\definecolor{darkgreen}{RGB}{51,117,56}
\definecolor{boldpurple}{RGB}{46,37,113}
\definecolor{babyblue}{RGB}{148,203,236}
\definecolor{beige}{RGB}{220,205,125}
\definecolor{burgundy}{RGB}{126,041,084}
\definecolor{pinkcheeks}{RGB}{194,106,119}
\definecolor{realpurple}{RGB}{159,074,150} 
\definecolor{babyteal}{RGB}{093,168,153}
\newtheorem{thm}{Theorem}[section]
\newtheorem{lem}[thm]{Lemma}
\newtheorem{cor}[thm]{Corollary}
\newtheorem{prop}[thm]{Proposition}
\newtheorem{conj}[thm]{Conjecture}
\newtheorem{conjecture}[thm]{Conjecture}
\newtheorem*{thmAltRow_new}{Theorem~\ref{thm:AltRow_new}}
\newtheorem*{thmorbits}{Theorem \ref{thm:thmorbits}}
\newtheorem*{thmhomomesy}{Theorem \ref{thm:homomesy}}
\theoremstyle{definition}
\newtheorem{definition}[thm]{Definition}
\newtheorem{example}[thm]{Example}
\newtheorem{remark}[thm]{Remark}
\newcommand{\IC}{\mathcal{IC}}
\renewcommand{\O}{\mathcal{O}}
\newcommand{\rank}{\textrm{rk}}
\newcommand{\row}{\mathrm{Row}}
\newcommand{\Max}{\mathrm{Max}}
\newcommand{\Min}{\mathrm{Min}}
\newcommand{\inc}{\mathrm{Inc}}
\newcommand{\f}{\nabla}
\newcommand{\oi}{\Delta}
\newcommand{\ceil}[1]{\mathrm{Ceil}({#1})}
\newcommand{\sn}{\sigma_{(n\bmod{6})}}
\newcommand{\wn}{w_{(n\bmod{6})}}
\newcommand{\F}{\Min(I)\cap\oi\ceil{I}}
\newcommand{\arow}{\inc(I)\cup\Big(\oi\inc_{I}\big(\ceil{I}\big) -\big(I\cup\oi\ceil{I}\big)\Big)\cup\Big(\oi\ceil{I}-\oi(\F) \Big)}
\newcommand{\SC}{\mathrm{sc}}
\title{Interval-closed set rowmotion and homomesy \linebreak on products of two chains}
\author{Nadia Lafreni\`ere, Joel Brewster Lewis, Erin McNicholas, Jessica Striker, Amanda Welch}
\date{\today}
\begin{document}

\maketitle

\begin{abstract}
   We study rowmotion dynamics on interval-closed sets. Our first main result proves a simplification of the global definition of interval-closed set rowmotion from (Elder, Lafreni\`ere, McNicholas, Striker, and Welch 2024). We then completely describe the orbits of interval-closed set rowmotion on products of two chains $[2]\times[n]$ and use this understanding to prove a homomesy conjecture from (ELMSW 2024) involving the signed cardinality statistic. 
\end{abstract}

\section{Introduction}
   Rowmotion is a natural action     
   with surprisingly nice dynamical properties in certain cases.
   An interesting feature is that this action has both global and local definitions; globally, it may be seen as a convex closure operation, while locally, it is given as a composition of toggle operators.   
   Rowmotion was originally studied on order ideals of a finite poset (equivalently, distributive lattices)~\cite{SW2012} and has recently been extended to generalized realms~\cite{EinsteinPropp21,BSV21}, other combinatorial objects~\cite{Striker2018,Joseph19}, and other types of lattices~\cite{Barnard2021,RowmotionSlowmotion,Semidistrim}. Rowmotion has even been categorified~\cite{IyamaMarczinzik22}.  Many foundational results in dynamical algebraic combinatorics relate to this action; see the survey articles~\cite{Roby2016,Striker2017,HopkinsOPAC}.  
   
   Recently, Elder, Lafreni\`ere, McNicholas, Striker, and Welch \cite{ELMSW} defined rowmotion on the set $\IC(P)$ of \emph{interval-closed sets} (or convex subsets) of a poset, a natural superset of order ideals, giving both local and global characterizations.
   The local characterization is a natural toggle action for interval-closed sets, analogous to the definition for order ideals: for $x\in P$, the toggle $t_x$ acts on an interval-closed set $I$ by taking the symmetric difference of $\{x\}$ and $I$, provided this is an interval-closed set, and as the identity otherwise. The local definition of rowmotion applies all toggles in the poset from top to bottom, that is, in the reverse order of any linear extension. The global characterization of ICS rowmotion, though, is much more complicated than the analogous definition for order ideals. 
   Our first main theorem is a simplification of this global definition of rowmotion on interval-closed sets; see Section~\ref{sec:global} for the notation.
\begin{thm}\label{thm:AltRow_new}
Given an interval-closed set $I\in\IC(P)$, rowmotion on $I$ is given by
    \[ \row(I) = \Big( \inc(\ceil{I}) - I \Big)\cup\Big(\oi\ceil{I}-\oi(\F) \Big). \]
\end{thm}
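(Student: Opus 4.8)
The plan is to derive the claimed two-term expression directly from the original global formula for $\row$ established in \cite{ELMSW}, namely
\[ \row(I) = \inc(I)\cup\Big(\oi\inc_{I}\big(\ceil{I}\big)-\big(I\cup\oi\ceil{I}\big)\Big)\cup\Big(\oi\ceil{I}-\oi(\F)\Big), \]
and to show by set-theoretic manipulation that the two agree. Since the third summand $\oi\ceil{I}-\oi(\F)$ occurs verbatim in both formulas, the entire statement reduces to the single set identity
\[ \inc(I)\cup\Big(\oi\inc_{I}\big(\ceil{I}\big)-\big(I\cup\oi\ceil{I}\big)\Big) = \inc(\ceil{I})-I, \]
which I would isolate as a lemma and prove by establishing the two inclusions separately.

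For the inclusion $\subseteq$ I would treat the two left-hand pieces in turn. I would show that $\inc(I)$ lands inside $\inc(\ceil{I})-I$ by comparing the increment evaluated at $I$ with the increment evaluated at its ceiling $\ceil{I}$ and using the defining properties of $\inc$ to ensure it avoids $I$; and I would show that the second piece, after unfolding the relative increment $\inc_I(\ceil{I})$ and deleting $I\cup\oi\ceil{I}$, contributes only elements of $\inc(\ceil{I})$ lying outside $I$. For the reverse inclusion $\supseteq$ I would take an arbitrary $x\in\inc(\ceil{I})-I$ and split into cases according to whether or not $x\in\oi\ceil{I}$, checking in each case that $x$ is produced by one of the two pieces on the left.

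The main obstacle I expect is reconciling the absolute increment $\inc(\ceil{I})$ appearing on the right with the relative increment $\inc_I(\ceil{I})$ appearing on the left. These two operations differ exactly on elements adjacent to the boundary set $\oi\ceil{I}$, and the subtraction of $I\cup\oi\ceil{I}$ is precisely the bookkeeping that forces them to agree. Making this precise requires a careful case analysis on each poset element $x$ according to its membership pattern in $I$, $\ceil{I}$, and $\oi\ceil{I}$, verifying in every case that the two sides add or omit $x$ in lockstep, so that nothing is double-counted and nothing is dropped. I would lean throughout on the interval-closedness of $I$ and on the relationship between $I$ and its ceiling $\ceil{I}$, which governs how the increment behaves near the top of $I$; once the boundary behavior of $\inc$ versus $\inc_I$ is pinned down, the remaining steps are routine set algebra. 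As an independent sanity check one could instead verify the two-term formula against the local toggle definition of $\row$, but I expect the reduction above to be the more efficient route.
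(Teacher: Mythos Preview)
Your overall strategy---starting from the three-term formula of \cite{ELMSW} and comparing it set-theoretically to the two-term formula---is exactly what the paper does. However, your reduction contains a genuine error. You write that because the set $C := \oi\ceil{I}-\oi(\F)$ appears in both unions, the statement reduces to
\[
\inc(I)\cup\Big(\oi\inc_{I}\big(\ceil{I}\big)-\big(I\cup\oi\ceil{I}\big)\Big) \;=\; \inc(\ceil{I})-I.
\]
But one cannot cancel a common term from a union: $X\cup C = Y\cup C$ does not imply $X=Y$. And in fact the identity you propose to prove is \emph{false}. Take $P=\{a,b,c\}$ with $a<c$, $b<c$, and $a,b$ incomparable, and let $I=\{a\}$. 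Then $\ceil{I}=\{c\}$, so $b\in\inc(I)$ (it is incomparable with $a$) while $b\notin\inc(\ceil{I})$ (since $b<c$). Thus $b$ lies on your left side but not your right side. Your plan to ``show that $\inc(I)$ lands inside $\inc(\ceil{I})-I$'' would therefore fail at exactly this point.

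What actually happens is that such an element $b$ is picked up by $C$ on the right-hand side of the full identity, not by $\inc(\ceil{I})-I$. The paper handles this correctly by proving the weaker (and true) inclusions $\inc(I)\subseteq D\cup C$, $B\subseteq D$, and $D\subseteq A\cup B$, which together give $A\cup B\cup C = D\cup C$. Your case analysis would likely have revealed the problem once you tried to carry it out, but the reduction as stated is not salvageable; you need to keep $C$ in play when showing the $\subseteq$ direction. (As a side remark, $\inc$ here denotes the set of elements \emph{incomparable} to a given set, not an ``increment''; your description of the obstacle in terms of boundary behavior suggests a misreading of the notation.)
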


The rest of our results focus on products of two chains $[m]\times[n]$. Order ideals of $[m]\times[n]$ have many nice dynamical properties: the order of rowmotion is $m+n$~\cite{AST2013,SW2012} and several statistics exhibit \emph{homomesy}~\cite{PR2015}, in which the average value of a statistic $f$ on each orbit of rowmotion equals the global average $c$. (In this case we also say $f$ is \emph{$c$-mesic}.)
 We prove some analogous theorems on $\IC([m]\times[n])$, though due to the increased complexity, we focus on the case $m=2$. The second theorem below settles \cite[Conjecture 4.12 ($m=2$)]{ELMSW}. 
\begin{thm}\label{thm:thmorbits}
    The orbits of rowmotion on $\IC([2]\times [n])$ are of sizes dividing $n+3$ or $n+5$, with the exception of a single orbit of size $2$ and the quadratic orbits described in Theorem \ref{large_orbits} and Table \ref{tab:orbit_lengths_small_posets}. In particular, the counts of orbits of each size are given in Tables \ref{tab:all_orbits} and \ref{tab:orbit_lengths_small_posets}.
\end{thm}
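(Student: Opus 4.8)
The plan is to reduce everything to an explicit combinatorial model and then push it through the simplified formula of Theorem~\ref{thm:AltRow_new}. First I would fix coordinates for $\IC([2]\times[n])$. Writing the bottom chain as $a_1<\cdots<a_n$ and the top chain as $b_1<\cdots<b_n$, every interval-closed set is recorded by the two (possibly empty) intervals $A=\{j:a_j\in I\}$ and $B=\{j:b_j\in I\}$. A short convexity computation (the only nontrivial relations are the cross-relations $a_j\le b_{j'}$ for $j\le j'$) shows that the admissible configurations are exactly: one or both of $A,B$ empty; $B$ lying strictly to the left of $A$ (that is, $\max B<\min A$); or the staircase overlap $\min B\le\min A\le\max B\le\max A$. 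This gives a finite list of combinatorial \emph{types} together with an exact count of $\IC([2]\times[n])$, which I would carry along as a running consistency check.

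Next I would translate Theorem~\ref{thm:AltRow_new} into explicit update rules on the pair $(A,B)$. Because $[2]\times[n]$ is a product of chains, each ingredient of $\row(I)=(\inc(\ceil{I})-I)\cup(\oi\ceil{I}-\oi(\F))$ evaluates to a simple shift or truncation of interval endpoints: the ceiling, the increment $\inc$, the incomparability operator $\oi$, and the set $\F$ all reduce to elementary operations on $\min A,\max A,\min B,\max B$. Carrying this out type by type yields piecewise-linear maps on the endpoint data, and I expect them to permute the types in a controlled cyclic pattern (for instance cycling among the $A$-only, overlap, and $B$-only types); bijectivity would be read off directly from these formulas.

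The heart of the argument is then to extract the orbit structure from the update rules. I would group the types into the families on which rowmotion is conjugate to a rotation and, for each family, exhibit an equivariant bijection to rotation of a cyclic boundary-path encoding (a path on a cylinder, in the spirit of the order-ideal story for products of chains), so that the period is manifestly $n+3$ on one family and $n+5$ on the other; the two constants should appear as the two relevant cycle lengths. A distinguished pair of symmetric interval-closed sets should form the exceptional orbit of size $2$ (fixed by $\row^2$ but not by $\row$), while the remaining irregular convex sets—those whose endpoint updates do not close up after a single rotation—must be collected into the large (quadratic) orbits of Theorem~\ref{large_orbits}. Isolating these exactly, and proving that nothing else escapes the $n+3$/$n+5$ pattern, is the main obstacle; I expect it to require a careful case analysis of the boundary behavior of the piecewise-linear maps.

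Finally, with every element assigned to a family and the period of each family known, I would count orbits of each size. The exact total from the first step, split according to type and divided by the appropriate period (after subtracting off the size-$2$ orbit and the quadratic orbits), yields the entries of Tables~\ref{tab:all_orbits} and~\ref{tab:orbit_lengths_small_posets}. The smallest values of $n$, where the generic families degenerate and some types collide, I would verify by direct computation to populate the small-poset table.
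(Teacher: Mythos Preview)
Your setup matches the paper closely: the paper parameterizes each ICS by the six-tuple $[b_1,i_1,a_1:b_2,i_2,a_2]$ (equivalent to your endpoint data for $A,B$), classifies the resulting configurations into six types, and computes the action of rowmotion on each type explicitly as piecewise-linear updates of these parameters. Your final step---summing the identified orbits and comparing against the known total $|\IC([2]\times[n])|$---is also exactly how the paper closes the argument.

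Where your proposal diverges, and where I think there is a genuine gap, is the middle. You hope to ``exhibit an equivariant bijection to rotation of a cyclic boundary-path encoding'' so that the periods $n+3$ and $n+5$ are manifest. The paper finds no such uniform model. The $n+3$ orbits alone split into four structurally different subfamilies (stacked ranks, stacked diagonals with a $3$-fold internal symmetry, second hooks, and double hooks with a $2$-fold internal symmetry), each requiring its own tracking argument; the $(n+3)/2$ and $(n+3)/3$ orbits arise as degenerate fixed points of those internal symmetries. The $n+5$ orbits are handled by a separate lemma with its own family of representatives. There is no evidence that a single cyclic-path model captures all of this, and your proposal does not say how you would build one.

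More seriously, your plan for the quadratic orbits is essentially ``collect what is left over and do a case analysis,'' but this is where most of the paper's work lies. The quadratic orbits are governed by the lower-chain singletons $\{(1,x)\}$, and the key structural fact is that $\row^{2n+12}(\{(1,x)\})=\{(1,x+6)\}$ for $1<x\le n-6$, together with a delicate analysis of what happens near the top (depending on $n\bmod 6$, and passing through intermediate doubleton ICS whose behavior further depends on $n\bmod 3$). The resulting orbit sizes are things like $n^2+3n-5$, $\tfrac{2n^2+n-27}{3}$, etc., varying with $n\bmod 6$. None of this is visible from the ``piecewise-linear boundary behavior'' perspective alone; one has to identify the right orbit representatives and iterate. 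Your proposal does not indicate awareness of this structure, and without it the classification cannot be completed.
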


Some interesting features of the orbits are summarized in Remark \ref{rem:orbits_description}.

\begin{thm} \label{thm:homomesy}
    Suppose that $n$ is odd. Then the signed cardinality statistic on $\IC([2] \times [n])$ is $0$-mesic with respect to rowmotion. 
\end{thm}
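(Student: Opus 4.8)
The plan is to exploit the natural order-reversing symmetry of $[2]\times[n]$. Let $\rho$ denote the $180^\circ$ rotation of the poset, $\rho(i,j)=(3-i,\,n+1-j)$; this is a poset anti-automorphism, and since convexity is preserved under order reversal, $\rho$ restricts to an involution on $\IC([2]\times[n])$. First I would record the compatibility of $\rho$ with rowmotion. Working from the local (toggle) definition, conjugation by $\rho$ sends each toggle $t_x$ to $t_{\rho(x)}$ and reverses the order in which toggles are applied; since rowmotion applies toggles in reverse linear-extension order, this yields $\rho\,\row\,\rho^{-1}=\row^{-1}$. (Alternatively one can verify this directly from the global formula of Theorem~\ref{thm:AltRow_new}.) In particular $\rho$ permutes the rowmotion orbits, each orbit of $\row$ being simultaneously an orbit of $\row^{-1}$.

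Next I would isolate the point at which the parity hypothesis enters. The signed cardinality statistic weights each element $(i,j)$ by $(-1)^{i+j}$ (equivalently, by the parity of its rank) and sums over $I$. Since $\rank(\rho(i,j))=n-\rank(i,j)$, the weight of $\rho(x)$ equals $(-1)^{n}$ times the weight of $x$, so summing over $I$ gives $\SC(\rho(I))=(-1)^{n}\,\SC(I)$ for every $I\in\IC([2]\times[n])$. When $n$ is odd this reads $\SC(\rho(I))=-\SC(I)$, whereas for $n$ even the sign is preserved — precisely why $0$-mesy is not expected in the even case.

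Granting that each rowmotion orbit $\O$ is stable under $\rho$, the theorem follows at once: because $\rho$ restricts to a bijection of $\O$, reindexing gives $\sum_{I\in\O}\SC(I)=\sum_{I\in\O}\SC(\rho(I))=-\sum_{I\in\O}\SC(I)$, so the orbit sum vanishes and the orbit average is $0$. The main obstacle is therefore establishing that $\rho$ fixes each orbit \emph{setwise} rather than swapping orbits in pairs (a swapped pair would produce orbits of opposite nonzero average, contradicting homomesy). This is where I would invoke the explicit orbit description of Theorem~\ref{thm:thmorbits}: for the generic families — orbits whose size divides $n+3$ or $n+5$ — I would check that the combinatorial parametrization of the interval-closed sets occurring within a single orbit is itself invariant under the rotation $\rho$, so that $\rho(I)$ always lands in the same orbit as $I$; the exceptional orbits, namely the single orbit of size $2$ and the quadratic/large orbits of Theorem~\ref{large_orbits} and Table~\ref{tab:orbit_lengths_small_posets}, I would confirm by direct inspection. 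I expect the verification of $\rho$-stability for the generic orbits to be the crux, since it draws on the fine internal structure of the orbits rather than merely their sizes.
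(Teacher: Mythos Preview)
Your symmetry idea is attractive, and the first two steps are correct: $\rho$ is an anti-automorphism of $[2]\times[n]$, it conjugates $\row$ to $\row^{-1}$, and for odd $n$ one has $\SC(\rho(I))=-\SC(I)$. The argument collapses, however, at the step you yourself flag as the crux: it is \emph{not} true that every rowmotion orbit is $\rho$-stable.

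A concrete counterexample already occurs among the Stacked Diagonal orbits of Lemma~\ref{n+3} (Case~Two). The orbit of $[b,i,a:b,i,a]$ contains exactly the three Stacked Diagonals $[b,i,a]$, $[a,b,i]$, $[i,a,b]$, i.e., the cyclic rotations of the triple $(b,i,a)$. But $\rho([b,i,a:b,i,a])=[a,i,b:a,i,b]$, the \emph{reversal}; this lies in the same orbit only when two of $a,b,i$ coincide. For $n=7$ and $(b,i,a)=(1,2,4)$, the orbit of $[1,2,4:1,2,4]$ contains $[1,2,4]$, $[4,1,2]$, $[2,4,1]$, while $\rho$ sends it to the distinct orbit containing $[4,2,1]$, $[1,4,2]$, $[2,1,4]$. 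The same phenomenon recurs for the Second Hook orbits of Case~Three and for the $n+5$ orbits of Theorem~\ref{thm:n+5}: generically $\rho$ swaps orbits in pairs. Your argument then yields only that the two orbit sums are negatives of one another, which is consistent with $0$-mesy but does not prove it; there is no evident second relation forcing each sum to vanish.

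By contrast, the paper does not use $\rho$ at all. It carries out a direct computation of $\sum_{I\in\O}\SC(I)$ for each orbit type, using the explicit formula $\SC([b_1,i_1,a_1:b_2,i_2,a_2])=\delta_{i_1}(-1)^{b_1}+\delta_{i_2}(-1)^{b_2+1}$ together with the shift/complement cancellations of Propositions~\ref{comp_sc} and~\ref{shift_sc}, tracking contributions along the detailed orbit descriptions established in Lemma~\ref{n+3}, Lemma~\ref{lem:n + 5}, and Section~\ref{sec:large}. This is laborious but is what actually pins down each orbit sum individually.
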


This homomesy is surprising because the orbits of rowmotion on $\IC([2]\times[n])$ are much less well-behaved than in the case of order ideals. Most homomesies appearing in the literature involve actions with a small order, but this theorem proves homomesy on an action with wild orbits, like the homomesies found on noncrossing partitions in \cite{EinsteinFGJMPR16}.


We now give precise definitions of the objects and actions in the title of the paper and discuss relevant results of the prior paper \cite{ELMSW}. We also state analogous dynamical results for order ideals, for the sake of comparison.

Let $P$ be a partially ordered set (poset). All posets in this paper are finite.
A \emph{chain} of a poset $P$ is a totally ordered subset of $P$. An $n$-element \emph{chain poset} has elements $1<2<\cdots<n$ and is denoted as $[n]$. The \emph{Cartesian product} of two chains is given as $[m]\times [n]=\{(i,j) \ | \ 1\leq i\leq m, 1\leq j\leq n\}$ with partial order $(a,b)\leq (c,d)$ if and only if $a\leq c$ and $b\leq d$. An \emph{order ideal} of $P$ is a downward-closed subset, and an \emph{order filter} of $P$ is an upward-closed subset. The set of order ideals of $P$ is denoted $J(P)$. 
A \emph{linear extension} of $P$ is a total order that respects (or extends) the partial order $P$. 
See \cite[Ch.\ 3]{Stanley2011} for details and other standard poset terminology.

\begin{definition}  Let $P$ be a poset and $I$ a subset of $P$. We say that $I$ is an \emph{interval-closed set (ICS)} of $P$ if for all $x, y \in I$ and $z\in P$ such that $x \leq z \leq y$,  then $z \in I$. Let $\IC(P)$ denote the set of all interval-closed sets of $P$.
\end{definition}

The following definition contains notation that will be used in Section~\ref{sec:global}; see \cite[Examples 2.2 and 2.15]{ELMSW} for examples.

\begin{definition} 
\label{def:poset_stuff}
Given $I\in \IC(P)$, let $\oi(I)$ denote the smallest order ideal (downward-closed set) containing $I$ and $\f(I)$ denote the smallest order filter (upward-closed set) containing $I$.
Denote the minimal elements of $I$ as $\Min(I)$ and the maximal elements of $I$ as $\Max(I)$.  
Let $\inc(I)$ be the set of elements in $P$ that are incomparable with $I$ and $\inc_{I}(J)$ be the restriction of $\inc(J)$ to the subset $I$.
The \textit{ceiling} of $I$, denoted $\ceil{I}$, is the set of minimal elements of the order filter of $I$, not including $I$. That is, $\ceil{I}=\Min(\f(I)-I)$. 
\end{definition}

We now review the definition of generalized toggles from \cite{Striker2018} where we define toggles relative to any set of subsets $\mathcal{L}$ of a fixed ground set. In this paper, we usually take $\mathcal{L}=\IC(P)$, but we also compare to prior results on order ideals, in which case we use $\mathcal{L}=J(P)$.

\begin{definition}[\protect{\cite[Section 3.4]{Striker2018}}] Let $x\in P$ and $\mathcal{L}$ any subset of the power set $2^P$. 
Define the \emph{toggle} $t_x:\mathcal{L}\rightarrow\mathcal{L}$ as:
\[t_x(I) = 
\begin{cases} 
I \setminus \{x\} &\text{if } x\in I \text{ and } I \setminus\{x\} \in\mathcal{L},\\
I \cup \{x\} &\text{if } x\notin I \text{ and } I \cup\{x\} \in\mathcal{L},\\
I &\text{otherwise.}
\end{cases}\]
\end{definition}

The following states the condition in which order ideal toggles commute.
\begin{lem}[\protect{\cite[Sec.\ 2]{CF1995}}] \label{lem:oi_commute}
Given $x,y\in P$ and $\mathcal{L}=J(P)$,
$(t_x t_y)^2 = 1$ if and only if $x$ and $y$ do not share a covering relation in $P$.
\end{lem}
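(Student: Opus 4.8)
The plan is to use two standard features of order ideal toggles: that each $t_x$ is an involution, and that toggling is a \emph{local} operation. First I would record that $t_x^2 = 1$ on $J(P)$, since removing and re-adding (or vice versa) a togglable element returns the original ideal, and when $t_x$ acts as the identity it is because neither direction produces an order ideal. Because $t_x$ and $t_y$ are involutions, $(t_xt_y)^2 = 1$ holds if and only if $t_xt_y = t_yt_x$, so it suffices to show that $t_x$ and $t_y$ commute exactly when $x$ and $y$ share no covering relation; the case $x = y$ is immediate, so I assume $x \neq y$. Next I would make the locality explicit: for an order ideal $I$, whether $t_x$ removes $x$, adds $x$, or fixes $I$ is determined solely by the membership in $I$ of $x$ together with the covers of $x$ and the elements covered by $x$. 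Indeed, $t_x$ removes $x$ iff $x \in I$ and no cover of $x$ lies in $I$ (so $x$ is maximal in $I$), and $t_x$ adds $x$ iff $x \notin I$ and every element covered by $x$ lies in $I$.

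For the backward direction, suppose $x$ and $y$ share no covering relation, so $y$ is neither a cover of $x$ nor covered by $x$, and symmetrically. Since $t_y$ alters only the membership of $y$, it leaves unchanged the membership of $x$ and of all covers and co-covers of $x$; by the locality statement, $t_x$ therefore makes the same decision and the same change to $x$'s membership whether applied to $I$ or to $t_y(I)$, and symmetrically for $t_y$ under $t_x$. As $t_x$ affects only $x$ and $t_y$ only $y$, both $t_xt_y(I)$ and $t_yt_x(I)$ yield the same net membership, so the toggles commute.

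For the forward direction, suppose $x$ and $y$ share a covering relation, say $x \lessdot y$ after relabeling. I would take the explicit order ideal $I = \{z \in P : z < y\}$ and verify the two relevant local facts. Every element covered by $y$ lies in $I$, so $y$ is addable to $I$; and $x$ is maximal in $I$, since any cover $w$ of $x$ lying in $I$ would satisfy $x \lessdot w$ and $w < y$, giving $x < w < y$ and contradicting $x \lessdot y$. Computing both orders then gives $t_xt_y(I) = I \cup \{y\}$ (after $y$ is added, $x$ is no longer maximal, so $t_x$ fixes it) while $t_yt_x(I) = I \setminus \{x\}$ (after $x$ is removed, it is a missing element covered by $y$, so $y$ is no longer addable and $t_y$ fixes it). These two ideals differ, so $t_xt_y \neq t_yt_x$.

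The only delicate point is the maximality verification in the forward direction: it is precisely the covering hypothesis $x \lessdot y$, rather than mere comparability $x < y$, that excludes an intermediate cover of $x$ sitting below $y$ and thereby guarantees that $x$ is togglable in $I$. Everything else is routine bookkeeping with the locality characterization.
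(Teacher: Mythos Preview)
Your proof is correct. Note, however, that the paper does not actually prove this lemma: it is stated as a citation from \cite{CF1995} with no proof given, so there is no ``paper's own proof'' to compare against. Your argument is the standard one: reduce $(t_xt_y)^2=1$ to commutativity via the involution property, use locality of toggles for the commuting direction, and for non-commutativity exhibit the explicit witness $I=\{z:z<y\}$ when $x\lessdot y$. The maximality check for $x$ in $I$ is the only subtle point and you handle it correctly by invoking the covering relation to rule out intermediate elements.
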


Below is the analogous lemma for ICS. Note that the condition under which toggles commute is weaker.
\begin{lem}[\protect{\cite[Lemma 3.17]{Striker2018}}] \label{lem:commute}
Given $x,y\in P$ and $\mathcal{L}=\IC(P)$,
$(t_x t_y)^2 = 1$ if and only if $x$ and $y$ are
incomparable in $P$ or (in the case $x < y$) if $y$ covers $x$ where $y$ is maximal and $x$ is minimal.
\end{lem}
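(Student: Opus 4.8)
The plan is to prove the commutation criterion of Lemma~\ref{lem:commute} directly from the definition of the ICS toggle $t_x$, analyzing when the composition $t_x t_y$ (applied to an arbitrary $I \in \IC(P)$) depends on the order in which the two toggles are performed. Since $(t_x t_y)^2 = 1$ is equivalent to asking that $t_x$ and $t_y$ commute (each $t_x$ is an involution, so $t_x t_y t_x t_y = 1$ iff $t_x t_y = t_y t_x$), I would recast the statement as: $t_x t_y = t_y t_x$ on all of $\IC(P)$ if and only if $x,y$ are incomparable, or (for $x<y$) $y$ covers $x$ with $y$ maximal and $x$ minimal in $P$. The key observation is that, unlike for order ideals, toggling an element $x$ out of or into an ICS is constrained not by global downward-closure but only by the local interval-closure condition, so the relevant obstructions are genuinely local.

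First I would dispose of the incomparable case. If $x$ and $y$ are incomparable, then for any $z$ with $x \le z \le y$ or $y \le z \le x$ there are none forcing the presence of the other, so whether $x \in t_y(I)$ is a valid ICS is unaffected by the status of $y$, and vice versa; hence the two toggles act on disjoint ``triggers'' and commute. This is the easy direction and can be checked by a short case analysis on the four membership patterns of $x,y$ in $I$. The substance of the lemma is the comparable case, so assume $x < y$ (the case $y<x$ is symmetric). Here I would carefully enumerate the obstruction: the interval-closure condition can fail when toggling $x$ or $y$ precisely because some element lying between them (or witnessing minimality/maximality) is present or absent. I expect the natural approach is to exhibit, whenever the stated covering/extremality condition fails, a specific ICS $I$ on which $t_x t_y (I) \ne t_y t_x (I)$, and conversely to verify commutation when it holds.

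The main technical work, and the step I expect to be the main obstacle, is the ``only if'' direction for comparable $x<y$: I need to show that if $x<y$ but it is \emph{not} the case that $y$ covers $x$ with $y$ maximal and $x$ minimal, then there exists an explicit $I\in\IC(P)$ witnessing noncommutativity. There are several ways the good condition can fail---$y$ does not cover $x$ (so some $z$ lies strictly between them), or $x$ is not minimal (some $w<x$), or $y$ is not maximal (some $v>y$)---and in each case I would build a small ICS (for instance, one containing $y$ and the obstructing element but not $x$, or the reverse) where applying $t_x$ first either is blocked or permitted in a way that is reversed when $t_y$ goes first. The care needed is that ``interval-closed'' is a two-sided condition, so removing $x$ can break closure via an element above $x$ still in $I$, while adding $x$ can break closure only if both a smaller and a larger element of $I$ straddle $x$; tracking these asymmetries across the four membership patterns and the three failure modes is where the bookkeeping concentrates. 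For the converse ``if'' direction I would check directly that when $y$ covers $x$, $x$ is minimal, and $y$ is maximal, the only interactions are benign: since $x$ is minimal and $y$ maximal, neither sits strictly between two other elements in a way that the other's presence governs, and the covering relation means there is no intermediate $z$ to mediate, so each toggle's validity is independent of the other and the compositions agree on every $I$.

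Since this lemma is quoted from \cite[Lemma 3.17]{Striker2018}, an alternative and cleaner route I would consider is simply to reduce to that reference, but to keep the paper self-contained I would favor the direct case analysis sketched above, organizing it by first fixing the four membership possibilities of $\{x,y\}$ relative to $I$ and then, within the comparable case, splitting on which of the three extremality/covering hypotheses fails. I anticipate that with this organization the ``if'' direction is routine and the real content lives in producing the three families of witnessing counterexamples for the ``only if'' direction.
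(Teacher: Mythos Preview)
The paper does not give its own proof of this lemma; it is stated with the citation \cite[Lemma 3.17]{Striker2018} and no argument is supplied. So there is nothing in the paper to compare your proposal against.

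That said, your plan is sound and is essentially how one would prove it from scratch. The reduction from $(t_x t_y)^2 = 1$ to commutativity is correct, the incomparable case is indeed trivial (since $y$ can never serve as a witness $p$, $q$, or intermediate $z$ in the interval-closure test for toggling $x$, and vice versa), and your identification of the three failure modes in the comparable case is right. For the ``only if'' direction, concrete witnesses along the lines you describe work: if $y$ does not cover $x$, take $I = \{y\}$; if $y$ covers $x$ but $x$ is not minimal, take $I = [w, y]$ for some $w < x$; the non-maximal case is dual. The ``if'' direction is, as you say, routine once you observe that minimality of $x$ makes removing $x$ always legal and makes $y$ irrelevant to the add-$x$ test (since $[x,y] = \{x,y\}$ and $y$, being maximal, never lies strictly inside another interval $[x,q]$), with the dual statements for $y$.
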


Toggles may be composed to create interesting actions. One well-studied action on order ideals is the composition of toggles from top to bottom; this action was named rowmotion~\cite{SW2012}. In \cite{ELMSW}, this name was used for the analogous composition on interval-closed sets. We define this composition for any subset $\mathcal{L}$ of the power set $2^P$, noting that the case $\mathcal{L}=J(P)$ is \cite[Lemma 1]{CF1995} and $\mathcal{L}=\IC(P)$ is from \cite[Section 2.3]{ELMSW}.

\begin{definition}\label{def:Row_tog}
Given $\mathcal{L}$ any subset of the power set $2^P$ and $I\in\mathcal{L}$, the \emph{rowmotion} of $I$, $\row(I)$, is given by applying all toggles in the reverse order of any linear extension of $P$.
\end{definition}

In the case of order ideals, the action now called rowmotion first appeared in the literature as a global action~\cite{Duchet1974};  subsequently, \cite[Lemma 1]{CF1995} proved the equivalence of the toggling definition, given above. This original global definition of order ideal rowmotion is below, in our notation, and stated as a lemma, since we took the toggling characterization as our definition.
\begin{lem}[\cite{Duchet1974,CF1995}]
\label{eq:global_JP}
For $I\in J(P)$, $\row(I)=\oi(\ceil{I})$.
\end{lem}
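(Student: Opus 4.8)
The plan is to prove the identity directly from the toggle definition of rowmotion (Definition~\ref{def:Row_tog} with $\mathcal{L}=J(P)$), showing that the top-to-bottom composition of toggles agrees with the global map $I\mapsto\oi(\ceil{I})$. Since $I$ is an order ideal, its complement $P\setminus I$ is an order filter, $\ceil{I}$ is the set of minimal elements of the complementary filter $P\setminus I$, and $\oi(\ceil{I})$ is therefore the order ideal generated by those minimal elements. The engine of the argument is the observation that, for order ideals, a single toggle $t_x$ changes the membership of $x$ and of no other element. Consequently I would fix a linear extension, apply the toggles in reverse order (from maximal to minimal), and record for each $x$ its membership at the instant $t_x$ fires: because every subsequent toggle involves an element other than $x$, this instantaneous membership is exactly whether $x\in\row(I)$.

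At the moment $t_x$ is applied, every element strictly greater than $x$ has already been toggled and hence occupies its final $\row(I)$-state, every element strictly less than $x$ is still untouched and hence occupies its original $I$-state, and elements incomparable to $x$ do not enter the toggle condition. Substituting this into the definition of $t_x$ yields a clean local rule. If $x\notin I$, then $x$ is inserted precisely when every element below $x$ already lies in $I$, i.e.\ when $x$ is a minimal element of $P\setminus I$, so $x\in\row(I)\iff x\in\ceil{I}$. If $x\in I$, then $x$ survives precisely when some element above $x$ remains present, i.e.\ when some element covering $x$ already lies in $\row(I)$ (using that $\row(I)$ is itself an order ideal).

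It then remains to check that this characterization coincides with $D:=\oi(\ceil{I})$. On the complement $P\setminus I$ the two sets are equal to $\ceil{I}$: for the toggle map this is the first half of the local rule, and for $D$ it follows because a complement element lies weakly below a minimal complement element only if it is that element itself. On $I$, both $\row(I)$ and $D$ obey the same downward recursion, namely that $x\in I$ belongs if and only if some cover of $x$ belongs, with the common base case that a maximal element of $P$ lying in $I$ never belongs; this recursion has a unique solution, so the two sets agree, and I would finish by induction on the elements of $I$ from the top down. I expect the main obstacle to be the rigorous justification of the ``freeze after toggling'' step --- precisely that, when $t_x$ fires, the elements above $x$ are genuinely in their final state and those below in their original state --- which is exactly where the single-element nature of order-ideal toggles and the reverse-linear-extension order are used; once this is in place, matching the two recursions is routine.
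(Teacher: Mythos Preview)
Your argument is correct. The paper does not supply its own proof of this lemma; it is cited as a classical result from \cite{Duchet1974,CF1995}, so there is no in-paper argument to compare against. Your approach---tracking the state of each element at the instant its toggle fires, using that order-ideal toggles affect only a single element, and then matching the resulting local recursion against the characterization of $\oi(\ceil{I})$---is a standard and complete way to establish the equivalence. The one place worth tightening slightly in a written version is the induction at the end: rather than saying the recursion ``has a unique solution,'' it is cleaner to induct downward through a linear extension of $P$, noting that for $x\notin I$ the two sets agree outright, and for $x\in I$ every cover of $x$ (whether in $I$ or in $P\setminus I$) has already been shown to have the same membership in $\row(I)$ and in $\oi(\ceil{I})$, so the cover rule forces agreement at $x$.
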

Our first main result, Theorem \ref{thm:AltRow_new} (a simplification of \cite[Thm.\ 2.20]{ELMSW}), gives the analogous statement for interval-closed sets.

The order of order ideal rowmotion on various families of posets turns out to be nice. See \protect{\cite[Sec. 3.1 and 6.1]{SW2012}} for history of the result given below.

\begin{thm}[\cite{Brouwer1975,Stanley2009,SW2012}]
\label{thm:oi_prod_chains_row}
    The order of rowmotion on $J([m]\times [n])$ divides $m+n$.
\end{thm}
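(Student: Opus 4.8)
The plan is to compute this order inside the toggle group of $[m]\times[n]$, following the strategy of Striker--Williams \cite{SW2012}: realize rowmotion as one toggle product, exhibit a second, manifestly periodic toggle product, and show the two are conjugate. The poset $[m]\times[n]$ is graded, with the elements sharing a fixed value of $i+j$ forming an antichain (a rank). Since the elements of a single rank are pairwise incomparable, they share no covering relation, so by Lemma~\ref{lem:oi_commute} their toggles commute. Hence for each rank $r$ the product $R_r$ of all rank-$r$ toggles is well defined independently of the order of composition, and Definition~\ref{def:Row_tog} expresses rowmotion as $\row = R_{\mathrm{top}}\cdots R_1$, the ranks applied from top to bottom.

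Next I would introduce the toggle product $\mathrm{Pro}$ obtained by sweeping the grid in the transverse (``left-to-right'') direction rather than by rank; this is the \emph{promotion} of \cite{SW2012}. The structural input I would quote from \cite{SW2012} is that $\row$ and $\mathrm{Pro}$ are conjugate in the toggle group of $[m]\times[n]$: one transforms the rank-ordered product into the file-ordered product by repeatedly interchanging adjacent commuting toggles and by cyclically shifting the product via conjugation, so that the toggles swept to the other end assemble into the conjugating element. Because conjugate elements of a group have equal order, it then suffices to bound the order of $\mathrm{Pro}$.

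Finally I would identify promotion with a rotation. Order ideals of $[m]\times[n]$ are exactly the Young diagrams contained in an $m\times n$ rectangle, and each is encoded by its boundary lattice path, a cyclic word of length $m+n$ using $m$ steps of one type and $n$ of the other. I would verify that a single application of $\mathrm{Pro}$ shifts this word cyclically by one position. Since a cyclic word of length $m+n$ is fixed by $m+n$ shifts, $\mathrm{Pro}$, and therefore $\row$, has order dividing $m+n$.

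The main obstacle is the conjugacy step together with the shift computation: toggles along a single file do \emph{not} commute, so the reordering that produces the conjugacy must track the noncommuting toggles with care, and confirming that promotion acts as a one-step rotation of the boundary word is precisely where that bookkeeping is spent. Once both facts are secured, the divisibility is immediate, in agreement with the alternative derivations via promotion--evacuation \cite{Stanley2009} and via the uniform constructions of \cite{Brouwer1975}.
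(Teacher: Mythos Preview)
The paper does not prove this theorem; it is stated as background with citations to \cite{Brouwer1975,Stanley2009,SW2012}, so there is no ``paper's own proof'' to compare against. Your sketch accurately reproduces the argument of \cite{SW2012}: write $\row$ as the rank-by-rank toggle product, conjugate it to the file-by-file product $\mathrm{Pro}$, and identify $\mathrm{Pro}$ with cyclic rotation of the length-$(m+n)$ boundary path, whence the order divides $m+n$. The outline is correct and is exactly one of the approaches the citations point to.
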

Our second main result, Theorem~\ref{thm:thmorbits}, finds the order of rowmotion on $\IC([2]\times[n])$ via an extensive analysis of the orbits. Such an analysis seems intractable for the case of general $m$.

Rowmotion on $J([m]\times[n])$ was found to exhibit another dynamical phenomenon called homomesy.

\begin{definition}[\cite{PR2015}]\label{def:homomesy}
We say that a statistic $f$ on a set $X$ exhibits \emph{homomesy} under an action $\varphi:X\rightarrow X$ when every orbit $\mathcal{O}$ of $\varphi$ has the same average value of the statistic $f$. That is, 
\[\frac{1}{|\mathcal{O}|}\sum_{I \in \mathcal{O}} f(I) =c\]
for all orbits $\mathcal{O}$ of $\varphi$. We say that the triple $(X,\varphi,f)$ is $c$\emph{-mesic}.
\end{definition}

For $I\subseteq P$, the cardinality statistic of $I$ is $|I|$, the number of elements of $I$.
\begin{thm}[\protect{\cite[Thm.\ 23]{PR2015}}]
    The cardinality statistic on $J([m]\times[n])$ is $\frac{mn}{2}$-mesic with respect to rowmotion.
\end{thm}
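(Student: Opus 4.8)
The plan is to reduce the cardinality statistic to a rotation-invariant count on binary words. First I would encode each order ideal $I \in J([m]\times[n])$ by the partition $\lambda(I) = (\lambda_1\ge\cdots\ge\lambda_m)$ with $\lambda_i = |\{j : (i,j)\in I\}| \le n$, so that $|I| = |\lambda(I)|$; the boundary lattice path of $\lambda(I)$ inside the $m\times n$ rectangle is a word $w(I)\in\{0,1\}^{m+n}$ with exactly $m$ ones and $n$ zeros. Under this (Stanley--Thomas) encoding, two facts are standard, and I would cite \cite{Stanley2009, SW2012} for them: the cardinality is the inversion count $|I| = \operatorname{inv}(w(I)) := \#\{p<q : w_p = 1,\ w_q = 0\}$, and rowmotion becomes the single cyclic shift $\rho$ of $w(I)$ (consistent with Theorem \ref{thm:oi_prod_chains_row}, since $\rho$ has order dividing $m+n$).

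Given this, the homomesy statement becomes a clean averaging claim. For an orbit $\O$ of $\rho$, of cyclic period $p \mid m+n$, summing $\operatorname{inv}$ over all $m+n$ shifts traverses $\O$ exactly $(m+n)/p$ times, so it suffices to prove
\[ \sum_{t=0}^{m+n-1}\operatorname{inv}(\rho^t w) = \frac{mn(m+n)}{2}, \]
from which $\frac{1}{|\O|}\sum_{I\in\O}|I| = \frac{mn}{2}$ follows immediately. Writing $L = m+n$ and letting $A\subseteq\mathbb{Z}/L$ be the positions of the ones and $B$ those of the zeros, a given ordered pair (a one at $a$, a zero at $b$) is a $1$-before-$0$ inversion in exactly $(a-b)\bmod L$ of the $L$ cyclic readings, so the left-hand side equals $\Sigma' := \sum_{a\in A,\,b\in B}\big((a-b)\bmod L\big)$.

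The crux is therefore the identity $\Sigma' = \tfrac12|A||B|L = \tfrac12 mnL$. Pairing each ordered pair with its reverse gives $\Sigma' + \Sigma = |A||B|L$, where $\Sigma = \sum_{a\in A, b\in B}\big((b-a)\bmod L\big)$, so it remains to show $\Sigma' = \Sigma$. I would prove this using the odd function $g(d) = (d\bmod L) - ((-d)\bmod L)$ (with $g(0)=0$ and $g(-d) = -g(d)$), for which $\Sigma' - \Sigma = \sum_{a\in A, b\in B} g(a-b)$. Since $\sum_{d\in\mathbb{Z}/L} g(d) = 0$, for each fixed $a$ we have $\sum_{c\in\mathbb{Z}/L} g(a-c) = 0$; splitting the inner sum over $\mathbb{Z}/L = A\sqcup B$ and noting that $\sum_{a,c\in A} g(a-c) = 0$ by antisymmetry leaves $\sum_{a\in A, b\in B} g(a-b) = 0$, i.e.\ $\Sigma' = \Sigma$, completing the argument.

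The essential input --- and the main obstacle if one does not wish to quote it --- is the Stanley--Thomas correspondence identifying rowmotion with a single cyclic shift; establishing it from the toggle definition requires genuine work, whereas everything after it is the elementary cyclic-sum computation above. An appealing alternative route uses the order-reversing involution $(i,j)\mapsto(m+1-i,\,n+1-j)$ to build a cardinality-complementing involution $\kappa$ on $J([m]\times[n])$ with $\kappa\,\row\,\kappa = \row^{-1}$; this immediately yields the average $\tfrac{mn}{2}$ on any $\kappa$-stable orbit, but verifying that every orbit is $\kappa$-stable again amounts to understanding the orbit structure, so I would prefer the word-based argument.
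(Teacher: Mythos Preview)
The paper does not prove this theorem; it is quoted as a known result from \cite{PR2015} for context and comparison with the paper's own results on interval-closed sets. So there is no ``paper's proof'' to compare against.

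Your argument is correct and is essentially the standard one. The encoding of $J([m]\times[n])$ by binary words of length $m+n$ with $m$ ones, together with the fact that rowmotion becomes a single cyclic shift, is exactly the Stanley--Thomas picture you cite; this is also the framework Propp and Roby use. Your computation of the cyclic sum of inversions is clean: the key identities $((a-b)\bmod L)+((b-a)\bmod L)=L$ for $a\neq b$ and $\sum_{a\in A,\,b\in B} g(a-b)=0$ via antisymmetry are both fine, and the passage from the sum over all $L$ shifts to the orbit average via the period $p\mid L$ is handled correctly. One minor point worth making explicit in a write-up is that the equality $|I|=\operatorname{inv}(w(I))$ depends on a specific convention for the boundary word; with the opposite convention one gets $mn-\operatorname{inv}(w(I))$, which of course still yields the same average $\tfrac{mn}{2}$.

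Your closing remarks are also accurate: the only substantive input is the Stanley--Thomas equivariance, and the alternative involution argument you sketch does indeed founder on exactly the orbit-stability issue you identify unless one already knows the orbit structure.
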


It was noted in \cite[Remark 3.26]{DHPP23} that the signed cardinality statistic, defined below, also exhibits homomesy on order ideals of $[m]\times[n]$ with respect to rowmotion. Our third main result, Theorem~\ref{thm:homomesy}, is an analogous homomesy result on $\IC([2]\times[n])$. 

\begin{definition}\label{def:sc}
For each $x\in P$, define the \emph{signed cardinality statistic} $\SC(x): P \rightarrow \{-1, 1\}$  as:
\[\SC(x) =
\begin{cases}
1 \textrm{ if } \rank(x) \textrm{ is even, } \\
-1 \textrm{ if } \rank(x) \textrm{ is odd. }
\end{cases}\]
For $I\subseteq P$, let $\SC(I) = \sum_{x\in I}\SC(x)$.
\end{definition}

In the above paragraphs, we  stated relevant theorems about rowmotion on order ideals and discussed how the results of this paper are analogous. As one may note by comparing the statements and the proofs, the results on interval-closed sets are much more complicated to state and to prove. So rather than jumping to these analogous theorems, in \cite{ELMSW}, the authors proved  theorems about ICS rowmotion on a simpler family of posets: ordinal sums of antichains.

\begin{thm}[\protect{\cite[Thm.\ 3.7]{ELMSW}}]
    The order of rowmotion on $\IC(\mathbf{a}_1\oplus\mathbf{a}_2\oplus\cdots\oplus\mathbf{a}_n)$ divides $2n(n + 2)$ when $n$ is odd
and $n(n + 2)/2$ when $n$ is even.
\end{thm}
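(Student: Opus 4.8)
The plan is to reduce $\IC(\mathbf{a}_1\oplus\cdots\oplus\mathbf{a}_n)$ to an explicit combinatorial parametrization and then compute rowmotion directly on it. First I would establish the structural lemma that, because every element of $\mathbf{a}_i$ lies below every element of $\mathbf{a}_{i+1}$, any nonempty $I\in\IC(\mathbf{a}_1\oplus\cdots\oplus\mathbf{a}_n)$ occupies a contiguous block of ranks $[\ell,h]$: convexity forces $I$ to contain \emph{every} element of each interior rank $\ell<i<h$, while on the two boundary ranks it may contain any nonempty subsets $S_\ell\subseteq\mathbf{a}_\ell$ and $S_h\subseteq\mathbf{a}_h$. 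Thus each ICS is recorded by the data $(\ell,h,S_\ell,S_h)$ (with $S_\ell=S_h$ when $\ell=h$), together with the empty set. This collapses a potentially huge lattice to a tractable bookkeeping object.

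Next, since the elements within a single antichain $\mathbf{a}_i$ are pairwise incomparable, their toggles commute by Lemma~\ref{lem:commute}, so rowmotion factors as the composition of rank-toggles $T_i=\prod_{x\in\mathbf{a}_i}t_x$ applied in decreasing order, $\row=T_1\circ T_2\circ\cdots\circ T_n$. I would compute the effect of this composition on the parametrization $(\ell,h,S_\ell,S_h)$, obtaining explicit update rules. The interior-full constraint makes many toggles inert (a toggle that would destroy convexity acts as the identity), so the computation splits into a modest number of cases according to the position of the block and whether its boundary ranks are full or partial. The expected outcome, matching direct computation in the chain case and in small ordinal sums, is that the block's endpoints shift and grow in a controlled, quasi-cyclic manner while the boundary subsets are transported and complemented within their antichains.

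I would then classify the orbits by type and read off the period in each case. The computations suggest a clean dichotomy: orbits on which the boundary data behaves ``chain-like'' (the relevant boundary rank is always full) have period dividing $n+2$, while orbits carrying a genuinely partial boundary subset have period equal to the least common multiple of a position-period dividing $n$ and the period of the complementation $S\mapsto\mathbf{a}_i\setminus S$. Since complementation is an involution, the latter contributes a factor of $2$ precisely when the block traverses an odd number of ranks before the subset is restored, and this is where the parity of $n$ enters: for odd $n$ the partial orbits have period dividing $2n$ and for even $n$ dividing $n$, with a single exceptional orbit $\emptyset\leftrightarrow(\text{full poset})$ of size $2$. Taking the least common multiple of the possible orbit sizes then gives $\operatorname{lcm}(2,n+2,2n)=2n(n+2)$ for odd $n$ and $\operatorname{lcm}(2,n+2,n)=n(n+2)/2$ for even $n$ (using $\gcd(n+2,2n)=1$ when $n$ is odd and $\gcd(n+2,n)=2$ when $n$ is even), which is exactly the claimed bound.

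The main obstacle is the second and third steps: pinning down the exact update rule at a partial boundary and proving the precise periods for the partial-boundary orbits, including the even/odd complementation subtlety. The interior-full ``blocking'' phenomenon means the block does not simply translate---it alternately slides and grows, and the boundary subset is handed off and complemented at specific moments---so verifying that these combined motions have period dividing exactly $2n$ (odd $n$) or $n$ (even $n$) requires careful case analysis. By contrast, the structural parametrization, the commutation of within-rank toggles, and the final least-common-multiple assembly are comparatively routine.
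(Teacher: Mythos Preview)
This theorem is not proved in the present paper: it is quoted verbatim from \cite{ELMSW} as background (see the attribution ``\cite[Thm.\ 3.7]{ELMSW}'' in the statement), alongside the companion results Theorems~\ref{alt_def_ordinal} and~\ref{thm:signed_card_ord_sum}. The current paper only uses the ordinal-sum results as motivation and analogy for its own $[2]\times[n]$ analysis; it supplies no argument for them. Consequently there is no ``paper's own proof'' here against which to compare your proposal.

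That said, your outline is a sensible plan for the cited result, and the structural steps (the $(\ell,h,S_\ell,S_h)$ parametrization, the rank-toggle factorization via Lemma~\ref{lem:commute}, and the final $\mathrm{lcm}$ assembly) are correct. The global description in Theorem~\ref{alt_def_ordinal}---namely $\row(I)=\oi\ceil{I}\setminus\oi\Min(I)$ outside of a few boundary cases, with complementation in the remaining cases---is exactly the closed form your ``update rule'' computation would produce, and it confirms your intuition that the boundary subsets are handed off and complemented. Your honest identification of the obstacle is apt: the delicate part is verifying that the partial-boundary orbits have period dividing $2n$ (odd $n$) or $n$ (even $n$), and you have asserted rather than derived this. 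In particular, the interaction between the block's positional motion and the three exceptional complementation cases in Theorem~\ref{alt_def_ordinal} needs to be tracked carefully to justify those divisibilities; the heuristic ``position-period dividing $n$ times complementation-period $2$'' is not obviously the whole story. If you wish to complete the argument, working directly from Theorem~\ref{alt_def_ordinal} rather than from toggles would streamline the case analysis.
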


For these posets, the global characterization of rowmotion simplified.

\begin{thm}
[\protect{\cite[Thm.\ 3.14]{ELMSW}}]\label{alt_def_ordinal}
Rowmotion on $I \in \IC(\mathbf{a}_1\oplus\mathbf{a}_2\oplus\cdots\oplus\mathbf{a}_n)$ is given as
\[\row(I) =
\begin{cases}
\overline{I} &\text{if } I=\emptyset, \ \mathbf{a}_n\subseteq I, \text{ or }  I\subseteq \mathbf{a}_n , \\
\oi\ceil{I}\setminus\oi\Min(I) &\text{otherwise}.
\end{cases}\]
\end{thm}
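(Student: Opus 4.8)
The plan is to deduce the piecewise formula directly from the unified global description in Theorem~\ref{thm:AltRow_new}, using the very rigid structure of interval-closed sets in an ordinal sum of antichains. First I would record the shape of a nonempty $I\in\IC(\mathbf{a}_1\oplus\cdots\oplus\mathbf{a}_n)$: since any two distinct levels are completely comparable while each $\mathbf{a}_i$ is an antichain, interval-closedness forces $I$ to occupy a contiguous block of levels $a\le b$, consisting of an arbitrary nonempty set $S_a\subseteq\mathbf{a}_a$ of minimal elements, an arbitrary nonempty set $S_b\subseteq\mathbf{a}_b$ of maximal elements, and every intermediate antichain $\mathbf{a}_{a+1},\dots,\mathbf{a}_{b-1}$ in full. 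In this notation $\Min(I)=S_a$ and $\f(I)=S_a\cup\mathbf{a}_{a+1}\cup\cdots\cup\mathbf{a}_n$, so a short computation of $\ceil{I}=\Min(\f(I)-I)$ shows that $\ceil{I}$ always lies inside a single antichain $\mathbf{a}_c$: it is $\mathbf{a}_b\setminus S_b$ (with $c=b$) when $a<b$ and $S_b\neq\mathbf{a}_b$, and otherwise the full antichain $\mathbf{a}_{b+1}$ (with $c=b+1$). Crucially, $\ceil{I}=\emptyset$ happens exactly when $\f(I)=I$, i.e.\ when $I$ is an order filter, and in the ordinal sum this is precisely the list of exceptional cases $I=\emptyset$, $\mathbf{a}_n\subseteq I$, and $I\subseteq\mathbf{a}_n$.

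In the generic case $\ceil{I}\neq\emptyset$ sits inside one antichain $\mathbf{a}_c$, so the elements incomparable to all of $\ceil{I}$ are exactly those of $\mathbf{a}_c\setminus\ceil{I}$, every element of another level being comparable to all of $\mathbf{a}_c$; that is, $\inc(\ceil{I})=\mathbf{a}_c\setminus\ceil{I}$. When $\ceil{I}=\mathbf{a}_b\setminus S_b$ this is $S_b\subseteq I$, and when $\ceil{I}$ is a full antichain it is empty, so in either event $\inc(\ceil{I})\subseteq I$ and the term $\inc(\ceil{I})-I$ of Theorem~\ref{thm:AltRow_new} vanishes. Since $\Min(I)=S_a$ lies at level $a<c$, it is contained in $\oi\ceil{I}$ (which contains every level below $c$ in full), whence $\Min(I)\cap\oi\ceil{I}=\Min(I)$ and the remaining term collapses to $\oi\ceil{I}\setminus\oi\Min(I)$. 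This is exactly the ``otherwise'' value, and one checks that the explicit block description of $\oi\ceil{I}\setminus\oi\Min(I)$ indeed complements $S_a$ at the bottom and $S_b$ at the top while keeping the interior full (pushing the top up one level when $S_b=\mathbf{a}_b$).

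For the three exceptional cases, where $\ceil{I}=\emptyset$, the term $\oi\ceil{I}\setminus\oi(\Min(I)\cap\oi\ceil{I})$ is empty, and the first term reduces to $\inc(\emptyset)-I=P\setminus I=\overline{I}$ under the vacuous reading that every element is incomparable to the empty set; I would pin this convention down against the definition of $\inc$, or---to avoid any ambiguity---verify the complementation directly by toggling, which is transparent here because an order filter of an ordinal sum of antichains is swept to its complement one antichain at a time from the top down. I expect the main obstacle to be not any single deep idea but the disciplined case bookkeeping: correctly locating $\ceil{I}$ and its ambient antichain across all boundary configurations ($S_a=\mathbf{a}_a$, $S_b=\mathbf{a}_b$, $a=b$, $b=n$) and confirming that each order-ideal difference reduces to the claimed set, with the precise handling of $\inc(\emptyset)$ in the filter case being the most delicate point to state cleanly.
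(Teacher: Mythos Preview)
The paper does not give its own proof of this statement; it is quoted from the prior paper \cite{ELMSW} as background. So there is nothing in the present paper to compare against directly.

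Your approach is correct, and it is genuinely different from what the original argument in \cite{ELMSW} could have been: you deduce the ordinal-sum formula as a specialization of the \emph{new} Theorem~\ref{thm:AltRow_new}, whereas the proof in \cite{ELMSW} necessarily predates that result and would have worked from the more complicated Theorem~\ref{thm:AltRow} or from toggles directly. The structural observations you isolate are exactly the ones that make the two-term formula collapse: in an ordinal sum, $\ceil{I}$ (when nonempty) sits inside a single antichain $\mathbf{a}_c$, so $\inc(\ceil{I})=\mathbf{a}_c\setminus\ceil{I}$ is either $S_b\subseteq I$ or empty, killing the first term; and $\Min(I)$ lives at a strictly lower level than $c$, so $\Min(I)\subseteq\oi\ceil{I}$, reducing the second term to $\oi\ceil{I}\setminus\oi\Min(I)$. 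Your identification of the exceptional cases with ``$I$ is an order filter,'' equivalently $\ceil{I}=\emptyset$, is also right. The only delicate point, which you already flag, is reading $\inc(\emptyset)=P$ vacuously; this is consistent with how the paper uses $\inc$, and your alternative of verifying the filter cases by one top-to-bottom sweep of toggles is an easy and unambiguous fallback. The payoff of your route is that it exhibits Theorem~\ref{alt_def_ordinal} as an immediate corollary of the present paper's main simplification, rather than as an independent computation.
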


They also gave a homomesy result with the signed cardinality statistic on ordinal sums of antichains of the same size. 
\begin{thm}[\protect{\cite[Thm.\ 3.21]{ELMSW}}]
\label{thm:signed_card_ord_sum}
The signed cardinality statistic exhibits homomesy with respect to rowmotion on the poset $\bigoplus_{i=1}^n \mathbf{m}$ where $n$ is even.
\end{thm}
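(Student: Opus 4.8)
The plan is to reduce the entire homomesy statement to a single pointwise identity: for $n$ even and every $I\in\IC(\bigoplus_{i=1}^n\mathbf{m})$,
\begin{equation*}
\SC(\row(I)) = -\SC(I). \tag{$\ast$}
\end{equation*}
Granting $(\ast)$, homomesy with average $0$ is immediate. Along any orbit $I,\row(I),\dots,\row^{\ell-1}(I)$ of size $\ell$, iterating $(\ast)$ gives $\SC(\row^k(I)) = (-1)^k\SC(I)$. If $\ell$ is even the orbit sum is $\SC(I)\sum_{k=0}^{\ell-1}(-1)^k = 0$; if $\ell$ is odd, closing the orbit forces $\SC(I) = \SC(\row^\ell(I)) = (-1)^\ell\SC(I) = -\SC(I)$, so $\SC(I)=0$ and the sum again vanishes. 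Hence every orbit averages $0$, which is the asserted homomesy.

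To prove $(\ast)$ I would first record the shape of an interval-closed set of the ordinal sum. Writing the antichains as $\mathbf{a}_1,\dots,\mathbf{a}_n$ from bottom to top, convexity forces a nonempty $I$ to occupy a contiguous band of levels $b\le i\le t$: every intermediate level $b<i<t$ is completely full, while the bottom and top levels contain arbitrary nonempty subsets $S_b\subseteq\mathbf{a}_b$ and $S_t\subseteq\mathbf{a}_t$ (with $S_b=S_t$ when $b=t$). Consequently $\SC(I)=\sum_i(-1)^{i-1}|I\cap\mathbf{a}_i|$ depends only on the level-fill numbers $f_i=|I\cap\mathbf{a}_i|$, and the quantities $\Min(I)$, $\ceil{I}$, and the ideal/filter closures required by Theorem~\ref{alt_def_ordinal} are all directly readable from the band data.

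For the generic branch of Theorem~\ref{alt_def_ordinal} I would use that $\row(I)=\oi\ceil{I}\setminus\oi\Min(I)$ is a difference of two order ideals with $\oi\Min(I)\subseteq\oi\ceil{I}$, so that $\SC(\row(I))=\SC(\oi\ceil{I})-\SC(\oi\Min(I))$. Because the signed cardinality of an order ideal of $\bigoplus\mathbf{m}$ with full prefix $1,\dots,k-1$ and $s$ elements on level $k$ equals $m\,\varepsilon_{k-1}+(-1)^{k-1}s$, where $\varepsilon_j:=\sum_{i=1}^j(-1)^{i-1}\in\{0,1\}$, each instance of $(\ast)$ collapses to the elementary alternating-sum identity
\[ \sum_{i=b}^{t}(-1)^{i-1} + \sum_{i=b+1}^{t-1}(-1)^{i-1} = 0 \qquad (b<t), \]
together with a one-line direct check when $b=t$. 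I would carry this out across the boundary configurations that actually occur --- $b=t$ versus $b<t$, and the top (resp.\ bottom) level full versus proper, which govern whether the band is reflected in place, shifted up, or extended upward --- verifying $(\ast)$ uniformly. For the three exceptional inputs ($I=\emptyset$, $\mathbf{a}_n\subseteq I$, or $I\subseteq\mathbf{a}_n$), Theorem~\ref{alt_def_ordinal} gives $\row(I)=P\setminus I$, whence $\SC(\row(I))=\SC(P)-\SC(I)$; since $\SC(P)=m\,\varepsilon_n=0$ exactly because $n$ is even, $(\ast)$ holds here too. This exceptional branch is the only place the parity hypothesis enters.

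The main obstacle is the bookkeeping in the generic branch: one must correctly track how $\Min(I)$, $\ceil{I}$, and their ideal closures move as a band endpoint reaches a full level or approaches the top antichain $\mathbf{a}_n$, and confirm that each resulting profile of $\row(I)$ feeds into the same sign-reversal identity. Care is also needed at the interface between the generic and exceptional branches of Theorem~\ref{alt_def_ordinal}, to ensure every $I$ is treated exactly once. Organized around the order-ideal decomposition above, however, each case is a short computation, and crucially no description of the (complicated) global orbit structure is required.
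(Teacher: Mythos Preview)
This theorem is quoted from \cite{ELMSW} and is not proved in the present paper, so there is no in-paper proof to compare against.

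That said, your approach is correct and in fact proves more than the cited statement: the pointwise identity $\SC(\row(I))=-\SC(I)$ is strictly stronger than $0$-mesy. Your case analysis via Theorem~\ref{alt_def_ordinal} is sound. For the generic branch with $b<t$ and $S_t\subsetneq\mathbf{a}_t$, one computes $\row(I)=(\mathbf{a}_b\setminus S_b)\cup\mathbf{a}_{b+1}\cup\cdots\cup\mathbf{a}_{t-1}\cup(\mathbf{a}_t\setminus S_t)$, and $\SC(I)+\SC(\row(I))=m\big((-1)^{b-1}+2\sum_{i=b+1}^{t-1}(-1)^{i-1}+(-1)^{t-1}\big)=0$ by the alternating-sum identity you wrote down. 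The full-top case ($S_t=\mathbf{a}_t$, $t<n$) reduces to the same identity with $t$ replaced by $t+1$, and the single-level case $b=t<n$ gives $\SC(\row(I))=(-1)^{b-1}(m-|S_b|)+(-1)^b m=-(-1)^{b-1}|S_b|=-\SC(I)$ directly. The exceptional branch uses $\SC(P)=0$ for $n$ even, exactly as you say. The only bookkeeping point worth spelling out explicitly in a final write-up is that when $t=n$ with $S_n\subsetneq\mathbf{a}_n$ and $b<n$, one is still in the generic branch (neither $\mathbf{a}_n\subseteq I$ nor $I\subseteq\mathbf{a}_n$), and the proper-top computation applies verbatim.

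By contrast, the homomesy proofs that \emph{are} given in this paper (for $[2]\times[n]$) proceed by a full orbit classification rather than any pointwise sign-reversal, so your method here is considerably more economical; it avoids orbit structure entirely.
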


They found another homomesy and conjectured the analogous statement for $[m]\times[n]$.
\begin{thm}[\protect{\cite[Thm.\ 4.7]{ELMSW}}]
\label{thm:homomesy_2_by_n_max-min}
The number of maximal elements minus the number of minimal elements is $0$-mesic for rowmotion on $\IC([2] \times [n])$.
\end{thm}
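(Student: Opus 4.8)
The plan is to exploit the order-reversing anti-automorphism $\phi$ of $[2]\times[n]$ given by $\phi(i,j) = (3-i,\,n+1-j)$. First I would record three elementary properties. Since convexity is preserved by order-reversing bijections, $\phi$ restricts to an involution on $\IC([2]\times[n])$. Because $\phi$ reverses order, it interchanges the minimal and maximal elements of any ICS, so $\Min(\phi(I))=\phi(\Max(I))$ and $\Max(\phi(I))=\phi(\Min(I))$; in particular the statistic $f(I):=|\Max(I)|-|\Min(I)|$ satisfies $f(\phi(I))=-f(I)$. Finally, since rowmotion toggles in the reverse order of a linear extension and $\phi$ turns a linear extension of $P$ into a reverse linear extension while conjugating each toggle to $\phi\circ t_x\circ\phi = t_{\phi(x)}$, one gets $\phi\circ\row\circ\phi=\row^{-1}$.

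Second, I would use these to reduce the homomesy to one structural statement. By the conjugation relation, $\phi(\mathcal{O})$ is again a rowmotion orbit, namely the orbit of $\phi(I)$ (indeed $\phi(\row^{k}I)=\row^{-k}\phi(I)$), so $\phi(\mathcal{O})=\mathcal{O}$ exactly when $\phi(I)$ lies in the orbit of $I$. Whenever $\phi(\mathcal{O})=\mathcal{O}$, the involution $\phi$ is a bijection of $\mathcal{O}$ onto itself, and reindexing the orbit sum through it gives
\[ \sum_{I\in\mathcal{O}}|\Min(I)| = \sum_{I\in\mathcal{O}}|\Min(\phi(I))| = \sum_{I\in\mathcal{O}}|\Max(I)|, \]
so $\sum_{I\in\mathcal{O}}f(I)=0$ and $\mathcal{O}$ is $0$-mesic. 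Thus it suffices to prove that \emph{every} rowmotion orbit on $\IC([2]\times[n])$ is $\phi$-invariant. By the conjugation relation this follows if each orbit contains a single ICS $I_0$ with $\phi(I_0)=\row^{c}(I_0)$ for some $c$ (then $\phi(\row^{k}I_0)=\row^{\,c-k}(I_0)\in\mathcal{O}$ for all $k$); in particular it is enough to exhibit in each orbit a self-dual element $\phi(I_0)=I_0$ or a dual-adjacent pair $\phi(I_0)=\row(I_0)$.

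The main obstacle is precisely this orbit-symmetry claim. The \emph{global} average is immediate, since $\phi$ pairs each $I$ with $\phi(I)$ and $f\circ\phi=-f$ forces $\sum_{I}f(I)=0$; but homomesy demands cancellation \emph{within each orbit}, and a priori $\phi$ could carry an orbit to a different orbit of opposite sum. To settle it I would invoke the explicit description of $\IC([2]\times[n])$: each such set is determined by an interval of occupied positions in the bottom chain together with one in the top chain, linked by the convexity condition. Using Theorem~\ref{thm:AltRow_new} I would compute the effect of one rowmotion step on these interval data, enough to locate a canonical $\phi$-symmetric representative in each orbit (for instance a most-spread or thinnest configuration, which the anti-automorphism either fixes or shifts by one step).

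Alternatively, if isolating a symmetric representative proves awkward, the same interval bookkeeping offers a direct route: track how $\Min$ and $\Max$ evolve across a single step and show that $|\Max(I)|=|\Min(\row(I))|$ holds except at a controlled set of ``boundary'' steps where $I$ meets the top or bottom rank of the poset, then argue that the discrepancies at these exceptional steps cancel in pairs around the orbit. Either way the crux is the same: controlling the behavior of $\Min$ and $\Max$ under one rowmotion step precisely enough to extract the per-orbit cancellation, with the anti-automorphism reducing the whole homomesy to this local structural analysis.
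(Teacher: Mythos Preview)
Your primary route has a genuine gap: the claim that every rowmotion orbit on $\IC([2]\times[n])$ is $\phi$-invariant is \emph{false}. Take $n=6$ and the Stacked Diagonal $I=[1,2,3:1,2,3]$. By the orbit description in Lemma~\ref{n+3} (Case Two), the orbit of $I$ contains exactly three Stacked Diagonals, namely $[1,2,3]$, $[3,1,2]$, $[2,3,1]$ (cyclic rotations of the triple $(b,i,a)$). Your involution sends $[b,i,a:b,i,a]$ to $[a,i,b:a,i,b]$, so $\phi(I)=[3,2,1:3,2,1]$, which is a Stacked Diagonal lying in the \emph{other} orbit, the one containing $[3,2,1]$, $[1,3,2]$, $[2,1,3]$. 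Thus $\phi$ genuinely swaps two distinct orbits here, and no amount of searching for a ``$\phi$-symmetric representative'' will succeed. (These two orbits still each have max-minus-min sum zero, but not because of $\phi$.) The conjugation identity $\phi\circ\row\circ\phi=\row^{-1}$ and the relation $f\circ\phi=-f$ only give you that $\phi$-paired orbits have opposite sums; they do not force each sum to vanish.

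The proof in \cite{ELMSW} (summarized in the Remark following Conjecture~\ref{conj:max-min}) takes a different and more concrete route: in $[2]\times[n]$ every nonempty antichain has size $1$ or $2$, so the statistic $|\Max(I)|-|\Min(I)|$ only takes values in $\{-1,0,+1\}$, and one exhibits, within each orbit, an explicit matching of the ICS with value $+1$ to those with value $-1$. Your fallback suggestion---tracking $|\Max(I)|$ versus $|\Min(\row(I))|$ step by step and controlling the exceptional boundary steps---is in the same spirit and could be made to work, but you would need to actually carry out that case analysis (e.g.\ using the catalogue in Lemma~\ref{Lem: CatalogRowmotion2xn}); the anti-automorphism by itself does not do the job.
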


Finally, they gave the following conjecture. Our Theorem~\ref{thm:homomesy} is the proof of the case $m=2$.
\begin{conj}[\protect{\cite[Conj.\ 4.12]{ELMSW}}]
\label{conj:signed_card} If $m = 2$ or $m = 3,$ then the signed cardinality
statistic is $0$-mesic under rowmotion on interval-closed sets of $[m]\times[n]$ whenever $m + n - 1$ is
even. 
\end{conj}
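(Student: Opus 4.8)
The plan is to isolate a single symmetry that works uniformly for both $m=2$ and $m=3$, and then to finish each case with its own orbit bookkeeping. Since $\SC$ is linear, $\SC(I)=\sum_{x\in I}\SC(x)$ with $\SC((i,j))=(-1)^{i+j}$ in $[m]\times[n]$, everything is governed by how rowmotion and $\SC$ interact with the order-reversing automorphism $\iota\colon[m]\times[n]\to[m]\times[n]$, $\iota(i,j)=(m+1-i,n+1-j)$ (the $180^\circ$ rotation of the poset). First I would check that $\iota$ carries interval-closed sets to interval-closed sets (convexity is preserved by any poset anti-automorphism) and that $\SC(\iota(x))=(-1)^{m+n}\SC(x)$; under the hypothesis that $m+n-1$ is even, i.e.\ $m+n$ is odd, this gives $\SC\circ\iota=-\SC$. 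In particular, pairing each $I$ with $\iota(I)$ shows the global average of $\SC$ over $\IC([m]\times[n])$ is $0$, so the only possible mesic constant is $c=0$.

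Next I would establish the dynamical equivariance $\iota\,\row\,\iota^{-1}=\row^{-1}$. Because toggling commutes with anti-automorphisms in the sense $\iota\,t_x\,\iota^{-1}=t_{\iota(x)}$, applying $\iota$ to the top-to-bottom toggle product defining $\row$ turns it into the bottom-to-top toggle product along the linear extension obtained by reversing and $\iota$-relabelling the original one; since $\row$ is independent of the chosen linear extension, this product is exactly $\row^{-1}$. Consequently $\iota$ permutes the rowmotion orbits, and for any orbit $\O$ the orbit sum satisfies $\sum_{I\in\iota(\O)}\SC(I)=-\sum_{I\in\O}\SC(I)$. Every $\iota$-stable orbit therefore has signed-cardinality sum $0$, and in all cases it suffices to prove that each individual orbit sum vanishes, which together with $c=0$ is exactly $0$-mesy.

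To finish I would feed in explicit orbit data. For $m=2$ this is precisely the content of the orbit classification in Theorem~\ref{thm:thmorbits}: I would take the canonical representative of each orbit family recorded there, propagate $\SC$ along the family using the local toggles (or the global formula of Theorem~\ref{thm:AltRow_new}), and verify the signed cardinalities sum to $0$ over each orbit; this is the computation underlying Theorem~\ref{thm:homomesy}, and the symmetry $\iota$ halves the casework and accounts for the exceptional orbits. For $m=3$ (where $m+n$ odd forces $n$ even) I would carry out the analogous analysis on $\IC([3]\times[n])$: use Theorem~\ref{thm:AltRow_new} to propagate a small set of orbit representatives, use $\iota$ to reduce to $\iota$-stable orbits and $\iota$-pairs, and then evaluate $\SC$ on each family. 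I would also try to shortcut the classification by relating $\SC$ to the $0$-mesic statistic of Theorem~\ref{thm:homomesy_2_by_n_max-min} together with manifestly balanced rank-wise contributions.

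The main obstacle is the $m=3$ orbit structure. The authors already describe the $m=2$ analysis as extensive and note that general $m$ seems intractable, so the crux of the plan is taming the orbits of $\IC([3]\times[n])$ without a full enumeration. My preferred route around this is to prove that the $\iota$-unstable orbits occur in $\iota$-pairs with opposite sums, and then to show each such sum is individually $0$ by a finer telescoping of the rank-by-rank changes produced by one step of $\row$, tracking via Theorem~\ref{thm:AltRow_new} how the ceiling $\ceil{I}$ and the correction $\oi\ceil{I}-\oi(\F)$ redistribute signed cardinality. If that uniform cancellation cannot be pushed through, the fallback is a direct structural classification of the $[3]\times[n]$ orbit families, guided by computer exploration and then proved by induction on $n$, followed by the same representative-wise signed-cardinality check.
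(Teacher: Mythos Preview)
The statement you are addressing is a \emph{conjecture}, and the paper does not prove it in full: it proves only the case $m=2$ (this is exactly Theorem~\ref{thm:homomesy}), while the case $m=3$ remains open. So the relevant comparison is between your $m=2$ argument and the paper's, and your $m=3$ proposal should be assessed on its own.

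For $m=2$, the paper's proof is a complete orbit-by-orbit verification: it first classifies every rowmotion orbit (Theorem~\ref{thm:thmorbits}, via Lemmas~\ref{n+3}, \ref{lem:n + 5}, \ref{prop:big orbits 2n+12}, \ref{prop:big orbit 2phase}, \ref{prop:big orbits edge cases}, Theorem~\ref{large_orbits}), and then computes the signed-cardinality sum over each family of orbits directly (Theorems~\ref{thm:sc_n+3}, \ref{thm:sc_n+5}, \ref{lem:sc-big-orbits}, and Corollary~\ref{cor:sc_n+3}). No use is made of the anti-automorphism $\iota$. Your $\iota$ observation is correct and pleasant---it immediately gives sum $0$ on every $\iota$-stable orbit and shows the global average is $0$---but it does \emph{not} finish the problem: for an orbit $\O$ with $\iota(\O)\neq\O$ you only get $\sum_{I\in\iota(\O)}\SC(I)=-\sum_{I\in\O}\SC(I)$, which is perfectly compatible with both sums being nonzero and hence with homomesy failing. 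So the ``halves the casework'' claim is optimistic: you still need, for every non-$\iota$-stable orbit, an independent argument that its sum vanishes, and at that point you are back to the paper's orbit-by-orbit computation (which you in fact invoke).

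For $m=3$, what you have written is a plan, not a proof. You explicitly identify the orbit structure of $\IC([3]\times[n])$ as the obstacle, and your two proposed routes---a ``finer telescoping'' of rank-by-rank changes under one step of $\row$, or a full structural classification by induction on $n$---are both programs whose feasibility is unestablished. The paper's authors describe the $m=2$ analysis as already extensive and do not attempt $m=3$; nothing in your proposal overcomes that, and the link you suggest to Theorem~\ref{thm:homomesy_2_by_n_max-min} is not developed. In short: your $m=2$ route coincides with the paper's once the $\iota$ shortcut runs out, and your $m=3$ route is a genuine gap that the paper also leaves open.
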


The paper is organized as follows. Section~\ref{sec:global} proves Theorem~\ref{thm:AltRow_new}, giving a simpler global characterization of rowmotion on interval-closed sets. Section~\ref{sec:2xn_orb_hom} proves Theorems~\ref{thm:thmorbits} and \ref{thm:homomesy} via a detailed analysis of the orbits of rowmotion on $\IC([2]\times[n])$. Section~\ref{sec:conj} discusses some directions for future research.

\section{A simpler global definition of rowmotion}
\label{sec:global}
In \cite{ELMSW}, rowmotion on interval-closed sets was defined in two ways. It was first defined as a composition of \textit{toggles} through which we add or remove individual elements in an interval-closed set, i.e.,~the \textit{local} definition. Rowmotion was then characterized \emph{globally} (\cite[Theorem 2.20]{ELMSW}) and it was shown that the two definitions are equivalent. The global definition sped up computations significantly, allowing the authors to search more effectively for dynamical phenomena.  We recall this result now; see Definition~\ref{def:poset_stuff} for necessary notations.

\begin{thm}[\protect{\cite[Theorem 2.20]{ELMSW}}]\label{thm:AltRow}
Given an interval-closed set $I\in\IC(P)$, rowmotion on $I$ is given by
\[
\row(I)=\arow.
\]
\end{thm}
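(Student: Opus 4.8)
The plan is to work directly from the local (toggle) definition of rowmotion in Definition~\ref{def:Row_tog}, processing the elements of $P$ one at a time in the reverse order of a fixed linear extension (top to bottom), and to describe the membership of each element of $P$ in the resulting set. The first preparatory step is to pin down exactly when a single toggle acts nontrivially. A short argument shows that $x\in I$ can be removed (that is, $I\setminus\{x\}\in\IC(P)$) precisely when $x\in\Min(I)\cup\Max(I)$, since the only interval-closure violation that deleting $x$ could create is an $a<x<b$ with $a,b\in I$. Dually, for $x\notin I$ the set $I\cup\{x\}$ lies in $\IC(P)$ precisely when $x$ is not trapped strictly between two elements of $I$ and every element lying strictly between $x$ and a comparable element of $I$ already belongs to $I$ (a ``gap-filling'' condition). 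Two consequences are immediate and will be used repeatedly: an element incomparable to the current set is always addable, and each element of $\ceil{I}$ is addable to $I$ itself.

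Next I would organize the sweep using the partition of $P$ relative to $I$ into the incomparable part $\inc(I)$, the strictly-lower part $\oi(I)\setminus I$, the set $I$, and the strictly-upper part $\f(I)\setminus I$ (recall $I=\oi(I)\cap\f(I)$ for an interval-closed set). The weak commutation criterion of Lemma~\ref{lem:commute} is what makes a global description possible: because incomparable elements commute with almost all toggles, I would first argue that every element of $\inc(I)$ is added and never subsequently removed, contributing exactly the first union term $\inc(I)$. This separates the genuinely interacting dynamics to the comparable region $\oi(I)\cup\f(I)$.

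The core of the argument is an induction along the linear extension that maintains a closed-form description of the intermediate set after all elements above a given cut have been toggled. The expected shape of the invariant is a hybrid: on the already-processed (upper) region the set has been ``advanced'' to resemble the order ideal generated by the ceiling, while on the not-yet-processed (lower) region it still looks like $I$; the transition happens exactly at the boundary between $\ceil{I}$ and $\Min(I)$. Here I can borrow intuition, and partly machinery, from the order-ideal global formula $\row(I)=\oi(\ceil{I})$ of Lemma~\ref{eq:global_JP}: the third union term $\oi\ceil{I}-\oi(\Min(I)\cap\oi\ceil{I})$ is precisely the order-ideal rowmotion image $\oi\ceil{I}$ with the ``floor'' cut out along those minimal elements of $I$ that sit inside $\oi\ceil{I}$, which are exactly the elements whose removal stalls as one sweeps downward. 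Reading off the final membership region by region should then reproduce the three disjoint pieces.

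The main obstacle will be the two correction terms, namely the middle piece $\oi\inc_I(\ceil{I})-(I\cup\oi\ceil{I})$ and the subtraction of $\oi(\Min(I)\cap\oi\ceil{I})$ in the last piece. These encode the subtle boundary behavior where elements of $I$ that are incomparable to the ceiling are treated differently from those lying below it, and where newly added ceiling elements ``pull in'' elements below them that were not previously in $I$. Establishing that these interactions produce exactly $\oi\inc_I(\ceil{I})-(I\cup\oi\ceil{I})$ and no more requires a careful case analysis of each element $z\in P$ according to its comparabilities with $\ceil{I}$, $\Min(I)$, and the ideal $\oi\ceil{I}$; verifying that the induction's invariant is preserved across this boundary, rather than the region-by-region bookkeeping away from it, is where the real work lies.
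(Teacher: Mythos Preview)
This theorem is not proved in the present paper: it is quoted verbatim from \cite{ELMSW} (as \cite[Theorem~2.20]{ELMSW}) and serves only as the starting point for the paper's new simplification, Theorem~\ref{thm:AltRow_new}. There is therefore no ``paper's own proof'' here against which to compare your proposal. What the paper does prove is the equivalence of the three-set expression with the two-set expression in Theorem~\ref{thm:AltRow_new}, and that argument is a short set-theoretic double inclusion having nothing to do with toggles.

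As for your proposal itself: it is a plausible outline of how a direct toggle-based proof might go, and the high-level structure (separate out $\inc(I)$, then track an invariant as the sweep passes through $\f(I)\setminus I$, then $I$, then $\oi(I)\setminus I$) is the natural one. But what you have written is a plan rather than a proof. The decisive content---the precise form of the intermediate invariant and the verification that it is preserved at each toggle, especially across the $\ceil{I}$/$\Min(I)$ boundary---is exactly what you flag as ``where the real work lies,'' and none of that work is carried out. In particular, your description of when $x\notin I$ is addable (``not trapped strictly between two elements of $I$ and every element lying strictly between $x$ and a comparable element of $I$ already belongs to $I$'') is imprecise as stated and would need to be sharpened before it could drive an induction. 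If you want an actual proof, you would either need to supply the invariant and the case analysis explicitly, or consult the original argument in \cite{ELMSW}.
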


In this section, we prove Theorem~\ref{thm:AltRow_new}, giving a simpler characterization of the global definition of rowmotion, presented as the union of two sets rather than three. 

\begin{thmAltRow_new}
Given an interval-closed set $I\in\IC(P)$, rowmotion on $I$ is given by
    \[ \row(I) = \Big( \inc(\ceil{I}) - I \Big)\cup\Big(\oi\ceil{I}-\oi(\F) \Big). \]
\end{thmAltRow_new}

\begin{proof}
    Recall from Theorem~\ref{thm:AltRow} above that 
    \[\row(I)=\arow.\]
    So we need to show the following (where we have labeled the sets in each union for brevity): 
    \begin{align*}
    \underbrace{\inc(I)}_{A}\cup&\underbrace{\Big(\oi\inc_{I}\big(\ceil{I}\big) -\big(I\cup\oi\ceil{I}\big)\Big)}_{B}\cup\underbrace{\Big(\oi\ceil{I}-\oi(\F) \Big)}_{C}\\
        &= \underbrace{\Big( \inc(\ceil{I}) - I \Big)}_{D}\cup\underbrace{\Big(\oi\ceil{I}-\oi(\F) \Big)}_{C}.
    \end{align*}
    We proceed by double inclusion. Since the last set of each union is $C$, we only need to show that $A\cup B \subseteq D \cup C$ and that $D \subseteq A \cup B \cup C$. We show below that  $D \subseteq A \cup B$, $A \subseteq D \cup C$ and that $B \subseteq D$. 
    
    We consider $x \in \inc(\ceil{I})-I$ (so $x \in D$). Then, $x$ is incomparable with the ceiling of $I$, and not in $I$. So, $x$ cannot be in the order filter spanned by $I$, so it is either incomparable with $I$, or in its order ideal (but not in $I$). In the first case, $x \in A$. In the second case, $x$ is in the order ideal of $I$ but not in $I$, and incomparable with the ceiling. Hence, $x \in B$.

    Now let $x$ be in $\inc(I)$. Because $x$ is incomparable with $I$, it cannot be in the order filter of $I$, so it is not in the ceiling of $I$ nor above. It can be either incomparable with the ceiling of $I$ (so in $D$), or below the ceiling of $I$ but not in the order ideal of $I$, so $x\in C$.

    The last case to consider is when $x$ is in $\Big(\oi\inc_{I}\big(\ceil{I}\big) -\big(I\cup\oi\ceil{I}\big)\Big)$ (so in $B$). We need to show that $x \in D\cup C$, and it suffices to show that $x \in D$, so this is what we do. In that case, $x$ is below an element of $I$ that is incomparable with the ceiling, but $x$ itself is neither in $I$ nor below the ceiling. Therefore, $x$ is incomparable with the ceiling and also not in $I$, so $x \in D$.
\end{proof}

\section{Rowmotion orbits and signed cardinality homomesy of interval-closed sets of $[2]\times[n]$}
\label{sec:2xn_orb_hom}
Our goals in this section are to prove Theorem~\ref{thm:thmorbits}, characterizing the rowmotion orbits on interval-closed sets of $[2]\times[n]$, and Theorem~\ref{thm:homomesy}, a homomesy result on these orbits with respect to the signed cardinality statistic, when $n$ is odd.

We begin in Subsection~\ref{sec:catalog} by giving a classification of the various types of ICS (Lemma~\ref{lem:types}) and showing the effect of rowmotion on each (Lemma~\ref{Lem: CatalogRowmotion2xn}).  In Subsection~\ref{ssec:calc_sc}, we discuss some basic properties of signed cardinality for these ICS. We then analyze orbits of size dividing $n+3$ in Subsection \ref{sec:n+3}, proving Theorem~\ref{thm:n+3} and Corollary~\ref{cor_n+3/2}, enumerating orbits of sizes dividing $n+3$, and Theorem~\ref{thm:sc_n+3}  and Corollary~\ref{cor:sc_n+3}, showing the average signed cardinality over each orbit is $0$. We study
orbits of size dividing $n+5$ in Subsection \ref{sec:n + 5}, proving Theorems~\ref{thm:n+5} and \ref{thm:sc_n+5} on orbit enumeration and homomesy, respectively.
We study all other orbits in Subsection \ref{sec:large}, proving Theorems \ref{large_orbits} and \ref{lem:sc-big-orbits} on orbit structure and homomesy. Subsection \ref{ssec:all_together} shows that all the orbits were listed in the previous subsections, allowing us to describe precisely the orbit structure and complete the proofs of Theorems~\ref{thm:thmorbits} and \ref{thm:homomesy}.

\subsection{Cataloging types of interval-closed sets of $[2]\times[n]$ and the effect of rowmotion on each}
\label{sec:catalog}

In this subsection, we describe all the ICS of $[2]
\times[n]$ and their image under rowmotion. We classify those into six types. To do so, we start by introducing notation.

The elements of the poset $[2] \times [n]$ may be divided into two subposets, $\{1\} \times [n]$ and $\{2\} \times [n]$, which we call the \emph{lower} and \emph{upper chain}, respectively. We parameterize each $I\in \IC([2] \times [n])$ by six numbers that record how many elements lie below, in, and above $I$ on these two chains.  Precisely, we denote the ICS $I$ by the tuple
 \[
[b_1,i_1,a_1:b_2,i_2,a_2] 
 \]
 where the parameters are given as follows:
 \begin{itemize}
     \item $i_1$ and $i_2$ are the number of elements of $I$ that belong to the lower and upper chain, respectively,
     \item $a_1$ and $a_2$ are the number of elements in the lower and upper chains, respectively, above $I$,
     \item $b_1$ and $b_2$ are the number of elements in the lower and upper chains, respectively,  below $I$, and
     \item we take the convention that if $I$ does not intersect the lower chain then $b_1 = n$ and $a_1 = 0$, while if $I$ does not intersect the upper chain then $b_2 = 0$ and $a_2 = n$.
 \end{itemize}
 For example, the full ICS $I = [2] \times [n]$ is parametrized by $[0, n, 0 : 0, n, 0]$ and the empty ICS $I = \emptyset$ by $[n, 0, 0 : 0, 0, n]$; a more generic ICS is illustrated in Figure~\ref{fig:Iparameterization}.

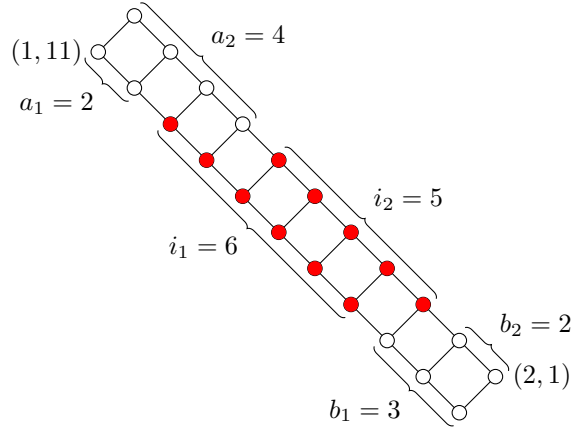
\begin{figure}[htbp]
    \centering
 \begin{tikzpicture}[scale=.48]
\foreach \x in {0,...,1}
	{\foreach \y in {0,...,10}
		{\ifthenelse{\x < 1}
			{\draw (\x - \y, \x + \y) -- (\x - \y + 1, \x + \y + 1);}{}
		\ifthenelse{\y < 10}
			{\draw (\x - \y, \x + \y) -- (\x - \y - 1, \x + \y+1);}{}
        \draw[fill=white] (\x - \y, \x + \y) circle (0.2cm) {};
		}
	}
\fill[red] (0 - 8, 0 + 8) circle (0.2cm) {};
\fill[red] (0 - 7, 0 + 7) circle (0.2cm) {};
\fill[red] (0 - 6, 0 + 6) circle (0.2cm) {};
\fill[red] (0 - 5, 0 + 5) circle (0.2cm) {};
\fill[red] (0 - 4, 0 + 4) circle (0.2cm) {};
\fill[red] (0 - 3, 0 + 3) circle (0.2cm) {};
\fill[red] (1 - 6, 1 + 6) circle (0.2cm) {};
\fill[red] (1 - 5, 1 + 5) circle (0.2cm) {};
\fill[red] (1 - 4, 1 + 4) circle (0.2cm) {};
\fill[red] (1 - 3, 1 + 3) circle (0.2cm) {};
\fill[red] (1 - 2, 1 + 2) circle (0.2cm) {};
\draw (0 - 10, 0 + 10) node[left=.25em] {$(1, 11)$};
\draw (1 - 0, 1 + 0) node[right=.3em] {$(2, 1)$};
\draw[decoration={brace, raise=.5em},decorate]
  (0 + .1, 0 - .1) -- node[below left=.5em] {$b_1 = 3$} (0 - 2.1, 0 + 2.1);
\draw[decoration={brace, raise=.5em},decorate]
  (0 - 2.9, 0 + 2.9) -- node[below left=.5em] {$i_1 = 6$} (0 - 8.1, 0 + 8.1);
\draw[decoration={brace, raise=.5em},decorate]
  (0 - 8.9, 0 + 8.9) -- node[below left=.5em] {$a_1 = 2$} (0 - 10.1, 0 + 10.1);
\draw[decoration={brace, raise=.5em, mirror},decorate]
  (1 + .1, 1  - .1) -- node[above right=.5em] {$b_2 = 2$} (1 - 1.1, 1 + 1.1);
\draw[decoration={brace, raise=.5em, mirror},decorate]
  (1 - 1.9, 1  + 1.9) -- node[above right=.5em] {$i_2 = 5$} (1 - 6.1, 1 + 6.1);
\draw[decoration={brace, raise=.5em, mirror},decorate]
  (1 - 6.9, 1  + 6.9) -- node[above right=.5em] {$a_2 = 4$} (1 - 10.1, 1 + 10.1);
\end{tikzpicture}   
    \caption{The ICS $I \subseteq [2] \times [11]$ parametrized by $[b_1, i_1, a_1 : b_2, i_2, a_2] = [3, 6, 2 : 2, 5, 4]$}
    \label{fig:Iparameterization}
\end{figure}

When $i_1$ and $i_2$ are nonzero, we have that $[b_1, i_1, a_1 : b_2, i_2, a_2]$ represents an interval-closed set of $[2] \times [n]$  if and only if the six entries are nonnegative integers such that $b_1 + i_1 + a_1 = b_2 + i_2 + a_2 = n$, $b_1  \geq b_2$, and $a_2 \geq a_1.$

In the case of ICS that belong entirely to either the upper or lower chain, we will abbreviate this notation further and write $[b, i, a : \varnothing]$ instead of $[b, i, a : 0, 0, n]$ and likewise $[\varnothing : b, i, a]$ instead of $[n, 0, 0 : b, i, a]$.

In what follows, it will be convenient to partition interval-closed sets that are neither the empty set nor the whole poset into six families, which we name now and illustrate in Tables~\ref{tab:lemma-RowEffectsPoints1} and~\ref{tab:lemma-RowEffectsPoints2}. 
For Types 1 to 4, we assume that $I$ intersects both the upper and lower chain (i.e., that $i_1, i_2 > 0$).
\begin{enumerate}[Type 1,]
    \item Double Hook: $b_2<b_1< b_2+i_2<b_1+i_1,$ 
    \item Disjoint:  $b_2+i_2\leq b_1,$ 
    \item First Hook: $b_1=b_2$ and $a_1<a_2,$
    \item Stacked Diagonals and Second Hooks: $a_1=a_2$ and $b_2 \leq b_1$,
    \item Low: $a_2 = n,$
    \item High: $b_1 = n.$
\end{enumerate}

\begin{lem}\label{lem:types}
    Any interval-closed set that is neither the empty set nor the full poset is in exactly one of the types $1$ through $6$.
\end{lem}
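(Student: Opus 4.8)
The plan is to prove that every interval-closed set other than $\emptyset$ and $[2]\times[n]$ falls into at least one type and into no more than one type; since these are the two halves of ``exactly one,'' I would treat them as separate tasks (existence and uniqueness of the type). Throughout I will use the parameterization $[b_1,i_1,a_1:b_2,i_2,a_2]$ together with the constraints $b_1+i_1+a_1=b_2+i_2+a_2=n$, $b_1\geq b_2$, and $a_2\geq a_1$ that characterize valid ICS.

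For \textbf{existence}, I would first split on whether $I$ meets both chains. If $I$ misses the lower chain, then by the stated convention $b_1=n$, which is exactly Type 6 (High); if $I$ misses the upper chain, then $a_2=n$, which is Type 5 (Low). (Since $I\neq\emptyset$, it meets at least one chain, and these two cases are the only ways to miss a chain.) So the substantive case is $i_1,i_2>0$, where I must show that the inequalities among $b_1,b_2,a_1,a_2$ force one of Types 1--4. Here the natural case analysis is on the relationship between $b_1$ and $b_2$ and between $a_1$ and $a_2$. Using $b_1\geq b_2$ and $a_2\geq a_1$, I would argue: if $b_1=b_2$ we are in Type 3 (noting $a_1<a_2$ follows, since $a_1=a_2$ together with $b_1=b_2$ and $i_1,i_2>0$ would force $I$ to be rectangular, and I must check whether that collapses to another type or is excluded); if $a_1=a_2$ we are in Type 4; and if $b_1>b_2$ and $a_1<a_2$ strictly, then $b_2<b_1$ and I split on whether $b_1<b_2+i_2$ (giving the Double Hook Type 1) or $b_1\geq b_2+i_2$, i.e. $b_2+i_2\leq b_1$ (giving the Disjoint Type 2). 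The remaining inequality $b_2+i_2<b_1+i_1$ needed for Type 1 should follow from $a_1<a_2$ after substituting $i_j=n-b_j-a_j$.

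For \textbf{uniqueness}, I would verify that the defining inequalities of distinct types are mutually exclusive. Types 5 and 6 are disjoint from Types 1--4 because those require $i_1,i_2>0$ whereas $a_2=n$ forces $i_2=0$ and $b_1=n$ forces $i_1=0$; and Types 5 and 6 are disjoint from each other unless $I$ is simultaneously empty on both chains, i.e.\ $I=\emptyset$, which is excluded. Among Types 1--4, the cleanest separators are the conditions on $b_1$ vs.\ $b_2$ and $a_1$ vs.\ $a_2$: Type 3 demands $b_1=b_2$ while Types 1 and 2 demand $b_2<b_1$; Type 4 demands $a_1=a_2$ while Types 1 and 3 demand strict inequality $a_1<a_2$; and Types 1 and 2 are separated by $b_1<b_2+i_2$ versus $b_2+i_2\leq b_1$. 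The one delicate overlap to rule out is Type 3 versus Type 4, which could both hold only if $b_1=b_2$ and $a_1=a_2$ simultaneously, forcing $i_1=i_2$ and a rectangular ICS; I expect this to be the main obstacle, since I must confirm that such rectangles are either excluded (being $\emptyset$ or the full poset) or consistently assigned to exactly one type by the chosen strict/non-strict conventions. I would resolve this by reading off the conventions in the type definitions carefully---Type 3 requires $a_1<a_2$ strictly, so a rectangle with $a_1=a_2$ is not Type 3 and lands unambiguously in Type 4---thereby pinning down each ICS to a single type.
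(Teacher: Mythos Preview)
Your proposal is correct and follows essentially the same approach as the paper: both proceed by first splitting on whether $I$ meets one or both chains (handling Types 5 and 6), and then, when $i_1,i_2>0$, case-analyzing on the equalities/inequalities among $b_1,b_2,a_1,a_2$ and on $b_1$ versus $b_2+i_2$ to land in one of Types 1--4. The paper organizes this as a single decision tree (which yields existence and uniqueness simultaneously), whereas you separate the two; but the underlying case structure---including your correct identification that the ``rectangle'' case $b_1=b_2,\,a_1=a_2$ is resolved by the strict inequality in Type 3 and hence falls into Type 4---is the same.
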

\begin{proof}
    Let $I$ be an ICS that is neither the empty set nor the full poset. If $I$ intersects only the lower chain, then it is of Type 5, Low. If it intersects only the upper chain, then it is of Type 6, High.
    Otherwise, $I = [b_1, i_1, a_1: b_2, i_2, a_2]$ with both $i_1$ and $i_2$ positive. There are different cases. Recall that for a subset of $P$ to be interval-closed, we need that $a_1 \leq a_2$ and $b_1 \geq b_2$. Here are all the cases:
    \begin{itemize}
        \item If $b_1 = b_2$:
        \begin{itemize}
            \item if $a_1 < a_2$, then $I$ is a First Hook.
            \item otherwise, $a_1 = a_2$, and $I$ is an interval-closed set of type Stacked Diagonals.
        \end{itemize}
        \item Otherwise, $b_1 > b_2$:
        \begin{itemize}
            \item If $a_1 = a_2$, then $I$ is a Second Hook.
            \item Otherwise, $ a_1 < a_2$, and there are two cases:
            \begin{itemize}
                \item If $b_1 \geq b_2 +i_2$, then $I$ is Disjoint.
                \item Otherwise, $b_1 < b_2 + i_2$, and $b_2 < b_1 <  b_2 +i_2 = n-a_2 < n-a_1 = b_1 + i_1$. Therefore, I is a Double Hook. \qedhere
            \end{itemize}
        \end{itemize}
    \end{itemize}
\end{proof}

\begin{lem} 
\label{Lem: CatalogRowmotion2xn}
The effects of rowmotion on each of the 
six configuration types are as shown in Tables~\ref{tab:lemma-RowEffectsPoints1} and~\ref{tab:lemma-RowEffectsPoints2}. 
\end{lem}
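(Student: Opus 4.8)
The plan is to apply the global formula for rowmotion established in Theorem~\ref{thm:AltRow_new},
\[ \row(I) = \Big( \inc(\ceil{I}) - I \Big)\cup\Big(\oi\ceil{I}-\oi(\F) \Big), \]
to each of the six types in turn and read off the six parameters of the image. Since every set appearing in the formula is determined by where the two chains of $I$ sit, each ingredient can be written explicitly in terms of $b_1,i_1,a_1,b_2,i_2,a_2$, reducing the problem to bookkeeping on the lower and upper chains.

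For a fixed type I would first locate $\Max(I)$, whose two elements sit at heights $b_1+i_1$ and $b_2+i_2$ on the lower and upper chains, and from these determine the ceiling $\ceil{I}=\Min(\f(I)-I)$. The subtle point is that the cell $(2,k)$ on the upper chain has two lower covers, $(2,k-1)$ and $(1,k)$, so the element just above the top of $I$ on the upper chain is a genuinely minimal element of $\f(I)-I$ only when the lower chain beneath it does not already contribute a smaller element of $\f(I)-I$. This cross-chain interaction is exactly what separates the hook, disjoint, and stacked configurations, and is where each type requires its own argument. Having pinned down $\ceil{I}$, I would compute its downward closure $\oi\ceil{I}$ and the incomparable set $\inc(\ceil{I})$, and separately $\Min(I)$ and hence $\F$ and its order ideal $\oi(\F)$.

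I would then substitute these into the formula, simplify the two set differences and their union, and re-encode the resulting convex region in the six-parameter notation, comparing with the appropriate row of Table~\ref{tab:lemma-RowEffectsPoints1} or Table~\ref{tab:lemma-RowEffectsPoints2}. Carrying this out for Double Hook, Disjoint, First Hook, Stacked Diagonals/Second Hooks, Low, and High completes the verification. The self-duality of $[2]\times[n]$ under $(i,j)\mapsto(3-i,n+1-j)$ relates some of these configurations and may be used to shorten the casework, but the direct computation is available for every type.

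The main obstacle is not any individual computation but the uniform handling of the boundary and degenerate cases inside each type: when $a_1=0$ or $a_2=n$ (no elements above $I$ on a chain), when $b_1=n$ or $b_2=0$ (a chain missed entirely, as in the Low and High types), and when the two chains' ceiling elements coincide in height or collapse to a single element. These edge cases control precisely which cells lie in $\ceil{I}$ and therefore in $\inc(\ceil{I})$ and $\oi\ceil{I}$, and keeping them consistent with the $[\varnothing:\cdots]$ and $[\cdots:\varnothing]$ conventions is where the real care is needed.
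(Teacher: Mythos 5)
Your proposal is correct and matches the paper's own (very terse) proof, which simply states that the lemma is a direct application of the toggle definition or of Theorem~\ref{thm:AltRow_new}; you have chosen the latter route and outlined the same case-by-case bookkeeping, with appropriate attention to the boundary cases ($a_1=0$, $b_2=0$, single-chain ICS) that distinguish the sub-types in Tables~\ref{tab:lemma-RowEffectsPoints1} and~\ref{tab:lemma-RowEffectsPoints2}. Nothing further is needed beyond carrying out the computations you describe.
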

\begin{proof}
    This is a direct application of the Definition~\ref{def:Row_tog}, or of Theorem~\ref{thm:AltRow_new}.
\end{proof}

\begin{table}[!htbp]
    \centering
    \begin{tabular}{|p{3.5cm}|p{5.75cm}|p{5.75cm}|}\hline
        $[b_1,i_1, a_1:b_2,i_2,a_2]$  & 
        $a_1\not=0$ and $a_2\not=0$       
       &

       $a_1=0$ \\\hline\hline
       Type 1: Double Hook \newline

       $b_2<b_1<b_2+i_2<b_1+i_1$  
       
       &  Type 1a: $\row(I)=$

       $[b_1+1,i_1,a_1-1:b_2+1,i_2,a_2-1]$

       (Double Hook shifted up)
         \medskip
         \centering{\includegraphics[width=4cm]{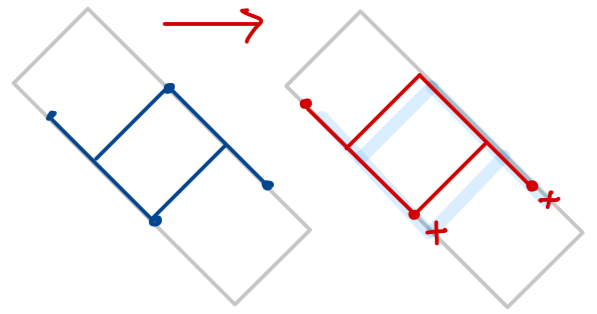}}

       & Type 1b:               
       $\row(I)=$
       
       $[b_1+1, i_1-a_2, a_2-1:b_2+1,i_2,a_2-1]$
       
       (Second Hook)
       
            \medskip   
       \centerline{\includegraphics[width=4cm]{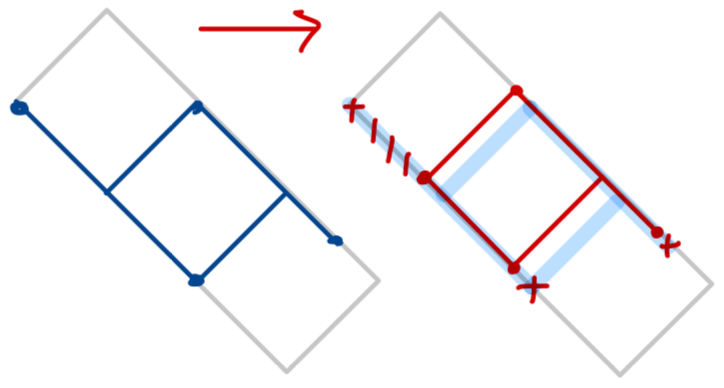}}
        \\\hline

     Type 2: Disjoint  \newline 
        
        $b_2+i_2\leq b_1$

              &  Type 2a:
      $\row(I)=$\newline
      $[b_1+1,i_1,a_1-1:b_2+1,i_2,a_2-1]$ 
      \newline
      (Disjoint shifted up)

        \centering{\includegraphics[width=4cm]{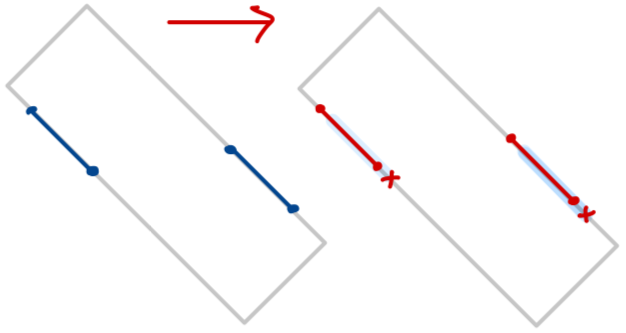}}
         
      & Type 2b: If $a_1=0$ and $ i_1<a_2$ in $I$ then
      $\row(I)=$\newline
      $[b_2+1,b_1-(b_2+1),i_1:b_2+1,i_2,a_2-1]$
      \newline
      (First Hook, or Stacked Diagonals if in $I$, $a_2=i_1+1$)
    \centerline{\includegraphics[width=4cm]{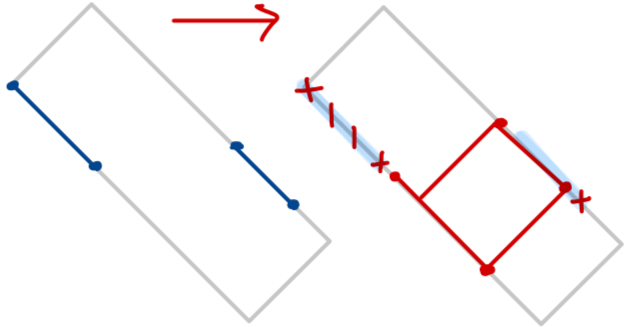}}  
    
    Type 2c: If $a_1=0$ and $i_1=a_2$ then  $\row(I)=$
 $[\varnothing:b_2+1,i_2,a_2-1]$\newline
 (High)

        \centerline{\includegraphics[width=4cm]{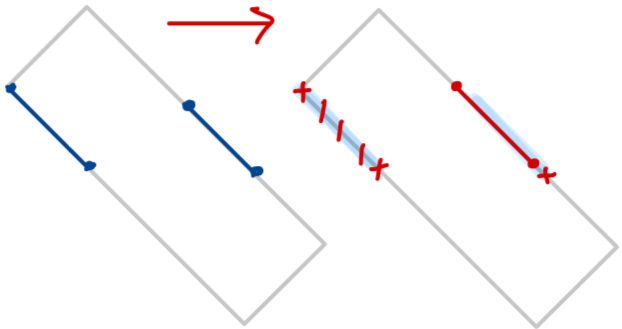}}    
   
    \\\hline

        Type 3: First Hook

        $b_1=b_2$ and 
        $a_1<a_2$

        & Type 3a: 
        $\row(I)=$
        \newline
        $[b_1+1,i_1,a_1-1:0,i_2+b_2+1,a_2-1]$

        (Double Hook)
        \medskip

        \centerline{\includegraphics[width=4cm]{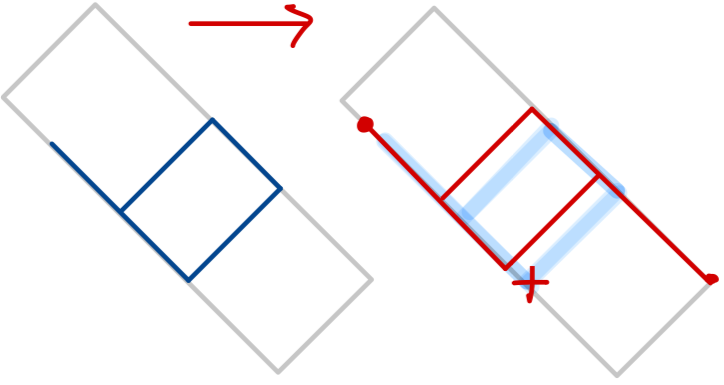}}

   & Type 3b:    $\row(I)=$
   \newline
   $[b_1+1,i_2,a_2-1:0,i_2+b_2+1,a_2-1]$

   (Second Hook)

   \medskip

        \centerline{\includegraphics[width=4cm]{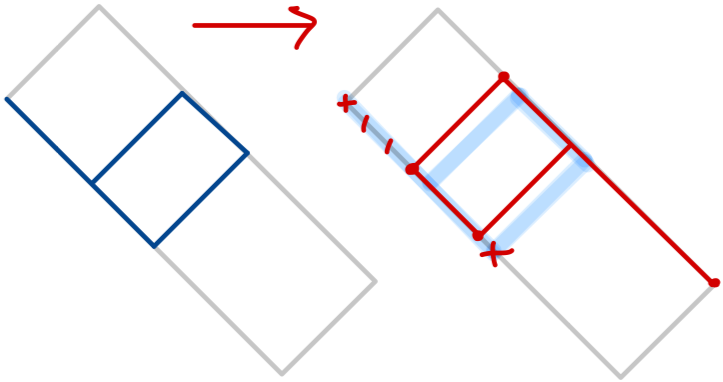}}

\\\hline

\end{tabular}
\caption{The effects of rowmotion on interval-closed sets of $[2]\times[n]$}
    \label{tab:lemma-RowEffectsPoints1}
\end{table}

\begin{small}
\begin{table}[p]
    \centering
    \begin{tabular}{|p{3.5cm}|p{5.75cm}|p{5.75cm}|}\hline
        $[b_1,i_1, a_1:b_2,i_2,a_2]$  & 
        $a_1\not=0$ and $a_2\not=0$     
       & 
       
       $a_1=0$ \\\hline\hline
        Type 4: Stacked Diagonal / Second Hook

        $a_1=a_2$ and $b_2 \leq b_1$

        & Type 4a: $\row(I)=$
        
        $[b_1+1,i_1,a_1-1:0,b_2,n-b_2]$
        
                (Disjoint)

                \medskip
        \centerline{\includegraphics[width=4cm]{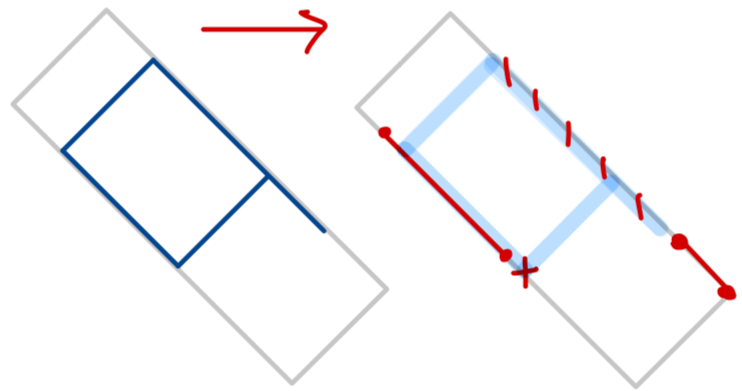}}

    Type 4b: If $b_2=0$ in $I$ then 
    
    $\row(I)=$
    $[b_1+1,i_1,a_1-1:\varnothing]$

    (Low)
    
    \medskip
        \centerline{\includegraphics[width=4cm]{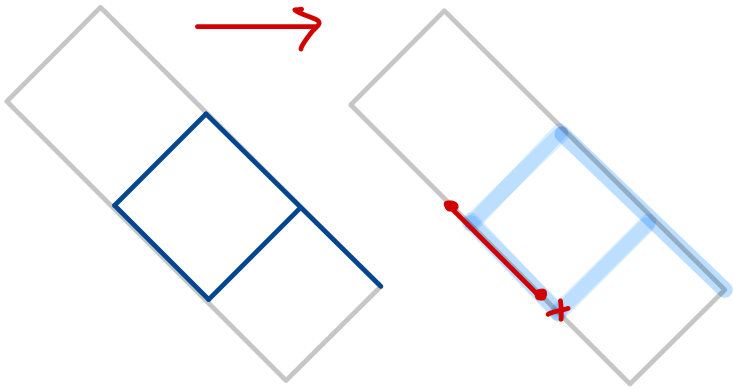}}

    & Type 4c: If $a_2\neq 0$,

    $\row(I)=[0,b_1,i_1:0,b_2,i_2]$
    
    (The complement; Stacked Diagonal or First Hook)
    
\medskip
        \centerline{\includegraphics[width=4cm]{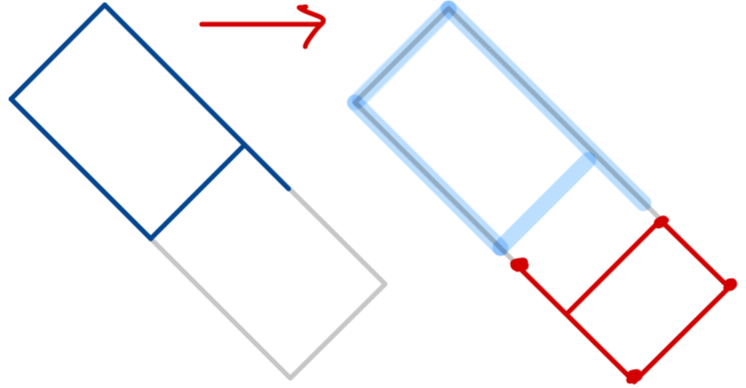}}
    
    Type 4d: If $b_1>b_2=0$,

    $\row(I)=[0,b_1,i_1:\varnothing]$
    
    (Low)
    
\medskip
        \centerline{\includegraphics[width=4cm]{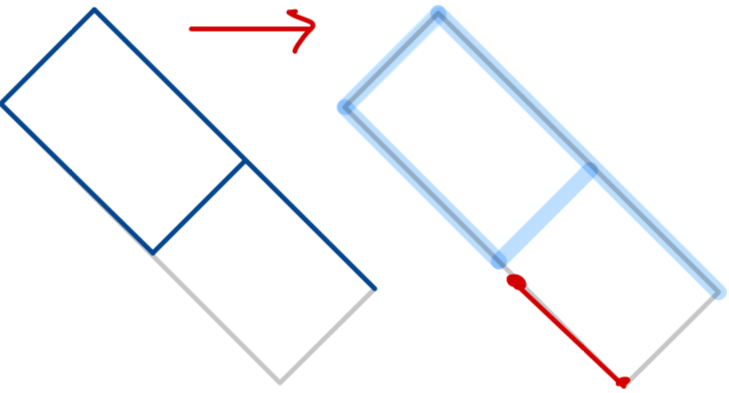}}
    
\\\hline
        
        Type 5: Low

         $a_2=n$ 
        
        & Type 5a: $\row(I)=$
         
         $[b_1+1,i_1,a_1-1: 0,b_1+1,n-b_1-1]$

         (Disjoint)

         \medskip

         \centerline{\includegraphics[width=4cm]{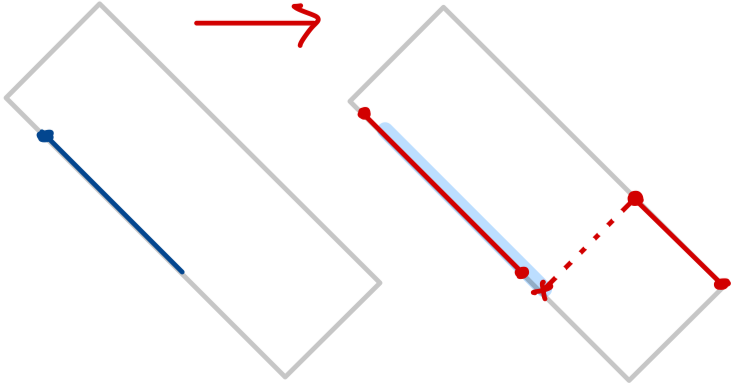}}

    & Type 5b: $\row(I)=$

    $[\varnothing: 0,b_1+1,i_1-1]$

    (High)

    \medskip
    \centerline{\includegraphics[width=4cm]{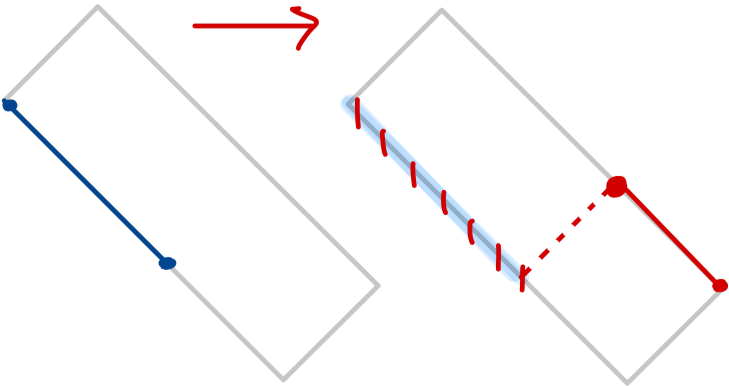}}

    \\\hline\hline
    
        $[b_1,i_1, a_1:b_2,i_2,a_2]$  & 
      $a_2\not=0$     
       & 
       
       $a_2=0$ \\\hline\hline  

        Type 6: High

        $b_1=n$
        
        & Type 6a: 
    $\row(I)=$ 
    
    $[b_2+1,i_2+a_2-1,0:b_2+1,i_2,a_2-1]$

    (First Hook) 

    \medskip
 
    \centerline{\includegraphics[width=4cm]{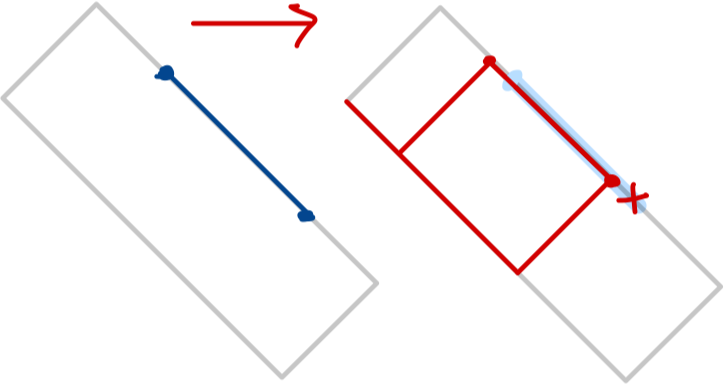}}
    
    Type 6b: If $a_2=1$ in $I$, then $\row(I)=$
        $[b_2+1,i_2,0:b_2+1,i_2,0]$

    (Stacked Diagonals)

    \medskip
     \centerline{\includegraphics[width=4cm]{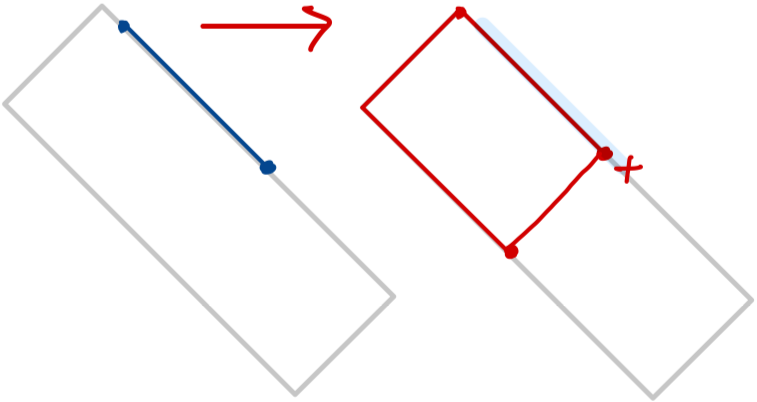}}

    &    
    Type 6c: If $b_2\neq 0$ in $I$, then  
    
        $\row(I)=[0,n,0:0,b_2,i_2]$

        (The complement; First Hook)

        \medskip
        \centerline{\includegraphics[width=4cm]{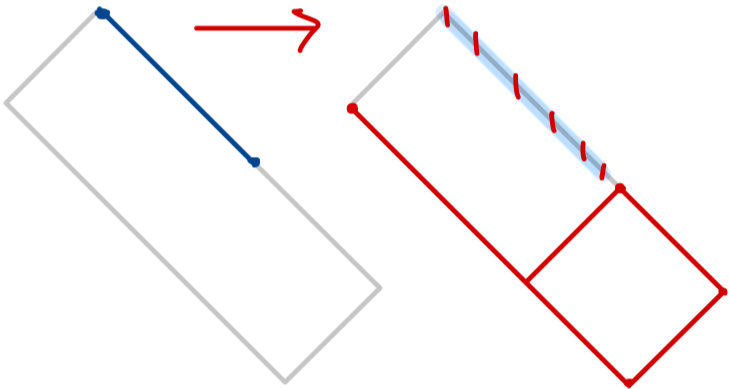}}

     Type 6d: If $b_2=0$, then

     $\row(I)=[0,n,0:\varnothing]$
     
     (The complement; Low) 
     
     \medskip
     \centerline{\includegraphics[width=4cm]{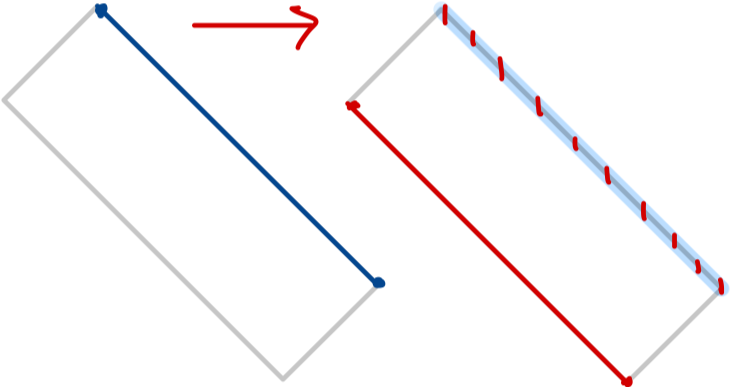}}

\\\hline
    \end{tabular}
    \normalsize
    \caption{The effects of rowmotion on interval-closed sets of $[2]\times[n]$}
    \label{tab:lemma-RowEffectsPoints2}
\end{table}
\end{small}
\normalsize

\subsection{Calculating the signed cardinality}\label{ssec:calc_sc}
Using the notation from Subsection \ref{sec:catalog}, we now compute the signed cardinality for the ICS of $[2]\times[n]$.
Recall from Definition \ref{def:sc}, that for all $x \in P$, the \emph{signed cardinality statistic} $\SC(x): P \rightarrow \{-1, 1\}$ is
\[\SC(x) =
\begin{cases}
1 & \textrm{if } \rank(x) \textrm{ is even, } \\
-1 & \textrm{if } \rank(x) \textrm{ is odd. }
\end{cases}\]
For $I\subseteq P$, let $\SC(I) = \sum_{x\in I}\SC(x)$.  
Using the \emph{parity indicator function}
\begin{equation}
\label{eq:parity delta}
\delta_x= \begin{cases} 
      1 & \text{if $x$ is odd,} \\
      0 & \text{otherwise}, 
      \end{cases}
\end{equation} we can directly calculate the signed cardinality for ICS in $P = [2] \times [n]$, as follows.

\begin{lem}\label{calc_sc(I)}
Let $P = [2] \times [n]$ with $n \geq 2$ and let $I = [b_1, i_1, a_1 : b_2, i_2, a_2]\in \IC(P)$. Then 
\begin{equation}
\SC(I) = \delta_{i_1}(-1)^{b_1}+ \delta_{i_2}(-1)^{b_2+1}.
\end{equation}
\end{lem}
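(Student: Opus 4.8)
The plan is to evaluate $\SC(I)$ straight from Definition~\ref{def:sc} by summing the per-element contributions over the lower and upper chains separately, exploiting the fact that the elements of $I$ occupy a single block of consecutive positions on each chain.

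First I would fix the rank convention on $P = [2]\times[n]$: the element $(a,b)$ has rank $a+b-2$, so on the lower chain $(1,j)$ has rank $j-1$ and contributes $\SC((1,j)) = (-1)^{j-1}$, while on the upper chain $(2,j)$ has rank $j$ and contributes $(-1)^{j}$. The parameterization of Subsection~\ref{sec:catalog} pins down exactly which elements lie in $I$: on the lower chain these are $(1,j)$ for $b_1 < j \leq b_1+i_1$, and on the upper chain they are $(2,j)$ for $b_2 < j \leq b_2+i_2$. Summing gives $\SC(I) = \sum_{j=b_1+1}^{b_1+i_1}(-1)^{j-1} + \sum_{j=b_2+1}^{b_2+i_2}(-1)^{j}$.

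The key computational step is the elementary evaluation of $\sum (-1)^k$ over a block of consecutive integers: the sum is $0$ when the block has even length and equals its first term when the length is odd. Applied to the first sum (length $i_1$, first term $(-1)^{b_1}$) this yields $\delta_{i_1}(-1)^{b_1}$, and applied to the second sum (length $i_2$, first term $(-1)^{b_2+1}$) it yields $\delta_{i_2}(-1)^{b_2+1}$; adding these produces the claimed identity. I would also verify that the degenerate conventions are automatically consistent: when $I$ misses the lower chain we have $i_1=0$, so the empty sum vanishes and $\delta_{i_1}=\delta_0=0$ kills the first term, and symmetrically for the upper chain, so no separate cases are needed.

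There is no genuine obstacle here — the statement is essentially a bookkeeping exercise — but the one point requiring care is the asymmetry between the two chains. Because the lower chain begins at rank $0$ and the upper chain at rank $1$, the two alternating sums are out of phase, and this phase shift is precisely what produces the $(-1)^{b_1}$ in the first term against the $(-1)^{b_2+1}$ in the second. Getting that offset right (rather than, say, both terms carrying the same sign exponent) is the only place where a sign error could creep in.
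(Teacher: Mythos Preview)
Your proposal is correct and follows essentially the same approach as the paper: both identify that the lower-chain elements occupy ranks $b_1,\ldots,b_1+i_1-1$ and the upper-chain elements ranks $b_2+1,\ldots,b_2+i_2$, and both reduce to evaluating an alternating sum over a block of consecutive integers. The paper phrases this via the identity $\sum_{k=r}^{s}(-1)^k=\tfrac{(-1)^r+(-1)^s}{2}$, while you phrase it as ``zero if the block has even length, first term otherwise''; these are equivalent, and your explicit check of the degenerate $i_1=0$ or $i_2=0$ cases is a nice addition the paper leaves implicit.
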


\begin{proof}
The ICS $I := [b_1, i_1, a_1 : b_2, i_2, a_2]$ has entries on the lower diagonal in ranks $b_1, \ldots, b_1 + i_1 - 1$ and on the upper diagonal in ranks $b_2 + 1, \ldots, b_2 + i_2$.  Since $\sum_{k = r}^s (-1)^k = \frac{(-1)^r + (-1)^s}{2}$, it follows that the signed cardinality of $I$ is $\frac{(-1)^{b_1} + (-1)^{b_1 + i_1 - 1}}{2} + \frac{(-1)^{b_2 + 1} + (-1)^{b_2 + i_2}}{2}=\delta_{i_1}(-1)^{b_1}+ \delta_{i_2}(-1)^{b_2+1}.$ 
\end{proof}

Using Lemma~\ref{calc_sc(I)}, it is easy to prove the following propositions, which are integral in proving Theorem~\ref{thm:homomesy}. 

\begin{prop}\label{P_sc}
Let $P = [2] \times [n]$ with $n \geq 1$. Then $\SC(P) = 0$.
\end{prop}

\begin{prop}\label{comp_sc}
Let $P = [2] \times [n]$ with $n \geq 1$ and let $I \in \IC(P)$. Then $\SC(I) + \SC(\overline{I}) = 0$.
\end{prop}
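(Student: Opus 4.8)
The plan is to derive the identity immediately from Proposition~\ref{P_sc} together with the additivity of the signed cardinality statistic, avoiding any case analysis on the six types of ICS. First I would pin down that $\overline{I}$ denotes the set-theoretic complement $P\setminus I$; this is the sense in which the word ``complement'' is used in the rowmotion catalog of Tables~\ref{tab:lemma-RowEffectsPoints1} and~\ref{tab:lemma-RowEffectsPoints2}, where, for instance, the ICS labeled ``the complement'' in Types 4c and 6c is exactly $P\setminus I$ read off in the $[b_1,i_1,a_1:b_2,i_2,a_2]$ parameterization. In particular, $I$ and $\overline{I}$ are disjoint and their union is all of $P$.

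The key step is then purely formal. Because $\SC$ is defined elementwise in Definition~\ref{def:sc}, as $\SC(J)=\sum_{x\in J}\SC(x)$ for \emph{any} subset $J\subseteq P$, it is additive over disjoint unions. Hence
\[
\SC(I)+\SC(\overline{I})=\sum_{x\in I}\SC(x)+\sum_{x\in \overline{I}}\SC(x)=\sum_{x\in P}\SC(x)=\SC(P),
\]
and Proposition~\ref{P_sc} gives $\SC(P)=0$, which finishes the argument. Note that it is irrelevant whether $\overline{I}$ is itself interval-closed: $\SC$ is a statistic on all of $2^P$, so the elementwise definition applies to $\overline{I}$ regardless.

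Since the elementwise definition makes additivity automatic, I do not expect a genuine obstacle here; the only point requiring care is confirming the interpretation of $\overline{I}$ as the set complement, and this is settled by the catalog. As a self-contained alternative that bypasses Proposition~\ref{P_sc}, one could instead compute $\SC(\overline{I})$ directly: writing $\SC(x)=(-1)^{\rank(x)}$, the complement of $I=[b_1,i_1,a_1:b_2,i_2,a_2]$ splits into the rank-runs $\{0,\dots,b_1-1\}$, $\{b_1+i_1,\dots,n-1\}$ on the lower chain and $\{1,\dots,b_2\}$, $\{b_2+i_2+1,\dots,n\}$ on the upper chain, and summing $(-1)^{\rank}$ over each run via $\sum_{k=r}^{s}(-1)^k=\frac{(-1)^r+(-1)^s}{2}$ (exactly as in the proof of Lemma~\ref{calc_sc(I)}) cancels against the expression for $\SC(I)$ given there. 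I would use the additive argument as the main proof and relegate this direct computation to at most a remark, since it is strictly more tedious.
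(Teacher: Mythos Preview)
Your proof is correct. The paper does not give an explicit argument for this proposition, merely indicating that it (along with Propositions~\ref{P_sc} and~\ref{shift_sc}) follows easily from Lemma~\ref{calc_sc(I)}; your additivity argument via $\SC(I)+\SC(\overline{I})=\SC(P)$ together with Proposition~\ref{P_sc} is a clean and entirely valid route, and the direct computation you sketch as an alternative is presumably what the paper has in mind.
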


\begin{prop}\label{shift_sc}
Let $P = [2] \times [n]$ with $n \geq 1$, let $I$ be an ICS in $P$ that does not contain the top element in either chain, and let $I'$ be the result of shifting $I$ up by one rank.  Then $\SC(I) + \SC(I') = 0$.
\end{prop}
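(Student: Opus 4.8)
The plan is to reduce the statement to the closed formula of Lemma~\ref{calc_sc(I)}, after which the claim is essentially a one-line sign computation. First I would translate ``shifting up by one rank'' into its effect on the six parameters. Writing $I = [b_1, i_1, a_1 : b_2, i_2, a_2]$, the hypothesis that $I$ contains neither top element means precisely that $a_1 \geq 1$ whenever $i_1 > 0$ and $a_2 \geq 1$ whenever $i_2 > 0$; this is exactly what guarantees there is room to push the occupied block on each chain one position higher. On each chain that $I$ meets, the shift keeps the number of occupied positions the same but moves them up, replacing $b_1$ by $b_1 + 1$ and $b_2$ by $b_2 + 1$ (and decreasing $a_1, a_2$ accordingly). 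Thus $I' = [b_1 + 1, i_1, a_1 - 1 : b_2 + 1, i_2, a_2 - 1]$, and the ICS inequalities $b_1 + 1 \geq b_2 + 1$ and $a_1 - 1 \leq a_2 - 1$ follow from those for $I$, so $I'$ is again an interval-closed set.

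Next I would substitute into the formula $\SC(I) = \delta_{i_1}(-1)^{b_1} + \delta_{i_2}(-1)^{b_2 + 1}$. Since the $\delta$ factors depend only on $i_1, i_2$, which are unchanged, and since $(-1)^{b_2 + 2} = (-1)^{b_2}$, I would obtain
\[
\SC(I') = \delta_{i_1}(-1)^{b_1 + 1} + \delta_{i_2}(-1)^{b_2 + 2} = -\delta_{i_1}(-1)^{b_1} - \delta_{i_2}(-1)^{b_2 + 1} = -\SC(I),
\]
and adding $\SC(I)$ gives the result. The conceptual point is simply that raising both $b_1$ and $b_2$ by one flips the sign of each summand at once.

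The only delicate point is the degenerate case in which $I$ misses one of the chains, where the description of $b_j$ as ``elements below'' is overridden by the conventions ($b_1 = n$, $a_1 = 0$ when $i_1 = 0$, and symmetrically). I expect this to be the main, though very mild, obstacle: one checks that the $\delta$ factors absorb it, since $i_1 = 0$ forces $\delta_{i_1} = 0$, killing the entire lower-chain summand for both $I$ and $I'$ no matter how $b_1$ is recorded, and likewise for the upper chain. Hence the sign computation above remains valid in all cases.
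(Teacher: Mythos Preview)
Your proof is correct and follows exactly the approach the paper intends: the paper states that Proposition~\ref{shift_sc} follows easily from Lemma~\ref{calc_sc(I)}, and your argument is precisely the direct sign computation one obtains by applying that formula after incrementing $b_1$ and $b_2$. Your handling of the degenerate (High/Low/empty) cases via $\delta_{i_j}=0$ is also correct and tidies up the one point the paper leaves implicit.
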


\subsection{Orbits of size dividing $n + 3$}\label{sec:n+3}
In this section, we analyze orbits of size dividing $n+3$, proving the following result:

\begin{thm}\label{thm:n+3}

Let $P = [2] \times [n]$ with $n \geq 2$. The number of orbits of size $n + 3$ for rowmotion on $P = [2]\times [n]$ is at least

\begin{itemize}

\item $(n^3 - n^2 - 4n + 12)/12$ when $n \equiv 0 \pmod{6},$

\item $(n^3 - n^2 - 7n + 19)/12$ when $n \equiv 1 \pmod{6},$

\item $(n^3 - n^2 - 4n + 16)/12$ when $n \equiv 2 \pmod{6},$

\item $(n^3 - n^2 - 7n + 15)/12$ when $n \equiv 3 \pmod{6},$

\item $(n^3 - n^2 - 4n + 16)/12$ when $n \equiv 4 \pmod{6},$ and

\item $(n^3 - n^2 - 7n + 19)/12$ when $n \equiv 5 \pmod{6}.$

\end{itemize}
    
\end{thm}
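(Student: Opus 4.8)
The plan is to obtain the lower bound by exhibiting an explicit, exactly countable family of interval-closed sets that all lie on orbits of size exactly $n+3$, and then passing from this family to the set of orbits it meets. Aiming only for a lower bound is natural here because the global orbit structure is erratic: the catalog of Lemma~\ref{Lem: CatalogRowmotion2xn} already shows that two ICS with nearly identical parameters can lie on orbits of very different sizes. For example, when $n=3$ the singleton $\{(1,1)\}=[0,1,2:\varnothing]$ sits on an orbit of size $6=n+3$, while the nearby set $\{(1,1),(1,2)\}=[0,2,1:\varnothing]$ sits on an orbit of size $20$. A clean characterization of all size-$(n+3)$ orbits is therefore postponed to Subsection~\ref{ssec:all_together}, where it is obtained by combining this count with those of the other orbit types; for the present theorem we only need a family we can count and certify.

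To certify a representative I would trace it around its orbit using Lemma~\ref{Lem: CatalogRowmotion2xn}. For generic (large $n$) parameters the catalog shows that an orbit cycles through several of the six types, where most steps are the unit rank shifts of Types 1a, 2a, 5a (each incrementing $b_1,b_2$ and decrementing $a_1,a_2$) and only boundedly many steps are genuine type transitions (Types 1b, 2b/2c, 3a/3b, 4a--4d). Writing the six parameters at the start of each phase and summing the shift counts phase by phase, these counts telescope, and I expect the total to be exactly $n+3$. The harder half of this step is the lower bound on the period: one must also show the representatives are not fixed by $\row^{(n+3)/p}$ for any prime $p\mid n+3$. The small cases show that genuine period-$2$ and period-$3$ behaviour occurs (for $n$ odd, respectively for $3\mid n$), so this is exactly where certain degenerate parameter values must be excluded, and it is the source of the distinction between even and odd residues of $n$.

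Having fixed the family, the count is a lattice-point enumeration in the six parameters subject to the defining inequalities of Lemma~\ref{lem:types} for the chosen representative type together with the genericity conditions excluding the short-period degeneracies. The leading term is cubic in $n$, with leading coefficient $1/12$, and the residue-mod-$6$ breakdown comes from two independent and multiplicatively-combining sources: parity conditions on the parameters, which split the even residues $0,2,4$ from the odd residues $1,3,5$ (visible in the $-4n$ versus $-7n$ coefficient), and the divisibility conditions $2\mid n+3$ and $3\mid n+3$, which govern whether the period-$2$ and period-$3$ orbits interfere and account for the finer variation of the constant term. One then verifies that distinct representatives lie on distinct orbits --- or, if an orbit can contain several representatives, that this multiplicity is a fixed constant by which one divides --- so that the enumeration yields a genuine lower bound for the number of size-$(n+3)$ orbits.

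The main obstacle is the interaction of the period computation with the enumeration at the boundaries. Because the exact cycle of types a representative traverses depends on the relative sizes of its hook parameters and on $n$, the clean telescoping ``$\text{total}=n+3$'' must be established uniformly across all admissible parameter ranges, including degenerate phases of length zero where the transition tables behave differently; it is precisely at these boundaries that orbits collapse to proper divisors of $n+3$ or escape into the $n+5$ and large orbits of Theorem~\ref{large_orbits}. Carrying the bookkeeping through every boundary case, and then matching the resulting constant term to the correct residue class of $n$ modulo $6$, is the most delicate and error-prone part of the argument, and is the reason the statement is phrased as a lower bound rather than an equality.
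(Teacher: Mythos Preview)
Your high-level strategy is exactly the paper's: exhibit explicit families of ICS, trace each around its orbit using the catalog of Lemma~\ref{Lem: CatalogRowmotion2xn}, verify the period is $n+3$ (not a proper divisor), check distinctness of the resulting orbits, and sum the counts. You have also correctly diagnosed where the mod-$6$ dependence enters (parity for the $(n+3)/2$ collapse, divisibility by $3$ for the $(n+3)/3$ collapse).

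What is missing is the concrete content that makes this a proof rather than a plan: you never name the families. The paper's Lemma~\ref{n+3} identifies four of them --- Stacked Ranks $[0,r,n-r:0,r-1,n+1-r]$, Stacked Diagonals $[b,i,a:b,i,a]$, certain Second Hooks $[b,i,0:b-t,t+i,0]$, and certain Double Hooks $[b,k+j,a:0,b+k,a+j]$ --- and for each one traces the full orbit step by step through the catalog, recording exactly which type transitions occur and when the period collapses (to $(n+3)/2$ in Cases~1 and~4 when $n$ is odd, to $(n+3)/3$ in Case~2 when $3\mid n$). The four counts $\lfloor n/2\rfloor$, roughly $(n^2-3n)/6$, $(n^2-5n+6)/2$, and roughly $\frac{1}{2}\binom{n-2}{3}$ are then summed to give the theorem. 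Your telescoping intuition is correct for each family individually, but the enumeration and the precise constant terms cannot be obtained until the families and their internal multiplicities (three Stacked Diagonals per orbit in Case~2, two Double Hooks per orbit in Case~4) are pinned down. Without that specification, the ``lattice-point enumeration'' step has no polytope to enumerate.
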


Theorem \ref{thm:n+3} will be an immediate consequence of the following lemma, which categorizes the orbits of size $n+3$ into four cases. We use the terminology \emph{stacked ranks} to mean an ICS consisting of consecutive full ranks. This type of interval-closed set intersects each of the six types of ICS we defined in Section \ref{sec:catalog}. Specifically, if $I \in \IC(P)$ is an ICS in $P$ consisting of 2 or more stacked ranks,
then $I = [b_1, i_1, a_1 : b_2, i_2, a_2]$ 
has one of four forms:

\begin{itemize}
    \item If $I$ contains neither the maximal or minimal element of $P$, then $b_1 = b_2 + 1, 2 \leq i_1 = i_2, a_1 = a_2 - 1$. This falls under the category Double Hook. 
    \item If $I$ contains the maximal element of $P$ but not the minimal element, then $b_1 = b_2 + 1, 2 \leq i_1 = i_2 - 1, a_1 = a_2 = 0$. This falls under the category Second Hook.
    \item If $I$ contains both the maximal and minimal elements of $P$, then $b_1 = b_2 = 0 = a_1 = a_2, i_1 = i_2 = n $. This falls under the category Stacked Diagonals and is specifically when $I = P$.
    \item If $I$ contains the minimal element of $P$ but not the maximal element, then $b_1 = b_2 = 0, 2 \leq i_1 = i_2 + 1, a_1 = a_2 - 1$. This falls under the category First Hook.
\end{itemize}

If instead $I$ consists of only one stacked rank, then $I = [b_1, 1, a_1 : b_2, 1, a_2]$ 
has one of three forms:

\begin{itemize}
    \item If $I$ contains neither the maximal or minimal element of $P$, then $b_1 = b_2 + 1$, $i_1 = 1 = i_2$. This falls under the category Disjoint.
    \item If $I$ contains the maximal element of $P$ but not the minimal element, then $b_1 = n$. This falls under the category High.
    \item If $I$ contains the minimal element of $P$ but not the maximal element, then $a_2 = n$. This falls under the category Low.
\end{itemize}

\begin{lem}\label{n+3}
Let $P = [2] \times [n]$ with $n \geq 2$.  Consider the following ICS:
\begin{enumerate}
    \item Stacked ranks $[0, r, n - r : 0, r - 1, n + 1 - r]$ for $1 \leq r \leq n/2$;
    \item Stacked Diagonals of the form $I = [b, i, a : b, i, a]$ with $a, i, b > 0$, excepting the case $a = i = b = n/3$ when $n \equiv 0 \pmod{3}$;
    \item Second Hooks of the form $I = [b, i, 0  :  b - t, t + i, 0],$ with $i \geq 1, t \geq 2$, and $b - t \geq 1$; and
    \item Double Hooks of the form $[b, k + j, a :   0, b + k, a + j]$ where $j \geq 2$, $b \geq 2$, $k\geq 1$, and $a \geq 0$, such that either $j \neq b$ or $a + 1 \neq k$.
\end{enumerate}
Each of these ICS belongs to an orbit of size $n + 3$.  These orbits are distinct, with the following exceptions: in case 2, the ICS $[b, i, a : b, i, a]$, $[a, b, i : a, b, i]$, and $[i, a, b : i, a, b]$ all belong to the same orbit, and in case 4, the ICS $[b, k + j, a :   0, b + k, a + j]$ and $[j, a + b + 1, k - 1 :  0, j + a + 1, b + k - 1]$ belong to the same orbit.
The numbers of orbits containing the ICS in the four cases are respectively
\begin{enumerate}
    \item  $n/2$ when $n$ is even, $(n - 1)/2$ when $n$ is odd;
    \item  $(n^2 - 3n)/6$ when $n$ is divisible by $3$, $(n^2 - 3n + 2)/6$ otherwise;
    \item $(n^2 - 5n + 6)/2$; and
    \item $\frac{1}{2}\binom{n - 2}{3}$ ICS when $n$ is even, $\frac{1}{2}\left(\binom{n - 2}{3} - \frac{n - 3}{2}\right)$ when $n$ is odd. 
\end{enumerate}
\end{lem}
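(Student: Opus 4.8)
The plan is to prove the three assertions of the lemma---that each listed ICS lies in an orbit of size exactly $n+3$, that the only coincidences among orbits are the two stated, and that the four enumeration formulas hold---by following rowmotion trajectories explicitly with the catalog of Lemma~\ref{Lem: CatalogRowmotion2xn} and then counting parameter tuples modulo the identifications. The engine of the whole argument is the observation, visible in Tables~\ref{tab:lemma-RowEffectsPoints1} and~\ref{tab:lemma-RowEffectsPoints2}, that rowmotion generically shifts a configuration up one rank (increasing each $b_\ell$, decreasing each $a_\ell$) until a chain reaches the top, at which moment a bounded type transition occurs. Thus each orbit is a sequence of upward shifts punctuated by a few transitions, and its length is the total vertical displacement accumulated before the configuration returns to itself.

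First I would compute each orbit by iterating $\row$. For the Stacked Diagonals of case 2 this is already decisive: starting from $[b,i,a:b,i,a]$, one step of Type 4a produces a Disjoint ICS, then $a-1$ shifts of Type 2a raise it until $a_1=0$, and a single Type 2b step returns the Stacked Diagonal $[a,b,i:a,b,i]$---after exactly $a+1$ applications. Iterating, the orbit cycles $[b,i,a:b,i,a]\mapsto[a,b,i:a,b,i]\mapsto[i,a,b:i,a,b]\mapsto[b,i,a:b,i,a]$ in $(a+1)+(b+1)+(i+1)=n+3$ steps, so the orbit has size exactly $n+3$ (the intermediate Disjoint/Hook ICS never equal the start, ruling out a proper divisor), and the $\mathbb{Z}/3$ identification drops out as the cyclic permutation of parameters realized by one-third of the orbit. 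The same bookkeeping, run through the relevant subcases, handles cases 1, 3, and 4: in each the shifts and transitions accumulate to displacement $n+3$, and the case-4 pairing $[b,k+j,a:0,b+k,a+j]\sim[j,a+b+1,k-1:0,j+a+1,b+k-1]$ should emerge as the two Double Hooks visited in one orbit, realized by the involution $(b,k,j,a)\mapsto(j,a+1,b,k-1)$. Distinctness across representatives then follows by exhibiting a canonical member of each orbit (the Stacked Diagonal, Second Hook, or Double Hook of the prescribed shape) and checking that two representatives share an orbit only in the listed cases.

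The counts are then elementary composition counts. Case 3 is a direct count of pairs $(b,t)$ with $b+i=n$, $i\ge1$, $t\ge2$, $b-t\ge1$, giving $\binom{n-2}{2}=(n^2-5n+6)/2$ with no quotient. Case 2 counts the compositions $b+i+a=n$ into three positive parts, namely $\binom{n-1}{2}$, removes the single diagonal $a=i=b=n/3$ when $3\mid n$ (which the family excludes), and divides the remaining free $\mathbb{Z}/3$-set by $3$, yielding $(n^2-3n+2)/6$ or $(n^2-3n)/6$. Case 4 counts solutions of $a+b+j+k=n$ with $j\ge2,\ b\ge2,\ k\ge1,\ a\ge0$, namely $\binom{n-2}{3}$, subtracts the excluded fixed points of the involution---of which there are $(n-3)/2$ when $n$ is odd and none when $n$ is even---and halves, giving the stated formula. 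Case 1 reduces to counting $r$ in $1\le r\le n/2$, that is $\lfloor n/2\rfloor$, after verifying that distinct $r$ in this range give distinct orbits.

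The main obstacle will be the trajectory bookkeeping for cases 1, 3, and 4. Unlike case 2, whose orbit stays among Stacked Diagonals, Disjoints, and a single Type 2b transition, these orbits thread through several of the many subcases (1a/1b, 3a/3b, 4a--4d, 5a/5b, 6a--6d) as the configuration shifts up and wraps around, and verifying that each trajectory closes after precisely $n+3$ steps---while correctly locating the moment the case-4 involution coincidence occurs---requires careful case management. The enumeration itself is then routine, the only genuine subtleties being the exclusion of the fixed diagonal in case 2 and the fixed-point count in case 4, which produce the divisibility and parity case splits in the formulas.
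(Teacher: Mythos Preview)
Your proposal is correct and follows essentially the same approach as the paper: explicit trajectory tracking via the catalog of Lemma~\ref{Lem: CatalogRowmotion2xn} to establish the orbit length $n+3$, identification of the canonical representatives visited along each orbit to pin down the coincidences, and then standard composition counts modulo the group actions for the enumeration. Your Case~2 computation matches the paper's exactly, and your Case~3 and Case~4 counts (including the fixed-point analysis of the involution $(b,k,j,a)\mapsto(j,a+1,b,k-1)$) are the same arguments the paper uses.
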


For examples of representative ICS in the four cases, see Figure \ref{n+3_reps}.


\begin{figure}[htbp]
\begin{center}
\begin{minipage}[c]{3cm}
\begin{tikzpicture}[scale = .4]
\draw [-, ultra thick] (-1,0) -- (-2,1);
\draw [-, ultra thick] (-2,1) -- (-3,2);
\draw [-, ultra thick] (-3,2) -- (-4,3);
\draw [-, ultra thick] (-4,3) -- (-5,4);
\draw [-, ultra thick] (-5,4) -- (-6,5);
\draw [-, ultra thick] (-3,0) -- (-4,1);
\draw [-, ultra thick] (-2,-1) -- (-3,0);
\draw [-, ultra thick] (-4,1) -- (-5,2);
\draw [-, ultra thick] (-5,2) -- (-6,3);
\draw [-, ultra thick] (-6,3) -- (-7,4);
\draw [-, ultra thick] (-7,4) -- (-8,5);
\draw [-, ultra thick] (-6,5) -- (-7,6);
\draw [-, ultra thick] (-1,0) -- (-2,-1);
\draw [-, ultra thick] (-2,1) -- (-3,0);
\draw [-, ultra thick] (-3,2) -- (-4,1);
\draw [-, ultra thick] (-4,3) -- (-5,2);
\draw [-, ultra thick] (-5,4) -- (-6,3);
\draw [-, ultra thick] (-6,5) -- (-7,4);
\draw [-, ultra thick] (-7,6) -- (-8,5);
\draw[fill=red, radius = .2] (-1,0) circle [radius = 0.2];
\draw[fill=red, radius = .2] (-2,1) circle [radius = 0.2];
\draw[fill=white, radius = .2] (-3,2) circle [radius = 0.2];
\draw[fill=white, radius = .2] (-4,3) circle [radius = 0.2];
\draw[fill=white, radius = .2] (-5,4) circle [radius = 0.2];
\draw[fill=white, radius = .2] (-6,5) circle [radius = 0.2];
\draw[fill=white, radius = .2] (-7,6) circle [radius = 0.2];
\draw[fill=red, radius = .2] (-2,-1) circle [radius = 0.2];
\draw[fill=red, radius = .2] (-3,0) circle [radius = 0.2];
\draw[fill=red, radius = .2] (-4,1) circle [radius = 0.2];
\draw[fill=white, radius = .2] (-5,2) circle [radius = 0.2];
\draw[fill=white, radius = .2] (-6,3) circle [radius = 0.2];
\draw[fill=white, radius = .2] (-7,4) circle [radius = 0.2];
\draw[fill=white, radius = .2] (-8,5) circle [radius = 0.2];
\draw[decoration={brace, raise=.5em,mirror},decorate]
  (-4, 1 + .1) -- node[left=1em] {$r = 3$} (-4, -1 - .1);
\end{tikzpicture}
  \end{minipage}
  \begin{minipage}[c]{4cm}
\begin{tikzpicture}[scale = .4]
\draw[decoration={brace, raise=.5em},decorate]
  (-2 + .1, -1 - .1) -- node[below left=.5em] {$b = 4$} (-5 +.1, 2 - .1);
\draw[decoration={brace, raise=.5em},decorate]
  (-6 + .1, 3 - .1) -- node[below left=.5em] {$i = 2$} (-7 +.1, 4 - .1);
\draw (-8 , 5 ) node[left=.5em]{$a = 1$};
\draw [-, ultra thick] (-1,0) -- (-2,1);
\draw [-, ultra thick] (-2,1) -- (-3,2);
\draw [-, ultra thick] (-3,2) -- (-4,3);
\draw [-, ultra thick] (-4,3) -- (-5,4);
\draw [-, ultra thick] (-5,4) -- (-6,5);
\draw [-, ultra thick] (-3,0) -- (-4,1);
\draw [-, ultra thick] (-2,-1) -- (-3,0);
\draw [-, ultra thick] (-4,1) -- (-5,2);
\draw [-, ultra thick] (-5,2) -- (-6,3);
\draw [-, ultra thick] (-6,3) -- (-7,4);
\draw [-, ultra thick] (-7,4) -- (-8,5);
\draw [-, ultra thick] (-6,5) -- (-7,6);
\draw [-, ultra thick] (-1,0) -- (-2,-1);
\draw [-, ultra thick] (-2,1) -- (-3,0);
\draw [-, ultra thick] (-3,2) -- (-4,1);
\draw [-, ultra thick] (-4,3) -- (-5,2);
\draw [-, ultra thick] (-5,4) -- (-6,3);
\draw [-, ultra thick] (-6,5) -- (-7,4);
\draw [-, ultra thick] (-7,6) -- (-8,5);
\draw[fill=white, radius = .2] (-1,0) circle [radius = 0.2];
\draw[fill=white, radius = .2] (-2,1) circle [radius = 0.2];
\draw[fill=white, radius = .2] (-3,2) circle [radius = 0.2];
\draw[fill=white, radius = .2] (-4,3) circle [radius = 0.2];
\draw[fill=red, radius = .2] (-5,4) circle [radius = 0.2];
\draw[fill=red, radius = .2] (-6,5) circle [radius = 0.2];
\draw[fill=white, radius = .2] (-7,6) circle [radius = 0.2];
\draw[fill=white, radius = .2] (-2,-1) circle [radius = 0.2];
\draw[fill=white, radius = .2] (-3,0) circle [radius = 0.2];
\draw[fill=white, radius = .2] (-4,1) circle [radius = 0.2];
\draw[fill=white, radius = .2] (-5,2) circle [radius = 0.2];
\draw[fill=red, radius = .2] (-6,3) circle [radius = 0.2];
\draw[fill=red, radius = .2] (-7,4) circle [radius = 0.2];
\draw[fill=white, radius = .2] (-8,5) circle [radius = 0.2];
\end{tikzpicture}
  \end{minipage}
\begin{minipage}[c]{4cm}
\begin{tikzpicture}[scale = .4]
\draw[decoration={brace, raise=.5em},decorate]
  (-2 + .1, -1 - .1) -- node[below left=.5em] {$b = 5$} (-6 +.1, 3 - .1);
\draw[decoration={brace, raise=.5em},decorate]
  (-7 + .1, 4 - .1) -- node[below left=.5em] {$i = 2$} (-8 +.1, 5 - .1);
\draw[decoration={brace, raise=.5em, mirror},decorate]
  (-3 + .1, 2 - .1) -- node[above right=.5em] {$t = 3$} (-5 +.1, 4 - .1);
\draw [-, ultra thick] (-1,0) -- (-2,1);
\draw [-, ultra thick] (-2,1) -- (-3,2);
\draw [-, ultra thick] (-3,2) -- (-4,3);
\draw [-, ultra thick] (-4,3) -- (-5,4);
\draw [-, ultra thick] (-5,4) -- (-6,5);
\draw [-, ultra thick] (-3,0) -- (-4,1);
\draw [-, ultra thick] (-2,-1) -- (-3,0);
\draw [-, ultra thick] (-4,1) -- (-5,2);
\draw [-, ultra thick] (-5,2) -- (-6,3);
\draw [-, ultra thick] (-6,3) -- (-7,4);
\draw [-, ultra thick] (-7,4) -- (-8,5);
\draw [-, ultra thick] (-6,5) -- (-7,6);
\draw [-, ultra thick] (-1,0) -- (-2,-1);
\draw [-, ultra thick] (-2,1) -- (-3,0);
\draw [-, ultra thick] (-3,2) -- (-4,1);
\draw [-, ultra thick] (-4,3) -- (-5,2);
\draw [-, ultra thick] (-5,4) -- (-6,3);
\draw [-, ultra thick] (-6,5) -- (-7,4);
\draw [-, ultra thick] (-7,6) -- (-8,5);
\draw[fill=white, radius = .2] (-1,0) circle [radius = 0.2];
\draw[fill=white, radius = .2] (-2,1) circle [radius = 0.2];
\draw[fill=red, radius = .2] (-3,2) circle [radius = 0.2];
\draw[fill=red, radius = .2] (-4,3) circle [radius = 0.2];
\draw[fill=red, radius = .2] (-5,4) circle [radius = 0.2];
\draw[fill=red, radius = .2] (-6,5) circle [radius = 0.2];
\draw[fill=red, radius = .2] (-7,6) circle [radius = 0.2];
\draw[fill=white, radius = .2] (-2,-1) circle [radius = 0.2];
\draw[fill=white, radius = .2] (-3,0) circle [radius = 0.2];
\draw[fill=white, radius = .2] (-4,1) circle [radius = 0.2];
\draw[fill=white, radius = .2] (-5,2) circle [radius = 0.2];
\draw[fill=white, radius = .2] (-6,3) circle [radius = 0.2];
\draw[fill=red, radius = .2] (-7,4) circle [radius = 0.2];
\draw[fill=red, radius = .2] (-8,5) circle [radius = 0.2];
\end{tikzpicture}
  \end{minipage}
  \begin{minipage}[c]{4cm}
\begin{tikzpicture}[scale = .4]
\draw[decoration={brace, raise=.5em},decorate]
  (-2 + .1, -1 - .1) -- node[below left=.5em] {$b = 2$} (-3 +.1, 0 - .1);
\draw[decoration={brace, raise=.5em},decorate]
  (-4 + .1, 1 - .1) -- node[below left=.5em] {$k = 2$} (-5 +.1, 2 - .1);
\draw[decoration={brace, raise=.5em},decorate]
  (-6 + .1, 3 - .1) -- node[below left=.5em] {$j = 2$} (-7 +.1, 4 - .1);
\draw (-8 , 5 ) node[left=.5em]{$a = 1$};
\draw [-, ultra thick] (-1,0) -- (-2,1);
\draw [-, ultra thick] (-2,1) -- (-3,2);
\draw [-, ultra thick] (-3,2) -- (-4,3);
\draw [-, ultra thick] (-4,3) -- (-5,4);
\draw [-, ultra thick] (-5,4) -- (-6,5);
\draw [-, ultra thick] (-3,0) -- (-4,1);
\draw [-, ultra thick] (-2,-1) -- (-3,0);
\draw [-, ultra thick] (-4,1) -- (-5,2);
\draw [-, ultra thick] (-5,2) -- (-6,3);
\draw [-, ultra thick] (-6,3) -- (-7,4);
\draw [-, ultra thick] (-7,4) -- (-8,5);
\draw [-, ultra thick] (-6,5) -- (-7,6);
\draw [-, ultra thick] (-1,0) -- (-2,-1);
\draw [-, ultra thick] (-2,1) -- (-3,0);
\draw [-, ultra thick] (-3,2) -- (-4,1);
\draw [-, ultra thick] (-4,3) -- (-5,2);
\draw [-, ultra thick] (-5,4) -- (-6,3);
\draw [-, ultra thick] (-6,5) -- (-7,4);
\draw [-, ultra thick] (-7,6) -- (-8,5);
\draw[fill=red, radius = .2] (-1,0) circle [radius = 0.2];
\draw[fill=red, radius = .2] (-2,1) circle [radius = 0.2];
\draw[fill=red, radius = .2] (-3,2) circle [radius = 0.2];
\draw[fill=red, radius = .2] (-4,3) circle [radius = 0.2];
\draw[fill=white, radius = .2] (-5,4) circle [radius = 0.2];
\draw[fill=white, radius = .2] (-6,5) circle [radius = 0.2];
\draw[fill=white, radius = .2] (-7,6) circle [radius = 0.2];
\draw[fill=white, radius = .2] (-2,-1) circle [radius = 0.2];
\draw[fill=white, radius = .2] (-3,0) circle [radius = 0.2];
\draw[fill=red, radius = .2] (-4,1) circle [radius = 0.2];
\draw[fill=red, radius = .2] (-5,2) circle [radius = 0.2];
\draw[fill=red, radius = .2] (-6,3) circle [radius = 0.2];
\draw[fill=red, radius = .2] (-7,4) circle [radius = 0.2];
\draw[fill=white, radius = .2] (-8,5) circle [radius = 0.2];
\end{tikzpicture}
  \end{minipage}
  \caption{The four types of representatives for orbits of size $n + 3$}
  \label{n+3_reps}
  \end{center}
\end{figure}
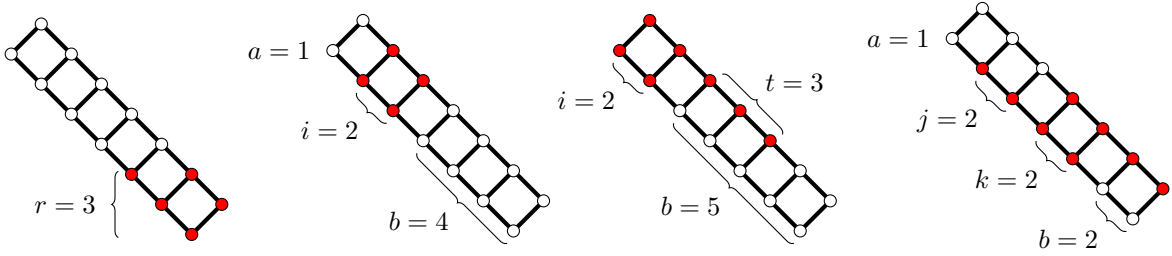


\begin{proof}[Proof of Lemma \ref{n+3}] Let $P = [2] \times [n]$.\\

\textit{Case One: Stacked Ranks.} 
For $r \leq n/2$, let $I = [0, r, n - r : 0, r - 1, n + 1 - r]$ be a stack of $r$ consecutive ranks, starting with rank $0$. First, we consider the case where $r > 1$. (This is illustrated in Example \ref{ex1_n+3} below.)  Then $I$ is a First Hook interval-closed set. Since necessarily $r < n$, by Lemma \ref{Lem: CatalogRowmotion2xn} Type 3a, a single application of rowmotion will return the Double Hook ICS $[1, r, n - r - 1 : 0, r, n - r]$. Then the next $n - r - 1$ applications of rowmotion belong to case Lemma \ref{Lem: CatalogRowmotion2xn} Type 1a, shifting the ICS up until it contains the maximal element of the lower chain. This requires results in the Double Hook interval-closed set $[n - r, r, 0 : n - r - 1, r, 1]$.

By Lemma \ref{Lem: CatalogRowmotion2xn} Type 1b, a single application of rowmotion will result in the Second Hook interval-closed set $[n - r + 1, r - 1, 0 : n - r, r, 0]$, which includes the maximum element of the poset. Thus, another single application of rowmotion returns the complement (Lemma \ref{Lem: CatalogRowmotion2xn} Type 4b), the First Hook interval-closed set $J = [0, n - r + 1, r-1 : 0, n - r, r]$. Altogether, this required $n - r + 2$ applications of rowmotion. 

If $J = I$, then $r = n - r + 1$, so $n$ must be odd and $r = \frac{n + 1}{2}$. However, this is excluded by our bounds on $r$.  (In this case the orbit of $I$ has size $\frac{n + 3}{2}$; it is recorded below in Corollary~\ref{cor_n+3/2}). Thus $J \neq I$.

Moreover, since $r > 1$, the same analysis that we applied to $I$ applies equally well to $J= [0, n - r + 1, r-1 : 0, n - r, r]$, and another $(r - 1) + 2 = r + 1$ applications of rowmotion will replace $J$ with a Double Hook, then shift this interval-closed set up until it reaches the top of the poset, and take the complement to get $[0, r, n - r: 0, r - 1, n - r + 1] = I.$ Obviously none of the intermediate steps in this process produced $I$, and therefore $I$ belongs to an orbit of size $(n - r + 2) + (r + 1) = n + 3$.  Moreover, we see that this orbit contains all interval-closed sets formed from either $r$ stacked ranks or $n - r + 1$ stacked ranks.

Next, consider the case when $r = 1 < n$. That is, $I$ contains only the minimum element of $P$ (the full rank $0$) and is a Low ICS.  By Lemma \ref{Lem: CatalogRowmotion2xn} Type 5a, applying rowmotion once returns the Disjoint ICS consisting of the full rank $1$. By Lemma \ref{Lem: CatalogRowmotion2xn} Type 2a, repeated application of rowmotion will shift this up one rank at a time until the ICS reaches the maximal element of the lower chain, and then by Lemma \ref{Lem: CatalogRowmotion2xn} Type 2c, a further application of rowmotion will return the High ICS consisting of only the maximum element (the full rank $n$). This requires a total of $n$ applications of rowmotion.  As this ICS contains the maximum element, a single application of rowmotion returns the complement, the full poset except for the maximum element (i.e., the stack of the $n$ bottom ranks in $P$), so it is a First Hook.  By Lemma \ref{Lem: CatalogRowmotion2xn} Type 3b, a single application of rowmotion returns the Second Hook ICS that includes every element of the poset except for the minimum element. As this contains the maximal element of the poset an application of rowmotion returns the complement, which is the Low ICS consisting of only the minimum element, which is our starting ICS.  Thus this orbit has size $n + 1 + 1 + 1 = n + 3$.
In total, the preceding analysis produces $\lfloor n/2 \rfloor$ distinct orbits of size $n + 3$.\\ 

\textit{Case Two: Stacked Diagonals.} In this section, to simplify notation, we denote by $[b, i, a]$ the ICS $I = [b_1, i_1, a_1 : b_2, i_2, a_2]$ with $a_1 = a_2 = a$, $i_1 = i_2 = i$, and $b_1 = b_2 = b$.  This case is illustrated in Example \ref{ex2_n+3}.

We claim if $n \equiv 0 \pmod{3}$ and $I = [n/3, n/3, n/3]$, then $I$ belongs to an orbit of size $\frac{n + 3}{3}.$ Otherwise, when $a, b, i > 0$, $I$ belongs to an orbit of size $n + 3$. 

Consider $I = [b, i, a]$ with $b, i, a > 0$. By Lemma \ref{Lem: CatalogRowmotion2xn} Type 4a, $\row(I)$ is the Disjoint interval-closed set $[b + 1, i, a-1: 0, b, i + a]$. By Lemma \ref{Lem: CatalogRowmotion2xn} Type 2a,
repeated applications of rowmotion shift the ICS up one rank at a time until the interval on the lower chain reaches the top of the poset. This requires $a-1$ applications and results in the Disjoint interval-closed set $[a+b, i, 0 : a - 1, b, i + 1].$
By Lemma \ref{Lem: CatalogRowmotion2xn} Type 2b, applying rowmotion 
once more will result in another Stacked Diagonal interval-closed set $[a, b, i].$ In total, this required $a + 1$ applications of rowmotion. If $[a, b, i] = I$, then $a = b = i = n/3$. Thus, $I = [n/3, n/3, n/3]$ belongs in an orbit of size $\frac{n + 3}{3}$. 

Otherwise, by a similar argument, an additional $i+1$ applications of rowmowtion will produce a $[i, a, b],$ and finally, an additional $b+1$ applications of rowmotion will return us to the original $[b, i, a]$. 
Thus, $[b, i, a], [a, b, i],$ and $[i, a, b]$ are all in the same orbit of size $a + i + b + 3 = n + 3$ when $a, b, i > 0$ and $[b, i, a] \neq [n/3, n/3, n/3]$, and these orbits contain no additional Stacked Diagonal ICS. As there are $\binom{n-1}{2}$ of these interval-closed sets when $n$ is congruent to $1$ or $2 \pmod3$ and $\binom{n-1}{2} - 1$ of them when $n$ is congruent to $0 \pmod3$, we have $\frac{\binom{n-1}{2}}{3} = \frac{n^2 - 3n + 2}{6}$ of these orbits when $n$ is congruent to $1$ or $2 \pmod3$ and $\frac{\binom{n-1}{2} - 1}{3} = \frac{n^2 - 3n}{6}$ when $n$ is congruent to $0 \pmod3$.  \\

\textit{Case Three: Second Hooks.} Let $I = [b, i, 0 :  b - t, t + i, 0],$ with $i \geq 1, t \geq 2$, and $b - t \geq 1$. In Lemma \ref{Lem: CatalogRowmotion2xn}, $I$ falls under the category of Second Hook with the additional restrictions that the maximum element of $P$ is in $I$, the minimal element of the upper chain is not in $I$, and the tail of the hook (the part of $I$ not constructed of consecutive diagonals) is at least two in length. We claim that $\row^{n+3}(I) = I$ and $\row^j(I)$ does not contain the maximum element of $P$ for $0 < j < n + 3.$   This case is illustrated in Example \ref{ex3_n+3}.

As $I$ contains the maximum element of $P$, by Lemma \ref{Lem: CatalogRowmotion2xn} Type 4b, $\row(I) = \bar{I} = [0, b, i: 0, b - t, i + t].$ By Lemma \ref{Lem: CatalogRowmotion2xn} Type 3a, a single application of rowmotion returns the Double Hook ICS $[1, b, i - 1 :  0, b - t + 1, i + t - 1].$
By Lemma \ref{Lem: CatalogRowmotion2xn} Type 1a, repeated applications of rowmotion shift this up until the interval on the lower chain reaches the top of the poset, resulting in $[i, b, 0 :  i - 1, b - t + 1, t].$ This requires $i - 1$ applications of rowmotion. Then by Lemma \ref{Lem: CatalogRowmotion2xn} Type 1b, a single application of rowmotion results in the Second Hook interval-closed set $[i + 1, b - t, t - 1 :  i, b - t + 1, t - 1]$, and by Lemma \ref{Lem: CatalogRowmotion2xn} Type 4a, another application of rowmotion results in the Disjoint interval-closed set $[i + 2, b - t, t - 2 :  0, i, b].$
By Lemma \ref{Lem: CatalogRowmotion2xn} Type 2a, repeated applications of rowmotion shift this up until the interval on the lower chain reaches the top of the poset, resulting in $[i + t, b - t, 0 :  t - 2, i, b - t + 2].$ This requires $t - 2$ applications of rowmotion. Then by Lemma \ref{Lem: CatalogRowmotion2xn} Type 2b, a single application of rowmotion results in the First Hook interval-closed set $[t - 1, i + 1 ,b - t :  t - 1, i, b - t + 1]$, and by Lemma \ref{Lem: CatalogRowmotion2xn} Type 3a, another application of rowmotion will result in the Double Hook interval-closed set $[t, i + 1, b - t - 1 :  0, i + t, b - t].$ By Lemma \ref{Lem: CatalogRowmotion2xn} Type 1a, repeated application of rowmotion will shift this up until the interval on the lower chain reaches the top of the poset. This requires $b - t - 1$ applications of rowmotion. Then, by Lemma \ref{Lem: CatalogRowmotion2xn} Type 1b, a final application of rowmotion will return to the Second Hook interval-closed set $I$. 

In total, this required $3 + i + b = n + 3$ applications of rowmotion, and none of the interval-closed sets other than $I$ included the  maximum element of $P$. Thus, $I$ is in an orbit of size $n + 3.$ For $i = 1, 2,  \ldots, n - 3$, there are $n - i - 2$ such ICS (with $t = 2, 3, \ldots, b - 2$), so there are $(n - 3) + (n - 4) + \dotsc + 1 = \frac{n^2-5n+6}{2}$ interval-closed sets of this type, and therefore the same number of orbits of this form. \\

\textit{Case Four: Double Hooks.} Let $I = [b, k + j, a :  0, b + k, a + j]$ where $j \geq 2$, $b \geq 2$, $k \geq 1$, and $a \geq 0$. In Lemma \ref{Lem: CatalogRowmotion2xn}, interval-closed sets of this type are a subset of Double Hooks. We claim that $a + b + 2$ applications of rowmotion to $I$ will return an interval-closed set (possibly $I$ itself) of this same form.  This case is illustrated in Example \ref{ex4_n+3}.

By Lemma \ref{Lem: CatalogRowmotion2xn} Type 1a, repeated applications of rowmotion will shift $I$ up one rank at a time until it reaches the top of the poset, that is, until it has the form $[a + b, k + j, 0 :  a, b + k, j]$. This requires $a$ applications of rowmotion. By Lemma \ref{Lem: CatalogRowmotion2xn} Type 1b, a further application of rowmotion will then result in the Second Hook interval-closed set $[a + 1 + b, k, j - 1 :  a + 1, b + k, j - 1]$ formed by removing all the elements above the stack of consecutive diagonals and shifting up by one. 

By Lemma \ref{Lem: CatalogRowmotion2xn} Type 4a, a single application of rowmotion will result in the Disjoint interval-closed set $[a + 2 + b, k, j - 2 :  0, a+1, b + k + j - 1]$. Then by Lemma \ref{Lem: CatalogRowmotion2xn} Type 2a, repeated applications of rowmotion will shift this up one diagonal at a time until it reaches the maximal element on the lower chain. This will require $j - 2$ applications of rowmotion and will result in $[b + a + j, k, 0 :  j - 2, a + 1, b + k  + 1]$. By Lemma \ref{Lem: CatalogRowmotion2xn} Type 2b, an additional application of rowmotion results in the First Hook interval-closed set $[j - 1, a + b + 1, k :  j - 1, a + 1, b + k].$

By Lemma \ref{Lem: CatalogRowmotion2xn} Type 3a, a single application of rowmotion will result in the Double Hook interval-closed set $[j, a + b + 1, k - 1 :  0, j + a + 1, b + k - 1].$ In total, this took  $(a + 1) + 1 + (j -2) + 1 + 1 = a + j + 2$ applications of rowmotion. If this is $I$, then $[j, a + b + 1, k - 1 :  0, j + a + 1, b + k - 1] = [b, k + j, a :  0, b + k, j + a]$, so $j = b$ and $a + 1 = k$. Thus, $I = [b, a + 1 + b, a :  0, a + b + 1, a + b]$ with $a\geq0, b\geq 2$ and $2a + 2b + 1 = n$. In this case, $I$ will be in an orbit of size $a + b + 2 =\frac{n + 3}{2}.$

If $I \neq [j, a + b + 1, k - 1 :  0, j + a + 1, b + k - 1]$, then a similar argument shows it will take another $k - 1 + b + 2$ applications of rowmotion to return to $[b, k + j, a :  0, b + k, j + a].$ Thus, $I$ is in an orbit of size $a + j + 2 + k - 1 + b + 2 = n +3.$

There are $\binom{n - 2}{3}$ ways to choose integers $a, b, k, j$ with sum $n$ such that $a \geq 0$, $b \geq 2$, $k \geq 2$ and $j \geq 1$, and when $n$ is odd there are $\frac{n - 3}{2}$ ways to choose integers $a, b$ such that $a \geq 0$, $b\geq 2$, and $2a + 2b + 1 = n$.  Thus, when $n$ is even, there are $\frac{1}{2}\binom{n - 2}{3}$ orbits of size $n + 3$ of this form, while when $n$ is odd, there are $\frac{1}{2} \left( \binom{n - 2}{3} - \frac{n - 3}{2}\right)$. 

Finally, we observe that the orbits described in the different cases are distinct: in Case One, all of the interval-closed sets in all the orbits are stacked ranks, while no other case contains any such ICS. In Cases Two and Four, no ICS in any of the orbits contains the maximum element of the poset, while in Case Three, there is exactly one interval-closed set of each orbit containing the maximum element. Thus there is no overlap between Cases Two and Four and Case Three. Lastly, as every orbit in Case Two contains exactly three interval-closed sets formed by stacks of consecutive diagonals, and the orbits in Case 4 contain none, there is no overlap between those cases either. 
\end{proof}

Theorem \ref{thm:n+3} follows immediately from Lemma \ref{n+3}. We also have the following corollary.

\begin{cor}\label{cor_n+3/2}
Let $P = [2] \times [n]$ with $n \geq 2$. Under rowmotion, $P$ has at least $1$ orbit of size $\frac{n + 3}{3}$ when $n$ is divisible by $3$ and $\frac{n - 1}{2}$ orbits of size $\frac{n + 3}{2}$ when $n$ is odd.
\end{cor}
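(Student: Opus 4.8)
The plan is to extract Corollary~\ref{cor_n+3/2} directly from the orbit-size analysis already carried out in the proof of Lemma~\ref{n+3}, focusing on the two exceptional sub-cases where the return map closed up after half (or a third of) the expected number of rowmotion steps. These exceptional orbits were identified but deliberately set aside during the enumeration of genuine $(n+3)$-orbits, so the corollary amounts to counting them.

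First I would handle the orbit of size $\frac{n+3}{3}$. In Case Two (Stacked Diagonals), the proof showed that $a+1$ applications of rowmotion send $[b,i,a]$ to $[a,b,i]$, and that this returns to $I$ exactly when $a=b=i$. When $3 \mid n$, the unique such fixed configuration is $[n/3,n/3,n/3]$, whose orbit therefore has size $a+i+b$ divided by the number of distinct diagonals it cycles through — precisely $\frac{n+3}{3}$ as computed in the excerpt. So this yields exactly one orbit of size $\frac{n+3}{3}$ when $3 \mid n$, and none otherwise.

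Next I would count the orbits of size $\frac{n+3}{2}$ when $n$ is odd, which arise from two sources. The first is Case One (Stacked Ranks): the analysis there showed that the First Hook $I=[0,r,n-r:0,r-1,n+1-r]$ returns to itself after $n-r+2$ steps precisely when $r=\frac{n+1}{2}$, giving an orbit of size $\frac{n+3}{2}$; this requires $n$ odd and gives exactly one such orbit. The second source is Case Four (Double Hooks): the return after $a+j+2$ steps coincides with $I$ exactly when $j=b$ and $a+1=k$, i.e.\ $I=[b,a+1+b,a:0,a+b+1,a+b]$ with $a\geq 0$, $b\geq 2$, and $2a+2b+1=n$. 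The number of such pairs $(a,b)$ was already recorded in the excerpt as $\frac{n-3}{2}$ when $n$ is odd. Summing the one stacked-rank orbit with these $\frac{n-3}{2}$ double-hook orbits gives $1+\frac{n-3}{2}=\frac{n-1}{2}$ orbits of size $\frac{n+3}{2}$.

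I expect the only real subtlety to be bookkeeping rather than mathematics: I must verify that the claimed orbit sizes are genuinely $\frac{n+3}{2}$ and $\frac{n+3}{3}$ (not proper divisors thereof) and that these exceptional orbits are disjoint from one another and from the $(n+3)$-orbits of Lemma~\ref{n+3}. The disjointness largely follows from the same invariants used at the end of the Lemma~\ref{n+3} proof — stacked ranks versus non-stacked-ranks, and whether the orbit meets the maximum element of $P$. I would note that the stacked-rank orbit and the double-hook orbits of size $\frac{n+3}{2}$ are distinguished by whether they consist of stacked ranks, and that the $\frac{n+3}{3}$ orbit is a single stacked-diagonal orbit, so all counts add without overlap. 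Since the corollary only asserts a lower bound (``at least''), I need not argue that no further short orbits exist here.
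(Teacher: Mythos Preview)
Your proposal is correct and follows essentially the same approach as the paper: both proofs extract the exceptional sub-cases from the proof of Lemma~\ref{n+3}---the stacked-rank orbit with $r=\frac{n+1}{2}$ (Case One), the stacked-diagonal orbit $[n/3,n/3,n/3]$ (Case Two), and the double-hook orbits with $j=b$ and $k=a+1$ (Case Four)---and add $1+\frac{n-3}{2}=\frac{n-1}{2}$ for the half-length orbits. Your additional remarks on disjointness and the ``at least'' phrasing are not in the paper's terse proof but are consistent with it and do no harm.
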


\begin{proof}
Let $P = [2] \times [n]$. From the proof of Lemma \ref{n+3} we have the following:
\begin{itemize}
    \item If $n$ is odd, then the interval-closed set $[0, (n+1)/2, (n - 1)/2 : 0, (n-1)/2, (n+1)/2]$ consisting of $\frac{n + 1}{2}$ stacked ranks belongs to an orbit of size $\frac{n + 3}{2}$, containing all other ICS that consist of $(n + 1)/2$ stacked ranks.  (This is the sub-case of Case One in the proof of Lemma~\ref{n+3} when $J = I$.)
    \item If $n \equiv 0 \pmod{3}$ and $I = [n/3, n/3, n/3  :  n/3, n/3, n/3]$, then I belongs to an orbit of size $\frac{n + 3}{3}$. This is a special type of Stacked Diagonals interval-closed set.
    \item If $n$ is odd and $I = [b, a + 1 + b, a :  0, a + b + 1, a + b]$ with $a\geq0, b\geq 2$ and $2a + 2b + 1 = n$, then $I$ belongs to an orbit of size $\frac{n + 3}{2}.$ This is a special type of Double Hook interval-closed set contributing $\frac{n-1}{2} - 1$ orbits of size $\frac{n + 3}{2}.$  
    \qedhere
\end{itemize}
\end{proof}


Next, we consider the average value of signed cardinality over these orbits.

\begin{thm}\label{thm:sc_n+3}
Let $P = [2] \times [n]$ with $n$ odd. The average value of signed cardinality over the orbits of size $n + 3$ described in Lemma \ref{n+3} is $0$.
\end{thm}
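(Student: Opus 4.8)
The plan is to prove that signed cardinality averages to $0$ over each of the four families of orbits in Lemma~\ref{n+3} by exploiting the structural symmetries of the orbits, rather than summing $\SC$ over all $n+3$ elements directly. The key tool is Lemma~\ref{calc_sc(I)}, which reduces $\SC(I)$ to the compact formula $\delta_{i_1}(-1)^{b_1}+\delta_{i_2}(-1)^{b_2+1}$, together with the three structural propositions: $\SC(P)=0$ (Prop.~\ref{P_sc}), complementation $\SC(I)+\SC(\overline{I})=0$ (Prop.~\ref{comp_sc}), and upward-shift $\SC(I)+\SC(I')=0$ (Prop.~\ref{shift_sc}). Since $n$ is odd, each orbit of size $n+3$ has an \emph{even} number of elements, which is what makes a pairing argument plausible.

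The main idea is to pair up the elements of each orbit so that the two members of every pair have opposite signed cardinality. The proof of Lemma~\ref{n+3} already traces out each orbit explicitly as a sequence of rowmotion steps, so I would reuse that trajectory. Within the long runs where rowmotion simply shifts an ICS up one rank at a time (the Type 1a and Type 2a steps), Proposition~\ref{shift_sc} does not immediately apply since it pairs an ICS with its \emph{single} upward shift; instead I expect the cleaner approach is to exploit the two-part symmetry visible in each orbit description. In Case One, the orbit of $I$ (with $r$ stacked ranks) passes through the complementary-region ICS $J$ (with $n-r+1$ stacked ranks), and the trajectory from $I$ to $J$ is structurally mirrored by the trajectory from $J$ back to $I$; I would set up an involution on the orbit sending each ICS to its ``mirror'' partner and check via Lemma~\ref{calc_sc(I)} that the two have opposite $\SC$. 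Similarly, in Case Two the orbit is the disjoint union of three arcs $[b,i,a]\to[a,b,i]\to[i,a,b]\to[b,i,a]$, and in Case Four the orbit is built from the two half-trajectories joined at the Double Hooks $[b,k+j,a:0,b+k,a+j]$ and $[j,a+b+1,k-1:0,j+a+1,b+k-1]$. In each case the halving (or thirding) of the orbit observed in Lemma~\ref{n+3} suggests a sign-reversing symmetry.

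Concretely, for each of the four cases I would (i) list the $\SC$ values along the orbit using the catalog in Lemma~\ref{Lem: CatalogRowmotion2xn} and the closed form of Lemma~\ref{calc_sc(I)}, tracking how $b_1,b_2,i_1,i_2$ evolve and hence how $\delta_{i_1},\delta_{i_2},(-1)^{b_1},(-1)^{b_2+1}$ change; (ii) identify the sign-reversing involution (complementation for the ICS near the top/bottom of the poset, and rank-shift symmetry for the shifting runs); and (iii) conclude that the contributions cancel in pairs so the orbit sum is $0$, hence the average is $0$. The shifting runs deserve care: as an ICS shifts up one rank, $b_1$ and $b_2$ each increase by $1$, so $(-1)^{b_1}$ and $(-1)^{b_2+1}$ both flip sign while $\delta_{i_1},\delta_{i_2}$ are unchanged; thus \emph{consecutive} elements of a shifting run already have opposite signed cardinality, and a run of even length cancels internally. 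This is exactly Proposition~\ref{shift_sc} applied step by step, and it handles most of each orbit cheaply.

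The hard part will be the ``turnaround'' portions of each orbit---the Type 1b, 2b, 3a, 4a, 4b steps where the ICS changes type rather than merely shifting---since there the parameters $i_1,i_2,b_1,b_2$ jump discontinuously and the simple rank-shift cancellation breaks down. I expect to handle these transitional ICS either by checking that they themselves come in complementary pairs (using Prop.~\ref{comp_sc}, which is why the maximal/minimal-element configurations enter) or by a short direct computation with Lemma~\ref{calc_sc(I)} showing the finitely many leftover terms cancel against each other. Verifying that the chosen involution is genuinely fixed-point-free (needed so the pairing is a perfect matching) and matches the orbit-size parities computed in Lemma~\ref{n+3}---e.g.\ that the exceptional half-size orbits flagged in Corollary~\ref{cor_n+3/2} do not spoil the count for odd $n$---is the bookkeeping step most likely to require attention to the boundary sub-cases $r=1$, $a=0$, or $k=1$.
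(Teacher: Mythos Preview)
Your proposal is correct and takes essentially the same approach as the paper: reuse the orbit trajectories from Lemma~\ref{n+3}, cancel shifting runs in pairs via Proposition~\ref{shift_sc}, handle complementary pairs via Proposition~\ref{comp_sc}, and compute the remaining transitional terms directly with Lemma~\ref{calc_sc(I)}. One note: the paper does not package Cases Three and Four as a single sign-reversing involution but instead tabulates the $\SC$ contributions according to the parities of the parameters $(i,t)$ or $(a,b,j,k)$ and checks that each row sums to zero---this is exactly the ``short direct computation'' fallback you anticipated for the turnaround steps.
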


We prove Theorem \ref{thm:sc_n+3} in cases based on the representatives for the orbits.

\begin{proof}[Proof of Case One: Stacked Ranks.]

Consider the orbit $\mathcal{O}$ containing $I = [0, r, n - r : 0, r - 1, n + 1 - r]$, the bottom $r$ stacked ranks, for $1 \leq r < n/2$.  The proof of Lemma \ref{n+3} shows that this orbit also contains the order ideal $J = [0, n - r + 1, r - 1 : 0, n - r, r]$, their complementary order filters $\overline{I}$ and $\overline{J}$, $n - r$ ICS that consist of $r$ stacked ranks and do not include the maximum or minimum elements of $P$, and $r - 1$ ICS that consist of $n + 1 - r$ stacked ranks and do not include the maximum or minimum elements of $P$.  By Proposition \ref{comp_sc}, $\SC(I) + \SC(\overline{I}) = \SC(J) + \SC(\overline{J}) = 0$.  Now consider the other ICS in $\mathcal{O}$.  If $r$ is even, so is $n + 1 - r$, and so the other ICS in $\mathcal{O}$ all have signed cardinality $0$, and we're done.  Otherwise, $n - r$ and $r - 1$ are even, so the other ICS $\row^i(I)$ for $i = 1, \ldots, r - 1$ and $\row^j(J)$ for $j = 1, \ldots, n - r$ can be split up into pairs in which one ICS is the result of shifting the other up by one rank.  Thus in this case the signed cardinalities sum to zero by Proposition \ref{shift_sc}.
\end{proof}

\begin{proof}[Proof of Case Two: Stacked Diagonals.]

The second case of Lemma \ref{n+3} consisted of those orbits that contained a Stacked Diagonal ICS $I = [b, i, a : b, i, a]$ where $b + i + a = n$ and $b, i, a > 0$.  Then $\SC(I) = 0$ by Lemma \ref{calc_sc(I)}.  Consider the $a$ ICS that follow $I$ in its orbit, all of which are shifts of the Disjoint ICS $\row(I) = [b + 1, i, a-1 :  0, b, i + a]$. If $b$ and $i$ have the same parity, then $\SC(\row(I)) = \SC(\row^j(I)) = 0$ for $j = 1, \ldots, a$ by Lemma \ref{calc_sc(I)}.  Otherwise, $b$ and $i$ have different parities, so $a = n - b - i$ is even.  In this case, we can divide these $a$ ICS into pairs that differ by a single upwards shift, so their signed cardinalities sum to $0$ by Proposition \ref{shift_sc}.  Thus in either case the total signed cardinality over this portion of the orbit is $0$.  The same analysis applies to the other two thirds of the orbit (beginning with $[i, a, b]$ and $[a, b, i]$ instead of $[b, i, a]$), so the average signed cardinality over the whole orbit is $0$.
\end{proof}

\begin{proof}[Proof of Case Three: Second Hooks.]

In Lemma \ref{n+3}, we showed that the orbit containing $I = [b, i, 0 :  b - t, t + i, 0]$ with $i \geq 1$, $t \geq 2$, and $b - t \geq 1$, consists of the following $n + 3$ ICS:
\begin{itemize}
    \item $I$ and $\row(I) = \overline{I},$
    \item the Double Hook interval-closed set $[1, b, i - 1 :  0, b - t + 1, i + t - 1]$, and its $i - 1$ upward shifts,
    \item the Second Hook interval-closed set $[i + 1, b - t, t - 1 :  i, b - t + 1, t - 1],$
    \item the Disjoint interval-closed set $[i + 2, b - t, t - 2 :  0, i, b]$, and its $t - 2$ upward shifts,
    \item the First Hook interval-closed set $[t - 1, i + 1, b - t :  t - 1, i, b - t + 1],$ and
    \item the Double Hook interval-closed set $[t, i + 1, b - t - 1 :  0, i + t, b - t]$, and its $b - t - 1$ upward shifts.
\end{itemize}

First, we consider the signed cardinality of the interval-closed sets involved in the orbit that are not shifted upward through rowmotion. By Proposition \ref{comp_sc}, the complement of $I$ has signed cardinality that cancels with the signed cardinality of $I$. By Lemma \ref{calc_sc(I)}, $\SC([i + 1, b - t, t - 1 :  i, b - t + 1, t - 1])$ is $-1$ if $i$ is even and $+1$ if $i$ is odd, and $\SC([t - 1, i + 1 ,b - t :  t - 1, i, b - t + 1])$ is $+1$ if $b-t$ is even and  $-1$ if $b - t$ is odd.  The same kinds of calculations apply in the other cases; we combine them with Proposition \ref{shift_sc} to deal with the cases where the ICS is shifted upward.  The results of this calculation (grouping the shifts together, and using the fact that $b + i = n$, so $b$ and $i$ always have opposite parities) are recorded in the following table.
\begin{center}
\begin{tabular}[H]{ | m{29em} | m{1cm}| m{1cm} |  m{1cm}| m{1cm} |} 
  \hline
   & $i$ odd, $t$ even & $i$ odd, $t$ odd & $i$ even, $t$ even& $i$ even, $t$ odd \\ 
  \hline
$I$ and $\row(I) = \overline{I}$ & $0$ & $0$ & $0$ & $0$ \\ 
  \hline
Double Hook $[1, b, i - 1 :  0, b - t + 1, i + t - 1]$, shifted $i - 1$ times & $-1$ & $0$ & $0$ & $0$ \\ 
  \hline
  Second Hook $[i + 1, b - t, t - 1 :  i, b - t + 1, t - 1]$ & $+1$ & $+1$ & $-1$ & $-1$ \\ 
  \hline
   Disjoint $[i + 2, b - t, t - 2 :  0, i, b]$, shifted $t - 2$ times & $-1$ & $0$ & $+1$ & $0$ \\ 
  \hline
First Hook $[t - 1, i + 1 ,b - t :  t - 1, i, b - t + 1]$ & $+1$ & $-1$ & $-1$ & $+1$ \\ 
  \hline
Double Hook $[t, i + 1, b - t - 1 :  0, i + t, b - t]$, shifted $b - t - 1$ times & $0$ & $0$ & $+1$ & $0$ \\ 
  \hline
\end{tabular}
\end{center}
Thus, in all cases, signed cardinality over the orbit sums to $0$, as claimed.
\end{proof}

\begin{proof}[Proof of Case Four: Double Hooks]
Given integers $a, b, j, k$ such that $a + b + j + k = n$, let $I(a, b, j, k) = [b, k + j, a : 0, b + k, a + j]$ when that makes sense.  In Lemma \ref{n+3}, we showed that when $a \geq 0$, $b \geq 2$, $j \geq 2$, and $k \geq 1$, the orbit containing the Double Hook ICS $I(a, b, j, k)$ consists of the following ICS:
\begin{itemize}
    \item $I(a, b, j, k)$ and its $a$ upward shifts,
    \item the Second Hook ICS $[a + 1 + b, k, j-1 :  a+1, b+k, j-1]$,
    \item the Disjoint ICS $I(j - 2, a + b + 2, b + k + 1, -b - 1)$ and its $j - 2$ upward shifts,
    \item the First Hook ICS $[j-1, a + b + 1, k :  j-1, a + 1, b + k],$ and
    \item the ICS that we get by making the substitution $a \mapsto k - 1$, $b \mapsto j$, $j \mapsto b$, and $k \mapsto a + 1$ in the preceding four bullet-points.
\end{itemize}
By Lemma~\ref{shift_sc}, the total signed cardinality of an ICS $J$ and $m$ of its upward shifts is $\delta_{m + 1} \cdot \SC(J)$.  Therefore, by Lemma \ref{calc_sc(I)}, the total contributions to signed cardinality from each of the cases in the list above are respectively
\begin{itemize}
    \item $\left(\delta_{k + j}\cdot (-1)^b - \delta_{b + k}\right) \cdot \delta_{a + 1}$,
    \item $\delta_k \cdot (-1)^{a + b + 1} + \delta_{b + k} \cdot (-1)^a$,
    \item $\left(\delta_{k}\cdot (-1)^{a + b} - \delta_{a + 1}\right) \cdot \delta_{j - 1}$,
    \item $\delta_{a + b + 1} \cdot (-1)^{j - 1} + \delta_{a + 1} \cdot (-1)^j$, and
    \item respectively \quad $\left(\delta_{a + b + 1}\cdot (-1)^j - \delta_{a + j + 1}\right) \cdot \delta_{k}$, \qquad
        $\delta_{a + 1} \cdot (-1)^{j + k} + \delta_{a + j + 1} \cdot (-1)^{k - 1}$, \\
        $\left(\delta_{a + 1}\cdot (-1)^{j + k - 1} - \delta_{k}\right) \cdot \delta_{b - 1}$, \quad
        and \quad $\delta_{j + k} \cdot (-1)^{b - 1} + \delta_{k} \cdot (-1)^b$.
\end{itemize}
The following chart records each of these eight values in order, depending on the parities of $a$, $b$, $j$, and $k$, subject to the requirement that $a + b + j + k = n$ is odd:
\begin{center}
\begin{tabular}[H]{|c|c|c|c||c|c|c|c|c|c|c|c|} 
  \hline
$a$ & $b$ & $j$ & $k$ &&&&&&&& \\ 
  \hline\hline
even & even & even &  odd & $0$ & $0$ & $0$ & $0$ & $0$ & $0$ & $0$ & $0$ \\\hline
even & even &  odd & even & $+1$& $0$ & $0$ & $0$ & $0$ & $-1$& $+1$& $-1$\\\hline
even &  odd & even & even & $-1$& $+1$& $-1$& $+1$& $0$ & $0$ & $0$ & $0$ \\\hline
even &  odd &  odd &  odd & $0$ & $+1$& $0$ & $-1$& $0$ & $+1$& $0$ & $-1$\\\hline
 odd & even & even & even & $0$ & $0$ & $0$ & $0$ & $0$ & $0$ & $0$ & $0$ \\\hline
 odd & even &  odd &  odd & $0$ & $0$ & $0$ & $0$ & $-1$& $+1$& $-1$& $+1$\\\hline
 odd &  odd & even &  odd & $0$ & $-1$& $+1$& $-1$& $+1$& $0$ & $0$ & $0$ \\\hline
 odd &  odd &  odd & even & $0$ & $-1$& $0$ & $+1$& $0$ & $-1$& $0$ & $+1$\\\hline
\end{tabular}
\end{center}
In all eight cases, we see that the total signed cardinality (the sum across each row) is $0$, as needed.
\end{proof}

\begin{cor}\label{cor:sc_n+3}
Let $P = [2] \times [n]$ with $n$ odd. The average signed cardinality over the orbits of size $\frac{n+3}{3}$ or $\frac{n+3}{2}$ described in Corollary \ref{cor_n+3/2} is $0$.
\end{cor}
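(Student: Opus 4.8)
The plan is to handle separately the three families of special orbits identified in Corollary~\ref{cor_n+3/2} and to show that the signed cardinality sums to $0$ over each of them (so that the average is $0$). These families are: the single orbit consisting of all ICS made of $(n+1)/2$ stacked ranks (of size $(n+3)/2$); the orbit of the central Stacked Diagonal $[n/3,n/3,n/3:n/3,n/3,n/3]$ when $3\mid n$ (of size $(n+3)/3$); and the special Double Hooks $[b,a+1+b,a:0,a+b+1,a+b]$ with $a\ge 0$, $b\ge 2$, and $2a+2b+1=n$ (of size $(n+3)/2$). The unifying observation is that each such orbit is a \emph{folded} version of a generic orbit of size $n+3$ from Lemma~\ref{n+3}, so I can specialize or reuse the signed-cardinality bookkeeping already carried out in the proof of Theorem~\ref{thm:sc_n+3}.

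For the stacked-rank orbit I would run the Case One argument of Theorem~\ref{thm:sc_n+3} with $r=(n+1)/2$, in which case $J=I$ and the orbit reduces to the order ideal $I$, its complementary order filter $\overline{I}$, and the $(n-1)/2$ interior ICS built from $(n+1)/2$ consecutive full ranks, each obtained from the previous by an upward shift. Proposition~\ref{comp_sc} cancels $\SC(I)+\SC(\overline{I})$, and the interior stacks are handled by parity: if $(n+1)/2$ is even then Lemma~\ref{calc_sc(I)} makes each interior stack have $\SC=0$, while if $(n+1)/2$ is odd then $(n-1)/2$ is even, so the interior stacks pair into upward shifts that cancel by Proposition~\ref{shift_sc}. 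The central Stacked Diagonal is even easier: this is the $b=i$ specialization of Case Two of Theorem~\ref{thm:sc_n+3}, and since $n$ odd forces $n/3$ odd, a direct application of Lemma~\ref{calc_sc(I)} shows that the Stacked Diagonal and each of the $n/3$ Disjoint ICS that follow it (mutual upward shifts) individually have $\SC=0$.

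The substantive case is the Double Hook family. Here I would note that the orbit of $[b,a+1+b,a:0,a+b+1,a+b]$ is exactly the first half of the generic Case Four orbit of Lemma~\ref{n+3}, consisting of $I$ and its $a$ upward shifts, one Second Hook, one Disjoint ICS together with its $b-2$ shifts, and one First Hook, for a total of $a+b+2=(n+3)/2$ ICS. The crucial point is that the substitution $a\mapsto k-1,\ b\mapsto j,\ j\mapsto b,\ k\mapsto a+1$ relating the two halves of a generic Case Four orbit acts as the identity precisely when $j=b$ and $k=a+1$, which is exactly the degeneracy condition defining our orbit. Consequently the four first-half signed-cardinality contributions tabulated in Case Four coincide with the four second-half contributions, so the full eight-term total (which the Case Four parity table shows is $0$ in every admissible parity class with $a+b+j+k=n$ odd) equals twice the half-orbit total; hence the half-orbit total is $0$. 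Equivalently, I can simply read off the rows of that table compatible with $j\equiv b$ and $k\not\equiv a \pmod 2$ and check that the first four entries already sum to $0$.

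I expect the folding identification for the Double Hooks to be the delicate step: I must verify carefully that the degeneracy locus $\{j=b,\ k=a+1\}$ is exactly the fixed-point set of the half-swap substitution, and that the four contributions listed first in Case Four are the ones governing the specialized orbit. Once this is confirmed, the corollary follows from the already-tabulated computation, and the stacked-rank and stacked-diagonal families are routine specializations of Cases One and Two of Theorem~\ref{thm:sc_n+3}.
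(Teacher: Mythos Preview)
Your proposal is correct and follows essentially the same approach as the paper: both arguments handle the three families from Corollary~\ref{cor_n+3/2} by specializing the corresponding cases (One, Two, and Four) of the proof of Theorem~\ref{thm:sc_n+3}. Your folding/doubling argument for the Double Hook family---observing that the half-swap substitution $a\mapsto k-1,\ b\mapsto j,\ j\mapsto b,\ k\mapsto a+1$ fixes exactly the degeneracy locus, so the eight tabulated contributions collapse to twice the half-orbit total---is a slightly cleaner justification than the paper's, which simply points to the relevant rows and first four columns of the Case~Four parity table and checks they sum to $0$; but the content is the same.
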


\begin{proof}
When $n$ is congruent to $0 \pmod{3}$, $P$ has a single orbit of size $\frac{n + 3}{3}$ with representative $I = [n/3,n/3,n/3 : n/3, n/3, n/3].$ In the proof of Theorem \ref{thm:sc_n+3} we showed that orbits of this type have signed cardinality 0.

When $n$ is odd, $P$ has $\frac{n-1}{2}$ orbits of size $\frac{n+3}{2}.$ One of these consists of interval-closed sets made up of stacked ranks. In the proof of Theorem \ref{thm:sc_n+3} we showed that orbits of this type have signed cardinality 0. The other $\frac{n-3}{2}$ orbits have representative $I = [b, a + 1 + b, a :  0, a + b + 1, a + b]$ with $a\geq0, b\geq 2$ and $2a + 2b + 1 = n$. This is given in the fourth case of Theorem \ref{thm:sc_n+3}, specifically when $j$ and $b$ have the same parity, $a$ and $i$ have differing parity, and the orbit only goes through one rotation of Double Hook ICS, Second Hook ICS, Disjoint ICS, First Hook ICS (which is why the orbit size is smaller). In the proof of Theorem \ref{thm:sc_n+3}, this corresponds to only the first two columns of each table and only looking at the first half of the tables. In each case, we can see the average signed cardinality over the orbit is 0.
\end{proof}


To illustrate the four types of orbits discussed in Lemma \ref{n+3}, we include the following examples. The boxed values in the figures are the signed cardinality for the given interval-closed sets.

\begin{example}\label{ex1_n+3} Let $n = 7$ and $P = [2] \times [n]$. Then the following illustrates an orbit of size $n+3$ falling under Case One starting with $I = [0, 3, 4: 0, 2, 5]$. 
\begin{center}
\begin{minipage}[c]{2.5cm}
\begin{tikzpicture}[scale = .45]
\draw [-, ultra thick] (-1,0) -- (-2,1);
\draw [-, ultra thick] (-2,1) -- (-3,2);
\draw [-, ultra thick] (-3,2) -- (-4,3);
\draw [-, ultra thick] (-4,3) -- (-5,4);
\draw [-, ultra thick] (-5,4) -- (-6,5);
\draw [-, ultra thick] (-3,0) -- (-4,1);
\draw [-, ultra thick] (-2,-1) -- (-3,0);
\draw [-, ultra thick] (-4,1) -- (-5,2);
\draw [-, ultra thick] (-5,2) -- (-6,3);
\draw [-, ultra thick] (-6,3) -- (-7,4);
\draw [-, ultra thick] (-7,4) -- (-8,5);
\draw [-, ultra thick] (-6,5) -- (-7,6);
\draw [-, ultra thick] (-1,0) -- (-2,-1);
\draw [-, ultra thick] (-2,1) -- (-3,0);
\draw [-, ultra thick] (-3,2) -- (-4,1);
\draw [-, ultra thick] (-4,3) -- (-5,2);
\draw [-, ultra thick] (-5,4) -- (-6,3);
\draw [-, ultra thick] (-6,5) -- (-7,4);
\draw [-, ultra thick] (-7,6) -- (-8,5);
\draw[fill=red, radius = .2] (-1,0) circle [radius = 0.2];
\draw[fill=red, radius = .2] (-2,1) circle [radius = 0.2];
\draw[fill=white, radius = .2] (-3,2) circle [radius = 0.2];
\draw[fill=white, radius = .2] (-4,3) circle [radius = 0.2];
\draw[fill=white, radius = .2] (-5,4) circle [radius = 0.2];
\draw[fill=white, radius = .2] (-6,5) circle [radius = 0.2];
\draw[fill=white, radius = .2] (-7,6) circle [radius = 0.2];
\draw[fill=red, radius = .2] (-2,-1) circle [radius = 0.2];
\draw[fill=red, radius = .2] (-3,0) circle [radius = 0.2];
\draw[fill=red, radius = .2] (-4,1) circle [radius = 0.2];
\draw[fill=white, radius = .2] (-5,2) circle [radius = 0.2];
\draw[fill=white, radius = .2] (-6,3) circle [radius = 0.2];
\draw[fill=white, radius = .2] (-7,4) circle [radius = 0.2];
\draw[fill=white, radius = .2] (-8,5) circle [radius = 0.2];
\node[draw,align=left] at (-5,0){$+1$};
\end{tikzpicture}
  \end{minipage}
\begin{minipage}[c]{.1cm}$\quad\xrightarrow{\text{Row}}$\end{minipage}
  \begin{minipage}[c]{2.5cm}
\begin{tikzpicture}[scale = .45]
\draw [-, ultra thick] (-1,0) -- (-2,1);
\draw [-, ultra thick] (-2,1) -- (-3,2);
\draw [-, ultra thick] (-3,2) -- (-4,3);
\draw [-, ultra thick] (-4,3) -- (-5,4);
\draw [-, ultra thick] (-5,4) -- (-6,5);
\draw [-, ultra thick] (-3,0) -- (-4,1);
\draw [-, ultra thick] (-2,-1) -- (-3,0);
\draw [-, ultra thick] (-4,1) -- (-5,2);
\draw [-, ultra thick] (-5,2) -- (-6,3);
\draw [-, ultra thick] (-6,3) -- (-7,4);
\draw [-, ultra thick] (-7,4) -- (-8,5);
\draw [-, ultra thick] (-6,5) -- (-7,6);
\draw [-, ultra thick] (-1,0) -- (-2,-1);
\draw [-, ultra thick] (-2,1) -- (-3,0);
\draw [-, ultra thick] (-3,2) -- (-4,1);
\draw [-, ultra thick] (-4,3) -- (-5,2);
\draw [-, ultra thick] (-5,4) -- (-6,3);
\draw [-, ultra thick] (-6,5) -- (-7,4);
\draw [-, ultra thick] (-7,6) -- (-8,5);
\draw[fill=red, radius = .2] (-1,0) circle [radius = 0.2];
\draw[fill=red, radius = .2] (-2,1) circle [radius = 0.2];
\draw[fill=red, radius = .2] (-3,2) circle [radius = 0.2];
\draw[fill=white, radius = .2] (-4,3) circle [radius = 0.2];
\draw[fill=white, radius = .2] (-5,4) circle [radius = 0.2];
\draw[fill=white, radius = .2] (-6,5) circle [radius = 0.2];
\draw[fill=white, radius = .2] (-7,6) circle [radius = 0.2];
\draw[fill=white, radius = .2] (-2,-1) circle [radius = 0.2];
\draw[fill=red, radius = .2] (-3,0) circle [radius = 0.2];
\draw[fill=red, radius = .2] (-4,1) circle [radius = 0.2];
\draw[fill=red, radius = .2] (-5,2) circle [radius = 0.2];
\draw[fill=white, radius = .2] (-6,3) circle [radius = 0.2];
\draw[fill=white, radius = .2] (-7,4) circle [radius = 0.2];
\draw[fill=white, radius = .2] (-8,5) circle [radius = 0.2];
\node[draw,align=left] at (-5,0){$-2$};
\end{tikzpicture}
  \end{minipage}
\begin{minipage}[c]{.1cm}$\quad\xrightarrow{\text{Row}}$\end{minipage}
\begin{minipage}[c]{2.5cm}
\begin{tikzpicture}[scale = .45]
\draw [-, ultra thick] (-1,0) -- (-2,1);
\draw [-, ultra thick] (-2,1) -- (-3,2);
\draw [-, ultra thick] (-3,2) -- (-4,3);
\draw [-, ultra thick] (-4,3) -- (-5,4);
\draw [-, ultra thick] (-5,4) -- (-6,5);
\draw [-, ultra thick] (-3,0) -- (-4,1);
\draw [-, ultra thick] (-2,-1) -- (-3,0);
\draw [-, ultra thick] (-4,1) -- (-5,2);
\draw [-, ultra thick] (-5,2) -- (-6,3);
\draw [-, ultra thick] (-6,3) -- (-7,4);
\draw [-, ultra thick] (-7,4) -- (-8,5);
\draw [-, ultra thick] (-6,5) -- (-7,6);
\draw [-, ultra thick] (-1,0) -- (-2,-1);
\draw [-, ultra thick] (-2,1) -- (-3,0);
\draw [-, ultra thick] (-3,2) -- (-4,1);
\draw [-, ultra thick] (-4,3) -- (-5,2);
\draw [-, ultra thick] (-5,4) -- (-6,3);
\draw [-, ultra thick] (-6,5) -- (-7,4);
\draw [-, ultra thick] (-7,6) -- (-8,5);
\draw[fill=white, radius = .2] (-1,0) circle [radius = 0.2];
\draw[fill=red, radius = .2] (-2,1) circle [radius = 0.2];
\draw[fill=red, radius = .2] (-3,2) circle [radius = 0.2];
\draw[fill=red, radius = .2] (-4,3) circle [radius = 0.2];
\draw[fill=white, radius = .2] (-5,4) circle [radius = 0.2];
\draw[fill=white, radius = .2] (-6,5) circle [radius = 0.2];
\draw[fill=white, radius = .2] (-7,6) circle [radius = 0.2];
\draw[fill=white, radius = .2] (-2,-1) circle [radius = 0.2];
\draw[fill=white, radius = .2] (-3,0) circle [radius = 0.2];
\draw[fill=red, radius = .2] (-4,1) circle [radius = 0.2];
\draw[fill=red, radius = .2] (-5,2) circle [radius = 0.2];
\draw[fill=red, radius = .2] (-6,3) circle [radius = 0.2];
\draw[fill=white, radius = .2] (-7,4) circle [radius = 0.2];
\draw[fill=white, radius = .2] (-8,5) circle [radius = 0.2];
\node[draw,align=left] at (-5,0){$+2$};
\end{tikzpicture}
  \end{minipage}
\begin{minipage}[c]{.1cm}$\quad\xrightarrow{\text{Row}}$\end{minipage}
  \begin{minipage}[c]{2.5cm}
\begin{tikzpicture}[scale = .45]
\draw [-, ultra thick] (-1,0) -- (-2,1);
\draw [-, ultra thick] (-2,1) -- (-3,2);
\draw [-, ultra thick] (-3,2) -- (-4,3);
\draw [-, ultra thick] (-4,3) -- (-5,4);
\draw [-, ultra thick] (-5,4) -- (-6,5);
\draw [-, ultra thick] (-3,0) -- (-4,1);
\draw [-, ultra thick] (-2,-1) -- (-3,0);
\draw [-, ultra thick] (-4,1) -- (-5,2);
\draw [-, ultra thick] (-5,2) -- (-6,3);
\draw [-, ultra thick] (-6,3) -- (-7,4);
\draw [-, ultra thick] (-7,4) -- (-8,5);
\draw [-, ultra thick] (-6,5) -- (-7,6);
\draw [-, ultra thick] (-1,0) -- (-2,-1);
\draw [-, ultra thick] (-2,1) -- (-3,0);
\draw [-, ultra thick] (-3,2) -- (-4,1);
\draw [-, ultra thick] (-4,3) -- (-5,2);
\draw [-, ultra thick] (-5,4) -- (-6,3);
\draw [-, ultra thick] (-6,5) -- (-7,4);
\draw [-, ultra thick] (-7,6) -- (-8,5);
\draw[fill=white, radius = .2] (-1,0) circle [radius = 0.2];
\draw[fill=white, radius = .2] (-2,1) circle [radius = 0.2];
\draw[fill=red, radius = .2] (-3,2) circle [radius = 0.2];
\draw[fill=red, radius = .2] (-4,3) circle [radius = 0.2];
\draw[fill=red, radius = .2] (-5,4) circle [radius = 0.2];
\draw[fill=white, radius = .2] (-6,5) circle [radius = 0.2];
\draw[fill=white, radius = .2] (-7,6) circle [radius = 0.2];
\draw[fill=white, radius = .2] (-2,-1) circle [radius = 0.2];
\draw[fill=white, radius = .2] (-3,0) circle [radius = 0.2];
\draw[fill=white, radius = .2] (-4,1) circle [radius = 0.2];
\draw[fill=red, radius = .2] (-5,2) circle [radius = 0.2];
\draw[fill=red, radius = .2] (-6,3) circle [radius = 0.2];
\draw[fill=red, radius = .2] (-7,4) circle [radius = 0.2];
\draw[fill=white, radius = .2] (-8,5) circle [radius = 0.2];
\node[draw,align=left] at (-5,0){$-2$};
\end{tikzpicture}
  \end{minipage}
\begin{minipage}[c]{.1cm}$\quad\xrightarrow{\text{Row}}$\end{minipage}
  \begin{minipage}[c]{2.5cm}
\begin{tikzpicture}[scale = .45]
\draw [-, ultra thick] (-1,0) -- (-2,1);
\draw [-, ultra thick] (-2,1) -- (-3,2);
\draw [-, ultra thick] (-3,2) -- (-4,3);
\draw [-, ultra thick] (-4,3) -- (-5,4);
\draw [-, ultra thick] (-5,4) -- (-6,5);
\draw [-, ultra thick] (-3,0) -- (-4,1);
\draw [-, ultra thick] (-2,-1) -- (-3,0);
\draw [-, ultra thick] (-4,1) -- (-5,2);
\draw [-, ultra thick] (-5,2) -- (-6,3);
\draw [-, ultra thick] (-6,3) -- (-7,4);
\draw [-, ultra thick] (-7,4) -- (-8,5);
\draw [-, ultra thick] (-6,5) -- (-7,6);
\draw [-, ultra thick] (-1,0) -- (-2,-1);
\draw [-, ultra thick] (-2,1) -- (-3,0);
\draw [-, ultra thick] (-3,2) -- (-4,1);
\draw [-, ultra thick] (-4,3) -- (-5,2);
\draw [-, ultra thick] (-5,4) -- (-6,3);
\draw [-, ultra thick] (-6,5) -- (-7,4);
\draw [-, ultra thick] (-7,6) -- (-8,5);
\draw[fill=white, radius = .2] (-1,0) circle [radius = 0.2];
\draw[fill=white, radius = .2] (-2,1) circle [radius = 0.2];
\draw[fill=white, radius = .2] (-3,2) circle [radius = 0.2];
\draw[fill=red, radius = .2] (-4,3) circle [radius = 0.2];
\draw[fill=red, radius = .2] (-5,4) circle [radius = 0.2];
\draw[fill=red, radius = .2] (-6,5) circle [radius = 0.2];
\draw[fill=white, radius = .2] (-7,6) circle [radius = 0.2];
\draw[fill=white, radius = .2] (-2,-1) circle [radius = 0.2];
\draw[fill=white, radius = .2] (-3,0) circle [radius = 0.2];
\draw[fill=white, radius = .2] (-4,1) circle [radius = 0.2];
\draw[fill=white, radius = .2] (-5,2) circle [radius = 0.2];
\draw[fill=red, radius = .2] (-6,3) circle [radius = 0.2];
\draw[fill=red, radius = .2] (-7,4) circle [radius = 0.2];
\draw[fill=red, radius = .2] (-8,5) circle [radius = 0.2];
\node[draw,align=left] at (-5,0){$+2$};
\end{tikzpicture}
  \end{minipage}
  \begin{minipage}[c]{.1cm}$\quad\xrightarrow{\text{Row}}$\end{minipage}
\end{center}


\begin{center}
\begin{minipage}[c]{2.5cm}
\begin{tikzpicture}[scale = .45]
\draw [-, ultra thick] (-1,0) -- (-2,1);
\draw [-, ultra thick] (-2,1) -- (-3,2);
\draw [-, ultra thick] (-3,2) -- (-4,3);
\draw [-, ultra thick] (-4,3) -- (-5,4);
\draw [-, ultra thick] (-5,4) -- (-6,5);
\draw [-, ultra thick] (-3,0) -- (-4,1);
\draw [-, ultra thick] (-2,-1) -- (-3,0);
\draw [-, ultra thick] (-4,1) -- (-5,2);
\draw [-, ultra thick] (-5,2) -- (-6,3);
\draw [-, ultra thick] (-6,3) -- (-7,4);
\draw [-, ultra thick] (-7,4) -- (-8,5);
\draw [-, ultra thick] (-6,5) -- (-7,6);
\draw [-, ultra thick] (-1,0) -- (-2,-1);
\draw [-, ultra thick] (-2,1) -- (-3,0);
\draw [-, ultra thick] (-3,2) -- (-4,1);
\draw [-, ultra thick] (-4,3) -- (-5,2);
\draw [-, ultra thick] (-5,4) -- (-6,3);
\draw [-, ultra thick] (-6,5) -- (-7,4);
\draw [-, ultra thick] (-7,6) -- (-8,5);
\draw[fill=white, radius = .2] (-1,0) circle [radius = 0.2];
\draw[fill=white, radius = .2] (-2,1) circle [radius = 0.2];
\draw[fill=white, radius = .2] (-3,2) circle [radius = 0.2];
\draw[fill=white, radius = .2] (-4,3) circle [radius = 0.2];
\draw[fill=red, radius = .2] (-5,4) circle [radius = 0.2];
\draw[fill=red, radius = .2] (-6,5) circle [radius = 0.2];
\draw[fill=red, radius = .2] (-7,6) circle [radius = 0.2];
\draw[fill=white, radius = .2] (-2,-1) circle [radius = 0.2];
\draw[fill=white, radius = .2] (-3,0) circle [radius = 0.2];
\draw[fill=white, radius = .2] (-4,1) circle [radius = 0.2];
\draw[fill=white, radius = .2] (-5,2) circle [radius = 0.2];
\draw[fill=white, radius = .2] (-6,3) circle [radius = 0.2];
\draw[fill=red, radius = .2] (-7,4) circle [radius = 0.2];
\draw[fill=red, radius = .2] (-8,5) circle [radius = 0.2];
\node[draw,align=left] at (-5,0){$-1$};
\end{tikzpicture}
  \end{minipage}
\begin{minipage}[c]{.1cm}$\quad\xrightarrow{\text{Row}}$\end{minipage}
  \begin{minipage}[c]{2.5cm}
\begin{tikzpicture}[scale = .45]
\draw [-, ultra thick] (-1,0) -- (-2,1);
\draw [-, ultra thick] (-2,1) -- (-3,2);
\draw [-, ultra thick] (-3,2) -- (-4,3);
\draw [-, ultra thick] (-4,3) -- (-5,4);
\draw [-, ultra thick] (-5,4) -- (-6,5);
\draw [-, ultra thick] (-3,0) -- (-4,1);
\draw [-, ultra thick] (-2,-1) -- (-3,0);
\draw [-, ultra thick] (-4,1) -- (-5,2);
\draw [-, ultra thick] (-5,2) -- (-6,3);
\draw [-, ultra thick] (-6,3) -- (-7,4);
\draw [-, ultra thick] (-7,4) -- (-8,5);
\draw [-, ultra thick] (-6,5) -- (-7,6);
\draw [-, ultra thick] (-1,0) -- (-2,-1);
\draw [-, ultra thick] (-2,1) -- (-3,0);
\draw [-, ultra thick] (-3,2) -- (-4,1);
\draw [-, ultra thick] (-4,3) -- (-5,2);
\draw [-, ultra thick] (-5,4) -- (-6,3);
\draw [-, ultra thick] (-6,5) -- (-7,4);
\draw [-, ultra thick] (-7,6) -- (-8,5);
\draw[fill=red, radius = .2] (-1,0) circle [radius = 0.2];
\draw[fill=red, radius = .2] (-2,1) circle [radius = 0.2];
\draw[fill=red, radius = .2] (-3,2) circle [radius = 0.2];
\draw[fill=red, radius = .2] (-4,3) circle [radius = 0.2];
\draw[fill=white, radius = .2] (-5,4) circle [radius = 0.2];
\draw[fill=white, radius = .2] (-6,5) circle [radius = 0.2];
\draw[fill=white, radius = .2] (-7,6) circle [radius = 0.2];
\draw[fill=red, radius = .2] (-2,-1) circle [radius = 0.2];
\draw[fill=red, radius = .2] (-3,0) circle [radius = 0.2];
\draw[fill=red, radius = .2] (-4,1) circle [radius = 0.2];
\draw[fill=red, radius = .2] (-5,2) circle [radius = 0.2];
\draw[fill=red, radius = .2] (-6,3) circle [radius = 0.2];
\draw[fill=white, radius = .2] (-7,4) circle [radius = 0.2];
\draw[fill=white, radius = .2] (-8,5) circle [radius = 0.2];
\node[draw,align=left] at (-5,0){$+1$};
\end{tikzpicture}
  \end{minipage}
\begin{minipage}[c]{.1cm}$\quad\xrightarrow{\text{Row}}$\end{minipage}
\begin{minipage}[c]{2.5cm}
\begin{tikzpicture}[scale = .45]
\draw [-, ultra thick] (-1,0) -- (-2,1);
\draw [-, ultra thick] (-2,1) -- (-3,2);
\draw [-, ultra thick] (-3,2) -- (-4,3);
\draw [-, ultra thick] (-4,3) -- (-5,4);
\draw [-, ultra thick] (-5,4) -- (-6,5);
\draw [-, ultra thick] (-3,0) -- (-4,1);
\draw [-, ultra thick] (-2,-1) -- (-3,0);
\draw [-, ultra thick] (-4,1) -- (-5,2);
\draw [-, ultra thick] (-5,2) -- (-6,3);
\draw [-, ultra thick] (-6,3) -- (-7,4);
\draw [-, ultra thick] (-7,4) -- (-8,5);
\draw [-, ultra thick] (-6,5) -- (-7,6);
\draw [-, ultra thick] (-1,0) -- (-2,-1);
\draw [-, ultra thick] (-2,1) -- (-3,0);
\draw [-, ultra thick] (-3,2) -- (-4,1);
\draw [-, ultra thick] (-4,3) -- (-5,2);
\draw [-, ultra thick] (-5,4) -- (-6,3);
\draw [-, ultra thick] (-6,5) -- (-7,4);
\draw [-, ultra thick] (-7,6) -- (-8,5);
\draw[fill=red, radius = .2] (-1,0) circle [radius = 0.2];
\draw[fill=red, radius = .2] (-2,1) circle [radius = 0.2];
\draw[fill=red, radius = .2] (-3,2) circle [radius = 0.2];
\draw[fill=red, radius = .2] (-4,3) circle [radius = 0.2];
\draw[fill=red, radius = .2] (-5,4) circle [radius = 0.2];
\draw[fill=white, radius = .2] (-6,5) circle [radius = 0.2];
\draw[fill=white, radius = .2] (-7,6) circle [radius = 0.2];
\draw[fill=white, radius = .2] (-2,-1) circle [radius = 0.2];
\draw[fill=red, radius = .2] (-3,0) circle [radius = 0.2];
\draw[fill=red, radius = .2] (-4,1) circle [radius = 0.2];
\draw[fill=red, radius = .2] (-5,2) circle [radius = 0.2];
\draw[fill=red, radius = .2] (-6,3) circle [radius = 0.2];
\draw[fill=red, radius = .2] (-7,4) circle [radius = 0.2];
\draw[fill=white, radius = .2] (-8,5) circle [radius = 0.2];
\node[draw,align=left] at (-5,0){$-2$};
\end{tikzpicture}
  \end{minipage}
\begin{minipage}[c]{.1cm}$\quad\xrightarrow{\text{Row}}$\end{minipage}
  \begin{minipage}[c]{2.5cm}
\begin{tikzpicture}[scale = .45]
\draw [-, ultra thick] (-1,0) -- (-2,1);
\draw [-, ultra thick] (-2,1) -- (-3,2);
\draw [-, ultra thick] (-3,2) -- (-4,3);
\draw [-, ultra thick] (-4,3) -- (-5,4);
\draw [-, ultra thick] (-5,4) -- (-6,5);
\draw [-, ultra thick] (-3,0) -- (-4,1);
\draw [-, ultra thick] (-2,-1) -- (-3,0);
\draw [-, ultra thick] (-4,1) -- (-5,2);
\draw [-, ultra thick] (-5,2) -- (-6,3);
\draw [-, ultra thick] (-6,3) -- (-7,4);
\draw [-, ultra thick] (-7,4) -- (-8,5);
\draw [-, ultra thick] (-6,5) -- (-7,6);
\draw [-, ultra thick] (-1,0) -- (-2,-1);
\draw [-, ultra thick] (-2,1) -- (-3,0);
\draw [-, ultra thick] (-3,2) -- (-4,1);
\draw [-, ultra thick] (-4,3) -- (-5,2);
\draw [-, ultra thick] (-5,4) -- (-6,3);
\draw [-, ultra thick] (-6,5) -- (-7,4);
\draw [-, ultra thick] (-7,6) -- (-8,5);
\draw[fill=white, radius = .2] (-1,0) circle [radius = 0.2];
\draw[fill=red, radius = .2] (-2,1) circle [radius = 0.2];
\draw[fill=red, radius = .2] (-3,2) circle [radius = 0.2];
\draw[fill=red, radius = .2] (-4,3) circle [radius = 0.2];
\draw[fill=red, radius = .2] (-5,4) circle [radius = 0.2];
\draw[fill=red, radius = .2] (-6,5) circle [radius = 0.2];
\draw[fill=white, radius = .2] (-7,6) circle [radius = 0.2];
\draw[fill=white, radius = .2] (-2,-1) circle [radius = 0.2];
\draw[fill=white, radius = .2] (-3,0) circle [radius = 0.2];
\draw[fill=red, radius = .2] (-4,1) circle [radius = 0.2];
\draw[fill=red, radius = .2] (-5,2) circle [radius = 0.2];
\draw[fill=red, radius = .2] (-6,3) circle [radius = 0.2];
\draw[fill=red, radius = .2] (-7,4) circle [radius = 0.2];
\draw[fill=red, radius = .2] (-8,5) circle [radius = 0.2];
\node[draw,align=left] at (-5,0){$+2$};
\end{tikzpicture}
  \end{minipage}
\begin{minipage}[c]{.1cm}$\quad\xrightarrow{\text{Row}}$\end{minipage}
  \begin{minipage}[c]{2.5cm}
\begin{tikzpicture}[scale = .45]
\draw [-, ultra thick] (-1,0) -- (-2,1);
\draw [-, ultra thick] (-2,1) -- (-3,2);
\draw [-, ultra thick] (-3,2) -- (-4,3);
\draw [-, ultra thick] (-4,3) -- (-5,4);
\draw [-, ultra thick] (-5,4) -- (-6,5);
\draw [-, ultra thick] (-3,0) -- (-4,1);
\draw [-, ultra thick] (-2,-1) -- (-3,0);
\draw [-, ultra thick] (-4,1) -- (-5,2);
\draw [-, ultra thick] (-5,2) -- (-6,3);
\draw [-, ultra thick] (-6,3) -- (-7,4);
\draw [-, ultra thick] (-7,4) -- (-8,5);
\draw [-, ultra thick] (-6,5) -- (-7,6);
\draw [-, ultra thick] (-1,0) -- (-2,-1);
\draw [-, ultra thick] (-2,1) -- (-3,0);
\draw [-, ultra thick] (-3,2) -- (-4,1);
\draw [-, ultra thick] (-4,3) -- (-5,2);
\draw [-, ultra thick] (-5,4) -- (-6,3);
\draw [-, ultra thick] (-6,5) -- (-7,4);
\draw [-, ultra thick] (-7,6) -- (-8,5);
\draw[fill=white, radius = .2] (-1,0) circle [radius = 0.2];
\draw[fill=white, radius = .2] (-2,1) circle [radius = 0.2];
\draw[fill=red, radius = .2] (-3,2) circle [radius = 0.2];
\draw[fill=red, radius = .2] (-4,3) circle [radius = 0.2];
\draw[fill=red, radius = .2] (-5,4) circle [radius = 0.2];
\draw[fill=red, radius = .2] (-6,5) circle [radius = 0.2];
\draw[fill=red, radius = .2] (-7,6) circle [radius = 0.2];
\draw[fill=white, radius = .2] (-2,-1) circle [radius = 0.2];
\draw[fill=white, radius = .2] (-3,0) circle [radius = 0.2];
\draw[fill=white, radius = .2] (-4,1) circle [radius = 0.2];
\draw[fill=red, radius = .2] (-5,2) circle [radius = 0.2];
\draw[fill=red, radius = .2] (-6,3) circle [radius = 0.2];
\draw[fill=red, radius = .2] (-7,4) circle [radius = 0.2];
\draw[fill=red, radius = .2] (-8,5) circle [radius = 0.2];
\node[draw,align=left] at (-5,0){$-1$};
\end{tikzpicture}
  \end{minipage}
  \begin{minipage}[c]{.1cm}$\quad\xrightarrow{\text{Row}}$\end{minipage}
\end{center}
\end{example}


\begin{example}\label{ex2_n+3} Let $n = 7$ and $P = [2] \times [n]$. Then the following illustrates an orbit of size $n+3$ falling under Case Two starting with $I = [3, 3, 1: 3, 3, 1]$.

\begin{center}
\begin{minipage}[c]{2.5cm}

  \end{minipage}
\begin{minipage}[c]{.1cm}$\quad\xrightarrow{\text{Row}}$\end{minipage}
 
\end{center}
\end{example}

\subsection{Orbits of size dividing $n + 5$}\label{sec:n + 5}
In this section, we analyze orbits of size dividing $n+5$, giving representatives for them and computing the average signed cardinality over each such orbit.

\begin{lem}\label{lem:n + 5} 
  Fix any integers $x, y, z$ such that $x, y, z \geq 4$ and $x + y + z = n + 5$. Let $I$ be the ICS $[x - 1, y, z - 4 : \varnothing]$.  Then the following hold.
   \begin{enumerate}
   \item Applying rowmotion $z$ times to $I$
   results in the ICS  $[z - 1, x, y - 4 : \varnothing]$.
   \item None of the $z - 1$ intermediate ICS is a Low ICS.
   \end{enumerate}
\end{lem}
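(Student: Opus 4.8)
The plan is to prove both parts by a direct, step-by-step traversal of the rowmotion orbit, using the catalog in Lemma~\ref{Lem: CatalogRowmotion2xn} and tracking the six parameters $[b_1,i_1,a_1:b_2,i_2,a_2]$ while repeatedly substituting the relation $n = x+y+z-5$ to keep every entry expressed in terms of $x,y,z$. The starting ICS $I = [x-1,y,z-4:\varnothing]$ is a Low ICS (Type~5), and I would follow it through one full lap, recording at each application which case of the catalog governs the move. The whole argument is bookkeeping once the catalog is in hand; the content is checking that the cases chain together correctly and that the boundary hypotheses $y,z\geq 4$ keep us in the intended branch at each step.

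First I would establish the landmark that after $z-3$ applications, $I$ becomes the High ICS $[\varnothing:z-4,x,y-1]$. When $z\geq 5$, the first application is Type~5a, giving the Disjoint ICS $[x,y,z-5:0,x,n-x]$; then $z-5$ applications of Type~2a shift this diagonal up one rank at a time, remaining Disjoint (indeed $b_2+i_2=b_1$ throughout) and never stalling because $a_2 = n-x = y+z-5$ stays positive, dropping only to $y$ exactly when $a_1$ reaches $0$ at $[x+z-5,y,0:z-5,x,y]$; since here $i_1 = y = a_2$, the next application is Type~2c, producing the claimed High ICS. The edge case $z=4$ must be handled separately: there $a_1 = z-4 = 0$, so the first application is Type~5b instead and lands directly on $[\varnothing:0,x,y-1]$, which is again the landmark because $z-4=0$. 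Confirming that these phases compose correctly, and in particular that the $z=4$ and $z=5$ boundaries (where the Type~2a shifting phase is empty) still reach the landmark after exactly $z-3$ steps, is where most of the effort lives and is the main obstacle.

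The remaining three applications are uniform and independent of $z$, which is what makes the conclusion clean. From $[\varnothing:z-4,x,y-1]$, Type~6a yields the First Hook $[z-3,x+y-2,0:z-3,x,y-2]$, valid since $a_2 = y-1 > 0$ as $y\geq 4$; Type~3b (applicable because the first-chain ceiling parameter $a_1$ is $0$) yields the Second Hook $[z-2,x,y-3:0,x+z-2,y-3]$; and finally Type~4b—applicable precisely because $b_2 = 0$ while $a_1 = a_2 = y-3 \neq 0$, again using $y\geq 4$—returns the Low ICS $[z-1,x,y-4:\varnothing]$, exactly the target of part~(1). Summing the applications gives $1+(z-5)+1+3 = z$ when $z\geq 5$ and $1+3 = z$ when $z=4$, proving part~(1).

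For part~(2), I would read the types of the intermediate ICS $\row(I),\dots,\row^{z-1}(I)$ directly off the trajectory above: they are Disjoint, High, First Hook, and Second Hook ICS. A Low ICS is exactly one with $a_2 = n$, i.e.\ meeting only the lower chain; but every Disjoint, First Hook, and Second Hook ICS in the list has $i_2 = x > 0$ and so meets the upper chain, while the single High ICS meets only the upper chain. Hence none of the $z-1$ intermediate ICS is Low, which gives part~(2) and completes the proof.
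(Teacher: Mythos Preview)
Your proof is correct and follows essentially the same route as the paper: both arguments first establish the landmark $\row^{z-3}(I)=[\varnothing:z-4,x,y-1]$ via Type~5a/2a/2c (or Type~5b when $z=4$), and then finish with the uniform three-step chain 6a $\to$ 3b $\to$ 4b. Your write-up is slightly more explicit than the paper's in verifying the boundary hypotheses (e.g., that $y\geq 4$ forces $a_2=y-3\neq 0$ so Type~4b rather than 4d applies) and in spelling out part~(2), but there is no substantive difference.
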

\begin{proof}
Suppose $x, y, z \geq 4$, $x + y + z = n + 5$, and $[x - 1, y, z - 4 : \varnothing]$.  We first claim that after applying $z - 3$ applications of rowmotion to $I$, we arrive at the ICS $J = [\varnothing : z - 4, x, y - 1]$ (a High ICS).  In the case that $z = 4$, this is because of a single application of Lemma \ref{Lem: CatalogRowmotion2xn} Type 5b. In the case that $z > 4$, it instead is a three-step process: after one application of rowmotion we get $[x, y, z - 5 : 0, x, n - x]$ by Lemma \ref{Lem: CatalogRowmotion2xn} Type 5a, then by Lemma \ref{Lem: CatalogRowmotion2xn} Type 2a this marches up one diagonal at a time for $z - 5$ steps, leading to $[n - y, y, 0 : z - 5, x, y]$, and then by Lemma \ref{Lem: CatalogRowmotion2xn} Type 2c we end up at $J$, as claimed.

From $J$, we have that three successive applications of rowmotion produce $[z - 3, n - z + 3, 0 : z - 3, x, y - 2]$ (by Lemma \ref{Lem: CatalogRowmotion2xn} Type 6a), $[z - 2, x, y - 3 : 0, x + z - 2, y - 3]$ (by Lemma \ref{Lem: CatalogRowmotion2xn} Type 3b), and finally $[z - 1, x, y - 4 : \varnothing]$ (by Lemma \ref{Lem: CatalogRowmotion2xn} Type 4b), as claimed.
\end{proof}

\begin{example}\label{ex:n+5} Let $n = 8$ and $P = [2] \times [n]$. Then the following illustrates part of an orbit of size $n+5$ satisfying Lemma \ref{lem:n + 5} with $x = 4, y = 4, z = 5$ and $I = [3, 4, 1 : \varnothing]$.

\begin{center}
\begin{minipage}[c]{2.2cm}
\begin{tikzpicture}[scale = .35]
\draw [-, ultra thick] (-1,0) -- (-2,1);
\draw [-, ultra thick] (-2,1) -- (-3,2);
\draw [-, ultra thick] (-3,2) -- (-4,3);
\draw [-, ultra thick] (-4,3) -- (-5,4);
\draw [-, ultra thick] (-5,4) -- (-6,5);
\draw [-, ultra thick] (-3,0) -- (-4,1);
\draw [-, ultra thick] (-2,-1) -- (-3,0);
\draw [-, ultra thick] (-4,1) -- (-5,2);
\draw [-, ultra thick] (-5,2) -- (-6,3);
\draw [-, ultra thick] (-6,3) -- (-7,4);
\draw [-, ultra thick] (-7,4) -- (-8,5);
\draw [-, ultra thick] (-6,5) -- (-7,6);
\draw [-, ultra thick] (-8,5) -- (-9,6);
\draw [-, ultra thick] (-7,6) -- (-8, 7);
\draw [-, ultra thick] (-1,0) -- (-2,-1);
\draw [-, ultra thick] (-2,1) -- (-3,0);
\draw [-, ultra thick] (-3,2) -- (-4,1);
\draw [-, ultra thick] (-4,3) -- (-5,2);
\draw [-, ultra thick] (-5,4) -- (-6,3);
\draw [-, ultra thick] (-6,5) -- (-7,4);
\draw [-, ultra thick] (-7,6) -- (-8,5);
\draw [-, ultra thick] (-8,7) -- (-9,6);
\draw[fill=white, radius = .2] (-1,0) circle [radius = 0.2];
\draw[fill=white, radius = .2] (-2,1) circle [radius = 0.2];
\draw[fill=white, radius = .2] (-3,2) circle [radius = 0.2];
\draw[fill=white, radius = .2] (-4,3) circle [radius = 0.2];
\draw[fill=white, radius = .2] (-5,4) circle [radius = 0.2];
\draw[fill=white, radius = .2] (-6,5) circle [radius = 0.2];
\draw[fill=white, radius = .2] (-7,6) circle [radius = 0.2];
\draw[fill=white, radius = .2] (-8,7) circle [radius = 0.2];
\draw[fill=white, radius = .2] (-2,-1) circle [radius = 0.2];
\draw[fill=white, radius = .2] (-3,0) circle [radius = 0.2];
\draw[fill=white, radius = .2] (-4,1) circle [radius = 0.2];
\draw[fill=red, radius = .2] (-5,2) circle [radius = 0.2];
\draw[fill=red, radius = .2] (-6,3) circle [radius = 0.2];
\draw[fill=red, radius = .2] (-7,4) circle [radius = 0.2];
\draw[fill=red, radius = .2] (-8,5) circle [radius = 0.2];
\draw[fill=white, radius = .2] (-9,6) circle [radius = 0.2];
\end{tikzpicture}
  \end{minipage}
\begin{minipage}[c]{.1cm}$\quad\xrightarrow{\text{Row}}$\end{minipage}
  \begin{minipage}[c]{2.2cm}
\begin{tikzpicture}[scale = .35]
\draw [-, ultra thick] (-1,0) -- (-2,1);
\draw [-, ultra thick] (-2,1) -- (-3,2);
\draw [-, ultra thick] (-3,2) -- (-4,3);
\draw [-, ultra thick] (-4,3) -- (-5,4);
\draw [-, ultra thick] (-5,4) -- (-6,5);
\draw [-, ultra thick] (-3,0) -- (-4,1);
\draw [-, ultra thick] (-2,-1) -- (-3,0);
\draw [-, ultra thick] (-4,1) -- (-5,2);
\draw [-, ultra thick] (-5,2) -- (-6,3);
\draw [-, ultra thick] (-6,3) -- (-7,4);
\draw [-, ultra thick] (-7,4) -- (-8,5);
\draw [-, ultra thick] (-6,5) -- (-7,6);
\draw [-, ultra thick] (-8,5) -- (-9,6);
\draw [-, ultra thick] (-7,6) -- (-8, 7);
\draw [-, ultra thick] (-1,0) -- (-2,-1);
\draw [-, ultra thick] (-2,1) -- (-3,0);
\draw [-, ultra thick] (-3,2) -- (-4,1);
\draw [-, ultra thick] (-4,3) -- (-5,2);
\draw [-, ultra thick] (-5,4) -- (-6,3);
\draw [-, ultra thick] (-6,5) -- (-7,4);
\draw [-, ultra thick] (-7,6) -- (-8,5);
\draw [-, ultra thick] (-8,7) -- (-9,6);
\draw[fill=red, radius = .2] (-1,0) circle [radius = 0.2];
\draw[fill=red, radius = .2] (-2,1) circle [radius = 0.2];
\draw[fill=red, radius = .2] (-3,2) circle [radius = 0.2];
\draw[fill=red, radius = .2] (-4,3) circle [radius = 0.2];
\draw[fill=white, radius = .2] (-5,4) circle [radius = 0.2];
\draw[fill=white, radius = .2] (-6,5) circle [radius = 0.2];
\draw[fill=white, radius = .2] (-7,6) circle [radius = 0.2];
\draw[fill=white, radius = .2] (-8,7) circle [radius = 0.2];
\draw[fill=white, radius = .2] (-2,-1) circle [radius = 0.2];
\draw[fill=white, radius = .2] (-3,0) circle [radius = 0.2];
\draw[fill=white, radius = .2] (-4,1) circle [radius = 0.2];
\draw[fill=white, radius = .2] (-5,2) circle [radius = 0.2];
\draw[fill=red, radius = .2] (-6,3) circle [radius = 0.2];
\draw[fill=red, radius = .2] (-7,4) circle [radius = 0.2];
\draw[fill=red, radius = .2] (-8,5) circle [radius = 0.2];
\draw[fill=red, radius = .2] (-9,6) circle [radius = 0.2];
\end{tikzpicture}
  \end{minipage}
\begin{minipage}[c]{.1cm}$\quad\xrightarrow{\text{Row}}$\end{minipage}
\begin{minipage}[c]{2.2cm}
\begin{tikzpicture}[scale = .35]
\draw [-, ultra thick] (-1,0) -- (-2,1);
\draw [-, ultra thick] (-2,1) -- (-3,2);
\draw [-, ultra thick] (-3,2) -- (-4,3);
\draw [-, ultra thick] (-4,3) -- (-5,4);
\draw [-, ultra thick] (-5,4) -- (-6,5);
\draw [-, ultra thick] (-3,0) -- (-4,1);
\draw [-, ultra thick] (-2,-1) -- (-3,0);
\draw [-, ultra thick] (-4,1) -- (-5,2);
\draw [-, ultra thick] (-5,2) -- (-6,3);
\draw [-, ultra thick] (-6,3) -- (-7,4);
\draw [-, ultra thick] (-7,4) -- (-8,5);
\draw [-, ultra thick] (-6,5) -- (-7,6);
\draw [-, ultra thick] (-8,5) -- (-9,6);
\draw [-, ultra thick] (-7,6) -- (-8, 7);
\draw [-, ultra thick] (-1,0) -- (-2,-1);
\draw [-, ultra thick] (-2,1) -- (-3,0);
\draw [-, ultra thick] (-3,2) -- (-4,1);
\draw [-, ultra thick] (-4,3) -- (-5,2);
\draw [-, ultra thick] (-5,4) -- (-6,3);
\draw [-, ultra thick] (-6,5) -- (-7,4);
\draw [-, ultra thick] (-7,6) -- (-8,5);
\draw [-, ultra thick] (-8,7) -- (-9,6);
\draw[fill=white, radius = .2] (-1,0) circle [radius = 0.2];
\draw[fill=red, radius = .2] (-2,1) circle [radius = 0.2];
\draw[fill=red, radius = .2] (-3,2) circle [radius = 0.2];
\draw[fill=red, radius = .2] (-4,3) circle [radius = 0.2];
\draw[fill=red, radius = .2] (-5,4) circle [radius = 0.2];
\draw[fill=white, radius = .2] (-6,5) circle [radius = 0.2];
\draw[fill=white, radius = .2] (-7,6) circle [radius = 0.2];
\draw[fill=white, radius = .2] (-8,7) circle [radius = 0.2];
\draw[fill=white, radius = .2] (-2,-1) circle [radius = 0.2];
\draw[fill=white, radius = .2] (-3,0) circle [radius = 0.2];
\draw[fill=white, radius = .2] (-4,1) circle [radius = 0.2];
\draw[fill=white, radius = .2] (-5,2) circle [radius = 0.2];
\draw[fill=white, radius = .2] (-6,3) circle [radius = 0.2];
\draw[fill=white, radius = .2] (-7,4) circle [radius = 0.2];
\draw[fill=white, radius = .2] (-8,5) circle [radius = 0.2];
\draw[fill=white, radius = .2] (-9,6) circle [radius = 0.2];
\end{tikzpicture}
  \end{minipage}
\begin{minipage}[c]{.1cm}$\quad\xrightarrow{\text{Row}}$\end{minipage}
  \begin{minipage}[c]{2.2cm}
\begin{tikzpicture}[scale = .35]
\draw [-, ultra thick] (-1,0) -- (-2,1);
\draw [-, ultra thick] (-2,1) -- (-3,2);
\draw [-, ultra thick] (-3,2) -- (-4,3);
\draw [-, ultra thick] (-4,3) -- (-5,4);
\draw [-, ultra thick] (-5,4) -- (-6,5);
\draw [-, ultra thick] (-3,0) -- (-4,1);
\draw [-, ultra thick] (-2,-1) -- (-3,0);
\draw [-, ultra thick] (-4,1) -- (-5,2);
\draw [-, ultra thick] (-5,2) -- (-6,3);
\draw [-, ultra thick] (-6,3) -- (-7,4);
\draw [-, ultra thick] (-7,4) -- (-8,5);
\draw [-, ultra thick] (-6,5) -- (-7,6);
\draw [-, ultra thick] (-8,5) -- (-9,6);
\draw [-, ultra thick] (-7,6) -- (-8, 7);
\draw [-, ultra thick] (-1,0) -- (-2,-1);
\draw [-, ultra thick] (-2,1) -- (-3,0);
\draw [-, ultra thick] (-3,2) -- (-4,1);
\draw [-, ultra thick] (-4,3) -- (-5,2);
\draw [-, ultra thick] (-5,4) -- (-6,3);
\draw [-, ultra thick] (-6,5) -- (-7,4);
\draw [-, ultra thick] (-7,6) -- (-8,5);
\draw [-, ultra thick] (-8,7) -- (-9,6);
\draw[fill=white, radius = .2] (-1,0) circle [radius = 0.2];
\draw[fill=white, radius = .2] (-2,1) circle [radius = 0.2];
\draw[fill=red, radius = .2] (-3,2) circle [radius = 0.2];
\draw[fill=red, radius = .2] (-4,3) circle [radius = 0.2];
\draw[fill=red, radius = .2] (-5,4) circle [radius = 0.2];
\draw[fill=red, radius = .2] (-6,5) circle [radius = 0.2];
\draw[fill=white, radius = .2] (-7,6) circle [radius = 0.2];
\draw[fill=white, radius = .2] (-8, 7) circle [radius = 0.2];
\draw[fill=white, radius = .2] (-2,-1) circle [radius = 0.2];
\draw[fill=white, radius = .2] (-3,0) circle [radius = 0.2];
\draw[fill=red, radius = .2] (-4,1) circle [radius = 0.2];
\draw[fill=red, radius = .2] (-5,2) circle [radius = 0.2];
\draw[fill=red, radius = .2] (-6,3) circle [radius = 0.2];
\draw[fill=red, radius = .2] (-7,4) circle [radius = 0.2];
\draw[fill=red, radius = .2] (-8,5) circle [radius = 0.2];
\draw[fill=red, radius = .2] (-9,6) circle [radius = 0.2];
\end{tikzpicture}
  \end{minipage}
\begin{minipage}[c]{.1cm}$\quad\xrightarrow{\text{Row}}$\end{minipage}
  \begin{minipage}[c]{2.2cm}
\begin{tikzpicture}[scale = .35]
\draw [-, ultra thick] (-1,0) -- (-2,1);
\draw [-, ultra thick] (-2,1) -- (-3,2);
\draw [-, ultra thick] (-3,2) -- (-4,3);
\draw [-, ultra thick] (-4,3) -- (-5,4);
\draw [-, ultra thick] (-5,4) -- (-6,5);
\draw [-, ultra thick] (-3,0) -- (-4,1);
\draw [-, ultra thick] (-2,-1) -- (-3,0);
\draw [-, ultra thick] (-4,1) -- (-5,2);
\draw [-, ultra thick] (-5,2) -- (-6,3);
\draw [-, ultra thick] (-6,3) -- (-7,4);
\draw [-, ultra thick] (-7,4) -- (-8,5);
\draw [-, ultra thick] (-6,5) -- (-7,6);
\draw [-, ultra thick] (-8,5) -- (-9,6);
\draw [-, ultra thick] (-7,6) -- (-8, 7);
\draw [-, ultra thick] (-1,0) -- (-2,-1);
\draw [-, ultra thick] (-2,1) -- (-3,0);
\draw [-, ultra thick] (-3,2) -- (-4,1);
\draw [-, ultra thick] (-4,3) -- (-5,2);
\draw [-, ultra thick] (-5,4) -- (-6,3);
\draw [-, ultra thick] (-6,5) -- (-7,4);
\draw [-, ultra thick] (-7,6) -- (-8,5);
\draw [-, ultra thick] (-8,7) -- (-9,6);
\draw[fill=red, radius = .2] (-1,0) circle [radius = 0.2];
\draw[fill=red, radius = .2] (-2,1) circle [radius = 0.2];
\draw[fill=red, radius = .2] (-3,2) circle [radius = 0.2];
\draw[fill=red, radius = .2] (-4,3) circle [radius = 0.2];
\draw[fill=red, radius = .2] (-5,4) circle [radius = 0.2];
\draw[fill=red, radius = .2] (-6,5) circle [radius = 0.2];
\draw[fill=red, radius = .2] (-7,6) circle [radius = 0.2];
\draw[fill=white, radius = .2] (-8, 7) circle [radius = 0.2];
\draw[fill=white, radius = .2] (-2,-1) circle [radius = 0.2];
\draw[fill=white, radius = .2] (-3,0) circle [radius = 0.2];
\draw[fill=white, radius = .2] (-4,1) circle [radius = 0.2];
\draw[fill=red, radius = .2] (-5,2) circle [radius = 0.2];
\draw[fill=red, radius = .2] (-6,3) circle [radius = 0.2];
\draw[fill=red, radius = .2] (-7,4) circle [radius = 0.2];
\draw[fill=red, radius = .2] (-8,5) circle [radius = 0.2];
\draw[fill=white, radius = .2] (-9,6) circle [radius = 0.2];
\end{tikzpicture}
  \end{minipage}
  \begin{minipage}[c]{.1cm}$\quad\xrightarrow{\text{Row}}$\end{minipage}
  \begin{minipage}[c]{2.2cm}
\begin{tikzpicture}[scale = .35]
\draw [-, ultra thick] (-1,0) -- (-2,1);
\draw [-, ultra thick] (-2,1) -- (-3,2);
\draw [-, ultra thick] (-3,2) -- (-4,3);
\draw [-, ultra thick] (-4,3) -- (-5,4);
\draw [-, ultra thick] (-5,4) -- (-6,5);
\draw [-, ultra thick] (-3,0) -- (-4,1);
\draw [-, ultra thick] (-2,-1) -- (-3,0);
\draw [-, ultra thick] (-4,1) -- (-5,2);
\draw [-, ultra thick] (-5,2) -- (-6,3);
\draw [-, ultra thick] (-6,3) -- (-7,4);
\draw [-, ultra thick] (-7,4) -- (-8,5);
\draw [-, ultra thick] (-6,5) -- (-7,6);
\draw [-, ultra thick] (-8,5) -- (-9,6);
\draw [-, ultra thick] (-7,6) -- (-8, 7);
\draw [-, ultra thick] (-1,0) -- (-2,-1);
\draw [-, ultra thick] (-2,1) -- (-3,0);
\draw [-, ultra thick] (-3,2) -- (-4,1);
\draw [-, ultra thick] (-4,3) -- (-5,2);
\draw [-, ultra thick] (-5,4) -- (-6,3);
\draw [-, ultra thick] (-6,5) -- (-7,4);
\draw [-, ultra thick] (-7,6) -- (-8,5);
\draw [-, ultra thick] (-8,7) -- (-9,6);
\draw[fill=white, radius = .2] (-1,0) circle [radius = 0.2];
\draw[fill=white, radius = .2] (-2,1) circle [radius = 0.2];
\draw[fill=white, radius = .2] (-3,2) circle [radius = 0.2];
\draw[fill=white, radius = .2] (-4,3) circle [radius = 0.2];
\draw[fill=white, radius = .2] (-5,4) circle [radius = 0.2];
\draw[fill=white, radius = .2] (-6,5) circle [radius = 0.2];
\draw[fill=white, radius = .2] (-7,6) circle [radius = 0.2];
\draw[fill=white, radius = .2] (-8,7) circle [radius = 0.2];
\draw[fill=white, radius = .2] (-2,-1) circle [radius = 0.2];
\draw[fill=white, radius = .2] (-3,0) circle [radius = 0.2];
\draw[fill=white, radius = .2] (-4,1) circle [radius = 0.2];
\draw[fill=white, radius = .2] (-5,2) circle [radius = 0.2];
\draw[fill=red, radius = .2] (-6,3) circle [radius = 0.2];
\draw[fill=red, radius = .2] (-7,4) circle [radius = 0.2];
\draw[fill=red, radius = .2] (-8,5) circle [radius = 0.2];
\draw[fill=red, radius = .2] (-9,6) circle [radius = 0.2];
\end{tikzpicture}
  \end{minipage}
 \end{center}
\end{example}

Lemma~\ref{lem:n + 5} leads immediately to the classification of orbits of size $n + 5$ and $\frac{n + 5}{3}$.

\begin{thm}\label{thm:n+5}
    When $n \equiv 0, 2 \pmod{3}$ and $n \geq 7$, there is a collection of $\frac{1}{3} \binom{n - 5}{2}$ orbits, each of size $n + 5$, that together include all ICS of the form $[x - 1, y, n-x - y + 1: \varnothing]$    with $x, y \geq 4$.

    When $n \equiv 1 \pmod{3}$ and $n \geq 7$, there is a collection consisting of one orbit of size $\frac{n + 5}{3}$ and $\frac{(n - 4)(n - 7)}{3}$ orbits of size $n + 5$ that together include all ICS of the form  $[ x - 1, y, n - x - y + 1: \varnothing]$ with $x, y \geq 4$.
\end{thm}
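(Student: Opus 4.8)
The plan is to read off the orbit structure entirely from Lemma~\ref{lem:n + 5}, which I interpret as saying that rowmotion acts on the Low ICS of the form $[x-1,y,z-4:\varnothing]$ (with $x,y,z\geq 4$ and $x+y+z=n+5$) by cyclically rotating the associated triple, $(x,y,z)\mapsto(z,x,y)$, advancing $z$ steps through the orbit each time and passing through no intermediate Low ICS. First I would record the bijection between such Low ICS and ordered triples $(x,y,z)$ with each entry at least $4$ and sum $n+5$: writing $[b,i,a:\varnothing]$ we read off $x=b+1$, $y=i$, $z=a+4$, and the requirement $x,y\geq 4$ together with $a=z-4\geq 0$ forces all three of $x,y,z\geq 4$. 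Substituting $x'=x-4$, $y'=y-4$, $z'=z-4$, these triples are counted by compositions of $n-7$ into three nonnegative parts, of which there are $\binom{n-5}{2}$.

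Next I would assemble a full orbit from the Lemma. Starting at a Low ICS with triple $(x,y,z)$, part~(1) of Lemma~\ref{lem:n + 5} gives that $z$ applications of rowmotion reach $(z,x,y)$; applying the Lemma again with the roles of $x,y,z$ rotated, a further $y$ applications reach $(y,z,x)$, and a final $x$ applications return to $(x,y,z)$, for a total of $x+y+z=n+5$ steps. Since the rotated triples have the same multiset of entries, the $\geq 4$ hypotheses of the Lemma are automatically met at each rotation, so every orbit meeting this family contains exactly the three cyclic shifts of its triple as Low ICS (by part~(2), no others occur), and these three determine the orbit. Grouping the $\binom{n-5}{2}$ triples by the $\mathbb{Z}/3$ cyclic-shift action then counts the orbits. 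When $3\nmid n+5$, i.e.\ $n\equiv 0,2\pmod 3$, no triple is fixed, every cyclic orbit has size three, and there are $\tfrac13\binom{n-5}{2}$ orbits, each of size $n+5$. When $n\equiv 1\pmod 3$ the single all-equal triple $\big(\tfrac{n+5}{3},\tfrac{n+5}{3},\tfrac{n+5}{3}\big)$ is fixed; by part~(1) it returns to itself after $\tfrac{n+5}{3}$ steps with no intervening Low ICS, giving one orbit of size $\tfrac{n+5}{3}$, while the remaining $\binom{n-5}{2}-1$ triples split into cyclic orbits of size three, yielding the stated number of orbits of size $n+5$.

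The main obstacle is confirming that each generic orbit has size exactly $n+5$ rather than a proper divisor. For this I would use that rowmotion is a bijection, so each orbit is a single cycle, and that the only Low ICS of the given form in it occur at steps $0$, $z$, and $z+y$; since the three cyclic shifts are pairwise distinct whenever $(x,y,z)$ is not all-equal, the first return to the starting Low ICS cannot occur before step $n+5$, forcing the period to equal $n+5$. The same bookkeeping in the all-equal case pins the short orbit at size $\tfrac{n+5}{3}$. What remains—checking the arithmetic simplification of $\tfrac13\big(\binom{n-5}{2}-1\big)$ into closed form—is routine, and combining these steps gives Theorem~\ref{thm:n+5}.
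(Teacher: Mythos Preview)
Your proposal is correct and follows essentially the same approach as the paper: both establish the bijection between the relevant Low ICS and triples $(x,y,z)$ with entries $\geq 4$ summing to $n+5$, count these as $\binom{n-5}{2}$, iterate Lemma~\ref{lem:n + 5} three times to see that rowmotion cycles through the cyclic shifts of the triple in $n+5$ steps, and then partition triples by the $\mathbb{Z}/3$-action to count orbits. Your justification that the orbit size is exactly $n+5$ (via the observation that the only Low ICS in the trajectory occur at steps $0$, $z$, $z+y$ and are pairwise distinct when the triple is not constant) is slightly more explicit than the paper's, but the substance is the same.
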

\begin{proof}
    For this proof, define a \emph{relevant} ICS to be an interval of the form
    $[x-1,y, n- x - y +1 : \varnothing]$ with $x, y \geq 4$ and $x + y - 1 \leq n$.  In this case, defining $z := n + 5 - x - y$, we have that relevant ICS are in natural correspondence with integer solutions $(x, y, z)$ to the equation $x + y + z = n + 5$ with $x, y, z \geq 4$.  By standard enumerative machinery (``balls and boxes''), the number of such solutions is $\binom{n - 5}{2}$.  Fix one such solution $(x, y, z)$, and let $I$ be the associated relevant interval.

    If $x = y = z$ then it follows from  Lemma~\ref{lem:n + 5} (1) that applying rowmotion $z = \frac{n + 5}{3}$ times to $I$ returns $I$, and from Lemma~\ref{lem:n + 5} (2) that none of the intermediate $\frac{n + 5}{3} - 1$ ICS are equal to $I$.  Thus $I$ belongs to an orbit of size $\frac{n + 5}{3}$.

    Otherwise (if it is not the case that $x = y = z$), we have by Lemma~\ref{lem:n + 5} (1) that applying rowmotion $z$ times to $I$ returns 
    $[z-1, x, y-4 : \varnothing]$ (corresponding to the solution $(z, x, y)$).  Again by Leamma~\ref{lem:n + 5} (1), we have that applying rowmotion to this interval $y$ times returns 
    $[y - 1, z, x - 4 : \varnothing]$ (corresponding to the solution $(y, z, x)$), and applying rowmotion a further $x$ times returns 
    $[x - 1, y, z - 4: \varnothing] = I$.  Thus, $I$ belongs to an orbit of order dividing $x + y + z = n + 5$.  Since it is not the case that $x = y = z$, the three relevant ICS 
    $[x - 1, y, z - 4: \varnothing]$, $[z - 1, x, y - 4: \varnothing]$, and $[y - 1, z, x - 4: \varnothing]$
    are distinct.  Moreover, by Lemma~\ref{lem:n + 5} (2), none of the other $n + 2$ ICS that arise when applying rowmotion $n + 5$ times to $I$ are relevant.  Thus in fact the orbit of $I$ has size $n + 5$.

    The case that $x = y = z$ arises if and only if $n + 5$ is a multiple of $3$, i.e., if $n \equiv 1 \pmod{3}$.  In this case, the preceding argument shows that the solution $x = y = z= \frac{n + 5}{3}$ corresponds to an ICS in an orbit of size $\frac{n + 5}{3}$, while the remaining $\binom{n - 5}{2} - 1 = \frac{(n - 4)(n - 7)}{2}$ relevant ICS all belong to orbits of size $n + 5$, with three relevant ICS in each orbit, and thus $\frac{(n - 4)(n - 7)}{6}$ distinct such orbits.  In the case that $n \not\equiv 1 \pmod{3}$, we instead conclude that the $\binom{n - 5}{2}$ relevant ICS all belong to orbits of size $n + 5$, with three relevant ICS in each orbit, and thus $\frac{1}{3} \binom{n - 5}{2}$ distinct such orbits.  This completes the proof.
\end{proof}

\begin{thm}\label{thm:sc_n+5}
 Suppose that $n$ is odd and that $\O$ is an orbit of interval-closed sets of $[2] \times [n]$ of size $n + 5$ or $(n + 5)/3$.  Then the average value $\frac{1}{\#\O}\sum_{I \in \O} sc(I)$ of the signed cardinality over $\O$ is equal to $0$.
\end{thm}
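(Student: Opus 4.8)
The plan is to compute the \emph{total} signed cardinality over such an orbit directly from the explicit orbit description underlying Lemma~\ref{lem:n + 5} and show that it vanishes; dividing by the orbit size then yields the claim. By Theorem~\ref{thm:n+5} (together with the fact, established by the full classification, that every orbit of size $n+5$ or $(n+5)/3$ is one of those it describes), the orbit $\O$ contains a Low ICS $I=[x-1,y,z-4:\varnothing]$ with $x,y,z\ge 4$ and $x+y+z=n+5$, and $\O$ is the disjoint union of three ``thirds'' cyclically related by $(x,y,z)\mapsto(z,x,y)$: the first runs from $I$ through the next $z-1$ ICS up to (but not including) $[z-1,x,y-4:\varnothing]$, and the others are its cyclic images. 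Writing $S(x,y,z)$ for the sum of $\SC$ over one third, the total over $\O$ equals $S(x,y,z)+S(z,x,y)+S(y,z,x)$, while for the size $(n+5)/3$ orbit ($x=y=z$) it suffices to show $S(x,x,x)=0$.

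First I would read the ICS of a single third off the proof of Lemma~\ref{lem:n + 5}: the Low ICS $I$; a run of $z-4$ Disjoint ICS obtained by successive upward shifts (Type~5a then Type~2a); the High ICS $[\varnothing:z-4,x,y-1]$; the First Hook $[z-3,\,n-z+3,\,0:z-3,x,y-2]$ (note $n-z+3=x+y-2$); and the Second Hook $[z-2,x,y-3:0,x+z-2,y-3]$. For the Disjoint run I would invoke Proposition~\ref{shift_sc}: consecutive upward shifts carry opposite signed cardinality, so the run contributes $0$ when its length $z-4$ is even and a single boundary value when it is odd; this also handles the degenerate case $z=4$, where the run is empty. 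For each remaining (non-shifted) ICS I would apply Lemma~\ref{calc_sc(I)} to express $\SC$ through the parities of $x,y,z$, writing $p=(-1)^x$, $q=(-1)^y$, $r=(-1)^z$ and using the indicators $\delta_x,\delta_y,\delta_z,\delta_{x+y},\delta_{x+z}$.

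The decisive step is the cyclic sum. Since $n$ is odd, $x+y+z=n+5$ is even, so an even number of $x,y,z$ are odd (either none or exactly two). I would first isolate the degree-one (single-$\delta$) contributions: the term $\SC(I)=-p\,\delta_y$ from the Low ICS and the net boundary term $+r\,\delta_x$ produced by the High/Hook ICS. Summed around the cycle these give $-(p\delta_y+r\delta_x+q\delta_z)$ and $+(r\delta_x+q\delta_z+p\delta_y)$, which cancel. What remains are the product terms $\delta_a\delta_b$ (with and without the sign factors $p,q,r$) and the $\delta_{x+y}$-type terms; using $\delta_{x+y}=\delta_x+\delta_y-2\delta_x\delta_y$ and checking the two parity patterns ``all even'' and ``exactly two odd'' (the latter by cyclic symmetry reduces to a single case), each residual group sums to $0$. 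When $x=y=z$, evenness of $3x=n+5$ forces $x$ even, so every $\delta$ vanishes and $S(x,x,x)=0$ immediately.

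The main obstacle I expect is bookkeeping rather than anything conceptual: correctly recording the signed cardinalities of the High, First-Hook, and Second-Hook ICS at the ends of each third (where sign factors such as $(-1)^{z-3}$ enter) and tracking the parity-dependent contribution of the Disjoint run, exactly in the spirit of the four explicit parity tables in the proof of Theorem~\ref{thm:sc_n+3}. Once these values are tabulated, the cyclic cancellation is a short finite parity check; the only points needing care are the degenerate endpoints $z=4$ (empty Disjoint run) and $x=y=z$ (a single third), both of which I would verify are consistent with the general expression for $S(x,y,z)$.
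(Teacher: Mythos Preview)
Your proposal is correct and follows essentially the same approach as the paper: decompose the orbit into three cyclically related ``thirds'' via Lemma~\ref{lem:n + 5}, compute the signed cardinality of each ICS in a third using Lemma~\ref{calc_sc(I)} and Proposition~\ref{shift_sc} for the Disjoint run, then sum cyclically. The only difference is in the final algebra: the paper simplifies the sum over a single third to the closed form $\dfrac{(-1)^z-(-1)^y}{2}$ (valid for all $z\ge4$), which makes the cyclic sum telescope immediately, whereas you plan a parity case-check on the residual terms---equivalent, but more work.
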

\begin{proof}
We continue with the notation from the proof of Theorem~\ref{thm:n+5}.  Let's consider the $z$ ICS
\begin{multline*}
[x - 1 , y, z - 4 : \varnothing], \row([x - 1 , y, z - 4 : \varnothing]), \row^2([x - 1 , y, z - 4 : \varnothing]), \ldots, \\ \row^{z - 1}([x - 1 , y, z - 4 : \varnothing]) = \row^{-1}([z - 1, x, y - 4 : \varnothing]).
\end{multline*}
In the case $z = 4$, by the proof of Lemma~\ref{lem:n + 5} , these are
\[
[x - 1, y, 0 : \varnothing], \quad
[\varnothing : 0, x, y - 1], \quad
[1, n - 1, 0 : 1, x, y - 2], \quad
[2, x, y - 3 : 0, x + 2, y - 3].
\]
By Lemma \ref{calc_sc(I)}, the signed cardinalities of these ICS are respectively
\[
-\delta_y (-1)^{x} + 0, \quad
0 - \delta_x, \quad 
-\delta_{n - 1} + \delta_x, \quad
\text{and} \quad
\delta_x - \delta_x,
\]
and since $n$ is odd they have sum $-\delta_y (-1)^{x}$.

If instead $z > 4$, we have (again by the proof of Lemma~\ref{lem:n + 5} ) that the $z$ ICS are
\begin{multline*}
[x - 1, y, z - 4: \varnothing], \\
[x, y, z - 5: 0, x, n - x], \quad
[x + 1, y, z - 6: 1, x, n - x - 1], \quad
\ldots, \quad
[n - y, y, 0: z - 5, x, y], \\
[\varnothing : z - 4, x, y - 1], \quad
[z - 3, n - z + 3, 0: z - 3, x, y - 2], \quad
[z - 2, x, y - 3: 0, x + z - 2, y - 3].
\end{multline*}
Again by Lemma \ref{calc_sc(I)}, the first of these has signed cardinality $-\delta_y (-1)^{x}$, while the second has signed cardinality $\delta_y (-1)^x - \delta_x$.
Since the second through $(z - 3)$rd are just shifts, combining Lemma \ref{calc_sc(I)} and Proposition \ref{shift_sc}, we have that the total contribution of those $z - 4$ ICS is $(\delta_y (-1)^x - \delta_x) \cdot \delta_z$.   
Since $x + y + z = n - 5$ and $n$ is odd, one can see by considering cases for the parities that this can be simplified to $\delta_z (-1)^x$.
Finally the contributions of the last three are respectively
\[
0 - \delta_x (-1)^z, \quad
-\delta_{z} (-1)^z + \delta_x (-1)^z = \delta_z + \delta_x(-1)^z, \quad
\text{and} \quad
\delta_x (-1)^z - \delta_{x + z} = -\delta_z.
\]
Summing these various contributions and simplifying (using again that $x + y + z$ is even)
yields
\[
(\delta_z - \delta_y)(-1)^x = \frac{(-1)^z - (-1)^y}{2}.
\]
Observe (comparing with the earlier analysis) that this formula is also valid when $z = 4$, so we need not keep track of a separate case.  Then it follows that\footnote{in the $n + 5$ case; the $\frac{n + 5}{3}$ case is slightly different} the total signed cardinality of the whole orbit is
\[
\frac{(-1)^{z} - (-1)^y}{2} +
\frac{(-1)^{y} - (-1)^x}{2} +
\frac{(-1)^{x} - (-1)^z}{2} 
= 0,
\]
as claimed.
\end{proof}

\subsection{The quadratic orbits}\label{sec:large}
In this section, we examine the third class of orbits under rowmotion, those that have large size that grows quadratically with $n$. These orbits are unlike other orbit classes, which are either of size 2 or grow linearly with $n.$  Most orbits in this third class have lower-chain singleton representatives, i.e., interval-closed sets of the form $I=[x - 1,1,n-x:\varnothing] = \{(1,x)\}.$   
We give a characterization of these orbits in Theorem \ref{large_orbits}, while in Theorem \ref{lem:sc-big-orbits}, we prove the average value of the signed cardinality statistic for each of these orbits is zero when $n$ is odd.  
To prove Theorems \ref{large_orbits} and \ref{lem:sc-big-orbits}, we study commonly appearing orbit substructures, ultimately focusing our attention on substructures between consecutive appearances of lower-chain singleton ICS.   In particular, we find that for $1<x\leq n-6,$ the next lower-chain singleton appearing in an orbit after $\{(1,x)\}$ is $\{(1,x+6)\}$ (Lemma \ref{prop:big orbits 2n+12}).  A more complicated process determines the next lower-chain singleton appearing in an orbit after $\{(1,n-y)\}$ for $y<6$ (Lemma \ref{prop:big orbits edge cases}).  These two results allow us to use the ordering of lower-chain singleton representatives to stitch together the larger orbits based on values of $x$ and $n\bmod{6}.$ In particular, we are able to determine the size and lower-chain representatives of these orbits from subcycles of a weighted permutation.

The first substructures we observe give rules for mapping between low and high interval-closed sets under applications of rowmotion, Propositions \ref{prop:Low-to-High} and \ref{prop:High-to-Low}.  While these results follow directly from Lemma \ref{Lem: CatalogRowmotion2xn}, they provide a useful framework for examining more complex substructures, as seen in Lemmas \ref{prop:big orbits 2n+12}, \ref{prop:big orbit 2phase} and \ref{prop:big orbits edge cases}.

\begin{prop}\label{prop:Low-to-High} A Low (type 5) ICS of the form $I=[b,i,a:\varnothing]$ is mapped to a High (type 6) ICS under $a+1$ applications of rowmotion, with no High or Low ICS occurring in between. In particular, 
$$\row^{a+1}(I)=[\varnothing:a,b+1,i-1].$$
\end{prop}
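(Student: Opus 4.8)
The plan is to track the rowmotion orbit of $I=[b,i,a:\varnothing]=[b,i,a:0,0,n]$ one step at a time using the catalogue in Lemma~\ref{Lem: CatalogRowmotion2xn}, exploiting the fact that a Low ICS living entirely in the lower chain is pushed upward one rank at a time until its top reaches the maximal element of the lower chain, at which point it collapses onto the upper chain. Since $I$ is a Type~5 ICS (because $a_2=n$), I first split on whether $a=a_1$ is zero. In the base case $a=0$, Type~5b applies immediately, giving $\row(I)=[\varnothing:0,b+1,i-1]$, a High ICS; as $a+1=1$, this is exactly the claimed $[\varnothing:a,b+1,i-1]$.

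For $a\geq 1$ I would apply Type~5a once to obtain the Disjoint ICS $\row(I)=[b+1,i,a-1:0,b+1,n-b-1]$, and then prove by a short induction that for $0\le k\le a-1$ the further $k$-fold application of Type~2a yields $\row^{1+k}(I)=[b+1+k,\,i,\,a-1-k:\,k,\,b+1,\,n-b-1-k]$. Each of these I would verify is again Disjoint (the inequality $b_2+i_2\le b_1$ holds, in fact with equality) and is neither Low nor High, since $b_1=b+1+k<n$ and $a_2=n-b-1-k<n$. Taking $k=a-1$ gives, after $a$ applications in total, the ICS $[b+a,\,i,\,0:\,a-1,\,b+1,\,n-b-a]$.

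The crux of the argument, and the step I expect to require the key observation, is deciding which subrule of Type~2 fires at this final stage. Here $a_1=0$, so I must distinguish Type~2b (which would produce a First Hook) from Type~2c (which produces a High ICS). The conclusion hinges on the identity $b+i+a=n$ along the lower chain: it forces $a_2=n-b-a=i=i_1$, so that $i_1=a_2$ and Type~2c applies, giving $\row^{a+1}(I)=[\varnothing:(a-1)+1,\,b+1,\,i-1]=[\varnothing:a,b+1,i-1]$, exactly as claimed. Since the only interval-closed sets visited strictly between $I$ and this High ICS are the Disjoint ones produced above, none of them is Low or High, which establishes both assertions of the proposition.
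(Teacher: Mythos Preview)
Your proof is correct and follows essentially the same route as the paper's: both handle $a=0$ via Type~5b, and for $a\geq 1$ apply Type~5a once, then Type~2a repeatedly $a-1$ times, then Type~2c. You spell out in more detail why the intermediate ICS remain Disjoint (with $b_2+i_2=b_1$) and are neither Low nor High, and you make explicit the key observation that $b+i+a=n$ forces $i_1=a_2$ at the final step so that Type~2c rather than Type~2b fires; the paper simply asserts these facts.
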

\begin{proof}
Let $I=[b,i,a:\varnothing]$ with $a\geq 1.$  By Lemma \ref{Lem: CatalogRowmotion2xn} for type 5a ICS, $\row(I)$ is the type 2 Disjoint ICS $[b+1,i,a-1:0,b+1,n-b-1].$  Again by Lemma \ref{Lem: CatalogRowmotion2xn}, $\row^{a-1}([b+1,i,a-1:0,b+1,n-b-1])$ equals the type 2c Disjoint ICS $[n-i,i,0:a-1,b+1,i]$, and one more application of rowmotion returns the type 6 High ICS $[\varnothing:a,b+1,i-1].$  When $a=0$ the result follows trivially from the action of rowmotion on type 5b Low ICS.  
\end{proof}

\begin{prop}\label{prop:High-to-Low} Let $I=[\varnothing:b,i,a]$ be a High (type 6) ICS with $i\not=n.$ $I$ is mapped to a Low (type 5) ICS under three applications of rowmotion with no High or Low ICS in between. In particular, 
\begin{itemize} 
    \item[a.] 
    If $a\geq 3,$ $\row^3(I)=[b+3,i,a-3:\varnothing]$
    \item[b.] 
    If $0<a<3,$ $\row^3(I)=[2-a,n-i,i-2+a:\varnothing]$
    \item[c.] 
    If $a=0$ and
    \begin{itemize}
        \item[i.] $1<i<n,$ then $\row^3(I)=[2,n-i,i-2:\varnothing],$
        \item[ii.] $i=1,$ then $\row^3(I)=[0,1,n-1:\varnothing].$
    \end{itemize}
\end{itemize}
\end{prop}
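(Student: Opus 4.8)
The plan is to prove this by direct computation, exactly in the style of the proof of Proposition~\ref{prop:Low-to-High}: starting from the High ICS $I=[\varnothing:b,i,a]=[n,0,0:b,i,a]$, I apply rowmotion three times, and at each step I re-identify the type of the current interval-closed set and read off the corresponding entry of the catalog in Lemma~\ref{Lem: CatalogRowmotion2xn}. The natural case division is on the value of $a$ (together with $i$ when $a=0$), since this controls which sub-case of the Type~6 rule governs the very first step. In the generic case $a\geq 3$ (part~(a)), the first step uses Type~6a to produce the First Hook $[b+1,i+a-1,0:b+1,i,a-1]$; since this has $a_1=0$ and $a_2=a-1\geq 2$, the second step uses Type~3b to produce the Second Hook $[b+2,i,a-2:0,i+b+2,a-2]$; and since this has $a_1=a-2\geq 1$ and $b_2=0$, the third step uses Type~4b to produce the Low ICS $[b+3,i,a-3:\varnothing]$, as claimed.

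The remaining cases are boundary phenomena in which the generic path $6a\to 3b\to 4b$ is disrupted, and I would treat each by the same step-by-step tracking, checking at the end that the resulting parameters agree with the stated formula after substituting $b=n-i-a$. For part~(b) with $a=2$, the first two steps are still $6a\to 3b$, but the intermediate Second Hook $[b+2,i,0:0,i+b+2,0]$ now has $a_1=a_2=0$ and hence contains the maximal element of $P$, so the third step is governed by the complement rule Type~4d rather than 4b; for $a=1$ the first step instead falls under the $a_2=1$ special case Type~6b, yielding a Stacked Diagonals ICS, whose complement (the $a_1=0$ complement branch of Type~4) is again Stacked Diagonals, after which Type~4b produces the Low ICS. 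In both instances the endpoint simplifies to $[2-a,n-i,i-2+a:\varnothing]$. For part~(c) with $a=0$ the interval $I$ already contains the maximum, so the first step is itself a complement via Type~6c (here the hypothesis $1<i<n$ forces $b=n-i\geq 1$, so we are not in the degenerate Type~6d branch); one then descends through a First Hook and a Second Hook and finishes with Type~4b to reach $[2,n-i,i-2:\varnothing]$. The singleton case $i=1$ is analogous, but the second-step Second Hook contains the maximum, so the final step is Type~4d, giving $[0,1,n-1:\varnothing]$.

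Finally, the claim that no High or Low ICS appears at the two intermediate steps follows by inspection: in every case the intermediate interval-closed sets produced above are First Hooks, Second Hooks, or Stacked Diagonals, each of which intersects both chains and so is neither Type~5 (Low) nor Type~6 (High). I expect the only genuine obstacle to be the bookkeeping: at each of the three steps one must correctly re-identify the type of the current ICS and select the right sub-case of the catalog, and the boundary values of $a$ and $i$ repeatedly push the computation out of the generic branch into the various complement branches (6b, 6c, 6d, and the $a_1=0$ cases of Type~4). The point requiring the most care is verifying that the small-$a$ and singleton cases genuinely re-converge to the single closed forms in parts~(b) and~(c), since there the shape of the rowmotion path changes even though the final formula does not.
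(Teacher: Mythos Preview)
Your proposal is correct and follows essentially the same approach as the paper's proof: both proceed by direct step-by-step computation through the catalog of Lemma~\ref{Lem: CatalogRowmotion2xn}, with the same case split on $a$ (and on $i$ when $a=0$) and the same rowmotion paths ($6a\to 3b\to 4b$ generically, degenerating to $4d$ when $a=2$; $6b\to 4c\to 4b$ when $a=1$; $6c\to 3b\to 4b$ or $4d$ when $a=0$). The paper carries out each of these five cases explicitly with the intermediate tuples written out, whereas you summarize the boundary cases more briefly, but the content is the same.
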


\begin{proof}
By Lemma \ref{Lem: CatalogRowmotion2xn}, when $a=0$ and $i=n,$ one application of rowmotion returns the complement Low ICS $\row(I)=[0,n,0:\varnothing].$  For all other possible values of $a$ and $i$ we proceed by cases.

\medskip

\noindent (Case 1: $a\geq 3$) In this case, the High ICS $[\varnothing:b,i,a]$ with $a\geq 3$ is of type 6a, so rowmotion makes it a First Hook ICS of type 3b, $[b+1, i+a-1,0; b+1,i, a-1]$. Applying rowmotion again gives a Second Hook of type 4b, namely $[b+2,i,a-2: 0,n-a+2, a-2]$. One further application of rowmotion gives the Low ICS $[b+3, i, a-3: \varnothing]$, as desired.  

\medskip
\noindent(Case 2: $a=2$) Starting with the High ICS of type 6a $[\varnothing: n-i-2, i,2]$, applying rowmotion gives a First Hook of type 3b $[n-i-1, i+1,0:n-i-1,i,1]$. Applying rowmotion again gives a Second Hook of type 4d, $[n-i,i,0:0,n,0]$ which in turn gives its complement, the Low interval-closet set $[0,n-i,i:\varnothing]$.

\medskip
\noindent(Case 3: $a=1$) Starting with a High ICS of type 6b, $[\varnothing:n-i-1,i,1]$, and applying rowmotion gives the Stacked Diagonals $[n-i,i,0:n-i,i,0]$ of type 4c. Rowmotion gives the complement stacked diagonal $[0,n-i,i:0,n-i,i]$ of type 4b, which in turn gives the Low ICS $[1,n-i,i-1:\varnothing]$.

\medskip
\noindent(Case 4: $a=0, 1<i<n$)  If we start with a High ICS of type 6c, $[\varnothing:n-i,i,0]$, with $1<i<n$, rowmotion gives the complement, namely the First Hook of type 3b $[0,n,0:0,n-i,i]$. Rowmotion again gives the Second Hook $[1,n-i,i-1: 0,n-i+1,i-1]$, of type 4b. A final application of rowmotion gives its complement, the Low ICS $[2,n-i,i-2: \varnothing]$.

\medskip
\noindent(Case 5: $a=0, i=1$)  Finally, we start with the High ICS  $[\varnothing: n-1,1,0]$, which is the singleton top element. Applying rowmotion gives the complement ICS, which is a First Hook of type 3b, $[0,n,0:0,n-1,1]$. Rowmotion on this latter ICS gives the Second Hook $[1,n-1,0:0,n,0]$ of type 4d, which in turn gives the Low ICS $[0,1,n-1: \varnothing]$.
\end{proof}

Mirroring our approach to orbit structure, we examine the signed cardinality statistic on frequently appearing substructures to piece together the average signed cardinality statistic for large orbits.  As we are ultimately interested in the signed cardinality statistic when $n$ is odd, we will focus on that case alone.
We start by finding the sum of the signed cardinality statistic over the partial orbits examined in Propositions \ref{prop:Low-to-High} and \ref{prop:High-to-Low} when $n$ is odd.  Because we will be adding these partial sums together, we include the signed cardinality statistic for the starting ICS of a given substructure in our sum, but not the ending ICS.  For example, starting with a High (type 6) ICS examined in Proposition \ref{prop:High-to-Low}, we calculate the sum of the signed cardinality statistics $\SC(I)+\SC(\row(I))+\SC(\row^2(I)).$  One more application of rowmotion gives the Low (type 5) ICS $\row^3(I).$  

Recall from \eqref{eq:parity delta} the parity indicator function $\delta_x$, defined to be $1$ when $x$ is odd and $0$ otherwise.  This allows us to express useful signed cardinality statistic results for High and Low interval-closed sets as follows.
\begin{lem}\label{lem:sc_High-and-Low} If $n$ is odd and $I$ is a High (type 6) ICS of the form $[\varnothing:b,i,a]$ with $i\not=n$, then
\begin{equation*}
\SC(I)+\SC(\row(I))+\SC(\row^2(I))=
            \begin{cases}
        0  & \text{if } a\geq2,\\
        \delta_i & \text{if } a=1,\quad\\
        -1  & \text{if } a=0.
    \end{cases}
\end{equation*}
If $n$ is odd and $I$ is a Low (type 5) ICS of the form $[b,i,a:\varnothing],$ then 
\begin{equation*}
\sum_{k=0}^a\SC(\row^k(I))=\delta_a(-\delta_{i+1})+\delta_{a+1}\delta_i=\delta_{a+i}(-1)^{a}.
\end{equation*}
\end{lem}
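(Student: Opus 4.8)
The plan is to evaluate each summand directly with Lemma~\ref{calc_sc(I)}, after using the proofs of Propositions~\ref{prop:Low-to-High} and~\ref{prop:High-to-Low} to name the intermediate interval-closed sets explicitly, and then to collapse the resulting expressions using the hypothesis that $n$ is odd. The role of that hypothesis is that $b+i+a=n$ forces parity relations among $b$, $i$, and $a$ (for instance $\delta_{i+a-1}=\delta_b$, while $\delta_{n-a}=\delta_{b+i}$ is the complement of $\delta_a$), and it is exactly these relations that make the individual, $b$-dependent signed cardinalities telescope to a $b$-free answer.

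For the Low statement, Lemma~\ref{calc_sc(I)} gives $\SC([b,i,a:\varnothing])=\delta_i(-1)^b$, since the upper-chain term vanishes. If $a=0$ the sum is this single term, and the claim $\delta_i(-1)^b=\delta_i$ holds because $b+i=n$ odd forces $(-1)^b=1$ whenever $\delta_i\neq 0$. For $a\geq 1$, the proof of Proposition~\ref{prop:Low-to-High} identifies $\row(I),\dots,\row^a(I)$ as the Disjoint ICS $J=[b+1,i,a-1:0,b+1,n-b-1]$ together with its first $a-1$ upward shifts; since the shifted sets never contain the top of the lower chain until the last step, repeated use of Proposition~\ref{shift_sc} collapses their contribution to $\delta_a\cdot\SC(J)$, and Lemma~\ref{calc_sc(I)} gives $\SC(J)=-\delta_i(-1)^b-\delta_{b+1}$. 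Adding $\SC(I)$ and separating the cases $a$ even and $a$ odd, the parity constraints coming from $n$ odd simplify the total to $\delta_i-\delta_a$; the two displayed expressions are then reconciled by the identity $\delta_a(-\delta_{i+1})+\delta_{a+1}\delta_i=\delta_i-\delta_a=\delta_{a+i}(-1)^a$.

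For the High statement, $\SC([\varnothing:b,i,a])=\delta_i(-1)^{b+1}$, and I would read the two intermediate sets $\row(I),\row^2(I)$ off the case division in the proof of Proposition~\ref{prop:High-to-Low}. When $a\geq 3$ (resp.\ $a=2$) these are the First Hook and Second Hook listed there; evaluating each with Lemma~\ref{calc_sc(I)} and using $n$ odd to rewrite $\delta_{i+a-1}=\delta_b$ and $\delta_{n-a}=\delta_{b+i}$, the three terms cancel to $0$. When $a=1$, both $\row(I)$ and $\row^2(I)$ are Stacked Diagonals, each of signed cardinality $0$ by Lemma~\ref{calc_sc(I)}, so the sum is just $\SC(I)=\delta_i$. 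When $a=0$, handled through the two corresponding cases $1<i<n$ and $i=1$ of Proposition~\ref{prop:High-to-Low}, the first two terms cancel (each equal to $\pm\delta_i$) while the last contributes a full rank's worth $-(\delta_{n-i}+\delta_{n-i+1})=-1$, leaving the total $-1$; the excluded value $i=n$ is precisely what would otherwise introduce a stray full-chain term, so keeping that hypothesis in force matters.

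The main obstacle is purely the parity bookkeeping: every single signed cardinality appearing in these partial orbits carries a factor $(-1)^b$ or $\delta_b$, and this dependence disappears only after imposing $a+b+i=n$ odd and splitting on the parities of $i$ and $a$. The real work is keeping the several cases of Propositions~\ref{prop:Low-to-High} and~\ref{prop:High-to-Low} aligned with the correct parity reductions; once that alignment is fixed, each evaluation is a routine application of Lemma~\ref{calc_sc(I)} together with Proposition~\ref{shift_sc}.
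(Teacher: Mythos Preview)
Your proposal is correct and follows essentially the same approach as the paper's proof: both identify the intermediate ICS explicitly from the proofs of Propositions~\ref{prop:Low-to-High} and~\ref{prop:High-to-Low}, evaluate each via Lemma~\ref{calc_sc(I)}, use Proposition~\ref{shift_sc} to collapse the shifted Disjoint ICS in the Low case, and then simplify using the parity constraints forced by $n$ odd. Your intermediate identity $\delta_a(-\delta_{i+1})+\delta_{a+1}\delta_i=\delta_i-\delta_a=\delta_{a+i}(-1)^a$ is a slightly cleaner packaging than the paper's case-by-case verification, and your observation that the two Stacked Diagonals in the $a=1$ High case each have $\SC=0$ directly is tidier than computing them out, but these are expository differences rather than a different method.
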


\begin{proof}

\noindent(Case 1: High (type 6) ICS) 
 Recall $b=n-i-a$ and consider the ICS $I=[\varnothing:n-i-a,i,a].$ 
 
 If $a\geq 2$
then by Cases 1 and 2 in the proof of Proposition
\ref{prop:High-to-Low}
$$[\varnothing:n-i-a,i,a]\xrightarrow{\row}[n-i-a+1, i+a-1,0; n-i-a+1,i, a-1]\xrightarrow{\row}[n-i-a+2,i,a-2: 0,n-a+2, a-2].$$
Using Lemma \ref{calc_sc(I)} to calculate the signed cardinality statistic we find $\SC(I)+\SC(\row(I))+\SC(\row^2(I))=$ $$\delta_i(-1)^{n-i-a+1}+\left(\delta_{i+a-1}(-1)^{n-i-a+1}+\delta_i(-1)^{n-i-a}\right)+\left(\delta_i(-1)^{n-i-a}+\delta_{n-a}(-1)\right).$$  Noting that $\delta_{i+a-1}$ is $1$ when $i$ and $a$ have the same parity, and zero otherwise, we simplify this expression to find 
\begin{equation}\label{eq:sc_sum1}\SC(I)+\SC(\row(I))+\SC(\row^2(I))=\delta_{i+a-1}(-1)^{n+1}+\delta_i(-1)^{n-i-a}-\delta_{n-a}.
\end{equation}
Given $n$ is odd, Equation \eqref{eq:sc_sum1} becomes $$\SC(I)+\SC(\row(I))+\SC(\row^2(I))=\delta_{i+a-1}+\delta_i(-1)^{i+a+1}-\delta_{a+1}.$$  If $a$ and $i$ have the same parity this equals $1-(\delta_i+\delta_{a+1})=1-1=0,$ while if $a$ and $i$ have opposite parity we get $0+(\delta_i-\delta_{a+1})=0.$  

If $a=1,$ or equivalently $I=[\varnothing: n-i-1,i,1],$ we have by Case 3 in the proof of Proposition \ref{prop:High-to-Low}
$$[\varnothing:n-i-1,i,1]\xrightarrow{\row}[n-i, i,0; n-i,i,0]\xrightarrow{\row}[0,n-i,i: 0,n-i,i].$$
Using Lemma \ref{calc_sc(I)} to calculate the signed cardinality statistic we find $\SC(I)+\SC(\row(I))+\SC(\row^2(I))$ is equal to $$\delta_i(-1)^{n-i}+\left(\delta_{i}(-1)^{n-i}+\delta_i(-1)^{n-i+1}\right)+\left(\delta_{n-i}(-1)^0+\delta_{n-i}(-1)\right)=\delta_i(-1)^{i+1}=\delta_i.$$  
Here the second to last equality makes use of the assumption $n$ is odd, and the last equality makes use of the fact that $\delta_{i}$ is zero unless $i$ is odd.  Going forward, we will frequently use similar deductions to simplify expressions involving the parity delta function, often without explanation.

Finally, if $a=0,$ our ICS $I$ is of the form $[\varnothing:n-i,i,0]$ and by Cases 4 and 5 of the proof of Proposition \ref{prop:High-to-Low}, 
$$[\varnothing:n-i,i,0]\xrightarrow{\row}[0,n,0:0, n-i,i]\xrightarrow{\row}[1,n-i,i-1: 0,n-i+1,i-1].$$
Once again using Lemma \ref{calc_sc(I)} to calculate the signed cardinality statistic we find $$\SC(I)+\SC(\row(I))+\SC(\row^2(I))=\delta_i(-1)^{n-i+1}+\left(\delta_{n}+\delta_{n-i}(-1)\right)+\left(\delta_{n-i}(-1)+\delta_{n-i+1}(-1)\right).$$
Given $n$ is odd, this simplifies to 
$$-\delta_i+(1-\delta_{i+1})+(-\delta_{i+1}-\delta_i)=-1.$$

\medskip

\noindent(Case 2: Low (type 5) ICS) Consider the ICS $I=[n-i-a,i,a:\varnothing].$  

If $a=0,$ then by Lemma \ref{Lem: CatalogRowmotion2xn}, $\row(I)$ is a High ICS, and we need only to calculate $\SC(I).$ By Lemma \ref{calc_sc(I)}, $\SC(I)=\delta_{i}(-1)^{n-i},$ which is equal to $\delta_{a+i}(-1)^a$ when $n$ is odd and $a=0.$ 

If $a\ge 1,$ then by the proof of Proposition \ref{prop:Low-to-High}, 
$$[n-i-a,i,a:\varnothing]\xrightarrow{\row}[n-i-a+1,i,a-1: 0,n-i-a+1,i+a-1]\xrightarrow{\row^{a-1}}[n-i,i,0: a-1,n-i-a+1, i]$$ where each of the $a-1$ applications of rowmotion shifts the ICS up one rank. One additional application of rowmotion returns the High ICS $\row^{a+1}(I)=[\varnothing:a,n-i-a+1,i-1].$  By Proposition \ref{shift_sc}, if $\row(I)$ is $I$ shifted up one rank then $\SC(I)+\SC(\row(I))=0.$ Thus, applying Lemma \ref{calc_sc(I)} and summing the ICS appearing in the partial orbit 
we find 
\begin{equation*}\label{eqn:sc_2}
 \sum_{k=0}^{a}\SC(\row^k(I))=\delta_i(-1)^{n-i-a}+\left(\delta_i(-1)^{n-i-a+1}+\delta_{n-i-a+1}(-1)\right)(a\bmod{2}).
\end{equation*}
Using the fact $n$ is odd and examining the cases when $a$ is even and odd separately, we find 
\begin{equation*}
\sum_{k=0}^{a}\SC(\row^k(I))=
\begin{cases}
        \delta_{i}  & \text{if } a\text{ is even,}\\
        -\delta_{i+1}  & \text{if } a \text{ is odd},
    \end{cases}
\end{equation*}
 or equivalently 
 $\sum_{k=0}^a\SC(\row^k(I))=\delta_{a+i}(-1)^a.$
\end{proof}

Building on Propositions  \ref{prop:Low-to-High} and 
 \ref{prop:High-to-Low} and Lemma \ref{lem:sc_High-and-Low}, the following lemma explains the dependence on the values of $x$ and $n\pmod{6}$ in the characterization of orbits with singleton representatives $\{(1,x)\}.$

\begin{lem}\label{prop:big orbits 2n+12}  Fix positive integers $n, x$ with $n\geq 8$ and $1<
 x \leq n-6$.  Applying rowmotion $2n + 12$ times to $I = \{ (1, x) \}$ results in the ICS $\{ (1, x + 6)\},$ and among the intermediate ICS, there are no other singletons on the lower chain and only one singleton, $\{(2,x+3)\}$, on the upper chain.   When $n$ is odd, the sum of the signed cardinality statistic over this substructure is $$\sum_{k=0}^{2n+11}\SC(\row^k(I))=(-1)^x.$$ 
\end{lem}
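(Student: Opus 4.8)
The plan is to trace the orbit of $I=\{(1,x)\}$, which in coordinates is the Low ICS $[x-1,1,n-x:\varnothing]$, by applying Propositions~\ref{prop:Low-to-High} and~\ref{prop:High-to-Low} in alternation. Together these results show that the orbit passes through Low and High ICS in strict alternation, and that \emph{no} Low or High ICS occurs strictly between two consecutive anchors; so it suffices to compute the chain of anchor ICS and the numbers of rowmotion steps between them. Reading off the image given by the appropriate proposition at each stage, I would obtain the chain of eight anchors
\[
L_1\xrightarrow{\,n-x+1\,}H_1\xrightarrow{\,3\,}L_2\xrightarrow{\,x-1\,}H_2\xrightarrow{\,3\,}L_3\xrightarrow{\,n-x-3\,}H_3\xrightarrow{\,3\,}L_4\xrightarrow{\,x+3\,}H_4\xrightarrow{\,3\,}L_5,
\]
where $L_1=[x-1,1,n-x:\varnothing]$, $H_1=[\varnothing:n-x,x,0]$, $L_2=[2,n-x,x-2:\varnothing]$, $H_2=[\varnothing:x-2,3,n-x-1]$, $L_3=[x+1,3,n-x-4:\varnothing]$, $H_3=[\varnothing:n-x-4,x+2,2]$, $L_4=[0,n-x-2,x+2:\varnothing]$, $H_4=[\varnothing:x+2,1,n-x-3]$, and $L_5=[x+5,1,n-x-6:\varnothing]$. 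The labelled step counts sum to $2n+12$, and $H_4=\{(2,x+3)\}$ while $L_5=\{(1,x+6)\}$, which already yields the first assertion.

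The bookkeeping step is to confirm that at each of the eight invocations the correct sub-case of the relevant proposition is in force; this is where the constraint $1<x\le n-6$ enters. For the four Low$\to$High steps the top-gap parameters are $n-x,\ x-2,\ n-x-4,\ x+2$, each $\ge 0$, so Proposition~\ref{prop:Low-to-High} applies directly. For the four High$\to$Low steps the top-gap parameters are $0,\ n-x-1,\ 2,\ n-x-3$, selecting cases (c.i), (a), (b), (a) of Proposition~\ref{prop:High-to-Low} respectively (each High anchor has $i\ne n$, and $1<x<n$). The boundary values $x=2$ and $x=n-6$, where some $a$-parameter drops to its minimum, are the only places requiring an explicit check. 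For the singleton claim I would use that every lower-chain singleton is a Low ICS and every upper-chain singleton is a High ICS, so any singleton in the orbit must be an anchor. The middle coordinates $i$ of the anchors are $1,x,n-x,3,3,x+2,n-x-2,1$ (and $1$ for $L_5$), and since $x\ge 2$ and $n-x\ge 6$ the only anchors with $i=1$ are $L_1$, $H_4$, and $L_5$; hence among the intermediate ICS the unique singleton is $H_4=\{(2,x+3)\}$, on the upper chain.

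For the signed-cardinality statement, assume $n$ is odd and split $\sum_{k=0}^{2n+11}\SC(\row^k(I))$ into the eight segments above, each counted with its starting anchor but not its ending anchor, exactly as in Lemma~\ref{lem:sc_High-and-Low}. Applying the Low formula $\delta_{a+i}(-1)^a$ to $L_1,\dots,L_4$ and the High trichotomy ($0$, $\delta_i$, or $-1$) to $H_1,\dots,H_4$, and simplifying the parity-delta expressions using the oddness of $n$, the only nonzero segment contributions are $\delta_x$ from each of $L_1$ and $L_3$, $-1$ from $H_1$ (whose top gap is $0$), and $(-1)^x$ from each of $L_2$ and $L_4$ (whose values of $a+i$ are $n-2$ and $n$, both odd). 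Summing gives $2\delta_x-1+2(-1)^x$, which equals $(-1)^x$ whether $x$ is even or odd, as claimed.

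The main obstacle is not a single hard idea but the sustained bookkeeping: there are eight applications of the two propositions, each with a sub-case determined by the current top-gap parameter, and those parameters mix $x$ and $n$, so one must verify at every step that the hypotheses hold—in particular that no $a$-value slips out of its intended range at the extremes $x=2$ or $x=n-6$—and that the correct sub-case is used. The signed-cardinality computation carries the same risk, since the per-segment values from Lemma~\ref{lem:sc_High-and-Low} are parity-delta expressions that must be collapsed correctly using that $n$ is odd before they telescope to $(-1)^x$.
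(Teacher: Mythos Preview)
Your proposal is correct and follows essentially the same approach as the paper: both trace the same eight-anchor chain $L_1\to H_1\to\cdots\to H_4\to L_5$ via alternating applications of Propositions~\ref{prop:Low-to-High} and~\ref{prop:High-to-Low}, and both compute the signed-cardinality sum segment by segment using Lemma~\ref{lem:sc_High-and-Low}. The only cosmetic difference is that the paper writes out the $x=2$ case separately before doing $x>2$, whereas you handle all $x$ at once and simply note that the boundary values $x=2$ and $x=n-6$ require checking that the sub-case selections remain valid; your treatment also makes the singleton claim (that any intermediate singleton must be an anchor, and only $H_4$ has $i=1$) more explicit than the paper does.
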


\begin{proof}
By Propositions \ref{prop:Low-to-High} and \ref{prop:High-to-Low} and Lemma \ref{lem:sc_High-and-Low}, when $n$ is odd and $x=2$, we have the following orbit substructures:
      
     $$\{(1,2)\}=\underbrace{[1,1,n-2:\varnothing]\xrightarrow[\ref{prop:Low-to-High}]{\row^{n-1}}}_{\delta_{n-1}(-1)^{n-2}}\underbrace{[\varnothing:n-2,2,0]\xrightarrow[\ref{prop:High-to-Low}c(i)]{\row^3}}_{-1}\underbrace{[2,n-2,0:\varnothing]\xrightarrow[\ref{prop:Low-to-High}]{\row}}_{\delta_{n-2}(-1)^0}\underbrace{[\varnothing:0,3,n-3]\xrightarrow[\ref{prop:High-to-Low}a]{\row^3}}_{0}$$ 
     $$\underbrace{[3,3,n-6:\varnothing]\xrightarrow[\ref{prop:Low-to-High}]{\row^{n-5}}}_{\delta_{n-3}(-1)^{n-6}}\underbrace{[\varnothing:n-6,4,2]\xrightarrow[\ref{prop:High-to-Low}b]{\row^{3}}}_{0}\underbrace{[0,n-4,4:\varnothing]\xrightarrow[\ref{prop:Low-to-High}]{\row^{5}}}_{\delta_n(-1)^{4}}\underbrace{[\varnothing:4,1,n-5]\xrightarrow[\ref{prop:High-to-Low}a]{\row^{3}}}_{0}\{(1,8)\}.$$

    Here we have recorded the partial sums of the signed cardinality statistic, given by Lemma \ref{lem:sc_High-and-Low} for odd $n,$ under the corresponding orbit substructure, with $\sum_{k=0}^{n-2}\SC(\row^{k}(\{(1,2)\}))=\delta_{n-1}(-1)^{n-2}$ while $\sum_{k=n-1}^{n+1}\SC(\row^{k}(\{(1,2)\}))=-1,$ and so on.
Adding these partial sums of the signed cardinality statistic over the entire substructure, and using the assumption that $n$ is odd and the definition of the parity indicator function $\delta$ to simplify, we find 
$$\sum_{k=0}^{2n+11}\SC(\row^k(\{(1,2)\}))=0-1+1+0+0+0+1+0=1  $$
which matches the formula in the Proposition evaluated at $x=2.$

For $x>2,$ Propositions \ref{prop:Low-to-High} and 
 \ref{prop:High-to-Low} and Lemma \ref{lem:sc_High-and-Low} give the following orbit substructures and signed cardinality statistic sums:
    
 $$\{(1,x)\}=\underbrace{[x-1,1,n-x:\varnothing]\xrightarrow[\ref{prop:Low-to-High}]{\row^{n-x+1}}}_{\delta_{n-x+1}(-1)^{n-x}}\underbrace{[\varnothing:n-x,x,0]\xrightarrow[\ref{prop:High-to-Low}c(i)]{\row^{3}}}_{-1}\underbrace{[2,n-x,x-2:\varnothing]\xrightarrow[\ref{prop:Low-to-High}]{\row^{x-1}}}_{\delta_{n-2}(-1)^{x-2}}$$
    $$\underbrace{[\varnothing:x-2,3,n-x-1]\xrightarrow[\ref{prop:High-to-Low}a]{\row^3}}_{0}\underbrace{[x+1,3,n-x-4:\varnothing]\xrightarrow[\ref{prop:Low-to-High}]{\row^{n-x-3}}}_{\delta_{n-x-1}(-1)^{n-x-4}}\underbrace{[\varnothing: n-x-4,x+2,2]\xrightarrow[\ref{prop:High-to-Low}b]{\row^3}}_{0}$$
    $$\underbrace{[0,n-x-2,x+2:\varnothing]\xrightarrow[\ref{prop:Low-to-High}]{\row^{x+3}}}_{\delta_n(-1)^{x+2}}\underbrace{[\varnothing:x+2,1,n-x-3]\xrightarrow[\ref{prop:High-to-Low}a]{\row^3}}_{0}\{(1,x+6)\}.$$

Adding the partial sums of the signed cardinality statistic together, and once again using the assumption $n$ is odd and the definition of $\delta$ to simplify, we find 
$$\sum_{k=0}^{2n+11}\SC(\row^k(\{1,x\}))=\delta_x-1+(-1)^x+0+\delta_x+0+(-1)^x+0,$$
which is equal to $1-1-1+0+1+0-1+0=-1$ when $x$ is odd and $0-1+1+0+0+0+1+0=1$ when $x$ is even.  Thus, in all cases, \[\sum_{k=0}^{2n+11}\SC(\row^k(\{1,x\}))=(-1)^x.\qedhere\]
 \end{proof} 

To completely characterize the orbits with singleton representatives, we establish how singletons are mapped to singletons under rowmotion for $I=\{(1,n-y)\}$ with $0\leq y\leq 5.$ This is done in Lemma \ref{prop:big orbits edge cases}, the proof of which uses the following substructure results for ICS consisting of two consecutive ranks in the lower chain.  

\begin{lem}\label{prop:big orbit 2phase} Let $n\geq 6$ and consider ICS consisting of two consecutive ranks in the lower chain, i.e., ICS of the form $I=[n-a-2,2,a:\varnothing]=\{(1,n-a-1),(1,n-a)\}.$ 
\begin{enumerate}
    \item[i.] For $3\leq a\leq n-2$, no lower-chain singletons appear in the orbit of $I$ between $I$ and 
    $$\row^{n+6}(I)=[n-a+1,2,a-3:\varnothing]=\{(1,n-a+2),(1,n-a+3)\}.$$
    \item[ii.] For $a=1$ and $n\equiv 1\bmod{3}$, there are no lower-chain singletons in the orbit of $I$ and $$|\O(I)|=\frac{n^2+2n-9}{3}.$$
    \item[iii.] For all other values of $a$ and $n$, the first appearance of a lower-chain singleton in the orbit of $I$  is at \begin{equation*}
\row^m(I)=
\begin{cases}
        \{(1,2)\}  & \text{where $m=4$ when $a=0,$ and $m=\frac{n^2+3n+9}{3}$ when $a=1$ and $n\equiv 0\bmod{3}$},\\
        \{(1,6)\} & \text{where $m=n+12$ when $a=2,$ and $m=\frac{n^2+4n+21}{3}$ when $a=1$ and $n\equiv 2\bmod{3}$}.
    \end{cases}
\end{equation*}
\end{enumerate}

\noindent In almost all cases, the sum of the signed cardinality statistic over the examined substructure is $0$ when $n$ is odd.  In particular, if $\row^m$ is the next lower-chain ICS consisting of two consecutive ranks, or the first lower-chain singleton, appearing in the orbit of $I,$ then for odd $n$
\begin{equation*}
        \sum_{k=0}^{m-1}\SC(\row^k(I))=
            \begin{cases}  
             1  & \text{if $a=2$,}\\
             1 & \text{if $a=1$ and $n\equiv 2\bmod{3}$},\\
             0 & \text{otherwise}.
    \end{cases}
\end{equation*}
 
 \end{lem}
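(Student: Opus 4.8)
The plan is to trace the orbit of a two-rank Low ICS $I=[n-a-2,2,a:\varnothing]$ step by step, using Propositions~\ref{prop:Low-to-High} and~\ref{prop:High-to-Low} as the two basic moves: a Low ICS climbs to a High ICS, which then drops back to a Low ICS, and so on. Since a lower-chain singleton is exactly a Low ICS with $i=1$, and a two-consecutive-rank ICS is a Low ICS with $i=2$, it suffices to record, at each visit to a Low ICS, the value of $i$ together with the number of rowmotion steps elapsed. I would organize the argument around the parameter $a$, proving the generic range $3\le a\le n-2$ first, as it feeds into the boundary analysis.

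For part~(i), Proposition~\ref{prop:Low-to-High} gives $\row^{a+1}(I)=[\varnothing:a,n-a-1,1]$, a High ICS with top parameter $1$, and Proposition~\ref{prop:High-to-Low}(b) then yields the Low ICS $[1,a+1,n-a-2:\varnothing]$, which has $i=a+1\ge 4$. A second Low-to-High-to-Low passage (Proposition~\ref{prop:Low-to-High} followed by Proposition~\ref{prop:High-to-Low}(a), with top parameter $a\ge 3$) produces $[n-a+1,2,a-3:\varnothing]$, and adding the four leg-lengths $(a+1)+3+(n-a-1)+3$ gives exactly $n+6$. The only intermediate Low ICS has $i=a+1\ge 4$, so it is neither a singleton nor a two-rank ICS; this establishes the orbit-structure claim $a\mapsto a-3$. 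For the signed cardinality I would apply Lemma~\ref{lem:sc_High-and-Low} to each leg and sum; using that $n$ is odd (so that $\delta_{n-a-1}=\delta_a$ and $\delta_{n-1}=0$), the contributions collapse to $\delta_a(-1)^a+\delta_a=0$, matching the ``otherwise'' entry of the table.

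The boundary values $a\in\{0,1,2\}$ are the crux and must be traced individually, since the High ICS now reached has top parameter $1$ or $2$ and the subsequent drops pass through Stacked-Diagonal complement steps (handled by Proposition~\ref{prop:High-to-Low}, Cases~3--5). For $a=0$ one reaches $\{(1,2)\}$ after $4$ steps, and for $a=2$ one reaches $\{(1,6)\}$ after $n+12$ steps, passing only through Low ICS with $i=3$ and $i=n-2$ (so no earlier singleton or two-rank ICS is visited). The decisive observation is the $a=1$ \emph{reset}: a short $5$-step passage sends $[n-3,2,1:\varnothing]$ to $[1,2,n-3:\varnothing]$, a two-rank ICS with parameter $a'=n-3$, after which the part-(i) rule $a\mapsto a-3$ iterates. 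Tracking the arithmetic progression $n-3,n-6,\dots$ modulo $3$ produces the trichotomy: when $n\equiv 1\pmod 3$ it returns to $a=1$, closing the orbit with no singleton (part~(ii)); when $n\equiv 0$ (resp.\ $n\equiv 2$) it terminates at the $a=0$ (resp.\ $a=2$) two-rank ICS, and then the boundary analysis above supplies the first singleton (part~(iii)).

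The orbit sizes and singleton indices then follow by summing step-lengths: $5$ for the reset, $n+6$ per reduction (the number of reductions being $\tfrac{n-4}{3}$, $\tfrac{n-3}{3}$, or $\tfrac{n-5}{3}$ according as $n\equiv 1,0,2\pmod 3$), plus the terminal leg. These telescope to $\tfrac{n^2+2n-9}{3}$, $\tfrac{n^2+3n+9}{3}$, and $\tfrac{n^2+4n+21}{3}$, respectively. For the signed cardinality I would again add the per-leg contributions from Lemma~\ref{lem:sc_High-and-Low}: every reduction leg and the reset leg contribute $0$, the $a=0$ terminal contributes $0$, and the $a=2$ terminal contributes $+1$, the one nonzero leg being the Low ICS $[0,n-2,2:\varnothing]$ (for which Lemma~\ref{lem:sc_High-and-Low} gives $\delta_n=1$). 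Hence the total is $1$ exactly when $a=2$ or when $a=1$ and $n\equiv 2\pmod 3$, and $0$ otherwise. I expect the main obstacle to be the bookkeeping in these boundary cascades: verifying that no singleton or two-rank ICS is silently skipped in the long $a=1$ chains, and matching the summed arithmetic progressions to the stated closed forms for $m$ and $|\O(I)|$. The individual moves are routine given Propositions~\ref{prop:Low-to-High}--\ref{prop:High-to-Low} and Lemma~\ref{lem:sc_High-and-Low}, but the simultaneous case analysis by residue modulo $3$ and by parity is where care is most needed.
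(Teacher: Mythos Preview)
Your proposal is correct and follows essentially the same approach as the paper's proof: both organize the argument by the value of $a$, first establishing the generic reduction $a\mapsto a-3$ via two Low--High--Low passages (Propositions~\ref{prop:Low-to-High} and~\ref{prop:High-to-Low}) with leg-lengths summing to $n+6$, then handling the boundary cases $a\in\{0,1,2\}$ individually, with the $a=1$ case feeding into an iteration of the generic reduction starting from $a'=n-3$ and branching by $n\bmod 3$. The signed-cardinality bookkeeping via Lemma~\ref{lem:sc_High-and-Low} is likewise identical, including your identification of the Low ICS $[0,n-2,2:\varnothing]$ as the sole source of the $+1$ in the $a=2$ chain.
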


\begin{proof} For $I=[n-a-2,2,a:\varnothing]$ we consider the four cases: $3\leq a\leq n-2, a=0, a=2,$ and $a=1.$

\medskip
\noindent(Case 1: $3\leq a\leq n-2$) 
     Applying Propositions \ref{prop:Low-to-High} and \ref{prop:High-to-Low} and Lemma \ref{lem:sc_High-and-Low}, we have the following orbit structures and signed cardinality statistic sums: $$\underbrace{[n-a-2,2,a:\varnothing]\xrightarrow[\ref{prop:Low-to-High}]{\row^{a+1}}}_{\delta_{a+2}(-1)^a}\underbrace{[\varnothing:a,n-a-1,1]\xrightarrow[\ref{prop:High-to-Low}b]{\row^3}}_{\delta_{n-a-1}}\underbrace{[1,a+1,n-a-2:\varnothing]\xrightarrow[\ref{prop:Low-to-High}]{\row^{n-a-1}}}_{\delta_{n-1}(-1)^{n-a-2}}$$
     $$\underbrace{[\varnothing:n-a-2,2,a]\xrightarrow[\ref{prop:High-to-Low}a]{\row^3}}_{0}[n-a+1,2,a-3:\varnothing].$$
    For each Low ICS appearing in this substructure, the cardinality is $ i=2$ or $a+1.$  Given $a\geq 3,$ no lower-chain singleton interval-closed sets appear.  Adding the partial sums of the signed cardinality statistic together, and using the assumption that $n$ is odd and the properties of $\delta$ to simplify, we find 
$\sum_{k=0}^{n+5}\SC(\row^k(I))=-\delta_a+\delta_a+0+0=0.$

\medskip
\noindent(Case 2: $a=0$) 
Consider $I=[n-2,2,0:\varnothing].$ Again applying Propositions \ref{prop:Low-to-High} and \ref{prop:High-to-Low} and Lemma \ref{lem:sc_High-and-Low}, we have
$$\underbrace{[n-2,2,0:\varnothing]\xrightarrow[\ref{prop:Low-to-High}]{\row}}_{\delta_{2}(-1)^0}\underbrace{[\varnothing:0,n-1,1]\xrightarrow[\ref{prop:High-to-Low}b]{\row^3}}_{\delta_{n-1}}[1,1,n-2:\varnothing]=\{(1,2)\}$$  with no other High or Low ICS appearing in this substructure.  Thus $\{(1,2)\}$ is the first singleton in the orbit of $I$ and the sum of the signed cardinality statistic over this substructure when $n$ is odd is 
$\sum_{k=0}^{3}\SC(\row^k(I))=0.$

\medskip

\noindent(Case 3: $a=2$)
Starting with $I=[n-4,2,2:\varnothing],$ we find

$$\underbrace{[n-4,2,2:\varnothing]\xrightarrow[\ref{prop:Low-to-High}]{\row^3}}_{\delta_{4}(-1)^2}\underbrace{[\varnothing:2,n-3,1]\xrightarrow[\ref{prop:High-to-Low}b]{\row^3}}_{\delta_{n-3}}\underbrace{[1,3,n-4:\varnothing]\xrightarrow[\ref{prop:Low-to-High}]{\row^{n-3}}}_{\delta_{n-1}(-1)^{n-4}}$$
$$\underbrace{[\varnothing:n-4,2,2]\xrightarrow[\ref{prop:High-to-Low}b]{\row^3}}_{0}\underbrace{[0,n-2,2:\varnothing]\xrightarrow[\ref{prop:Low-to-High}]{\row^3}}_{\delta_{n}(-1)^2}\underbrace{[\varnothing:2,1,n-3]=\{(2,3)\}\xrightarrow[\ref{prop:High-to-Low}a]{\row^3}}_{0}[5,1,n-6:\varnothing]=\{(1,6)\}.$$
By Propositions \ref{prop:Low-to-High} and \ref{prop:High-to-Low}, $\{(2,3)\}$ and $\{(1,6)\}$ are the only singletons appearing in this substructure.  Adding the partial sums of the signed cardinality statistic together for odd $n$ and simplifying, we find
$\sum_{k=0}^{11}\SC(\row^k(I))=0+0-0+0+1+0=1.$

\medskip
\noindent(Case 4: $a=1$)
Given $I=[n-3,2,1:\varnothing],$
$$\underbrace{[n-3,2,1:\varnothing]\xrightarrow[\ref{prop:Low-to-High}]{\row^2}}_{\delta_3(-1)}\underbrace{[\varnothing:1,n-2,1]\xrightarrow[\ref{prop:High-to-Low}b]{\row^3}}_{\delta_{n-2}}[1,2,n-3:\varnothing]$$
Since $n\geq 6,$ we can apply Case 1 of this proof $\lfloor\frac{n-3}{3}\rfloor$ times, 
$$[1,2,n-3:\varnothing]\xrightarrow[\ref{prop:big orbit 2phase}i]{\row^{\lfloor\frac{n-3}{3}\rfloor(n+6)}}[x,2,n\bmod{3}:\varnothing],$$ where $x=n-2-(n\bmod{3}).$  
Applying the signed cardinality results from Case 1 of this proof, we have that for odd $n,$
$$\sum_{k=0}^{\left\lfloor\frac{n-3}{3}\right\rfloor(n+6)+4}\SC(\row^k(I))=-\delta_3+\delta_{n-2}+\sum_{i=1}^{\left\lfloor\frac{n-3}{3}\right\rfloor} 0=-1+1+0=0.$$

\begin{itemize}
    \item If $n\equiv 1\pmod{3},$ $[x,2,n\bmod{3}:\varnothing]=[n-3,2,1:\varnothing]$ and we are back where we started, demonstrating $\O(I)$ has no singleton representatives, $$|\O(I)|=5+\left\lfloor\frac{n-3}{3}\right\rfloor(n+6)=5+\frac{n-4}{3}(n+6)=\frac{n^2+2n-9}{3},$$ 
    and when $n$ is odd $\sum_{J\in \O(I)}\SC(J)=0.$ 
    
\item If $n\equiv 0\pmod{3},$ $[x,2,n\bmod{3}:\varnothing]=[n-2,2,0:\varnothing]$ and by Case 2 of this proof,  
$$\underbrace{[n-2,2,0:\varnothing]\xrightarrow{\row^4}}_{0}\{(1,2)\}.$$  Thus, $\row^{m}(I)=\{(1,2)\}$ is the first appearance of a singleton in the orbit of $I$, where $$m=5+\left\lfloor\frac{n-3}{3}\right\rfloor(n+6)+4=\frac{n^2+3n+9}{3},$$
and the sum of the signed cardinality statistic when $n$ is odd is $\sum_{k=0}^{m-1}\SC(\row^k(I))=0+0=0.$

\item Finally, if $n\equiv 2\pmod{3},$ $[x,2,n\bmod{3}:\varnothing]=[n-4,2,2:\varnothing]$ and by Case 3 of this proof,  
$$\underbrace{[n-4,2,2:\varnothing]\xrightarrow{\row^{n+12}}}_{1}\{(1,6)\}.$$  
Therefore $\row^{l}(I)=\{(1,6)\}$ is the first appearance of a singleton in the orbit of $I$, where $$l=5+\left\lfloor\frac{n-3}{3}\right\rfloor(n+6)+(n+12)=\frac{n^2+4n+21}{3},$$ and when $n$ is odd, $\sum_{k=0}^{l-1}\SC(\row^k(I))=0+1=1.$
\qedhere
\end{itemize}
\end{proof}

We are now ready to complete the characterization started in Lemma \ref{prop:big orbits 2n+12}, by looking at the action of rowmotion on singletons of the form $\{(1,n-y)\}$ for $0\leq y\leq 6.$  As we will show, the complexity of the orbit substructure appearing between ICS of this form and the next lower-chain singleton ICS depends on whether an ICS consisting of two consecutive ranks in the lower-chain appears in between, as happens when $y=2$ and $y=4.$

\begin{lem}\label{prop:big orbits edge cases}  Given the interval-closed set $\{(1,n-y)\}$,  for $n\geq 10$ and $0\leq y\leq 5,$ the next lower-chain singleton appearing in the orbit of $I,$ $\row^m(I),$ has the form $\{(1,x)\}$ 
 where $2\leq x\leq 7.$  In particular,

 \begin{itemize}
     \item if $y\notin \{2,4\}$ then $x$ is the unique value satisfying both $2\leq x\leq 7$ and $x\equiv(10-y)\bmod{6}.$ Also,
     \begin{equation*}
\begin{cases}
        6 &  \text{for } y=0,\\
        \left(\frac{y-1}{2}\right)n+4\left\lfloor\frac{2y}{3}\right\rfloor+5 & \text{for } y\in\{1,3,5\},
    \end{cases}
    \end{equation*}
and, for odd values of $n,$ 
\begin{equation*}
        \sum_{k=0}^{m-1}\SC(\row^k(I))=
\begin{cases}
      1  & \text{for } y=0\\
        -1 & \text{for } y\in \{1,3\},\\
        0 & \text{for } y=5.
    \end{cases}
    \end{equation*}
\item if $y=2$ or $y=4,$ then $x$ is the unique value satisfying both $2\leq x\leq 7$ and $x\equiv
\begin{cases}
        (10-y)\bmod{6}  &  \text{if } n\equiv 1\bmod{3},\\
        (10+y)\bmod{6} & \text{if } n\equiv 0,2\bmod{3}.
    \end{cases}$
Also,        
\begin{equation*}
        m=
\begin{cases}
      \frac{n^2+(2y-1)n+6(y+1)}{3}  &  \text{if } (y,n \bmod{3})\in \{(2,0),(4,2)\},\\
        \frac{n^2+2^{y-1}n+6(3y-5)}{3} & \text{if } (y,n \bmod{3})\in \{(2,1), (4,1)\},\\
        \frac{2n^2+(2y+1)n+(6y-3)}{3} & \text{if } (y,n \bmod{3}) \in \{(2,2), (4,0)\},
    \end{cases} 
\end{equation*}
and, for odd values of $n,$ 
\begin{equation*}
        \sum_{k=0}^{m-1}\SC(\row^k(I))=
\begin{cases}
      0  & \text{if } (y,n\bmod{3}) \in \{(4,0), (2,1),  (4,2)\},\\
        1 & \text{if } (y,n\bmod{3}) \in \{(2,0), (4,1), (2,2)\}.
    \end{cases}
    \end{equation*}
 \end{itemize}
\end{lem}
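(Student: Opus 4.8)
The plan is to dispatch the six values $y\in\{0,1,2,3,4,5\}$ one at a time and, for each, to trace the orbit of $\{(1,n-y)\}$ explicitly through the alternating sequence of Low and High interval-closed sets supplied by Propositions~\ref{prop:Low-to-High} and~\ref{prop:High-to-Low}, in the same style as the proof of Lemma~\ref{prop:big orbits 2n+12}. First I would rewrite the singleton in the Low parameterization as $\{(1,n-y)\}=[n-y-1,1,y:\varnothing]$, so that Proposition~\ref{prop:Low-to-High} applies with $b=n-y-1$, $i=1$, $a=y$. Alternating the two propositions generates a chain $\mathrm{Low}\to\mathrm{High}\to\mathrm{Low}\to\cdots$, and I would follow it until the next Low ICS with middle parameter $i=1$ appears; that ICS is by definition the next lower-chain singleton $\{(1,x)\}$. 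Accumulating the step counts of the blocks gives $m$, and accumulating signed cardinality blockwise via Lemma~\ref{lem:sc_High-and-Low}---where a Low block contributes $\delta_{a+i}(-1)^a$ and a High block contributes $0$, $\delta_i$, or $-1$ depending on its top parameter---gives the stated signed-cardinality totals.

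For $y\in\{0,1,3,5\}$ the chain returns to a singleton after only a bounded number of blocks, so the whole trajectory can be written out; for instance, when $y=0$ one has
\[
\{(1,n)\}\xrightarrow{\row}[\varnothing:0,n,0]\xrightarrow{\row}[0,n,0:\varnothing]\xrightarrow{\row}\{(2,1)\}\xrightarrow{\row^{3}}\{(1,4)\},
\]
giving $m=6$ and $x=4$. The linear-in-$n$ terms in $m$ for $y\in\{1,3,5\}$ arise because some Low$\to$High blocks have length $a+1$ with $a$ of order $n$ (the block shifts a diagonal all the way up the poset), while the residue $x\pmod 6$ is read off the final parameters. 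After substituting $n$ odd and evaluating the parity-delta terms, the signed-cardinality sums collapse to $+1,-1,-1,0$ for $y=0,1,3,5$ respectively.

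The cases $y=2$ and $y=4$ carry the real weight and are where I expect the main obstacle. Here the alternating chain does not return to a singleton directly; instead it first arrives at a Low ICS consisting of two consecutive ranks, of the shape $[n-a-2,2,a:\varnothing]$, at which point I would invoke Lemma~\ref{prop:big orbit 2phase}. By part~(i) of that lemma this two-rank block drifts three ranks upward every $n+6$ steps, and iterating the drift on the order of $n/3$ times---each iteration costing $\Theta(n)$ steps---produces the quadratic-in-$n$ behavior of $m$. The residue of the starting value of $a$ modulo $3$ (equivalently $n\bmod 3$) decides where the drift terminates: it lands on the $a=0$, $a=1$, or $a=2$ subcase of Lemma~\ref{prop:big orbit 2phase}, which is exactly why the $y\in\{2,4\}$ formulas split three ways. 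In the subcase where the drift ends on the $a=1$ two-rank ICS, that ICS re-enters the drift machinery (Case~4 of Lemma~\ref{prop:big orbit 2phase}) and runs a \emph{second} drift phase, which is the source of the leading $2n^{2}$ term in one branch of the $m$-formula. The delicate part is precisely this bookkeeping: matching each two-rank ICS to the hypotheses of Lemma~\ref{prop:big orbit 2phase}, counting the floor-function-many drift iterations, and assembling the piecewise quadratic step counts $m$ with their correct $n\bmod 3$ split. By contrast the signed-cardinality totals are comparatively painless, since every drift block contributes $0$ by Lemma~\ref{prop:big orbit 2phase}, so only the short initial and terminal segments survive, leaving a finite parity computation (using $n$ odd) to yield the stated values $0$ or $1$.
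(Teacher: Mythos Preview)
Your proposal is correct and follows essentially the same approach as the paper: a case-by-case analysis on $y$ that traces the alternating Low/High chain via Propositions~\ref{prop:Low-to-High} and~\ref{prop:High-to-Low}, with the $y\in\{2,4\}$ cases handed off to the two-rank drift of Lemma~\ref{prop:big orbit 2phase} and split by $n\bmod 3$ exactly as you describe. The one detail you may not have anticipated is that for $y\in\{2,4\}$ the chain passes through a short preliminary Low/High sequence (not yet two-rank) before arriving at the first ICS of shape $[\,\cdot\,,2,a:\varnothing]$ to which Lemma~\ref{prop:big orbit 2phase}(i) applies, and for $y=0$ the High ICS $[\varnothing:0,n,0]$ falls outside the hypothesis $i\neq n$ of Proposition~\ref{prop:High-to-Low} and must be handled directly by Lemma~\ref{Lem: CatalogRowmotion2xn} Type~6d---but your explicit $y=0$ chain already does this correctly.
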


\begin{proof} We break the proof up into cases based on the six possible values of $y,$ dispensing first with the cases in which no ICS consisting of two consecutive ranks appear between $I=\{(1,n-y)\}$ and the next lower-chain singleton in the orbit of $I.$

\noindent(Case 1: $y=0$)  Given $I=\{(1,n)\}=[n-1,1,0:\varnothing]$, we have that $\{(1,4)\}$ is the next appearance of a lower-chain singleton with 
$$\underbrace{[n-1,1,0:\varnothing]\xrightarrow[\ref{prop:Low-to-High}]{\row}}_{\delta_1(-1)^0}\underbrace{[\varnothing:0,n,0]\xrightarrow[\ref{Lem: CatalogRowmotion2xn}\textrm{ Type }6d]{\row}}_{-\delta_n}\underbrace{[0,n,0:\varnothing]\xrightarrow[\ref{prop:Low-to-High}]{\row}}_{\delta_n(-1)^0}\underbrace{[\varnothing:0,1,n-1]\xrightarrow[\ref{prop:High-to-Low}a]{\row^3}}_{0}[3,1,n-4:\varnothing]=\{(1,4)\}.$$ Here, we have used Lemma \ref{calc_sc(I)} and Lemma \ref{lem:sc_High-and-Low} to calculate the corresponding partial sums of the signed cardinality statistic when $n$ is odd.  Adding these partial sums together, we find that, when $n$ is odd, 
$\sum_{k=0}^5\SC(\row^k(\{(1,n)\}))=1-1+1+0=1.$

\medskip
\noindent For the remaining cases, we note that for $y\geq1,$ 
$$\{(1,n-y)\}=\underbrace{[n-y-1,1,y:\varnothing]\xrightarrow[\ref{prop:Low-to-High}]{\row^{y+1}}}_{\delta_{y+1}(-1)^y}\underbrace{[\varnothing:y,n-y,0]\xrightarrow[\ref{prop:High-to-Low}c(i)]{\row^3}}_{-1}[2,y,n-y-2:\varnothing],$$
and thus 
    \begin{equation}\label{eqn: Big Orbits Edge Cases}
    \row^{y+4}(\{(1,n-y)\})=[2,y,n-y-2:\varnothing], \mbox{ with }\sum_{k=0}^{y+3}\SC(\row^k(\{(1,n-y)\}))=\delta_{y+1}-1.\end{equation}

\noindent (Case 2: $y=1$)  By Equation \eqref{eqn: Big Orbits Edge Cases} we have $\row^{y+4}(\{(1,n-1)\})=\row^{5}(\{(1,n-1)\})=[2,1,n-3:\varnothing]=\{(1,3)\}$.  Moreover, by Propositions \ref{prop:Low-to-High} and \ref{prop:High-to-Low}, this is the next appearance of a lower-chain singleton in the orbit of $I=\{(1,n-1)\}.$  Summing the signed cardinality statistic over the sub-orbit from $\{(1,n-1)\}$ to $\{(1,3)\}$ gives $\sum_{k=0}^{4}\SC(\row^k(\{(1,n-1)\}))=\delta_2(-1)-1=-1$ when $n$ is odd.

\medskip

\noindent (Case 3: $y=3$) Similarly, when $y=3$ we have by Equation \eqref{eqn: Big Orbits Edge Cases}, $\row^{y+4}(\{(1,n-3)\})=\row^{7}(\{(1,n-3)\})=[2,3,n-5:\varnothing],$ and subsequent actions of rowmotion give $$\underbrace{\{(1,n-3)\}\xrightarrow[\textrm{Equation }\eqref{eqn: Big Orbits Edge Cases}]{\row^{7}}}_{\delta_{4}-1}\underbrace{[2,3,n-5:\varnothing]\xrightarrow[\ref{prop:Low-to-High}]{\row^{n-4}}}_{\delta_{n-2}(-1)^{n-5}}\underbrace{[\varnothing:n-5,3,2]\xrightarrow[\ref{prop:High-to-Low}b]{\row^3}}_{0}\underbrace{[0,n-3,3:\varnothing]\xrightarrow[\ref{prop:Low-to-High}]{\row^4}}_{\delta_n(-1)^3}$$ $$\underbrace{[\varnothing:3,1,n-4]\xrightarrow[\ref{prop:High-to-Low}a]{\row^3}}_{0}[6,1,n-7:\varnothing]=\{(1,7)\},$$ which, by Propositions \ref{prop:Low-to-High} and \ref{prop:High-to-Low}, is the next appearance of a lower-chain singleton in the orbit of $\{(1,n-3)\}.$ Adding the partial sums of the signed cardinality statistic and simplifying, we get $$\sum_{k=0}^{n+12}\SC(\row^k(\{(1,n-3)\}))=-\delta_4-1+\delta_{n-2}+0-\delta_n+0$$ which is also $-1$ for odd values of $n.$

\medskip

\noindent (Case 4: $y=5$)  Again by Equation \eqref{eqn: Big Orbits Edge Cases}, $\row^{y+4}(\{(1,n-5)\})=\row^{9}(\{(1,n-5)\})=[2,5,n-7:\varnothing]$ and subsequent actions of rowmotion give $$\underbrace{\{(1,n-5)\}\xrightarrow[\textrm{Equation }\eqref{eqn: Big Orbits Edge Cases}]{\row^{9}}}_{\delta_{6}-1}\underbrace{[2,5,n-7:\varnothing]\xrightarrow[\ref{prop:Low-to-High}]{\row^{n-6}}}_{\delta_{n-2}(-1)^{n-7}}\underbrace{[\varnothing:n-7,3,4]\xrightarrow[\ref{prop:High-to-Low}a]{\row^3}}_{0}\underbrace{[n-4,3,1:\varnothing]\xrightarrow[\ref{prop:Low-to-High}]{\row^2}}_{\delta_4(-1)}$$
$$\underbrace{[\varnothing:1,n-3,2]\xrightarrow[\ref{prop:High-to-Low}b]{\row^3}}_{0}\underbrace{[0,3,n-3:\varnothing]\xrightarrow[\ref{prop:Low-to-High}]{\row^{n-2}}}_{\delta_n(-1)^{n-3}}\underbrace{[\varnothing:n-3,1,2]\xrightarrow[\ref{prop:High-to-Low}b]{\row^3}}_{0}$$ $$\underbrace{[0,n-1,1:\varnothing]\xrightarrow[\ref{prop:Low-to-High}]{\row^2}}_{\delta_n(-1)}\underbrace{[\varnothing:1,1,n-2]\xrightarrow[\ref{prop:High-to-Low}a]{\row^3}}_{0}[4,1,n-5:\varnothing]=\{(1,5)\}.$$ 
Thus, $\{(1,5)\}$ is the next lower-chain singleton in the orbit of $I.$ Summing over the partial sums of the signed cardinality statistic, we find that when $n$ is odd $\sum_{k=0}^{2n+16}\SC(\row^k(\{(1,n-5)\}))=0.$

\medskip

Plugging $y\in \{1,3,5\}$ into the formula $\left(\frac{y-1}{2}\right)n+4\left\lfloor\frac{2y}{3}\right\rfloor+5$, we get respectively $5, n+13,$ and $2n+17,$ the calculated orbit substructure lengths between $\{(1,n-y)\}$ and the next lower-chain singleton.

\medskip

Cases 5 and 6 are more complicated as the orbit substructures examined include interval-closed sets consisting of two consecutive ranks in the lower chain.  As we showed in Lemma \ref{prop:big orbit 2phase}, the resulting orbit substructures depend on the value of $n\bmod{3}.$

\medskip

\noindent (Case 5: $y=2$)  Given $I=\{(1,n-2)\},$  we have by Equation \eqref{eqn: Big Orbits Edge Cases}, $\row^{y+4}(I)=\row^{6}(I)=[2,2,n-4:\varnothing]$  with $\sum_{k=0}^{5}\SC(\row^k(\{(1,n-2)\})=\delta_{3}-1=0$. Subsequent actions of rowmotion give
$$\underbrace{[2,2,n-4:\varnothing]\xrightarrow[\ref{prop:Low-to-High}]{\row^{n-3}}}_{\delta_{n-2}(-1)^{n-4}}\underbrace{[\varnothing:n-4,3,1]\xrightarrow[\ref{prop:High-to-Low}b]{\row^3}}_{\delta_3}\underbrace{[1,n-3,2:\varnothing]\xrightarrow[\ref{prop:Low-to-High}]{\row^3}}_{\delta_{n-1}(-1)^2}\underbrace{[\varnothing:2,2,n-4]\xrightarrow[\ref{prop:High-to-Low}a]{\row^3}}_{0}[5,2,n-7:\varnothing].$$
Since $n\geq 10$, we can apply Lemma \ref{prop:big orbit 2phase} to find
$$[5,2,n-7:\varnothing]\xrightarrow[\ref{prop:big orbit 2phase}i]{\row^{\lfloor\frac{n-7}{3}\rfloor(n+6)}}[x,2,n-1\bmod{3}:\varnothing],$$
where $x=n-2-(n-1\bmod{3}).$ 
Before considering the three possible values of $n-1\bmod{3},$ we sum the signed cardinality statistic over the orbit substructure from $I$ to $[x,2,n-1\bmod{3}:\varnothing]$ when $n$ is odd.  By Lemma \ref{prop:big orbit 2phase}\textit{i}, the interval-closed sets from $[5,2,n-7:\varnothing]$ to $[x,2,n-1\bmod{3}:\varnothing]$ contribute nothing to the sum of the signed cardinality statistic in this case.  Using this, and the assumption that $n$ is odd to simplify the sum, we find 
\begin{equation}\label{eqn: sc y=2 partial}
\sum_{k=0}^{n+12+\lfloor\frac{n-7}{3}\rfloor(n+6)-1}\SC(\row^k(I))=0+(-1)+1+0+0+0=0.
\end{equation}

\begin{itemize}
    \item If $n\equiv 0\pmod{3},$ 
$$[x,2,n-1\bmod{3}:\varnothing]=[n-4,2,2:\varnothing]\xrightarrow[\ref{prop:big orbit 2phase}iii]{\row^{n+12}}\{(1,6)\},$$
and we have $\row^m(I)=\{(1,6)\}$ is the next appearance of a lower-chain singleton in the orbit of $I=\{(1,n-2)\},$ where 
$$m=n+12+\lfloor\frac{n-7}{3}\rfloor(n+6)+(n+12)=\frac{n^2+3n+18}{3}=\frac{n^2+(2y-1)n+6(y+1)}{3}.$$ 
By Lemma \ref{prop:big orbit 2phase}, this last piece of the orbit substructure, from $[n-4,2,2:\varnothing]$ to $\{(1,6)\},$ contributes $1$ to the sum of the signed cardinality statistic.  Adding this to the partial sum from Equation \eqref{eqn: sc y=2 partial}, we find 
\begin{equation*}
    \sum_{k=0}^{m-1}\SC(\row^k(\{(1,n-2)\}))=1
\end{equation*}
for odd values of $n.$

\item If $n\equiv 1\pmod{3},$
$$[x,2,n-1\bmod{3}:\varnothing]=[n-2,2,0:\varnothing]\xrightarrow[\ref{prop:big orbit 2phase}iii]{\row^{4}}\{(1,2)\},$$
and the next lower-chain singleton in the orbit of $I$ is $\row^m(I)=\{(1,2)\},$ where $$m=n+12+\frac{n-7}{3}(n+6)+4=\frac{n^2+2n+6}{3}=\frac{n^2+2^{y-1}n+6(3y-5)}{3}.$$ Applying Equation \eqref{eqn: sc y=2 partial} and Lemma \ref{prop:big orbit 2phase}, we find that for odd $n,$
\begin{equation*}
    \sum_{k=0}^{m-1}\SC(\row^k(\{(1,n-2)\}))=0.
\end{equation*}

\item If $n\equiv 2\pmod{3},$
$$[x,2,n-1\bmod{3}:\varnothing]=[n-3,2,1:\varnothing]\xrightarrow[\ref{prop:big orbit 2phase}iii]{\row^{\frac{n^2+4n+21}{3}}}\{(1,6)\}$$ is the next lower-chain singleton appearing in the orbit of $I$ after $$m=n+12+
\frac{n-8}{3}(n+6)+\frac{n^2+4n+21}{3}=\frac{2n^2+5n+9}{3}=\frac{2n^2+(2y+1)n+(6y-3)}{3} $$ applications of rowmotion.  Again, by Lemma \ref{prop:big orbit 2phase}, when $n$ is odd, this last portion of the orbit substructure contributes $1$ to the sum of the signed cardinality statistic, and 
\begin{equation*}
    \sum_{k=0}^{m-1}\SC(\row^k(\{(1,n-2)\}))=1.
\end{equation*}
\end{itemize}

\medskip

\noindent (Case 6: $y=4$)
Given $I=\{(1,n-4)\},$  we have by Equation \eqref{eqn: Big Orbits Edge Cases}, $\row^{y+4}(I)=\row^{8}(I)=[2,4,n-6:\varnothing]$  with $\sum_{k=0}^{7}\SC(\row^k(\{(1,n-4)\})=\delta_{5}-1=0$. Subsequent actions of rowmotion give
$$\underbrace{[2,4,n-6:\varnothing]\xrightarrow[\ref{prop:Low-to-High}]{\row^{n-5}}}_{\delta_{n-2}(-1)^{n-6}}\underbrace{[\varnothing:n-6,3,3]\xrightarrow[\ref{prop:High-to-Low}a]{\row^3}}_{0}\underbrace{[n-3,3,0:\varnothing]\xrightarrow[\ref{prop:Low-to-High}]{\row}}_{\delta_3(-1)^0}\underbrace{[\varnothing:0,n-2,2]\xrightarrow[\ref{prop:High-to-Low}b]{\row^3}}_{0}$$
$$\underbrace{[0,2,n-2:\varnothing]\xrightarrow[\ref{prop:Low-to-High}]{\row^{n-1}}}_{\delta_n(-1)^{n-2}}\underbrace{[\varnothing:n-2,1,1]\xrightarrow[\ref{prop:High-to-Low}b]{\row^3}}_{\delta_1}\underbrace{[1,n-1,0:\varnothing]\xrightarrow[\ref{prop:Low-to-High}]{\row}}_{\delta_{n-1}(-1)^0}\underbrace{[\varnothing:0,2,n-2]\xrightarrow[\ref{prop:High-to-Low}a]{\row^3}}_{0}[3,2,n-5:\varnothing].$$
Thus, $\row^{2n+16}(I)=[3,2,n-5:\varnothing].$ 
As in Case 5, we apply Lemma \ref{prop:big orbit 2phase} to find
$$[3,2,n-5:\varnothing]\xrightarrow[\ref{prop:big orbit 2phase}i]{\row^{\lfloor\frac{n-5}{3}\rfloor(n+6)}}[x,2,n+1\bmod{3}:\varnothing],$$ where $x=n-2-(n+1\bmod{3}).$ Furthermore, by Lemma \ref{prop:big orbit 2phase}, the sum of the signed cardinality statistic over the orbit substructure from $[3,2,n-5:\varnothing]$ to $[x,2,n+1\bmod{3}:\varnothing]$ is $0$ when $n$ is odd.  Adding this partial sum to $\sum_{k=0}^{2n+15}\SC(\row^k(I)),$ we find that for odd values of $n$, 
\begin{equation*}
    \sum_{k=0}^{2n+16+\left\lfloor\frac{n-5}{3}\right\rfloor(n+6)-1}\SC(\row^k(I))=0.
\end{equation*}

\begin{itemize}
    \item If $n\equiv 0\pmod{3},$
$$[x,2,n+1\bmod{3}:\varnothing]=[n-3,2,1:\varnothing]\xrightarrow[\ref{prop:big orbit 2phase}iii]{\row^{\frac{n^2+3n+9}{3}}}\{(1,2)\},$$ 
and $\{(1,2)\}$ is the next lower-chain singleton appearing in the orbit of $I.$ 
Summing over the applications of rowmotion, and using the fact that $\lfloor\frac{n-5}{3}\rfloor=\frac{n-6}{3}$ when $n\equiv 0\pmod{3},$ we find $\row^m(I)=\{(1,2)\}$ for $$m=(2n+16)+\frac{n-6}{3}(n+6)+\frac{n^2+3n+9}{3}=\frac{2n^2+9n+21}{3}=\frac{2n^2+(2y+1)n+(6y-3)}{3} .$$ 
Also by Lemma \ref{prop:big orbit 2phase}, the orbit substructure from $[n-3,2,1:\varnothing]$ to $\{(1,2)\}$ contributes $0$ to the sum of the signed cardinality statistic when $n$ is odd.  Thus, for odd values of $n,$ 
\begin{equation*}
    \sum_{k=0}^{m-1}\SC(\row^k(\{(1,n-4)\}))=0+0=0.
\end{equation*}
\item If $n\equiv 1\pmod{3},$ $$[x,2,n+1\bmod{3}:\varnothing]=[n-4,2,2:\varnothing]\xrightarrow[\ref{prop:big orbit 2phase}iii]{\row^{n+12}}\{(1,6)\},$$ and $\row^m(I)=\{(1,6)\}$ is the next lower-chain singleton appearing in the orbit of $I,$ with $$m=(2n+16)+\frac{n-7}{3}(n+6)+(n+12)=\frac{n^2+8n+42}{3}=\frac{n^2+2^{y-1}n+6(3y-5)}{3}.$$
Here, we have used the fact that $\lfloor\frac{n-5}{3}\rfloor=\frac{n-7}{3}$ when $n\equiv 1\pmod{3}.$  By Lemma \ref{prop:big orbit 2phase}, when $n$ is odd, this last piece of the orbit substructure contributes $1$ to the sum of the signed cardinality statistic, and we have 
$$\sum_{k=0}^{m-1}\SC(\row^k(\{(1,n-4)\}))=0+1=1,$$
for odd $n.$
\item If $n\equiv 2\pmod{3},$
$$[x,2,n+1\bmod{3}:\varnothing]=[n-2,2,0:\varnothing]\xrightarrow[\ref{prop:big orbit 2phase}iii]{\row^{4}}\{(1,2)\},$$ and the next lower-chain singleton in the orbit of $I$ is $\row^m(I)=\{(1,2)\}$ with $$m=(2n+16)+\frac{n-5}{3}(n+6)+4=\frac{n^2+7n+30}{3}=\frac{n^2+(2y-1)n+6(y+1)}{3}.$$ 
Again by Lemma \ref{prop:big orbit 2phase}, this last piece of the substructure contributes $0$ to the sum of the signed cardinality statistic when $n$ is odd.  Thus, for odd values of $n,$
\[\sum_{k=0}^{m-1}\SC(\row^k(\{(1,n-4)\}))=0+0=0. \qedhere\]
\end{itemize}
\end{proof}

Lemmas \ref{prop:big orbits 2n+12} and \ref{prop:big orbits edge cases} allow us to study the orbits of lower-chain singleton ICS by examining just the lower-chain singleton elements in the orbit.  Starting with any $I=\{(1,x)\}$ for $x>1$, we either step up through lower-chain singletons of the form $\{(1,x+6k)\}$ until we get one of the form $\{(1,a)\}$ with $a>n-6$, or we started with $x>n-6.$ Either way, by Lemma \ref{prop:big orbits edge cases}, we eventually map to a lower-chain singleton of the form $\{(1,b)\}$ with $2\leq b\leq 7$.  Thus, the orbit of every lower-chain singleton has a representative of the form $\{(1,b)\}$ for $b\leq 7$, and knowing which lower-chain singletons of this form are in a particular orbit is enough to calculate the length of the orbit and all lower-chain singletons in the orbit for a given $n.$  This is the basis of the following Theorem. 

\begin{thm}\label{large_orbits} 
  For each $n \geq 5$, there are rowmotion orbits in $[2] \times [n]$ of the following sizes, for a total of $3n^2+5n-30$ interval-closed sets: 
\begin{itemize}
    \item for $n \equiv 0 \pmod{6}$, there is an orbit of size 
    \begin{itemize}
        \item $n^2 + 3n - 5$, containing all lower-chain singleton ICS of the form $\{(1, i)\}$ for $i \equiv 2 \pmod{6}$ 
        \item $n^2 + n - 12$, containing all lower-chain singleton ICS of the form $\{(1, i)\}$ for $i \equiv 0, 4 \pmod{6}$
        \item $n^2 + n - 13$, containing all lower-chain singleton ICS of the form $\{(1, i)\}$ for $i \equiv 1,3,5 \pmod{6}$, $i > 1$ 
    \end{itemize}
    \item for $n \equiv 1 \pmod{6}$, there is an orbit of size 
    \begin{itemize}
        \item $n^2 + 2n - 9$, containing all lower-chain singleton ICS of the form $\{(1, i)\}$ for $i \equiv 2, 5 \pmod{6}$ 
        \item $n^2 + 2n - 9$, containing all lower-chain singleton ICS of the form $\{(1, i)\}$ for $i \equiv 0, 3 \pmod{6}$
        \item $\frac{2n^2 + n - 27}{3}$, containing all lower-chain singleton ICS of the form $\{(1, i)\}$ for $i \equiv 1, 4 \pmod{6}$, $i > 1$
        \item $\frac{n^2+2n-9}{3}$, with no singleton ICS, and with representative $\{(1,n-2),(1,n-1)\}$
    \end{itemize}
    \item for $n \equiv 2 \pmod{6}$, there is an orbit of size 
    \begin{itemize}
        \item $n^2 + 3n - 4$, containing all lower-chain singleton ICS of the form $\{(1, i)\}$ for $i \equiv 2,4 \pmod{6}$ 
        \item $n^2 + n - 13$, containing all lower-chain singleton ICS of the form $\{(1, i)\}$ for $i \equiv 0 \pmod{6}$
        \item $n^2 + n - 13$, containing all lower-chain singleton ICS of the form $\{(1, i)\}$ for $i \equiv 1, 3, 5 \pmod{6}$, $i > 1$, 
    \end{itemize}
    \item for $n \equiv 3 \pmod{6}$, there is an orbit of size 
    \begin{itemize}
        \item $2n^2 + 5n - 13$, containing all lower-chain singleton ICS of the form $\{(1, i)\}$ for $i \equiv 2, 3, 4, 5 \pmod{6}$
        \item $n^2 - 17$, containing all lower-chain singleton ICS of the form $\{(1, i)\}$ for $i \equiv 0, 1 \pmod{6}$, $i > 1$ 
    \end{itemize}
    \item for $n \equiv 4 \pmod{6}$, there is an orbit of size 
    \begin{itemize}
        \item $\frac{2n^2 + 4n - 18}{3}$, containing all lower-chain singleton ICS of the form $\{(1, i)\}$ for $i \equiv 0 \pmod{6}$ 
        \item $\frac{2n^2 + 4n - 18}{3}$, containing all lower-chain singleton ICS of the form $\{(1, i)\}$ for $i \equiv 2 \pmod{6}$ 
        \item $\frac{n^2 + 2n - 6}{3}$, containing all lower-chain singleton ICS of the form $\{(1, i)\}$ for $i \equiv 4 \pmod{6}$ 
        \item $\frac{n^2 + 2n - 9}{3}$, containing all lower-chain singleton ICS of the form $\{(1, i)\}$ for $i \equiv 3 \pmod{6}$ 
        \item $\frac{n^2 + 2n - 9}{3}$, containing all lower-chain singleton ICS of the form $\{(1, i)\}$ for $i \equiv 5 \pmod{6}$ 
        \item $\frac{n^2 - n - 21}{3}$, containing all lower-chain singleton ICS of the form $\{(1, i)\}$ for $i \equiv 1 \pmod{6}$, $i > 1$ 
        \item $\frac{n^2+2n-9}{3}$, with no singleton ICS, and with representative $\{(1,n-2),(1,n-1)\}$
            \end{itemize}
    \item and for $n \equiv 5 \pmod{6}$, there is an orbit of size 
    \begin{itemize}
        \item $2n^2 + 3n - 21$, containing all lower-chain singleton ICS of the form $\{(1, i)\}$ for $i \equiv 0, 3, 4, 5 \pmod{6}$ 
               \item $n^2 + 2n - 9$, containing all lower-chain singleton ICS of the form $\{(1, i)\}$ for $i \equiv 1, 2 \pmod{6}$, $i > 1$. 
          \end{itemize}
    \end{itemize}
    \end{thm}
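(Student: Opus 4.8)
The plan is to reduce the computation of these quadratic orbits to the cycle structure of a single permutation on lower-chain singletons. The relevant objects are the interval-closed sets $\{(1,x)\} = [x-1,1,n-x:\varnothing]$ for $2 \le x \le n$; the remaining lower-chain singleton $\{(1,1)\}$ is the full bottom rank and lies in an orbit of size $n+3$ (Case One of Lemma~\ref{n+3}), so it is excluded here. By Lemmas~\ref{prop:big orbits 2n+12} and~\ref{prop:big orbits edge cases}, starting from any such $\{(1,x)\}$ there is a well-defined \emph{next} lower-chain singleton in its rowmotion orbit; let $\phi\colon \{2,\dots,n\} \to \{2,\dots,n\}$ be the induced map. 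Since rowmotion is invertible, $\phi$ is a permutation, each rowmotion orbit meeting the lower chain in a singleton corresponds to exactly one cycle of $\phi$, the singletons in that orbit are exactly those indexed by the cycle, and the orbit size is the total number of rowmotion steps accumulated while traversing the cycle once.

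First I would tabulate the local data of $\phi$. For $2 \le x \le n-6$, Lemma~\ref{prop:big orbits 2n+12} gives $\phi(x) = x+6$, reached after $2n+12$ steps; for $x = n-y$ with $0 \le y \le 5$, Lemma~\ref{prop:big orbits edge cases} gives $\phi(x) \in \{2,\dots,7\}$, reached after the stated number $m$ of steps (with the target and $m$ depending on $n \bmod 3$ when $y \in \{2,4\}$). The key structural point is that the interior rule $x \mapsto x+6$ preserves $x \bmod 6$, that the six boundary indices $n-5,\dots,n$ are exactly one per residue class, and that the six possible targets $2,3,4,5,6,7$ are precisely the minimal singletons of the six residue classes (among $x \ge 2$). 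Consequently the singletons in each residue class form a single $\phi$-path from its smallest to its largest member, and $\phi$ descends to a permutation $\bar\phi$ of the residues $\{0,1,2,3,4,5\}$ whose cycles are in bijection with the cycles of $\phi$; here $\bar\phi(c)$ is simply the residue class of the boundary target of the top singleton in class $c$.

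Next I would compute $\bar\phi$ and its cycle decomposition in each of the six cases $n \bmod 6$, resolving the $y \in \{2,4\}$ boundary targets using $n \bmod 3$. Reading off which residue classes lie in a common cycle of $\bar\phi$ yields directly the ``$i \equiv \cdots \pmod{6}$'' membership assertions of the theorem. For a cycle $C$ of $\bar\phi$, the size of the corresponding rowmotion orbit is $(2n+12)\sum_{c \in C}(N_c - 1) + \sum_{c \in C} m(c)$, where $N_c = \#\{x : 2 \le x \le n,\ x \equiv c \pmod{6}\}$: each residue class contributes $N_c - 1$ interior steps of weight $2n+12$ and one boundary step of weight $m(c)$, and the orbit contains $\sum_{c \in C} N_c$ lower-chain singletons. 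Substituting the explicit values of $N_c$ and of the boundary weights $m(c)$ from Lemma~\ref{prop:big orbits edge cases}, and simplifying the resulting quadratics in $n$, should reproduce each listed orbit size.

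Finally I would account for the orbits containing no lower-chain singleton. By parts (i) and (iii) of Lemma~\ref{prop:big orbit 2phase}, every lower-chain ICS consisting of two consecutive ranks eventually reaches a lower-chain singleton, with the single exception $a = 1$ and $n \equiv 1 \pmod{3}$; there, part (ii) yields one singleton-free orbit of size $\tfrac{n^2+2n-9}{3}$ with representative $\{(1,n-2),(1,n-1)\}$, explaining the extra listed orbit when $n \equiv 1,4 \pmod{6}$. As global consistency checks I would verify that the $N_c$ sum to $n-1$, so that every relevant singleton is placed in exactly one orbit, and that the sizes of all orbits produced in each residue case sum to $3n^2 + 5n - 30$. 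I expect the main obstacle to be precisely the third step: there are effectively eight boundary regimes (the six residues, with $y \in \{2,4\}$ split further by $n \bmod 3$), and one must simultaneously track the cycle structure of $\bar\phi$, the counts $N_c$, and the weights $m(c)$, then carry out the quadratic simplifications so that each of the listed orbit sizes comes out exactly.
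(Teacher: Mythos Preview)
Your approach is essentially the same as the paper's: both reduce the orbit computation to the cycle structure of a permutation on lower-chain singletons built from Lemmas~\ref{prop:big orbits 2n+12} and~\ref{prop:big orbits edge cases}, then sum the step-weights over cycles and invoke Lemma~\ref{prop:big orbit 2phase}(ii) for the singleton-free orbit when $n\equiv 1\pmod{3}$. The only differences are cosmetic (you collapse to a $6$-element permutation $\bar\phi$ on residues, whereas the paper works with a $12$-element permutation $\sigma_{(n\bmod 6)}$ on $S=\{2,\dots,7\}\cup\{n-5,\dots,n\}$) and one small omission: the paper's lemmas require $n\ge 10$, so the cases $5\le n\le 9$ must be verified directly, which you do not mention.
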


\begin{proof}
Consider first the posets $[2]\times[n]$ for $5 \leq n \leq 9$. The explicit orbit sizes are given in Table \ref{tab:orbit_lengths_small_posets}, allowing for the verification of the statement.

Now suppose that $n\geq 10$.
Consider the orbit, $\O(I),$ of $I=\{(1,i)\}$ for $1<i\leq 7.$  By Lemma \ref{prop:big orbits 2n+12}, $\O(I)$ contains all the interval-closed sets of the from $\{(1,a)\}$ where $a\equiv i\pmod{6}$ and $a>1.$  In particular, after $w=\lfloor \frac{n-i}{6}\rfloor (2n+12)$ applications of rowmotion, we have $\row^w(I)=\{(1,n-y)\}$ for $y=n-x\bmod{6}.$  As $n-y>n-6,$ the next lower-chain singleton appearing in the orbit of $I$ is determined by Lemma \ref{prop:big orbits edge cases}, and is of the form $\{(1,b)\}$ for some $1< b\leq 7.$ The process continues, sweeping in all lower-chain singleton ICS of the form $\{(1,t)\}$ with $t\equiv b\pmod{6}.$  In essence, Lemmas \ref{prop:big orbits 2n+12} and \ref{prop:big orbits edge cases} allow us to keep track of all lower-chain singleton elements in the orbit of an ICS by simply keeping track of the lower-chain singletons of the form $\{(1,i)\}$ for $2\leq i\leq 7$ and $\{(1,n-y)\}$ for $0\leq y\leq 5.$  As an example, consider the orbit of $\{(1,3)\}$ when $n\equiv 0\pmod{6}:$ $$\{(1,3)\}\xrightarrow[\ref{prop:big orbits 2n+12}]{\row^{\lfloor \frac{n-3}{6}\rfloor (2n+12)}}\{(1,n-(n-3\bmod{6}))\}=\{(1,n-3)\}\xrightarrow[\ref{prop:big orbits edge cases}]{\row^{n+13}}\{(1,7)\}$$
$$\xrightarrow[\ref{prop:big orbits 2n+12}]{\row^{\lfloor \frac{n-7}{6}\rfloor (2n+12)}}\{(1,n-(n-7\bmod{6}))\}=\{(1,n-5)\}\xrightarrow[\ref{prop:big orbits edge cases}]{\row^{2n+17}}\{(1,5)\}$$
$$\xrightarrow[\ref{prop:big orbits 2n+12}]{\row^{\lfloor \frac{n-5}{6}\rfloor (2n+12)}}\{(1,n-(n-5\bmod{6}))\}=\{(1,n-1)\}\xrightarrow[\ref{prop:big orbits edge cases}]{\row^{5}}\{(1,3)\}.$$ 
Using the fact that $\lfloor \frac{n-i}{6}\rfloor (2n+12)=\frac{(n-6)(2n+12)}{6}$ for all positive $i\leq 6$ and $\frac{(n-12)(2n+12)}{6}$ for $i=7,$ we sum over the applications of rowmotion to find that when $n\equiv 0\bmod{6}$, there is an orbit of size $$\frac{(n-6)(2n+12)}{6}+(n+13)+\frac{(n-12)(2n+12)}{6}+(2n+17)+\frac{(n-6)(2n+12)}{6}+5=n^2+n-13$$ containing all lower-chain singletons of the form $\{(1,i)\}$ for $i>1$ and equivalent to $1,3$ or $5\pmod{6}.$

We can use Lemmas \ref{prop:big orbits 2n+12} and \ref{prop:big orbits edge cases} to define a permutation on the elements $S=\{2,3,4,5,6,7,n,n-1,n-2,n-3,n-4,n-5\}$ for each residue class of $n\bmod{6}.$  From this perspective, the sequence of lower-chain representatives in the above argument, $3\rightarrow (n-3)\rightarrow 7\rightarrow (n-5)\rightarrow 5\rightarrow (n-1) \rightarrow 3,$ corresponds to a subcycle of the permutation.  Finding this permutation $\sigma_{(n\bmod{6})}$ and a corresponding function $w_{(n\bmod{6})},$ we can find the lengths of all orbits containing a lower-chain singleton $\{(1,x)\}$ for $x>1.$ 

To make this precise, for all $s\in S,$ define $\sn(s)$ to be the unique value in $S$ satisfying  \begin{equation}\label{eq:sigma}
        \sn(s)=
\begin{cases}
        n-(n-s\bmod{6})  &  \text{if } 2\leq s\leq 7,\\
        (10+(n-s))\bmod{6} & \text{if $s\in \{n-2,n-4\}$ and $n\equiv 0,2\bmod 3$},\\
        (10-(n-s))\bmod{6}& \text{otherwise},
\end{cases}
\end{equation}
and $\wn(s)$ to be the smallest positive value such that $\row^{\wn(s)}(\{(1,s)\})=\{(1,\sn(s))\}.$  By Lemmas \ref{prop:big orbits 2n+12} and \ref{prop:big orbits edge cases},
\begin{equation*}
        \wn(s)=
\begin{cases}
        \lfloor \frac{n-s}{6}\rfloor (2n+12)  &  \text{if } 2\leq s\leq 7,\\
        6& \text{if $s=n$},\\
        \left(\frac{y-1}{2}\right)n+4\left\lfloor\frac{2y}{3}\right\rfloor+5 & \text{if $s=n-y$ for $y\in\{1,3,5\}$},\\
        \frac{n^2+(2y-1)n+6(y+1)}{3}  &  \text{if $s=n-y$ and } (y,n \bmod{3})\in\{(2,0), (4,2)\},\\
        \frac{n^2+2^{y-1}n+6(3y-5)}{3} & \text{if $s=n-y$ and } (y,n \bmod{3})\in\{(2,1), (4,1)\},\\
        \frac{2n^2+(2y+1)n+(6y-3)}{3} & \text{if $s=n-y$ and } (y,n \bmod{3})\in\{(2,2), (4,0)\},
    \end{cases} 
\end{equation*}
and $\{(1,a)\}$ is in the orbit of some lower-chain ICS $\{(1,b)\}$ if and only if $a\equiv s_i\pmod{6}$ and $b\equiv s_j\pmod{6}$ for some $s_i,s_j$ in the same subcycle of $\sn.$  By our construction of $\wn,$ the size of this orbit
is $$|\O(\{(1,a)\})|=\sum_{s\in C_a}\wn(s),$$ where $C_a$ is the subcycle of $\sn$ containing an element congruent to $a\pmod{6}.$ 

We will calculate the orbit structures of lower-chain singleton ICS of the form $\{(1,i)\}$ with $2\leq i\leq 7$ in detail for $n\equiv 0\bmod{6}.$
Calculating the orbit structures for other values of $n$ follows similarly.

\medskip

\noindent (Case 0: $n\equiv 0\pmod{6}$) 
Writing $\sigma_0$ in cycle form, we have $$\sigma_0=(2,n-4)(3,n-3,7,n-5,5,n-1)(4,n-2,6,n).$$  
The first subcycle tells us $\row^{w_0(n-4)}(\{(1,n-4)\})=\row^{w_0(n-4)}(\row^{w_0(2)}(\{(1,2)\}))=\{(1,2)\}$ and we have an orbit of length
$$\sum_{s\in C_2}w_0(s)=w_0(2)+w_0(n-4)=\frac{(n-6)(2n+12)}{6}+\frac{2n^2+9n+21}{3}=n^2+3n-5$$ containing all lower-chain singletons of the form $\{(1,i)\}$ for $i\equiv 2\pmod{6}.$  The second subcycle gives us the orbit of length $n^2+n-13$ containing all lower-chain singletons of the form $\{(1,i)\}$ for $i>1$ and equivalent to $1,3$ or $5\pmod{6}$ examined at the beginning of this proof, and the last subcycle implies there is an orbit of length 
\begin{align*}
    \sum_{s\in C_4}w_0(s)&=w_0(4)+w_0(n-2)+w_0(6)+w_0(n)\\
    &=\frac{(n-6)(2n+12)}{6}+\frac{n^2+3n+18}{3}+\frac{(n-6)(2n+12)}{6}+6=n^2+n-12
\end{align*}
containing all lower-chain singletons of the form $\{(1,i)\}$ for $i\equiv 0,4\pmod{6}.$

\medskip

\noindent (Case $n\not\equiv 0\pmod{6}$) To find the orbit structures of lower-chain singletons when $n\not\equiv 0\pmod{6}$ we can calculate $\sn$ directly from Equation \ref{eq:sigma}, or from $\sigma_0$ using the following observations.  First, for $2\leq s\leq 7$, the image of $\sigma_r$ is the image of $\sigma_0$ cyclically shifted by $r.$  Thus, for $2\leq s\leq 7$, $$\sigma_r(s)=\left((n,n-1,n-2,n-3,n-4,n-5)^r\circ\sigma_0\right)(s).$$
Second, for $n-5\leq s\leq n,$ $\sigma_{(n\bmod{6})}(s)=\sigma_0(s)$ except when $s=n-2$ or $s=n-4.$  In these cases, if $n\equiv 1\bmod{3},$ $\sigma_{(n\bmod{6})}(s)=\left((2,6)\circ\sigma_0\right)(s).$
Putting these observations together, and letting $r=n\pmod{6},$ we find 
$$\sigma_r=(2,6)^{(r\bmod 3)}\circ(n,n-1,n-2,n-3,n-4,n-5)^r\circ\sigma_0.$$ 
In particular, 
\begin{align}
    \sigma_1&=(2,n-5,5,n-2)(3,n-4,6,n-1)(4,n-3,7,n),\nonumber\\
    \sigma_2&=(2,n,4,n-4)(3,n-5,5,n-3,7,n-1)(6,n-2),\nonumber\\
\sigma_3&=(6, n-3, 7, n-2) (2, n-1, 3, n, 4, n-5, 5, n-4)\label{eq:big-orbits sigmas}\\
\sigma_4&=(6, n-4) (7, n-3) (2, n-2)(3, n-1) (4, n)(5, n-5),\text{ and }\nonumber\\
\sigma_5&=(6, n-5, 5, n, 4, n-1, 3, n-2)(7, n-4, 2, n-3).\nonumber
\end{align}

The stated results for orbits with lower-chain singleton representatives then follow for each residue class of $n\bmod{6}$ by calculating $\sum_{s\in C_x}\wn(s)$ for each subcycle $C_x$ of $\sn$ and making note of the elements $s\in C_x$ such that $2\leq s\leq 7.$ 

By Lemma \ref{prop:big orbit 2phase}, for $n\equiv 1, 4\bmod{6},$ the orbit of $I=\{(1,n-2),(1,n-1)\}$ contains no lower-chain singleton ICS and has size $|\O(I)|=\frac{n^2+2n-9}{3},$ completing the proof.
\end{proof}

\begin{thm}\label{lem:sc-big-orbits}
Let $P = [2] \times [n]$ with $n$ odd. The average value of signed cardinality over the orbits with lower-chain singleton representatives is 0. 
\end{thm}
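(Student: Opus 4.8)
The plan is to leverage the permutation framework $\sn$ established in the proof of Theorem~\ref{large_orbits}, which already partitions each orbit with a lower-chain singleton representative into the substructures analyzed in Lemmas~\ref{prop:big orbits 2n+12} and~\ref{prop:big orbits edge cases}. Recall that each such orbit corresponds to a subcycle $C$ of $\sn$ acting on $S = \{2,3,4,5,6,7,n,n-1,n-2,n-3,n-4,n-5\}$, and that the orbit is covered exactly once by the partial orbits running from $\{(1,s)\}$ to $\{(1,\sn(s))\}$ as $s$ ranges over $C$. Following the summation convention fixed before Lemma~\ref{lem:sc_High-and-Low} (count the starting ICS of each substructure but not its end), the total signed cardinality over the orbit $\orb$ corresponding to $C$ is therefore
\[
\sum_{I \in \orb} \SC(I) \;=\; \sum_{s \in C}\; c_n(s), \qquad c_n(s) := \sum_{k=0}^{\wn(s)-1}\SC\big(\row^k(\{(1,s)\})\big).
\]
Since $n$ is odd we have $n \bmod 6 \in \{1,3,5\}$, and the small cases $5 \le n \le 9$ can be read off directly from the orbit data recorded for Table~\ref{tab:orbit_lengths_small_posets}; so I would assume $n \ge 10$ and reduce the theorem to showing $\sum_{s\in C} c_n(s) = 0$ for every subcycle $C$ of $\sn$.

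The second step is to tabulate $c_n(s)$ for every $s \in S$. For $2 \le s \le 7$ the substructure from $\{(1,s)\}$ to $\{(1,\sn(s))\}$ is exactly $\lfloor \tfrac{n-s}{6}\rfloor$ iterations of the length-$(2n+12)$ block of Lemma~\ref{prop:big orbits 2n+12}; because each iteration advances the lower-chain singleton by $6$, preserving its residue mod $6$ and hence its parity, and contributes $(-1)^{x}$ with $x \equiv s \pmod 6$, we obtain
\[
c_n(s) = \left\lfloor \tfrac{n-s}{6}\right\rfloor (-1)^{s} \qquad (2 \le s \le 7).
\]
For $s = n-y$ with $0 \le y \le 5$, the value $c_n(s)$ is precisely the signed-cardinality sum computed in Lemma~\ref{prop:big orbits edge cases} for odd $n$: namely $1$ for $y=0$, $-1$ for $y \in \{1,3\}$, $0$ for $y=5$, and the $n\bmod 3$-dependent values ($0$ or $1$) for $y \in \{2,4\}$.

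The final step is a finite verification. Using the explicit cycle decompositions of $\sigma_1,\sigma_3,\sigma_5$ in~\eqref{eq:big-orbits sigmas}, I would evaluate $\sum_{s\in C} c_n(s)$ subcycle by subcycle, substituting $n = 6m + r$ to resolve each floor $\lfloor \tfrac{n-s}{6}\rfloor$ into $m$ or $m-1$ and reading off the edge contributions from the values above. In every subcycle the (possibly large) stepping-up terms $\lfloor\tfrac{n-s}{6}\rfloor(-1)^s$ combine with the small edge terms to give $0$; for instance, when $\sn = \sigma_1$ the floor terms already cancel among the small elements $\{2,\dots,7\}$ of each subcycle while the edge terms cancel among the large elements, whereas for $\sigma_5$ the floor terms of a subcycle may sum to $-1$ and are offset by an edge sum of $+1$. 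This case analysis is the main obstacle: the bookkeeping is entirely elementary but must be carried out for all subcycles across the three residues, with careful attention to the exact value and parity of each floor term, since the individual $c_n(s)$ are unbounded in $n$ and cancel only after summation over a full subcycle.
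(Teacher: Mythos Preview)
Your proposal is correct and follows essentially the same approach as the paper: decompose each orbit via the subcycles of $\sn$, compute the partial sums $c_n(s)$ using Lemmas~\ref{prop:big orbits 2n+12} and~\ref{prop:big orbits edge cases} (yielding exactly the formulas $\lfloor\tfrac{n-s}{6}\rfloor(-1)^s$ for small $s$ and the tabulated edge values for $s=n-y$), and then verify vanishing subcycle-by-subcycle across the three odd residues of $n\bmod 6$. One minor point: the paper also checks $n=3$ by hand, whereas you start the small-case verification at $n=5$.
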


\begin{proof}
Let $n$ be odd.  By Theorem \ref{thm:sc_n+3}, $\sum_{J\in \O(\{(1,1)\})}\SC(J)=0.$ To prove $\sum_{J\in \O(I)}\SC(J)=0$ for ICS of the form $\{(1,i)\}$ with $i>1,$ we focus on orbit representatives of the form $\{(1,s)\}$ for $s\in S=\{2,3,4,5,6,7,n,n-1,n-2,n-3,n-4,n-5\}.$  Building on the proof of Theorem \ref{large_orbits}, we calculate the partial sums of the signed cardinality statistic between such representatives when $n$ is odd and then add these partial sums together to show the sum over the orbit is $0.$  We manually verified the result for $n=3,5,7,\text{ and } 9.$
    
    Recall the following from the proof of Theorem \ref{large_orbits},
    \begin{itemize}
        \item $\sn$ is the permutation of $S$ defined by setting $\sn(s)=x,$ where $\{(1,x)\}$ is the next lower-chain singleton in the orbit of $\{(1,s)\}$ having $x\in S,$
        \item $\wn(s)$ is the smallest positive number such that $\row^{\wn(s)}(\{(1,s)\})=\{(1,\sn(s))\},$ and
        \item each orbit having a lower-chain singleton representative $\{(1,i)\}$ for $i>1$ corresponds to a subcycle $C_i$ of $\sn$ containing an element $s\equiv i\pmod{6}.$ 
    \end{itemize}
    Given these definitions, the sum of the signed cardinality statistic over the orbit of $I=\{(1,i)\}$ for $i>1$ is given by 
    \begin{equation}\label{eq:sc-quad_orbit_formula}
      \sum_{J\in \O(I)}\SC(J)=\sum_{s\in C_i}\left(\sum_{k=0}^{\wn(s)-1}\SC(\row^k(\{(1,s)\}))\right).  
    \end{equation}
    We proceed by collecting the relevant partial sums of the signed cardinality statistic. 
    \begin{itemize}
        \item For $2\leq s\leq 7$, $\wn(s)=\left\lfloor \frac{n-s}{6}\right\rfloor(2n+12)$ and repeated applications of Lemma \ref{prop:big orbits 2n+12} give 
\begin{equation}\label{eq:sc-partial-2n+12}
    \sum_{k=0}^{\wn(s)-1}\SC(\row^k(\{(1,s)\}))=\sum_{t=0}^{\left\lfloor \frac{n-s}{6}\right\rfloor-1}(-1)^{s+6t}=\left\lfloor\frac{n-s}{6}\right\rfloor(-1)^s.
\end{equation}
\item For $n-5\leq s\leq n,$ let $y=n-s.$  By Lemma \ref{prop:big orbits edge cases}, 
\begin{equation}\label{eq:sc-partial-edge}
        \sum_{k=0}^{\wn(s)-1}\SC(\row^k(\{(1,s)\}))=
\begin{cases}           
        1 & \text{if $y=0$ or } (y,n\bmod{3})\in\{(2,0), (4,1), (2,2)\},\\
        -1 & \text{if } y\in\{1,3\},\\
      0  & \text{if $y=5$ or } (y,n\bmod{3})\in\{(4,0), (2,1), (4,2)\}.
\end{cases}
    \end{equation}
    \end{itemize}
We now prove the result by applying Equation \eqref{eq:sc-quad_orbit_formula} to the subcycles $C_i$ of $\sn$ for $n$ odd.

\medskip

\noindent (Case 1: $n\equiv 1\bmod{6}$) By Equation \eqref{eq:big-orbits sigmas}, $$\sigma_1=(2,n-5,5,n-2)(3,n-4,6,n-1)(4,n-3,7,n).$$
Calculating the sum of the signed cardinality statistic over the orbit of $\{(1,i)\}$ by first calculating the partial sums between consecutive elements of the subcycle $C_i$ for $2\leq i\leq 7,$ we find  
$$C_2=C_5:\underbrace{\{(1,2)\}\xrightarrow[]{\row^{w_1(2)}}}_{\left\lfloor\frac{n-2}{6}\right\rfloor(-1)^2}\underbrace{\{(1,n-5)\}\xrightarrow[]{\row^{w_1(n-5)}}}_{0}\underbrace{\{(1,5)\}\xrightarrow[]{\row^{w_1(5)}}}_{\left\lfloor\frac{n-5}{6}\right\rfloor(-1)^5}\underbrace{\{(1,n-2)\}\xrightarrow[]{\row^{w_1(n-2)}}}_{0}\{(1,2)\},$$

$$C_3=C_6: \underbrace{\{(1,3)\}\xrightarrow[]{\row^{w_1(3)}}}_{\left\lfloor\frac{n-3}{6}\right\rfloor(-1)^3}\underbrace{\{(1,n-4)\}\xrightarrow[]{\row^{w_1(n-4)}}}_{1}\underbrace{\{(1,6)\}\xrightarrow[]{\row^{w_1(6)}}}_{\left\lfloor\frac{n-6}{6}\right\rfloor(-1)^6}\underbrace{\{(1,n-1)\}\xrightarrow[]{\row^{w_1(n-1)}}}_{-1}\{(1,3)\},$$

$$\text{and }C_4=C_7: \underbrace{\{(1,4)\}\xrightarrow[]{\row^{w_1(4)}}}_{\left\lfloor\frac{n-4}{6}\right\rfloor(-1)^4}\underbrace{\{(1,n-3)\}\xrightarrow[]{\row^{w_1(n-3)}}}_{-1}\underbrace{\{(1,7)\}\xrightarrow[]{\row^{w_1(7)}}}_{\left\lfloor\frac{n-7}{6}\right\rfloor(-1)^7}\underbrace{\{(1,n)\}\xrightarrow[]{\row^{w_1(n)}}}_{1}\{(1,4)\}.$$
By assumption, $n=6q+1$ for some integer $q\geq0.$ 
 Thus, $\left\lfloor\frac{n-y}{6}\right\rfloor=q-1$ for $2\leq y\leq7,$ and 
 \begin{align*}
      \sum_{J\in \O(\{(1,i)\})}\SC(J)&=\sum_{s\in C_i}\left(\sum_{k=0}^{\wn(s)-1}\SC(\row^k(\{(1,s)\}))\right)=0
    \end{align*}
for $2\leq i\leq 7.$

\medskip

\noindent (Case 2: $n\equiv 3\bmod{6}$)
Similarly, using the subcycles of 
$$\sigma_3=(6, n-3, 7, n-2) (2, n-1, 3, n, 4, n-5, 5, n-4)$$
and Equations \eqref{eq:sc-quad_orbit_formula}, \eqref{eq:sc-partial-2n+12}, and \eqref{eq:sc-partial-edge} to calculate the sum of the signed cardinality statistic when $n\equiv3\bmod{6},$ we find 
$$\sum_{J\in \O(\{(1,i)\})}\SC(J)=\left\lfloor\frac{n-6}{6}\right\rfloor(-1)^6+(-1)+\left\lfloor\frac{n-7}{6}\right\rfloor(-1)^7+1 $$
for the orbit containing lower-chain singletons of the form $\{(1,i)\}$ for $i\equiv 6, 7\bmod{6},$ and 
$$\sum_{J\in \O(\{(1,i)\})}\SC(J)=\left\lfloor\frac{n-2}{6}\right\rfloor(-1)^2+(-1)+\left\lfloor\frac{n-3}{6}\right\rfloor(-1)^3+1+\left\lfloor\frac{n-4}{6}\right\rfloor(-1)^4+0+\left\lfloor\frac{n-5}{6}\right\rfloor(-1)^5+0$$
for the orbit containing lower-chain singletons of the form $\{(1,i)\}$ for $i\equiv 2,3,4,5\bmod{6}.$  Since $n=6q+3$ for some $q\geq 0,$
$\left\lfloor\frac{n-6}{6}\right\rfloor=\left\lfloor\frac{n-7}{6}\right\rfloor=\left\lfloor\frac{n-4}{6}\right\rfloor=\left\lfloor\frac{n-5}{6}\right\rfloor=q-1$, $\left\lfloor\frac{n-2}{6}\right\rfloor=\left\lfloor\frac{n-3}{6}\right\rfloor=q,$ and the sum of the signed cardinality statistic over each orbit is zero for $1<x\leq 7.$

\medskip

\noindent (Case 3: $n\equiv 5\bmod{6}$) Finally, by Equation \eqref{eq:big-orbits sigmas}, 
$$\sigma_5=(6, n-5, 5, n, 4, n-1, 3, n-2)(7, n-4, 2, n-3)$$
and the sum of the signed cardinality statistic is 
$$\sum_{J\in \O(\{(1,i)\})}\SC(J)=\left\lfloor\frac{n-6}{6}\right\rfloor(-1)^6+0+\left\lfloor\frac{n-5}{6}\right\rfloor(-1)^5+1+\left\lfloor\frac{n-4}{6}\right\rfloor(-1)^4+-1+\left\lfloor\frac{n-3}{6}\right\rfloor(-1)^3+1$$
for the orbit containing lower-chain singletons of the form $\{(1,i)\}$ for $i\equiv 3,4,5,6\bmod{6},$ and
$$\sum_{J\in \O(\{(1,i)\})}\SC(J)=\left\lfloor\frac{n-7}{6}\right\rfloor(-1)^7+0+\left\lfloor\frac{n-2}{6}\right\rfloor(-1)^2+(-1) $$
for the orbit containing lower-chain singletons of the form $\{(1,i)\}$ for $i\equiv 2,7\bmod{6}.$ Since $n=6q+5$ for some $q\geq 0,$
$\left\lfloor\frac{n-6}{6}\right\rfloor=\left\lfloor\frac{n-7}{6}\right\rfloor=q-1$, while $\left\lfloor\frac{n-5}{6}\right\rfloor=\left\lfloor\frac{n-4}{6}\right\rfloor=\left\lfloor\frac{n-3}{6}\right\rfloor=\left\lfloor\frac{n-2}{6}\right\rfloor=q.$ As before, it follows that for $1<x\leq 7,$ the sum of the signed cardinality statistic over each orbit is zero. 
\end{proof}

\subsection{Putting it all together: Proofs of Theorems \ref{thm:thmorbits} and \ref{thm:homomesy}}\label{ssec:all_together}
We are now ready to prove that the orbits of rowmotion described above are \textit{all} the orbits of interval-closed sets of the poset $[2]\times[n]$. To do so, we compare the enumeration of interval-closed sets for this poset obtained in \cite{ELMSW} with the number of interval-closed sets in each of the orbits described above. This is Theorem \ref{thm:thmorbits}, that we recall here:

\begin{thmorbits}
    The orbits of rowmotion on $\IC([2]\times [n])$ 
    are of sizes dividing $n+3$ or $n+5$, with the exception of a single orbit of size $2$ and the quadratic orbits described in Theorem \ref{large_orbits} and Table \ref{tab:orbit_lengths_small_posets}. In particular, the counts of orbits of each size are given in Tables \ref{tab:all_orbits} and \ref{tab:orbit_lengths_small_posets}.
\end{thmorbits}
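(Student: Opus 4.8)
The plan is to argue by a global count. Since the orbits of rowmotion partition $\IC([2]\times[n])$, it suffices to show that the orbits already exhibited are pairwise disjoint and that together they exhaust $\IC([2]\times[n])$. Concretely, I would first recall the enumeration of $\IC([2]\times[n])$ from \cite{ELMSW}, then tally the number of interval-closed sets lying in the orbits produced in Subsections~\ref{sec:n+3}, \ref{sec:n + 5}, and \ref{sec:large}, together with the exceptional orbit $\{\emptyset,\,[2]\times[n]\}$ of size $2$ (for which one checks directly, by toggling from the top, that $\row(\emptyset)=[2]\times[n]$ and $\row([2]\times[n])=\emptyset$), and verify that this tally equals $|\IC([2]\times[n])|$. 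If the counts agree, no interval-closed set can lie outside the exhibited orbits, so these must be all the orbits; the size claims and the enumerations then follow at once, with Tables~\ref{tab:all_orbits} and \ref{tab:orbit_lengths_small_posets} recording the resulting data.

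The tallies are assembled from the earlier results. Lemma~\ref{n+3} (via Theorem~\ref{thm:n+3}) and Corollary~\ref{cor_n+3/2} give the exact numbers of orbits of sizes $n+3$, $\frac{n+3}{2}$, and $\frac{n+3}{3}$; multiplying each count by its orbit size yields the number of interval-closed sets these orbits contain. Theorem~\ref{thm:n+5} does the same for the orbits of sizes $n+5$ and $\frac{n+5}{3}$, and Theorem~\ref{large_orbits} states outright that the quadratic orbits account for $3n^2+5n-30$ interval-closed sets. Adding the two sets of the exceptional orbit, I would obtain a single closed-form expression, depending on the residue of $n$ modulo $6$, for the number of interval-closed sets covered.

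Before equating this with $|\IC([2]\times[n])|$, I must confirm that the four families are pairwise disjoint, since disjointness was only established \emph{within} each family. This is handled by attaching to each family a structural invariant that no member of another family shares: the quadratic orbits are exactly those containing a lower-chain singleton $\{(1,x)\}$ with $x>1$, or (when $n\equiv 1,4\pmod 6$) the two-consecutive-rank lower-chain ICS $\{(1,n-2),(1,n-1)\}$; the size-$(n+5)$ orbits are detected by the Low sets $[x-1,y,n-x-y+1:\varnothing]$ with $x,y\geq 4$ appearing in them; the orbits of size dividing $n+3$ are built from stacked ranks, stacked diagonals, and the specific hooks of Lemma~\ref{n+3}, whose only lower-chain singleton is $\{(1,1)\}$; and $\{\emptyset,[2]\times[n]\}$ is the unique orbit meeting $\{\emptyset\}$. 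Checking that the \emph{home family} of each interval-closed set is unambiguous is routine once Lemma~\ref{Lem: CatalogRowmotion2xn} is used to read off the type of each ICS, and in particular that none of the other three families contains a lower-chain singleton $\{(1,x)\}$ with $x>1$.

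The main obstacle, and the bulk of the work, is the final numerical verification: because the orbit counts in Theorems~\ref{thm:n+3}, \ref{thm:n+5}, and \ref{large_orbits} depend on $n$ modulo $6$ (and through it on $n \bmod 3$ and the parity of $n$), the summation must be carried out separately in each residue class and matched against the enumeration of \cite{ELMSW}. This is delicate bookkeeping rather than deep mathematics, but it is where an error is most likely to hide, so I would organize it as one table of contributions per residue class. Finally, since Theorem~\ref{thm:n+5} requires $n\geq 7$ and Theorem~\ref{large_orbits} requires $n\geq 10$, the small posets with $n\leq 9$ fall outside the general argument and must be dispatched by the direct computation recorded in Table~\ref{tab:orbit_lengths_small_posets}; these base cases, together with the residue-class count, complete the proof of Theorem~\ref{thm:thmorbits}.
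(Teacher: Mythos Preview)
Your proposal is correct and follows essentially the same counting strategy as the paper: assemble the ICS totals from the size-$2$ orbit, the orbits of size dividing $n+3$ and $n+5$, and the quadratic orbits, and match the sum against the known enumeration of $\IC([2]\times[n])$ residue-class by residue-class, handling small $n$ by direct computation. One simplification worth noting: the paper dispatches cross-family disjointness (for $n\geq 5$) with the single observation that every quadratic orbit has size exceeding $n+5$, so orbits in different families have different sizes and hence cannot coincide---this replaces your structural-invariant argument and also lets the general case begin at $n\geq 5$ rather than $n\geq 10$, since Theorem~\ref{large_orbits} is already stated (and verified via Table~\ref{tab:orbit_lengths_small_posets}) for $n\geq 5$.
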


\begin{table}[]
    \[\def\arraystretch{2.5}
    \begin{array}{c||c|c|c|c|c|c|c}
        n \mod 6 & \#\ 2& \#\ n+3 & \# \dfrac{n + 3}{2} & \# \ \dfrac{n + 3}{3} & \# \ n + 5 & \# \dfrac{n + 5}{3} & \text{other sizes}\\
        \hline
        0 & 1& \dfrac{n^3 - n^2 - 4n + 12}{12} & 0 & 1 & \dfrac{n^2 - 11n + 30}{6} & 0 & 3\\
            1 & 1 & \dfrac{n^3 - n^2 - 7n + 19}{12} & \dfrac{n - 1}{2} & 0 & \dfrac{n^2 - 11n + 28}{6} & 1& 4 \\
        2 & 1& \dfrac{n^3 - n^2 - 4n + 16}{12} & 0 & 0 & \dfrac{n^2 - 11n + 30}{6} & 0 & 3\\
        3 & 1& \dfrac{n^3 - n^2 - 7n + 15}{12} & \dfrac{n - 1}{2} & 1 & \dfrac{n^2 - 11n + 30}{6} & 0 & 2 \\
        4 & 1& \dfrac{n^3 - n^2 - 4n + 16}{12} & 0 & 0 & \dfrac{n^2 - 11n + 28}{6} & 1 & 7\\
        5 & 1& \dfrac{n^3 - n^2 - 7n + 19}{12} & \dfrac{n - 1}{2} & 0 & \dfrac{n^2 - 11n + 30}{6} & 0 & 2\\
    \end{array}
    \]
    \caption{The number of orbits of rowmotion on ICS of $[2]\times[n]$ for $n\geq 5$, by size.}
    \label{tab:all_orbits}
\end{table}

Before proving this theorem, it is useful  to recall the enumeration of ICS for $[2]\times[n]$:
\begin{thm}[Theorem 4.2 in \cite{ELMSW}]
The number of interval-closed sets of $[2]\times[n]$ is $$1+n+n^2+\frac{n+1}{2}\binom{n+2}{3} = \frac{n^4+4n^3+17n^2+14n+12}{12}.$$
\end{thm}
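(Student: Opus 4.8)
The plan is to enumerate $\IC([2]\times[n])$ directly from the six-parameter description $[b_1,i_1,a_1:b_2,i_2,a_2]$ introduced in Subsection~\ref{sec:catalog}, partitioning the interval-closed sets according to how each meets the two chains. There are four classes: the empty set; the ICS contained in the lower chain alone; those contained in the upper chain alone; and those meeting both chains. The first three classes will account for the summand $1+n+n^2$, and the last for $\frac{n+1}{2}\binom{n+2}{3}$; adding these and simplifying should give the stated polynomial.

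First I would dispose of the degenerate classes. The empty ICS contributes $1$. An ICS meeting only one chain is exactly a nonempty convex subset of that chain, i.e.\ a contiguous interval, and there are $\binom{n+1}{2}=\frac{n(n+1)}{2}$ such intervals in an $n$-element chain. Summing the empty set and the two single-chain contributions gives $1+2\binom{n+1}{2}=1+n+n^2$.

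The heart of the argument is the count of ICS meeting both chains. Such an $I=[b_1,i_1,a_1:b_2,i_2,a_2]$ (with $i_1,i_2\ge 1$) is determined by the intervals it cuts out on the two chains and is interval-closed precisely when $b_1\ge b_2$ and $a_2\ge a_1$. Writing the lower-chain interval as $[\ell_1,\ell_2]$ and the upper-chain interval as $[u_1,u_2]$ inside $[n]$ (so $\ell_1=b_1+1$, $\ell_2=n-a_1$, $u_1=b_2+1$, $u_2=n-a_2$), these conditions translate exactly into $u_1\le\ell_1$ and $u_2\le\ell_2$. Thus the both-chain ICS are in bijection with pairs of intervals $(u_1\le u_2,\ \ell_1\le\ell_2)$ in $[n]$ satisfying $u_1\le\ell_1$ and $u_2\le\ell_2$. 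The key observation is that these four inequalities force $u_1$ to be the smallest and $\ell_2$ the largest of the four endpoints, while the two inner endpoints $u_2$ and $\ell_1$ range freely and independently over $\{u_1,\dots,\ell_2\}$. Hence, fixing $u_1\le\ell_2$ and setting $d:=\ell_2-u_1+1$, there are exactly $d^2$ choices for $(u_2,\ell_1)$, and there are $n+1-d$ pairs $(u_1,\ell_2)$ with $\ell_2-u_1+1=d$. Summing yields
\[
\sum_{d=1}^{n}(n+1-d)\,d^2=(n+1)\sum_{d=1}^n d^2-\sum_{d=1}^n d^3=\frac{n(n+1)^2(n+2)}{12}=\frac{n+1}{2}\binom{n+2}{3}.
\]

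Combining the four classes gives $1+n+n^2+\frac{n+1}{2}\binom{n+2}{3}$, and expanding is a routine algebraic simplification to $\frac{n^4+4n^3+17n^2+14n+12}{12}$. The only genuinely delicate step is the bijection of the previous paragraph: one must verify that the interval-closedness criterion $b_1\ge b_2$, $a_2\ge a_1$ is equivalent to the endpoint inequalities $u_1\le\ell_1$, $u_2\le\ell_2$, and that every such pair of intervals arises from a genuine both-chain ICS. Once that dictionary is in place, the remaining enumeration reduces to the clean single-variable sum above, so I expect the endpoint translation to be the main obstacle rather than the evaluation.
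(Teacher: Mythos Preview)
Your proof is correct. Note, however, that the present paper does not prove this statement at all: it is quoted as Theorem~4.2 of the earlier paper \cite{ELMSW} and used only as an input to the counting argument in the proof of Theorem~\ref{thm:thmorbits}. So there is no ``paper's own proof'' to compare against here.

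That said, your argument is clean and self-contained. The partition into empty / lower-chain-only / upper-chain-only / both-chains is exactly the one suggested by the $[b_1,i_1,a_1:b_2,i_2,a_2]$ encoding, and your translation of the interval-closedness constraints $b_1\ge b_2$, $a_2\ge a_1$ into the endpoint inequalities $u_1\le\ell_1$, $u_2\le\ell_2$ is correct. The observation that these four inequalities make $u_1$ the global minimum and $\ell_2$ the global maximum, with $u_2$ and $\ell_1$ then free in $[u_1,\ell_2]$, is the right idea and reduces the both-chain count to the elementary sum $\sum_{d=1}^{n}(n+1-d)d^{2}=\frac{n(n+1)^{2}(n+2)}{12}$. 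The only point you flag as ``delicate'' (the endpoint dictionary) is in fact routine once written out, so there is no real obstacle.
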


\begin{proof}[Proof of Theorem \ref{thm:thmorbits}] We first prove the theorem for $n$ sufficiently large, and then look at explicit orbit sizes of orbits for smaller posets.
Suppose $n \geq 5$ and let $P = [2]\times [n]$.
    From \cite[Corollary 2.11]{ELMSW}, every non-empty poset has one orbit of size $2$ containing the empty ICS and the full poset.
    Also, the number of interval-closed sets of $[2]\times[n]$ in orbits of sizes larger than $n+5$ are given by Theorem \ref{large_orbits}. There are six cases for the residue modulo $6$ of $n$, and each case yields between two and seven orbits. Adding their sizes (as they are given in  Theorem \ref{large_orbits}), one gets that the number of interval-closed sets in those large orbits is  $    3n^2+5n-30$ in each of these cases.
    
    For orbits of sizes dividing $n+3$ or $n+5$, their numbers are given by Theorems \ref{thm:n+3} and \ref{thm:n+5} and Corollary \ref{cor_n+3/2}. These are reported in Table \ref{tab:all_orbits}.
    
    To verify that those are all the orbits, we compare the number of interval-closed sets listed for each orbit sizes with the total number of them. We proceed with six cases.\\

\noindent
    (Case 1: $n \equiv 0 \pmod 6$) Let $n\equiv 0 \pmod 6$ and $n\geq 5$. Then, $[2]\times [n]$ has the following orbits:
    \begin{itemize}
        \item One orbit of size $2$, for a total of $2$ ICS
        \item $\frac{n^3-n^2-4n+12}{12}$ orbits of size $n+3$ for a total of $\frac{n^4+2n^3-7n^2+36}{12}$ ICS
        \item One orbit of size $\frac{n+3}{3}$, for a total of $\frac{n+3}{3}$ ICS
        \item $\frac{n^2-11n+30}{6}$ orbits of size $n+5$, for a total of $\frac{n^3-6n^2-25n+150}{6}$ ICS
        \item One orbit of each sizes $n^2+3n-5$, $n^2+n-12$ and $n^2+n-13$, for a total of $3n^2+5n-30$ ICS.
    \end{itemize}
    Summing all these values, one gets a total count of $\frac{n^4+4n^3+17n^2+14n+12}{12}$ ICS, matching the total number of interval-closed sets of the poset.\\

\noindent
    (Case 2: $n \equiv 1 \pmod 6$) Let $n\equiv 1 \pmod 6$ and $n\geq 5$. Then, $[2]\times [n]$ has the following orbits:
    \begin{itemize}
        \item One orbit of size $2$, for a total of $2$ ICS
        \item $\frac{n^3-n^2-7n+19}{12}$ orbits of size $n+3$ for a total of $\frac{n^4+2n^3-10n^2-2n+57}{12}$ ICS
        \item $\frac{n-1}{2}$ orbits of size $\frac{n+3}{2}$, for a total of $\frac{n^2+2n-3}{4}$ ICS
        \item $\frac{n^2-11n+28}{6}$ orbits of size $n+5$, for a total of $\frac{n^3-6n^2-27n+140}{6}$ ICS
        \item One orbit of size $\frac{n+5}{3}$ for a total of $\frac{n+5}{3}$ ICS
        \item Two orbits of size $n^2+2n-9$, one orbit of size $\frac{2n^2+n-27}{3}$ and one orbit of size $\frac{n^2+2n-9}{3}$ for a total of $3n^2+5n-30$ ICS. 
    \end{itemize}
    Summing all these values, one gets a total count of $\frac{n^4+4n^3+17n^2+14n+12}{12}$ ICS, matching the total number of interval-closed sets of the poset.\\

\noindent
    (Case 3: $n \equiv 2 \pmod 6$) Let $n\equiv 2 \pmod 6$ and $n\geq 5$. Then, $[2]\times [n]$ has the following orbits:
    \begin{itemize}
        \item One orbit of size $2$, for a total of $2$ ICS
        \item $\frac{n^3-n^2-4n+16}{12}$ orbits of size $n+3$ for a total of $\frac{n^4+2n^3-7n^2+4n+48}{12}$ ICS
        \item $\frac{n^2-11n+30}{6}$ orbits of size $n+5$, for a total of $\frac{n^3-6n^2-25n+150}{6}$ ICS
        \item One orbit of size $n^2+3n-4$ and two of size $n^2+n-13$ for a total of $3n^2+5n-30$ ICS.
    \end{itemize}
    Summing all these values, one gets a total count of $\frac{n^4+4n^3+17n^2+14n+12}{12}$ ICS, matching the total number of interval-closed sets of the poset.\\

\noindent
(Case 4: $n \equiv 3 \pmod 6$) Let $n\equiv 3 \pmod 6$ and $n\geq 5$. Then, $[2]\times [n]$ has the following orbits:
    \begin{itemize}
        \item One orbit of size $2$, for a total of $2$ ICS
        \item $\frac{n^3-n^2-7n+15}{12}$ orbits of size $n+3$ for a total of $\frac{n^4+2n^3-10n^2-10n+45}{12}$ ICS
        \item $\frac{n-1}{2}$ orbits of size $\frac{n+3}{2}$, for a total of $\frac{n^2+2n-3}{4}$ ICS
        \item One orbit of size $\frac{n+3}{3}$, for a total of $\frac{n+3}{3}$ ICS
        \item $\frac{n^2-11n+30}{6}$ orbits of size $n+5$, for a total of $\frac{n^3-6n^2-25n+150}{6}$ ICS
        \item One orbit of each size $2n^2+5n-13$ and $n^2-17$ for a total of $3n^2+5n-30$ ICS. 
    \end{itemize}
    Summing all these values, one gets a total count of $\frac{n^4+4n^3+17n^2+14n+12}{12}$ ICS, matching the total number of interval-closed sets of the poset.\\

\noindent
(Case 5: $n \equiv 4 \pmod 6$) Let $n\equiv 4 \pmod 6$ and $n\geq 5$. Then, $[2]\times [n]$ has the following orbits:
    \begin{itemize}
        \item One orbit of size $2$, for a total of $2$ ICS
        \item $\frac{n^3-n^2-4n+16}{12}$ orbits of size $n+3$ for a total of $\frac{n^4+2n^3-7n^2+4n+48}{12}$ ICS
        \item $\frac{n^2-11n+28}{6}$ orbits of size $n+5$, for a total of $\frac{n^3-6n^2-27n+140}{6}$ ICS
        \item One orbit of size $\frac{n+5}{3}$ for a total of $\frac{n+5}{3}$ ICS
        \item  Three orbits of size $\frac{n^2+2n-9}{3}$, two orbits of size $\frac{2n^2+4n-18}{3}$, one orbit of size $\frac{n^2+2n-6}{3}$ and one orbit of size $\frac{n^2-n-21}{3}$, for a total of $3n^2+5n-30$ ICS.
    \end{itemize}
    Summing all these values, one gets a total count of $\frac{n^4+4n^3+17n^2+14n+12}{12}$ ICS, matching the total number of interval-closed sets of the poset.\\

\noindent
(Case 6: $n \equiv 5 \pmod 6$) Let $n\equiv 5 \pmod 6$. Then, $[2]\times [n]$ has the following orbits:
    \begin{itemize}
        \item One orbit of size $2$, for a total of $2$ ICS
        \item $\frac{n^3-n^2-7n+19}{12}$ orbits of size $n+3$ for a total of $\frac{n^4+2n^3-10n^2-2n+57}{12}$ ICS
        \item $\frac{n-1}{2}$ orbits of size $\frac{n+3}{2}$, for a total of $\frac{n^2+2n-3}{4}$ ICS
        \item $\frac{n^2-11n+30}{6}$ orbits of size $n+5$, for a total of $\frac{n^3-6n^2-25n+150}{6}$ ICS
        \item One orbit of each size $2n^2+3n-21$ and $n^2+2n-9$, for a total of $3n^2+5n-30$ ICS. 
    \end{itemize}
    Summing all these values, one gets a total count of $\frac{n^4+4n^3+17n^2+14n+12}{12}$ ICS, matching the total number of interval-closed sets of the poset.

    Also, the orbits listed above are all distinct when $n\geq 5$, since all the orbits of quadratic size have size larger than $n+5$. For each group of orbits (size dividing $n+3$, size dividing $n+5$ and quadratic size), we described all the representatives of a given type in each orbit, ensuring that they are distinct.

    For $1\leq n \leq 4$, the hypothesis of Theorem \ref{large_orbits} is not met. However, we computed explicitly the orbit sizes and we provide them in Table \ref{tab:orbit_lengths_small_posets}.  Notice that all the orbits are of one of the sizes described above, but there are too few ICS for all the orbits listed above to exist. 
    
    This completes the verification of the statement of Theorem \ref{thm:thmorbits} for small posets.
    \begin{table}[htbp]
        \centering
        \begin{minipage}{0.45\linewidth}{\renewcommand{\arraystretch}{1.2}
            \begin{tabular}{c|c}
            $n$ & Orbit sizes \\
            \hline
            1 & 2 orbits of size 2 ($2$ and $\frac{n+5}{3}$)\\
            \hline
            2 & 1 orbit of size 2\\
            & 1 orbit of size 5 ($n+3$)\\
            & 1 orbit of size 6 ($n^2+3n-4$)\\
            \hline
            3 &  2 orbits of size 2 ($2$ and $\frac{n+3}{3}$)\\
            & 1 orbit of size 3 ($\frac{n+3}{2}$)\\
            & 1 orbit of size 6 ($n+3$)\\
            & 1 orbit of size 20 ($2n^2+5n-13$)\\
            \hline 
            4 & 1 orbit of size 2\\
            & 3 orbits of size 5 ($\frac{n^2+2n-9}{3}$)\\
            & 1 orbit of size 6 ($\frac{n^2+2n-6}{3}$)\\
            & 4 orbits of size 7 ($n+3$)\\
            & 2 orbits of size 10 ($\frac{2n^2+4n-18}{3}$)\\
            \hline
            5 & 1 orbit of size $2$\\
            & 2 orbits of size 4 ($\frac{n+3}{2}$)\\
            & 7 orbits of size 8 ($n+3$)\\
            & 1 orbit of size 26 ($n^2+2n-9$)\\
            & 1 orbit of size 44 ($2n^2+3n-21$)\\
            \hline
            6 & 1 orbit of size $2$\\
            & 1 orbit of size 3 ($\frac{n+3}{3}$)\\
            & 14 orbits of size 9 ($n+3$)\\
            & 1 orbit of size 29 ($n^2+n-13$)\\
            & 1 orbit of size 30 ($n^2+n-12$)\\
            & 1 orbit of size 49 ($n^2+3n-5$)\\
        \end{tabular}}
        \end{minipage}\quad
        \begin{minipage}{0.45\linewidth}{\renewcommand{\arraystretch}{1.2}
            \begin{tabular}{c|c}
            $n$ & Orbit sizes \\
            \hline
            7 & 1 orbit of size $2$\\
            & 1 orbit of size 4 ($\frac{n+5}{3}$)\\
            & 3 orbits of size 5 ($\frac{n+3}{2}$)\\
            & 22 orbits of size 10 ($n+3$)\\
            & 1 orbit of size 18 ($\frac{n^2+2n-9}{3}$)\\
            & 1 orbit of size 26 ($\frac{2n^2+n-27}{3}$)\\
            & 2 orbits of size 54 ($n^2+2n-9$)\\
            \hline 
            8 & 1 orbit of size $2$\\
            & 36 orbits of size 11 ($n+3$)\\
            & 1 orbit of size 13 ($n+5$)\\
            & 2 orbits of size 59 ($n^2+n-13$)\\
            & 1 orbit of size 84 ($n^2+3n-4$)\\
            \hline 
            9 & 1 orbit of size $2$\\
            & 1 orbit of size 4 ($\frac{
            n+3}{3}$)\\
            & 4 orbits of size 6 ($\frac{n+3}{2}$)\\
            & 50 orbits of size 12 ($n+3$)\\
            & 2 orbits of size 14 ($n+5$)\\
            & 1 orbits of size 64 ($n^2-17$)\\
            & 1 orbit of size 194 ($2n^2+5n-13$)\\
        \end{tabular}}
        \end{minipage}
        \caption{Explicit orbit sizes of rowmotion for $[2]\times [n]$, $1\leq n\leq 9$.}
        \label{tab:orbit_lengths_small_posets}
    \end{table}
\end{proof}

\begin{remark}\label{rem:orbits_description} This characterization of the orbit lengths includes  representatives for all orbits (see the theorems of this section cited in the proof). Our characterization shows that if an orbit contains no low interval-closed sets (those that only have elements in the first chain) or high interval-closed sets (those that have only elements in the second chain), then the orbit is of size dividing $n+3$. The vast majority of orbits are of this type.  All orbits of other sizes contain either the empty interval-closed set or an interval on a single chain. In particular, a given single-chain ICS belongs to an orbit of the following size:
\begin{itemize}
    \item $n+3$, if the ICS is either of the singleton sets $\{(1,1)\}$ or $\{(2,n)\}$, i.e., the minimum element or the maximum element of the poset;
    \item $n+5$ or $\frac{n+5}{3}$, if the ICS is an interval of size at least $4$ and either on the first chain and not including $(1,1)$, $(1,2)$ or $(1,3)$, or on the second chain and not including $(2,n)$, $(2,n-1)$ or $(2,n-2)$;
    \item quadratic size, if the ICS is an interval of size at least $4$ and includes any of $(1,1)$, $(1,2)$, $(1,3)$, $(2,n)$, $(2,n-1)$ or $(2,n-2)$;
    \item quadratic size, if the ICS has size at most $3$, and it is neither $\{(1,1)\}$ nor $\{(2,n)\}$. (In particular, all the orbits of quadratic size contain a singleton representative, except for one orbit with no singleton when $n\equiv1 \pmod3$. The latter orbit has a lower-chain representative of size $2$.)
\end{itemize}
\end{remark}

With the complete description of the orbits from Theorem~\ref{thm:thmorbits}, we can now prove Theorem \ref{thm:homomesy}.
\begin{thmhomomesy} 
    Suppose that $n$ is odd. Then the signed cardinality statistic on $\IC([2] \times [n])$ is $0$-mesic with respect to rowmotion. 
\end{thmhomomesy}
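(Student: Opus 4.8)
The plan is to prove the homomesy by reducing it to the per-family signed-cardinality computations already carried out in this subsection. By Definition~\ref{def:homomesy}, since the claimed constant is $c = 0$, it suffices to verify that $\sum_{I \in \O} \SC(I) = 0$ for \emph{every} orbit $\O$ of rowmotion on $\IC([2]\times[n])$ (dividing by $\#\O$ then gives average $0$). The key structural input is Theorem~\ref{thm:thmorbits}, which provides a complete and exhaustive inventory of the orbits: the single orbit of size $2$, the orbits of size dividing $n+3$, the orbits of size dividing $n+5$, and the quadratic orbits. Because that inventory is exhaustive, the whole argument is an assembly: dispatch each orbit to the partial result that already establishes its signed cardinality sums to zero.

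First I would handle the exceptional size-$2$ orbit, which consists of $\emptyset$ and the full poset $P$. Here $\SC(\emptyset) + \SC(P) = 0$ follows immediately, either directly from Proposition~\ref{P_sc} (which gives $\SC(P) = 0$) together with Proposition~\ref{comp_sc} applied to $I = \emptyset$, or simply from Proposition~\ref{comp_sc} since $P = \overline{\emptyset}$. Next, for orbits whose size divides $n+3$, I invoke Theorem~\ref{thm:sc_n+3} for the orbits of size exactly $n+3$ and Corollary~\ref{cor:sc_n+3} for the orbits of size $\frac{n+3}{2}$ and $\frac{n+3}{3}$; both assert the average signed cardinality is $0$ when $n$ is odd. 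Note in particular that the singletons $\{(1,1)\}$ and $\{(2,n)\}$ lie in orbits of size $n+3$ (per Remark~\ref{rem:orbits_description}), so they are already subsumed here. For orbits whose size divides $n+5$, Theorem~\ref{thm:sc_n+5} directly gives the vanishing average over orbits of size $n+5$ and $\frac{n+5}{3}$.

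The quadratic orbits require a little more care, and this is where the bookkeeping is the main obstacle rather than any new computation. By Remark~\ref{rem:orbits_description}, every quadratic orbit contains a lower-chain singleton representative $\{(1,i)\}$ with $i > 1$, \emph{except} for a single orbit with representative $\{(1,n-2),(1,n-1)\}$ that occurs only when $n \equiv 1 \pmod 3$. For all the quadratic orbits that do contain a lower-chain singleton, Theorem~\ref{lem:sc-big-orbits} establishes that the signed cardinality sums to $0$ for odd $n$. The lone remaining no-singleton orbit is not covered by Theorem~\ref{lem:sc-big-orbits}, so I would treat it separately by citing Lemma~\ref{prop:big orbit 2phase}, case (ii): there (with $a = 1$ and $n \equiv 1 \pmod 3$) the orbit closes up on itself with no lower-chain singletons, and the accompanying signed-cardinality computation shows $\sum_{J \in \O(I)} \SC(J) = 0$ for odd $n$.

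Finally I would conclude by observing that the orbit families enumerated above—size $2$, size dividing $n+3$, size dividing $n+5$, quadratic with a singleton, and the single no-singleton quadratic orbit—account for every orbit by Theorem~\ref{thm:thmorbits}, and each has total signed cardinality $0$. Hence every orbit has average signed cardinality equal to $0$, so the triple $\big(\IC([2]\times[n]), \row, \SC\big)$ is $0$-mesic whenever $n$ is odd, as claimed. The only real subtlety to flag is completeness of the case analysis, which is guaranteed precisely because the orbit classification of Theorem~\ref{thm:thmorbits} is exhaustive; no fresh estimate or calculation is needed beyond correctly matching each orbit type to its established zero-sum result.
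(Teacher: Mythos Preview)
Your proposal is correct and follows essentially the same assembly strategy as the paper's proof: invoke the exhaustive orbit classification of Theorem~\ref{thm:thmorbits}, then dispatch each orbit family to its zero-sum result (Propositions~\ref{P_sc}/\ref{comp_sc} for the size-$2$ orbit, Theorem~\ref{thm:sc_n+3} and Corollary~\ref{cor:sc_n+3} for divisors of $n+3$, Theorem~\ref{thm:sc_n+5} for divisors of $n+5$, Theorem~\ref{lem:sc-big-orbits} for quadratic orbits with singleton representatives, and Lemma~\ref{prop:big orbit 2phase}(ii) for the lone no-singleton orbit when $n\equiv 1\pmod 3$). The one point the paper makes explicit that you omit is a separate verification for small $n$ (the paper checks $n\in\{1,3,5,7,9\}$ by computer and then runs the structural argument for $n\ge 11$), since several of the cited lemmas carry implicit lower bounds on $n$ in their proofs; you should add this caveat, but otherwise the argument is the same.
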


\begin{proof}
We verified with SageMath \cite{sage} that all the orbits of $[2]\times [n]$ have an average signed cardinality of $0$ for $n \in \{1,3,5,7,9\}$.
Now assume that $n\geq 11$ and that $n$ is odd.
    The proof of Theorem \ref{thm:thmorbits} gives a complete description of all the orbits. Those are one orbit of size 2, orbits of size dividing $n+3$ and $n+5$ and orbits of quadratic sizes. The orbit of size $2$ contains two ICS for which the signed cardinality is $0$, which are the full poset and the empty poset.
    Following Theorem \ref{thm:sc_n+3} and Corollary \ref{cor:sc_n+3}, the signed cardinality also has an average of $0$ over orbits of size dividing $n+3$. Theorem \ref{thm:sc_n+5} shows that the average value of signed cardinality over the orbits of size dividing $n+5$ is also $0$. As for the quadratic orbits, all of them contain singleton representatives in the lower chain, except when $n \equiv 1 \pmod{3}$, in which case there is one orbit with representative $\{(1, n-2),(1,n-1)\}$.
    Theorem \ref{lem:sc-big-orbits} says that the average value of signed cardinality over the orbits with lower-chain singleton representatives is 0. Lemma \ref{prop:big orbit 2phase} means that the signed cardinality sums to $0$ for the one orbit with no singleton when $n\equiv1 \pmod3$. Together, the last two statements mean that all the orbits of quadratic size also have an average signed cardinality of $0$, from which we conclude that all orbits of $[2]\times[n]$ under rowmotion have an average signed cardinality of $0$. Hence, the signed cardinality  on $\IC([2]\times[n]$ is $0$-mesic with respect to rowmotion when $n$ is odd.
\end{proof}
It is worth noting that homomesy only occurs for the $[2]\times[n]$ poset when $n$ is odd, as shown in the proposition below. 
\begin{prop}\label{prop:no_homomesy_n_even} Let $n$ be positive and even. Then, the signed cardinality statistic is not homomesic under rowmotion for the poset $[2]\times[n]$.
\end{prop}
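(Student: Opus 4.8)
The plan is to disprove homomesy by exhibiting two orbits whose average signed cardinalities differ. Since homomesy requires that \emph{every} orbit have the same average, producing a single pair of orbits with distinct averages suffices, and I never need to identify a candidate global constant. Throughout I fix $n \geq 2$ even and write $P = [2]\times[n]$.

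First I would take the orbit $\mathcal{O}_1 = \{\varnothing, P\}$ of size $2$ (present in every nonempty poset by \cite[Corollary 2.11]{ELMSW}, and recorded in Theorem~\ref{thm:thmorbits}). Both of its members have signed cardinality $0$: the empty set contributes nothing, and $\SC(P)=0$ by Proposition~\ref{P_sc}. Hence the average of $\SC$ over $\mathcal{O}_1$ is $0$.

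Next I would analyze the orbit $\mathcal{O}_2$ of the minimum element $\{(1,1)\} = [0,1,n-1:\varnothing]$, which is precisely the $r=1$ sub-case of Case One (Stacked Ranks) in the proof of Lemma~\ref{n+3}; that argument already shows $|\mathcal{O}_2| = n+3$ and lists its members. Explicitly, $\mathcal{O}_2$ consists of the minimum $\{(1,1)\}$, the full rank-$k$ antichains for $1 \le k \le n-1$ (each a Disjoint ICS on two rank-$k$ elements), the maximum $\{(2,n)\}$, the complement of the maximum, and the complement of the minimum. Using Lemma~\ref{calc_sc(I)} (or directly from rank parities), each rank-$k$ antichain contributes $2(-1)^k$, while the four remaining ICS cancel in pairs: $\SC(\{(1,1)\}) = 1$ cancels the complement-of-minimum term $\SC(P)-1 = -1$, and $\SC(\{(2,n)\}) = (-1)^n$ cancels the complement-of-maximum term $-(-1)^n$. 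Therefore the signed cardinality summed over $\mathcal{O}_2$ equals $\sum_{k=1}^{n-1} 2(-1)^k$, giving an orbit average of $\tfrac{1}{n+3}\sum_{k=1}^{n-1}2(-1)^k$. Since $\mathcal{O}_1$ and $\mathcal{O}_2$ are distinct (their sizes are $2$ and $n+3 \ge 5$), it then remains only to check that their averages differ, i.e.\ that the sum above is nonzero.

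The whole computation is routine once the orbit structure of Lemma~\ref{n+3} is in hand, so the only point deserving care is the evaluation of $\sum_{k=1}^{n-1}(-1)^k = \tfrac{(-1)^{n-1}-1}{2}$: this telescoping sum vanishes when $n$ is odd (an even number of cancelling terms, consistent with Theorem~\ref{thm:homomesy}) but equals $-1$ when $n$ is even (one uncancelled term), so that $\SC$ over $\mathcal{O}_2$ totals $-2$ and its average is $-2/(n+3) \neq 0 = $ the average over $\mathcal{O}_1$. This is exactly where the parity hypothesis enters; note that the four boundary ICS of $\mathcal{O}_2$ cancel regardless of parity, so the even-versus-odd distinction lives entirely in the antichain terms. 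There is no substantive obstacle: the proposition is a short consequence of the orbit catalog already established, and the $n$-even hypothesis is used in a single parity step.
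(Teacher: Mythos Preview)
Your argument is correct. The orbit of $\{(1,1)\}$ is exactly as you describe (this is the $r=1$ sub-case of Case One in Lemma~\ref{n+3}), the four ``boundary'' ICS do cancel in pairs, and the remaining alternating sum $\sum_{k=1}^{n-1}2(-1)^k$ equals $-2$ precisely when $n$ is even. Comparing with the size-$2$ orbit gives two distinct orbits with different averages, so homomesy fails.

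Your route differs from the paper's. The paper also uses the size-$2$ orbit as one witness, but for the second it does not pick a single orbit; instead it computes the \emph{global} sum of $\SC$ over all ICS of $[2]\times[n]$, obtaining $-n^2(n+1)/4$, and concludes that the global average is nonzero. That computation decomposes all ICS into three families (those lying in an embedded $[2]\times[n-1]$, certain Low/High ICS, and those containing $(1,n)$ on both chains) and, crucially, invokes Theorem~\ref{thm:homomesy} for the odd case $n-1$ to dispatch the first family. Your argument is more self-contained: it needs only the explicit $r=1$ orbit from Lemma~\ref{n+3} and a short parity count, and never appeals to the main homomesy theorem. The paper's approach, on the other hand, yields the exact global sum as a byproduct, which is additional quantitative information your argument does not provide.
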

\begin{proof}
    For homomesy, we need the global average of signed cardinality to be the same as the average over every orbit. Under rowmotion, every nonempty poset has an orbit of size $2$ containing the whole poset and the empty ICS, which both have signed cardinality $0$. However, the global average of signed cardinality is not $0$ for the poset $[2]\times[n]$ when $n$ is even. Indeed, the average would be $0$ only if the sum of the signed cardinality over all ICS would be $0$. We show here that the total signed cardinality is $\frac{-n^2(n+1)}{4}$ over ICS of $[2]\times[n]$ with $n$ even.

Interval-closed sets of $[2]\times[n]$ are of one of the following types, which are disjoint:
\begin{itemize}
    \item an interval-closed set of $[2]\times[n-1]$ embedded in the $[2]\times[n]$ poset. Since rowmotion is $0$-mesic for ICS of $[2]\times[n-1]$ when $n$ is even, the sum of the signed cardinality for these ICS is $0$.
    \item a low ICS $[b,n-b,0:\varnothing]$ or a high ICS $[\varnothing:b,n-b,0]$, for $0\leq b \leq n-1$. For any given value of $b$, the low ICS $[b,n-b,0:\varnothing]$ has opposite signed cardinality value of that of the high ICS $[\varnothing: b,n-b,0]$, so the total sum of the signed cardinality for these ICS is $0$.
    \item an interval-closed set with elements in both the lower and upper chains, and containing $(1,n)$. This is the ICS $[b_1, n-b_1, 0: b_2, i_2, n-i_2-b_2]$ with $1\leq i_2\leq n$, $0\leq b_1\leq n-1$ and $0\leq b_2 \leq n-i_2$. Following Lemma \ref{calc_sc(I)}, the total signed cardinality for these ICS is 
    \begin{align*}
        \sum_{i_2=1}^{n} \sum_{b_1=0}^{n-1}\sum_{b_2=0}^{n-i_2} \left(\delta_{b_1}(-1)^{b_1}+\delta_{i_2}(-1)^{b_2+1}\right) &= \sum_{i_2=1}^{n} \sum_{b_1=0}^{n-1}\sum_{b_2=0}^{n-i_2} -\delta_{b_1}+ n\sum_{i_2=1}^{n} \sum_{b_2=0}^{n-i_2} \delta_{i_2}(-1)^{b_2+1}\\ &= -\frac{n}{2}\sum_{i_2=1}^n (n-i_2+1) +0\\ & = -\frac{n^2(n+1)}{4},
    \end{align*}
    where the second-to-last equality comes from parity analysis: when $n$ is even and $i_2$ is odd, there is an even number of values of $b_2$ between $0$ and $n-i_2$, meaning that the sum of $(-1)^{b_2+1}$ over these values of $b_2$ is $0$.
\end{itemize}
Therefore, signed cardinality cannot be homomesic under rowmotion for $[2]\times[n]$ with $n$ even.
\end{proof}

It is natural to ask whether signed cardinality is homomesic for other posets that can be expressed as a product of chains.
\begin{remark}
    As noted in \cite[Conjecture 4.12]{ELMSW}, the signed cardinality statistics seems to be homomesic under rowmotion for $[3]\times[n]$ when $n$ is even. For reasons analogous to those behind Proposition \ref{prop:no_homomesy_n_even},  the statistic is not homomesic for $[3]\times[n]$ when $n$ is odd.

    The chain poset, $[n]$, also exhibits homomesy for the signed cardinality poset, as shown in \cite[Proposition 3.18]{ELMSW}.
    
    However, hopes for a general statement that signed cardinality is homomesic under rowmotion for products of chains are limited by the following counter-examples, among many others: $[4 ]\times[4]$, $[4 ]\times[5]$, $[2]\times[2]\times[3]$, $[2]\times[2]\times[4]$, and $[2]\times[2]\times[2]\times[2]$.
\end{remark}

\section{Conjectures and future work}
In this section, we discuss some conjectures and directions for future work.
\label{sec:conj}
\subsection{Rowmotion orbits of interval-closed sets of $[m]\times[n]$}

Experimental evidence suggests that it will not be possible to fully catalogue rowmotion orbits in $[m] \times [n]$ for $m > 2$ and arbitrary $n$.  Nevertheless, some steps in this direction may be possible. It follows from Theorem~\ref{thm:thmorbits} that for large $n$, the vast majority of ICS of $[2] \times [n]$ belong to orbits of size $n + 3$.  Experimental evidence suggests that this extends to $[m] \times [n]$, in the following sense. 

\begin{conjecture}\label{conj:m + n + 1}
    Fix any positive integer $m$.  Let $r(n)$ be the fraction of interval-closed sets of $[m] \times [n]$ that belong to orbits of size $m + n + 1$.  Then $r(n) \to 1$ as $n \to \infty$.
\end{conjecture}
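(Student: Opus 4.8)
The plan is to reduce the conjecture to two asymptotic estimates and then to establish each by pushing the $m=2$ analysis of Section~\ref{sec:2xn_orb_hom} to an asymptotic, rather than exact, level. Write $N(n) = |\IC([m]\times[n])|$ for the total number of interval-closed sets and let $B(n)$ be the number of them lying in orbits of size \emph{not} equal to $m+n+1$. Since $r(n) = 1 - B(n)/N(n)$, it suffices to prove
\begin{equation*}
N(n) = \Theta(n^{2m}) \qquad\text{and}\qquad B(n) = O(n^{2m-1}),
\end{equation*}
whence $B(n)/N(n) = O(1/n) \to 0$. The first estimate is bookkeeping: an interval-closed set meets each of the $m$ chains in an interval, so $N(n)\le\big(\binom{n+1}{2}+1\big)^m = O(n^{2m})$, while counting the convex sets bounded away from the top and bottom of every chain (choosing the two endpoints of the interval in each column subject to a fixed, full-dimensional system of linear inequalities) gives a matching $\Omega(n^{2m})$; the exact degree-$2m$ polynomial can be extracted by a transfer-matrix argument in the $n$-direction, but only the order of growth is needed here.

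For $B(n)$, I would set up the good/bad dichotomy suggested by Remark~\ref{rem:orbits_description}. Call an interval-closed set \emph{interior} if its entire $\row$-orbit consists of sets meeting every one of the $m$ chains (in particular avoiding $\varnothing$ and $P$), and split $B(n)$ into the contribution of non-interior sets and the contribution of interior sets whose orbit is \emph{shorter} than $m+n+1$. Using the global formula of Theorem~\ref{thm:AltRow_new}, I would first show that an interior set not yet touching the top rank of $P$ is carried by $\row$ to its translate one rank higher, and that the boundary interactions occurring once the set reaches the top rank take only boundedly many steps before upward shifting resumes. Tracking one full sweep — the set rises through the poset, is reflected/complemented at the top, and returns — should show that every interior orbit has size \emph{dividing} $m+n+1$, exactly as the four cases of Lemma~\ref{n+3} do when $m=2$. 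A separate and easier estimate, that the interior sets fixed by a proper power of $\row$ (hence in orbits of proper-divisor size) number $O(n^{2m-1})$, then disposes of the second contribution.

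The non-interior contribution is controlled by a charging argument. The number of interval-closed sets that themselves leave some chain empty is $O(n^{2m-2})$, as such a set is determined by intervals on the remaining $m-1$ chains. The key input is a \emph{sweep lemma}: along any orbit the number of $\row$-steps between one chain-missing set and the next is $O(n)$, since between such events the set can only shift upward by one rank per step through a region of height at most $m+n$, the degenerate configurations resolving in $O(1)$ steps. This is precisely the mechanism already visible for $m=2$ in Propositions~\ref{prop:Low-to-High} and~\ref{prop:High-to-Low} and Lemma~\ref{prop:big orbits 2n+12}, where consecutive single-chain sets are $\Theta(n)$ apart. Granting the sweep lemma, each bad (non-interior) orbit has size at most $Cn$ times the number of chain-missing sets it contains, so, since each chain-missing set lies in exactly one orbit,
\begin{equation*}
\sum_{\text{non-interior orbits } \O}\!\!|\O| \;\le\; Cn\sum_{\text{non-interior orbits } \O}\#\{\text{chain-missing sets in }\O\} \;=\; Cn\cdot O(n^{2m-2}) \;=\; O(n^{2m-1}).
\end{equation*}
Together with the interior short-orbit bound this yields $B(n)=O(n^{2m-1})$.

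The genuine obstacle is the structural period claim of the second paragraph, that \emph{every} interior orbit has size dividing $m+n+1$. For $m=2$ this rested on the finite catalogue of ICS types and their images under $\row$ in Lemma~\ref{Lem: CatalogRowmotion2xn}, which has no evident analogue for $m>2$, where the number of shapes of convex set is unbounded. What is required is a conceptual, case-free argument: plausibly a $\row$-equivariant encoding of each $I$ as the nested pair of order ideals $\oi(I)\setminus I\subseteq\oi(I)$ (both are order ideals, as one checks directly from convexity), combined with the order $m+n$ of rowmotion on $J([m]\times[n])$ (Theorem~\ref{thm:oi_prod_chains_row}) to account for the extra $+1$. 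Proving that the generic period is \emph{exactly} $m+n+1$, and that the sweep lemma holds uniformly in $m$, are the two points where ideas beyond the $m=2$ bookkeeping will be needed; the enumeration and the charging argument above are then routine.
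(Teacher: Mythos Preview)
This statement is a \emph{conjecture}: the paper does not prove it and explicitly labels it as open, offering only an outline of a possible attack in Section~\ref{sec:conj}. So there is no ``paper's own proof'' to compare against, only the paper's sketched strategy.

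Your overall framework---show $N(n)=\Theta(n^{2m})$ and $B(n)=O(n^{2m-1})$, split the bad contribution into chain-missing orbits bounded via a sweep lemma and short interior orbits---is reasonable and more ambitious than the paper's outline. The paper instead singles out a concrete family of ``good'' ICS, those whose $2m$ interval endpoints are pairwise at distance at least $2$, counts them exactly as $C_m\binom{n-2m+1}{2m}$, and reduces the conjecture to showing (i) each good ICS sits in an orbit of size $m+n+1$ and (ii) the total ICS count has the matching leading term $n^{2m}/(m!(m+1)!)$. Neither approach is complete; yours tries to handle all interior orbits at once, whereas the paper bets on one explicit family whose asymptotic size already matches.

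There is, however, a concrete false step in your sketch: the claim that an interior ICS not containing the top element is carried by $\row$ to its upward shift is wrong already for $m=2$. A Stacked-Diagonal ICS $[b,i,a:b,i,a]$ with $a>0$ (which contains neither $(1,n)$ nor $(2,n)$) is sent by $\row$ to the Disjoint ICS $[b{+}1,i,a{-}1:0,b,i{+}a]$ (Lemma~\ref{Lem: CatalogRowmotion2xn}, Type~4a), and a First Hook with $a_1>0$ goes to a Double Hook that extends the upper-chain interval downward (Type~3a); neither is a shift. What \emph{is} true for $m=2$, and what actually drives Lemma~\ref{n+3}, is that each interior orbit decomposes into a bounded number of shift-runs separated by $O(1)$ non-shift transitions, the total length summing to $n+3$. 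The mechanism is not ``shift unless at the top,'' and any argument for general $m$ must replace that heuristic with something capturing this more delicate pattern. You correctly flag the divides-$(m{+}n{+}1)$ period claim as the genuine obstacle; as it stands this is essentially the content of the conjecture, not a lemma one can quote.
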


One may view this conjecture as asserting that, although the overall behavior of rowmotion on ICS is complicated and of large order, its behavior on \emph{typical} ICS is very nice. It would be possible in principle to prove Conjecture~\ref{conj:m + n + 1} by identifying a sufficiently large family of orbits of size $m + n + 1$, and comparing with the enumeration of all ICS. We outline a strategy in this direction here.

Say that an ICS $I$ in $[m] \times [n]$ is \emph{good} if 
\[
I = [(1, a_1), (1, b_1)] \cup [(2, a_2), (2, b_2)] \cup \cdots \cup [(m, a_m), (m, b_m)]
\]
such that no two of $\{a_1, \ldots, a_m, b_1, \ldots, b_m\}$ differ by less than $2$.  The number of good ICS is $C_m \cdot \binom{n - 2m + 1}{2m}$, where $C_m$ is the $m$th Catalan number $C_m=\frac{1}{m+1}\binom{2m}{m}$. The binomial coefficient $\binom{n - 2m + 1}{2m}$ counts the number of ways to choose $2m$ distinct elements from $\{1, \ldots, n\}$ such that no two differ by $1$, and there are $C_m$ ways to assign these $2m$ distinct values to be $a_1, \ldots, a_m, b_1, \ldots, b_m$ (the conditions $a_1 > a_2 > \cdots > a_m$, $b_1 > \cdots > b_m$, and $a_i < b_i$ on the assignment are essentially those of a standard Young tableau of shape $\langle m, m\rangle$).  If $m$ is fixed, then as $n \to \infty$, this number grows like $\frac{n^{2m}}{m!(m + 1)!} + O(n^{2m - 1})$.  Then \emph{if} one could show that all such elements belong to orbits of size $m + n + 1$, \emph{and} one could show that the total number of ICS of $[m] \times [n]$ for fixed $m$ and $n \to \infty$ is asymptotic to $\frac{n^{2m}}{m!(m + 1)!}$, that would be sufficient to prove the conjecture.  One should be able to extract the latter (enumerative) piece from the formulas in \cite{ELLMSW}.

\subsection{Max-minus-min homomesy}
Our second main result, Theorem~\ref{thm:homomesy}, proved a homomesy conjecture of \cite{ELMSW}. We state below another conjecture of that paper and give some remarks. The $m=2$ case was proved in \cite[Thm.\ 4.7]{ELMSW}.
\begin{conjecture}[\protect{\cite[Conj.~4.9]{ELMSW}}]\label{conj:max-min}
    The number of maximal elements minus the number of minimal elements is $0$-mesic under rowmotion on $\IC(P)$, for $P=[m] \times [n]$.
\end{conjecture}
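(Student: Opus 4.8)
The plan is to prove the stronger, parity-free statement that $f(I) := |\Max(I)| - |\Min(I)|$ is a discrete derivative (coboundary) along rowmotion; this yields $0$-mesy on every orbit simultaneously, with no classification of orbit sizes and no parity hypothesis (consistent with the $m=2$ result, Theorem~\ref{thm:homomesy_2_by_n_max-min}, which also had none). The whole theorem reduces to a single identity, which I will call the \textbf{Key Lemma}: for every $I \in \IC([m]\times[n])$ one has $|\Max(I)| = |\Min(\row(I))|$. Granting this, set $h(I) := |\Min(I)|$, so that $f(I) = h(\row(I)) - h(I)$, and sum over any orbit $\O = \{I, \row(I), \ldots, \row^{L-1}(I)\}$ with $\row^{L}(I) = I$. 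The sum telescopes cyclically:
\[
\sum_{k=0}^{L-1} f(\row^k(I)) = \sum_{k=0}^{L-1}\bigl(|\Min(\row^{k+1}(I))| - |\Min(\row^{k}(I))|\bigr) = 0 .
\]
Hence every orbit has average $f$ equal to $0$, which is exactly $0$-mesy.

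To prove the Key Lemma I would work from the global description in Theorem~\ref{thm:AltRow_new}, namely $\row(I) = \bigl(\inc(\ceil{I}) - I\bigr)\cup\bigl(\oi\ceil{I}-\oi(\F)\bigr)$, and construct an explicit bijection $\Max(I) \to \Min(\row(I))$. The naive hope—that each maximal element of $I$ matches the ceiling element just above it, with $|\ceil{I}| = |\Min(\row(I))|$—is false: when $I$ is itself an order filter (for instance a Second Hook containing the top element) one has $\ceil{I} = \varnothing$ even though $|\Max(I)| > 0$, yet the Key Lemma still holds. So the correspondence is genuinely \emph{non-local} and must be read off from the two pieces of $\row(I)$ together: the term $\oi\ceil{I}-\oi(\F)$ supplies the minimal elements sitting over the ``hook'' portion of $\Max(I)$, while $\inc(\ceil{I}) - I$ accounts for maximal elements that are incomparable to the ceiling. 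I would stratify $\Max(I)$ according to whether the blocked up-covers of a given maximal element are blocked by the boundary of $P$ or by elements of $P \setminus I$, and then track where each maximal element is sent after one top-to-bottom toggle sweep.

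As a consistency check and warm-up, the Key Lemma can be verified directly for $m = 2$ from the six-type catalog of Lemma~\ref{Lem: CatalogRowmotion2xn}: in each row of Tables~\ref{tab:lemma-RowEffectsPoints1} and~\ref{tab:lemma-RowEffectsPoints2} one reads $|\Max(I)|$ off the parameters and $|\Min(\row(I))|$ off the listed image, and the two agree, re-deriving Theorem~\ref{thm:homomesy_2_by_n_max-min}. The main obstacle is precisely that for $m > 2$ there is no finite catalog of ICS types—this is the same obstruction flagged in the discussion of Conjecture~\ref{conj:m + n + 1}—so the bijection cannot be checked family-by-family and instead must be produced uniformly from the global formula. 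Carrying out the stratification above in a way that is visibly rank- and chain-independent is where I expect the real difficulty to lie.

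Finally, as partial support and a structural guide, I would record the symmetry coming from the $180^\circ$ anti-automorphism $\psi(i,j) = (m+1-i,\,n+1-j)$. Since $\psi$ is order-reversing it preserves convexity, hence acts on $\IC([m]\times[n])$; it swaps minimal and maximal elements, so $f\circ\psi = -f$; and because toggling $x$ in $I$ corresponds to toggling $\psi(x)$ in $\psi(I)$ while $\psi$ reverses every linear extension, it conjugates rowmotion to its inverse, $\psi\,\row\,\psi = \row^{-1}$. These already force the \emph{global} average of $f$ to vanish and give $0$-mesy for free on every $\psi$-invariant orbit; more usefully, a short computation shows the Key Lemma is equivalent to itself under $\psi$, confirming its internal consistency and indicating that the bijection $\Max(I) \to \Min(\row(I))$ sought above should be chosen $\psi$-equivariantly. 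Producing that equivariant bijection from Theorem~\ref{thm:AltRow_new} is, I believe, the crux of a full proof of Conjecture~\ref{conj:max-min}.
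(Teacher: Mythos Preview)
This statement is listed in the paper as an open \emph{conjecture}; the paper offers no proof, only a remark explaining why the $m=2$ argument does not extend. So there is no paper proof to match. More importantly, your proposed Key Lemma $|\Max(I)| = |\Min(\row(I))|$ is \emph{false} for $m\ge 3$, and the counterexample is already sitting in the paper's Figure~\ref{fig:ICS-with-max-minus-min}.

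Take $P=[3]\times[5]$ and the order ideal
\[
I \;=\; \{(1,1),(1,2),(1,3),(1,4),(2,1),(2,2),(2,3),(3,1)\},
\]
which is the second ICS in that orbit. Its maximal antichain is $\{(1,4),(2,3),(3,1)\}$, so $|\Max(I)|=3$. Computing $\row(I)$ from Theorem~\ref{thm:AltRow_new}: $\ceil{I}=\{(1,5),(2,4),(3,2)\}$, $\inc(\ceil{I})=\varnothing$, $\Min(I)\cap\oi\ceil{I}=\{(1,1)\}$, hence
\[
\row(I)\;=\;\oi\{(1,5),(2,4),(3,2)\}\setminus\{(1,1)\}
\;=\;\{(1,2),\ldots,(1,5),(2,1),\ldots,(2,4),(3,1),(3,2)\}.
\]
The minimal elements of $\row(I)$ are $\{(1,2),(2,1)\}$, so $|\Min(\row(I))|=2\neq 3$. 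Equivalently, if the Key Lemma held then the $f$-values along the orbit would be successive differences of the $|\Min|$-sequence; but the listed $f$-values $-2,2,1,-1,-2,1,1,1,-1$ force $|\Min(I_2)|=|\Min(I_0)|+(-2)+2=3$, whereas the actual value is $2$.

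So the reduction to a coboundary collapses: $f$ is \emph{not} of the form $h\circ\row - h$ with $h=|\Min|$, and the bijection $\Max(I)\to\Min(\row(I))$ you hope to extract from Theorem~\ref{thm:AltRow_new} cannot exist in general. The $\psi$-symmetry observations are correct and do give the global average and $0$-mesy on self-dual orbits for free, but that is far from the full conjecture. If you want to salvage the telescoping idea you would need a different potential function $h$---one for which $f = h\circ\row - h$ genuinely holds on all of $\IC([m]\times[n])$---and the paper's example shows $|\Min|$ is not it.
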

\begin{remark}
    The proof in \cite[Thm.\ 4.7]{ELMSW} of this statement in the case $m=2$ proceeded by matching each interval-closed set that contributes $+1$ with an interval-closed set in the same orbit contributing $-1$. In that case, $+1$, $-1$ and $0$ are the only possible values for the statistics, since all nonempty antichains have either size $1$ or $2$ in $[2]\times [n]$. For larger products of chains, it is not possible to do the same. For instance, in $[3]\times [5]$, nonempty antichains have size $1$, $2$ or $3$, so the max-minus-min statistic takes values between $-2$ and $+2$. However, it is not always possible to pair, over a given orbit, interval-closed sets with values $i$ and $-i$. For example, the orbit depicted in Figure~\ref{fig:ICS-with-max-minus-min} has two ICS for which the statistic has value $-2$ and only one for which the statistic has value $+2$.
\begin{figure}[htbp]
\begin{center}
        \begin{tabular}{c|c|c|c|c|c|c|c|c}
\hspace{-.75em}\rotatebox{-15}{\begin{tikzpicture}[scale=.2]
\foreach \x in {1,2,3}
	{\foreach \y in {1,...,5}
		{\ifthenelse{\x < 3}
			{\draw (\x - \y, \x + \y) -- (\x - \y + 1, \x + \y + 1);}{}
		\ifthenelse{\y < 5}
			{\draw (\x - \y, \x + \y) -- (\x - \y - 1, \x + \y+1);}{}
   \draw[fill=white, radius = .3] (\x - \y, \x + \y) circle;
		}
	}
\draw[fill=red, radius = .3] (1-5,1+5) circle;
\draw[fill=red, radius = .3] (2-5,2+5) circle;
\draw[fill=red, radius = .3] (2-4,2+4) circle;
\draw[fill=red, radius = .3] (3-5,3+5) circle;
\draw[fill=red, radius = .3] (3-4,3+4) circle;
\draw[fill=red, radius = .3] (3-3,3+3) circle;
\draw[fill=red, radius = .3] (3-2,3+2) circle;
\end{tikzpicture}}\hspace{-.75em}  &
\hspace{-.75em}\rotatebox{-15}{\begin{tikzpicture}[scale=.2]
\foreach \x in {1,2,3}
	{\foreach \y in {1,...,5}
		{\ifthenelse{\x < 3}
			{\draw (\x - \y, \x + \y) -- (\x - \y + 1, \x + \y + 1);}{}
		\ifthenelse{\y < 5}
			{\draw (\x - \y, \x + \y) -- (\x - \y - 1, \x + \y+1);}{}
   \draw[fill=white, radius = .3] (\x - \y, \x + \y) circle;
		}
	}
\draw[fill=red, radius = .3] (1-1,1+1) circle;
\draw[fill=red, radius = .3] (1-2,1+2) circle;
\draw[fill=red, radius = .3] (1-3,1+3) circle;
\draw[fill=red, radius = .3] (1-4,1+4) circle;
\draw[fill=red, radius = .3] (2-1,2+1) circle;
\draw[fill=red, radius = .3] (2-2,2+2) circle;
\draw[fill=red, radius = .3] (2-3,2+3) circle;
\draw[fill=red, radius = .3] (3-1,3+1) circle;
\end{tikzpicture}}\hspace{-.75em}  &
\hspace{-.75em}\rotatebox{-15}{\begin{tikzpicture}[scale=.2]
\foreach \x in {1,2,3}
	{\foreach \y in {1,...,5}
		{\ifthenelse{\x < 3}
			{\draw (\x - \y, \x + \y) -- (\x - \y + 1, \x + \y + 1);}{}
		\ifthenelse{\y < 5}
			{\draw (\x - \y, \x + \y) -- (\x - \y - 1, \x + \y+1);}{}
   \draw[fill=white, radius = .3] (\x - \y, \x + \y) circle;
		}
	}
\draw[fill=red, radius = .3] (1-2,1+2) circle;
\draw[fill=red, radius = .3] (1-3,1+3) circle;
\draw[fill=red, radius = .3] (1-4,1+4) circle;
\draw[fill=red, radius = .3] (1-5,1+5) circle;
\draw[fill=red, radius = .3] (2-1,2+1) circle;
\draw[fill=red, radius = .3] (2-2,2+2) circle;
\draw[fill=red, radius = .3] (2-3,2+3) circle;
\draw[fill=red, radius = .3] (2-4,2+4) circle;
\draw[fill=red, radius = .3] (3-1,3+1) circle;
\draw[fill=red, radius = .3] (3-2,3+2) circle;
\end{tikzpicture}}\hspace{-.75em}  &
\hspace{-.75em}\rotatebox{-15}{\begin{tikzpicture}[scale=.2]
\foreach \x in {1,2,3}
	{\foreach \y in {1,...,5}
		{\ifthenelse{\x < 3}
			{\draw (\x - \y, \x + \y) -- (\x - \y + 1, \x + \y + 1);}{}
		\ifthenelse{\y < 5}
			{\draw (\x - \y, \x + \y) -- (\x - \y - 1, \x + \y+1);}{}
   \draw[fill=white, radius = .3] (\x - \y, \x + \y) circle;
		}
	}
\draw[fill=red, radius = .3] (1-3,1+3) circle;
\draw[fill=red, radius = .3] (1-4,1+4) circle;
\draw[fill=red, radius = .3] (1-5,1+5) circle;
\draw[fill=red, radius = .3] (2-2,2+2) circle;
\draw[fill=red, radius = .3] (2-3,2+3) circle;
\draw[fill=red, radius = .3] (2-4,2+4) circle;
\draw[fill=red, radius = .3] (2-5,2+5) circle;
\draw[fill=red, radius = .3] (3-1,3+1) circle;
\draw[fill=red, radius = .3] (3-2,3+2) circle;
\draw[fill=red, radius = .3] (3-3,3+3) circle;
\end{tikzpicture}}\hspace{-.75em}  &
\hspace{-.75em}\rotatebox{-15}{\begin{tikzpicture}[scale=.2]
\foreach \x in {1,2,3}
	{\foreach \y in {1,...,5}
		{\ifthenelse{\x < 3}
			{\draw (\x - \y, \x + \y) -- (\x - \y + 1, \x + \y + 1);}{}
		\ifthenelse{\y < 5}
			{\draw (\x - \y, \x + \y) -- (\x - \y - 1, \x + \y+1);}{}
   \draw[fill=white, radius = .3] (\x - \y, \x + \y) circle;
		}
	}
\draw[fill=red, radius = .3] (1-4,1+4) circle;
\draw[fill=red, radius = .3] (2-3,2+3) circle;
\draw[fill=red, radius = .3] (2-4,2+4) circle;
\draw[fill=red, radius = .3] (3-2,3+2) circle;
\draw[fill=red, radius = .3] (3-3,3+3) circle;
\draw[fill=red, radius = .3] (3-4,3+4) circle;
\end{tikzpicture}}\hspace{-.75em}  &
\hspace{-.75em}\rotatebox{-15}{\begin{tikzpicture}[scale=.2]
\foreach \x in {1,2,3}
	{\foreach \y in {1,...,5}
		{\ifthenelse{\x < 3}
			{\draw (\x - \y, \x + \y) -- (\x - \y + 1, \x + \y + 1);}{}
		\ifthenelse{\y < 5}
			{\draw (\x - \y, \x + \y) -- (\x - \y - 1, \x + \y+1);}{}
   \draw[fill=white, radius = .3] (\x - \y, \x + \y) circle;
		}
	}
\draw[fill=red, radius = .3] (1-5,1+5) circle;
\draw[fill=red, radius = .3] (2-1,2+1) circle;
\draw[fill=red, radius = .3] (2-2,2+2) circle;
\draw[fill=red, radius = .3] (3-1,3+1) circle;
\end{tikzpicture}}\hspace{-.75em}  &
\hspace{-.75em}\rotatebox{-15}{\begin{tikzpicture}[scale=.2]
\foreach \x in {1,2,3}
	{\foreach \y in {1,...,5}
		{\ifthenelse{\x < 3}
			{\draw (\x - \y, \x + \y) -- (\x - \y + 1, \x + \y + 1);}{}
		\ifthenelse{\y < 5}
			{\draw (\x - \y, \x + \y) -- (\x - \y - 1, \x + \y+1);}{}
   \draw[fill=white, radius = .3] (\x - \y, \x + \y) circle;
		}
	}
\draw[fill=red, radius = .3] (1-2,1+2) circle;
\draw[fill=red, radius = .3] (1-3,1+3) circle;
\draw[fill=red, radius = .3] (1-4,1+4) circle;
\draw[fill=red, radius = .3] (2-2,2+2) circle;
\draw[fill=red, radius = .3] (2-3,2+3) circle;
\draw[fill=red, radius = .3] (3-1,3+1) circle;
\draw[fill=red, radius = .3] (3-2,3+2) circle;
\end{tikzpicture}}\hspace{-.75em}  &
\hspace{-.75em}\rotatebox{-15}{\begin{tikzpicture}[scale=.2]
\foreach \x in {1,2,3}
	{\foreach \y in {1,...,5}
		{\ifthenelse{\x < 3}
			{\draw (\x - \y, \x + \y) -- (\x - \y + 1, \x + \y + 1);}{}
		\ifthenelse{\y < 5}
			{\draw (\x - \y, \x + \y) -- (\x - \y - 1, \x + \y+1);}{}
   \draw[fill=white, radius = .3] (\x - \y, \x + \y) circle;
		}
	}
\draw[fill=red, radius = .3] (1-3,1+3) circle;
\draw[fill=red, radius = .3] (1-4,1+4) circle;
\draw[fill=red, radius = .3] (1-5,1+5) circle;
\draw[fill=red, radius = .3] (2-2,2+2) circle;
\draw[fill=red, radius = .3] (2-3,2+3) circle;
\draw[fill=red, radius = .3] (2-4,2+4) circle;
\draw[fill=red, radius = .3] (3-2,3+2) circle;
\draw[fill=red, radius = .3] (3-3,3+3) circle;
\end{tikzpicture}}\hspace{-.75em} &
\hspace{-.75em}\rotatebox{-15}{\begin{tikzpicture}[scale=.2]
\foreach \x in {1,2,3}
	{\foreach \y in {1,...,5}
		{\ifthenelse{\x < 3}
			{\draw (\x - \y, \x + \y) -- (\x - \y + 1, \x + \y + 1);}{}
		\ifthenelse{\y < 5}
			{\draw (\x - \y, \x + \y) -- (\x - \y - 1, \x + \y+1);}{}
   \draw[fill=white, radius = .3] (\x - \y, \x + \y) circle;
		}
	}
\draw[fill=red, radius = .3] (1-4,1+4) circle;
\draw[fill=red, radius = .3] (1-5,1+5) circle;
\draw[fill=red, radius = .3] (2-3,2+3) circle;
\draw[fill=red, radius = .3] (2-4,2+4) circle;
\draw[fill=red, radius = .3] (2-5,2+5) circle;
\draw[fill=red, radius = .3] (3-1,3+1) circle;
\draw[fill=red, radius = .3] (3-2,3+2) circle;
\draw[fill=red, radius = .3] (3-3,3+3) circle;
\draw[fill=red, radius = .3] (3-4,3+4) circle;
\end{tikzpicture}}\hspace{-.75em} \\\hline
              $-2$ & $2$ & $1$ & $-1$ & $-2$& $1$& $1$& $1$& $-1$ 
        \end{tabular}
    \end{center}
\caption{A rowmotion orbit of ICS with the statistic $\max - \min$ as indicated}
\label{fig:ICS-with-max-minus-min}
\end{figure}
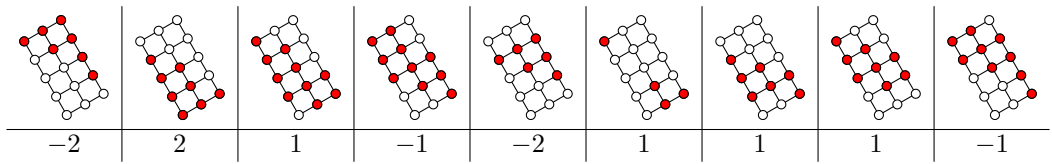
\end{remark}

\section*{Acknowledgements}
The authors thank Sergi Elizalde and Corey Vorland for helpful conversations and the developers of SageMath~\cite{sage}, which was useful in this research.
This work benefited from the opportunity to present and collaborate at the Fall 2023 AMS Central Sectional Meeting.
Striker was supported by Simons Foundation gift MP-TSM-00002802 and NSF grant DMS-2247089.  Lewis was supported by Simons Foundation gift MPS-TSM-00006960.

\bibliographystyle{abbrv}
\bibliography{master}

\end{document}